\title{\vspace{-1cm} \linespread{1.15} \bfseries \large SOME FORMAL GLUING DIAGRAMS FOR CONTINUOUS K-THEORY}
\author{\MakeUppercase Hyungseop Kim}
\date{}
\let\originallhook=\lhook
\DeclareSymbolFontAlphabet{\mathbb}{AMSb}
\DeclareSymbolFontAlphabet{\mathbbl}{bbold}
\let\lhook=\originallhook
\DeclareMathAlphabet{\mathpzc}{OT1}{pzc}{m}{it}
\def\Map{\mathrm{Map}}
\def\intmap{\underline{\mathrm{Map}}}
\def\Sp{\operatorname{Sp}}
\def\Spec{\operatorname{Spec}}
\def\Spf{\operatorname{Spf}}
\def\K{\mathcal{K}}
\def\op{\mathrm{op}}
\def\N{\operatorname{N}}
\def\colim{\operatorname{colim}}
\def\Ring1{\mathrm{Ring}1}
\def\CRing1{\mathrm{CRing}1}
\def\co{\mathrm{co}}
\def\Mod{\mathrm{Mod}}
\def\CAlg{\mathrm{CAlg}}
\def\bbE{\mathbb{E}}
\def\bbN{\mathbb{N}}
\def\bbZ{\mathbb{Z}}
\def\bbQ{\mathbb{Q}}
\def\+1{\xrightarrow{+1}}
\def\Loc{\operatorname{Loc}}
\def\codim{\operatorname{codim}}
\def\L{\mathrm{L}}
\def\R{\mathrm{R}}
\def\loc{\text{loc}}
\def\Cat{\mathrm{Cat}}
\def\CAT{\widehat{\mathrm{Cat}}}
\def\rex{\mathrm{rex}}
\def\st{\mathrm{st}}
\def\Perf{\mathrm{Perf}}
\def\Pr{\mathrm{Pr}}
\def\Prl{\mathrm{Pr}^{\mathrm{L}}}
\def\st{\mathrm{st}}
\def\dual{\mathrm{dual}}
\def\Sp{\mathrm{Sp}}
\def\N{\mathrm{N}}
\def\Fun{\mathrm{Fun}}
\def\fib{\mathrm{fib}}
\def\Res{\mathrm{Res}}
\def\Ind{\mathrm{Ind}}
\def\co{\mathrm{co}}
\def\loc{\operatorname{loc}}
\def\Nil{\operatorname{Nil}}
\def\Idem{\operatorname{Idem}}
\def\inc{\operatorname{inc}}
\def\K{\mathrm{K}}
\def\Spa{\mathrm{Spa}}
\def\cont{\mathrm{cont}}
\def\Ani{\mathrm{Ani}}
\def\Solid{\mathrm{Solid}}
\def\Nuc{\mathrm{Nuc}}
\def\mNuc{\widetilde{\mathrm{Nuc}}}
\def\rig{\mathrm{rig}}
\def\wjmath{\widehat{\jmath}}
\theoremstyle{definition}
\newtheorem{example}{Example}[section]  
\newtheorem{definition}[example]{Definition}
\newtheorem{notation}[example]{Notation}
\newtheorem{proposition}[example]{Proposition}
\newtheorem{lemma}[example]{Lemma}
\newtheorem{theorem}[example]{Theorem}
\newtheorem{corollary}[example]{Corollary}
\newtheorem{remarkn}[example]{Remark} 
\newtheorem{construction}[example]{Construction}
\newtheorem*{ack}{Acknowledgements}
\theoremstyle{remark}
\newtheorem*{remark}{Remark}
\renewenvironment{abstract}{\noindent\begin{center}\begin{minipage}{0.85\linewidth}\small{\scshape Abstract.}}{\end{minipage}\end{center}}
\def\blfootnote{\xdef\@thefnmark{}\@footnotetext}
\begin{document}
\maketitle
\begin{abstract}
We study a construction of diagrams of dualizable presentable stable $\infty$-categories associated with certain fiber-cofiber sequences over rigid bases, which are sent by localizing invariants, in particular continuous K-theory, to limit diagrams. We apply this to investigate two closely related types of diagrams pertinent to the formal gluing situation; we recover Clausen--Scholze's gluing of continuous K-theory along punctured tubular neighborhoods via Efimov's nuclear module category, and we verify a continuous version of adelic descent statement for localizing invariants on dualizable categories. 
\end{abstract}

{
  \let\clearpage\relax
  \small
  \tableofcontents
}

\section{Introduction}
In this article, we study certain diagrams of dualizable presentable stable $\infty$-categories related to the formal gluing situation, and verify that they are motivic limit diagrams, i.e., any localizing invariants on dualizable presentable stable $\infty$-categories, for instance continuous K-theory, map such diagrams to limit diagrams. \\
\indent By algebraic K-theory, we refer to the Thomason--Trobaugh nonconnective algebraic K-theory functor $\K$ valued in the $\infty$-category $\Sp$ of spectra; it is an instance of localizing invariants on small (idempotent complete) stable $\infty$-categories and exact functors \cite{bgt}, or equivalently on compactly generated stable $\infty$-categories and compact object preserving left adjoint functors. In the same vein as studying localizing invariants on compactly generated stable $\infty$-categories is useful even when one is solely interested in values of localizing invariants on rings or qcqs schemes, studying values of localizing invariants on non-compactly generated $\infty$-categories can be beneficial for understanding values on compactly generated $\infty$-categories. In fact, fibers of compact object preserving left adjoint localization functors between compactly generated stable $\infty$-categories computed in $\Prl_{\st}$ need not be compactly generated; rather, they are only dualizable, necessitating the consideration of values of localizing invariants on such categories. Through the insights of Efimov \cite{efimlarge}, it is now known that localizing invariants indeed have essentially unique extensions to dualizable presentable stable $\infty$-categories. Following \emph{ibid}., we write $\K^{\cont}$ to denote an extension of the algebraic K-theory $\K$ to dualizable presentable stable $\infty$-categories, known as the continuous K-theory. \\
\indent Categories of sheaves on spaces provide an important source of dualizable categories to which the application of continuous K-theory is of interest. For adic spaces, Clausen and Scholze defined the category of nuclear modules in the context of condensed mathematics as an appropriate notion of the category of quasicoherent sheaves \cite{angeom}. As an instance of the interaction between continuous K-theory of adic spaces and algebraic K-theory of schemes, they proved the following statement about Beauville-Laszlo type gluing along punctured tubular neighborhoods. Let $R$ be a Noetherian commutative ring whose corresponding affine scheme is denoted by $X = \Spec R$ and let $I$ be its ideal whose closed locus in $X$ is denoted by $Z = V(I)$. Also, write $U = X\backslash Z$ for the complement quasicompact open subscheme of $X$, write $X^{\wedge_{Z}} = \Spf R^{\wedge_{I}}$ for the $I$-adic formal scheme obtained as a formal completion of $X$ along $Z$, and write $(X^{\wedge_{Z}})_{\eta}$ for its adic generic fiber. Then, the natural diagram
\begin{equation}\label{eq:introformalgluingsq}
\begin{tikzcd}
\K(X) \arrow[r] \arrow[d] & \K(U) \arrow[d]\\
\K^{\cont}\left(\Nuc_{X^{\wedge_{Z}}}\right) \arrow[r] & \K^{\cont}\left(\Nuc_{(X^{\wedge_{Z}})_{\eta}}\right)
\end{tikzcd}
\end{equation}
is a pullback square of spectra \cite{clakadic}. Our aim is to study such type of pullback squares or cubical limit diagrams from a different perspective. 

\subsection{Main results}

First, let us specify the precise form of our construction: 

\begin{theorem}[Theorem \ref{thm:motiviclimitdiag}, Proposition \ref{prop:motivicgluingsq} when $n=1$]\label{thm:intromotiviclimitdiag}
Let $\mathcal{X}$ be a closed symmetric monoidal presentable $\infty$-category which is pointed and satisfies the condition that for any object $e$ of $\mathcal{X}$, the functor $e\otimes-:\mathcal{X}\to\mathcal{X}$ preserves fiber-cofiber sequences. Suppose that we are given a diagram of the following form
\begin{equation*}
\begin{tikzcd}
\Gamma_{n} \arrow[r] \arrow[d, "\epsilon_{n}"'] & \cdots \arrow[r] & \Gamma_{i+1} \arrow[r] \arrow[d, "\epsilon_{i+1}"'] & \Gamma_{i} \arrow[r] \arrow[d, "\epsilon_{i}"] & \cdots \arrow[r] & \Gamma_{1} \arrow[d, "\epsilon_{1}"] \\
\mathbf{1} \arrow[r, "="] \arrow[d, "\eta_{n}"'] & \cdots \arrow[r, "="] & \mathbf{1} \arrow[r, "="] \arrow[d, "\eta_{i+1}"'] & \mathbf{1} \arrow[r, "="] \arrow[d, "\eta_{i}"] & \cdots \arrow[r] & \mathbf{1} \arrow[d, "\eta_{1}"] \\
L_{n} \arrow[r] & \cdots \arrow[r] & L_{i+1} \arrow[r] & L_{i} \arrow[r] & \cdots \arrow[r] & L_{1} 
\end{tikzcd}
\end{equation*}
in $\mathcal{X}$ which satisfies the following two conditions:
\begin{adjustwidth}{15pt}{}
(i) Each of the vertical sequences is an idempotent fiber-cofiber sequence, i.e., for each $1\leq i\leq n$, the sequence $\Gamma_{i}\to\mathbf{1}\to L_{i}$ is a fiber-cofiber sequence in $\mathcal{X}$ and satisfies the condition $L_{i}\otimes\Gamma_{i}\simeq0$, cf. Definition \ref{def:fibcofibidem}.\\
(ii) $L_{i}\otimes \Gamma_{i+1}\simeq 0$ for $1\leq i < n$.
\end{adjustwidth}
Then, there is an $n$-cubical diagram $\sigma:\N\mathcal{P}([n])\to\Fun(\mathcal{X},\mathcal{X})$, 
\begin{equation*}
\emptyset\mapsto id,~~~(0\leq i_{1}<\cdots<i_{r}\leq n)\mapsto \phi_{i_{1}}\circ\cdots\circ\phi_{i_{r}},
\end{equation*}
which satisfies the following properties:\\
(1) For each $0\leq i\leq n$, the endofunctor $\phi_{i}$ of $\mathcal{X}$ appearing in the description of $\sigma$ takes the form 
\begin{equation*}
\phi_{i} = \intmap(\Gamma_{i},\Gamma_{i}\otimes L_{i+1}\otimes-)\in\Fun(\mathcal{X},\mathcal{X}).
\end{equation*}
Here, $\intmap$ stands for internal mapping objects of $\mathcal{X}$, and we use the convention $\Gamma_{0} = \mathbf{1}$ and $L_{n+1} = \mathbf{1}$.\\ 
(2) For any functor $E:\mathcal{X}\to\mathcal{V}$ into a stable $\infty$-category $\mathcal{V}$ which maps fiber-cofiber sequences of $\mathcal{X}$ to fiber-cofiber sequences of $\mathcal{V}$, the image 
\begin{equation*}
\emptyset\mapsto E(x),~~~(0\leq i_{1}<\cdots<i_{r}\leq n)\mapsto E(\phi_{i_{1}}\cdots\phi_{i_{r}}(x))
\end{equation*}
of the diagram in $\mathcal{X}$ obtained by evaluating the diagram $\sigma$ at any object $x\in\mathcal{X}$ by the functor $E$ is a limit diagram in $\mathcal{V}$. In short, there is an equivalence 
\begin{equation*}
E(x)\simeq\lim_{0\leq i_{1}<\cdots<i_{r}\leq n}E(\phi_{i_{1}}\cdots\phi_{i_{r}}(x))
\end{equation*}
in $\mathcal{V}$, natural in $E$ and $x\in\mathcal{X}$. 
\end{theorem}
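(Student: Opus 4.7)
My plan is to proceed by induction on $n$, with the case $n=1$ handled as an arithmetic fracture square and the general case reduced via a decomposition of the $n$-cube into a map of two $(n-1)$-cubes. To first treat item~(1), I would observe that each endofunctor $\phi_i = \intmap(\Gamma_i, \Gamma_i\otimes L_{i+1}\otimes -)$ admits a canonical unit transformation $\mathrm{id}\to\phi_i$, factoring as $y \to \intmap(\Gamma_i, \Gamma_i\otimes y) \to \intmap(\Gamma_i, \Gamma_i\otimes L_{i+1}\otimes y)$, where the first map is the unit of the adjunction $\Gamma_i\otimes -\dashv\intmap(\Gamma_i,-)$ and the second is induced by $\mathbf{1}\to L_{i+1}$. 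Setting $\sigma(\{i_1<\cdots<i_r\}) := \phi_{i_1}\circ\cdots\circ\phi_{i_r}$ with face maps induced by these units yields the desired cube once one verifies a pairwise commutativity $\phi_i\phi_j\simeq\phi_j\phi_i$; this I expect to follow from the hypotheses $L_i\otimes\Gamma_{i+1}\simeq 0$, which combined with the chain structure imply $L_i\otimes\Gamma_j\simeq 0$ for all $j>i$ and enable the requisite tensor-hom manipulations.

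For the base case $n=1$ of item~(2), the cube in question is the square
\begin{equation*}
\begin{tikzcd}
E(x) \arrow[r] \arrow[d] & E(L_1\otimes x) \arrow[d] \\
E(\phi_1(x)) \arrow[r] & E(L_1\otimes\phi_1(x))
\end{tikzcd}
\end{equation*}
and the goal is to show it is Cartesian in $\mathcal{V}$. The hypothesis $\Gamma_1\otimes L_1\simeq 0$ together with tensor-hom adjunction yields $\intmap(\Gamma_1, L_1\otimes y)\simeq 0$, whence applying $\intmap(\Gamma_1,-)$ to $\Gamma_1\otimes x \to x \to L_1\otimes x$ identifies $\phi_1(x)\simeq\intmap(\Gamma_1,x)$. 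Since both rows are obtained by tensoring the fiber-cofiber sequence $\Gamma_1\to\mathbf{1}\to L_1$ with $x$ and with $\phi_1(x)$, and $E$ preserves fiber-cofiber sequences, the Cartesian property in $\mathcal{V}$ is equivalent to $E$ sending the induced map $\Gamma_1\otimes x\to\Gamma_1\otimes\intmap(\Gamma_1,x)$ on row-fibers to an equivalence. Its fiber in $\mathcal{X}$ computes to $\Gamma_1\otimes\intmap(L_1,x)$, so it suffices to show this object vanishes; this I would establish using that $\intmap(L_1,x)$ is $L_1$-local, a consequence of $L_1\otimes L_1\simeq L_1$ and $\Gamma_1\otimes L_1\simeq 0$ via a short tensor-hom calculation.

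For the inductive step, assume the theorem holds for $n-1$ and decompose $\sigma$ along the coordinate $n$: write $\sigma$ as a natural transformation $\sigma'\to\sigma'\circ\phi_n$ of $(n-1)$-cubes, where $\sigma'$ uses only $\phi_0,\ldots,\phi_{n-1}$. In a stable $\infty$-category, a map of cubes presented as an $n$-cube is a limit cube if and only if both source and target $(n-1)$-cubes are limit cubes, so it suffices to show that $\sigma'$ evaluated at each of $x$ and $\phi_n(x)$ gives a limit cube after $E$. These sub-cubes are built from the truncated chain $\Gamma_1,\ldots,\Gamma_{n-1}$ retaining $L_n$ as the "terminal" localization rather than adopting the convention $L_n^{\mathrm{trunc}}=\mathbf{1}$, so applying the inductive hypothesis requires first formulating it slightly more generally with an auxiliary terminal localization in place of the convention $L_{n+1}=\mathbf{1}$. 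The principal obstacle I foresee is the base-case vanishing $\Gamma_1\otimes\intmap(L_1,x)\simeq 0$, together with the bookkeeping needed to reconcile the terminal-localization discrepancy in the induction, which is most conveniently handled by stating and proving a mildly generalized form of the theorem up front.
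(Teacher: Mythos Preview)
Your inductive step contains a genuine gap. You claim that in a stable $\mathcal{V}$, an $n$-cube is a limit cube if and only if both $(n-1)$-faces are limit cubes; only the ``if'' direction is true, but more importantly, the face $\sigma'$ you produce is \emph{not} a motivic limit cube, so the hypothesis of that implication fails. Concretely, take $n=2$: your $\sigma'$ is the square built from $\phi_0 = L_1\otimes-$ and $\phi_1=\intmap(\Gamma_1,\Gamma_1\otimes L_2\otimes-)$. Its horizontal fibers are $\Gamma_1\otimes x$ and $\Gamma_1\otimes\phi_1(x)\simeq\Gamma_1\otimes L_2\otimes x$ (using $\jmath_1^!\jmath_{1*}\simeq\mathrm{id}$), so after $E$ the square is cartesian iff $E(\Gamma_1\otimes x)\to E(\Gamma_1\otimes L_2\otimes x)$ is an equivalence. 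But the fiber-cofiber sequence $\Gamma_2\otimes x\to\Gamma_1\otimes x\to\Gamma_1\otimes L_2\otimes x$ shows this happens iff $E(\Gamma_2\otimes x)\simeq 0$, which is false in the cases of interest. Your proposed remedy---a generalized statement with an auxiliary terminal $L_{n+1}\neq\mathbf{1}$---is therefore simply false, and the induction cannot close this way.

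The paper avoids this by a different reduction: rather than decomposing into two faces and invoking an inductive hypothesis on the theorem, it sets $\sigma_k := \Gamma_k\otimes\sigma|_{\mathcal{P}([n]_{\geq k})}$ and uses the fiber-cofiber sequence $\Gamma_{k+1}\otimes-\to\Gamma_k\otimes-\to\Gamma_k\otimes L_{k+1}\otimes-$ (the crucial lemma) to exhibit $E(\sigma_{k+1}(x))$ as the pointwise \emph{fiber} of the map of $(n-k-1)$-cubes comprising $E(\sigma_k(x))$. This gives $E(\sigma_k)$ cartesian $\Leftrightarrow$ $E(\sigma_{k+1})$ cartesian, terminating at $\sigma_n$, which is the $0$-cube $\Gamma_n\otimes x\to\Gamma_n\otimes\phi_n(x)$ and is an equivalence in $\mathcal{X}$ by the triangle identity for $\jmath_n^*\dashv(\jmath_n)_*$ (using $L_{n+1}=\mathbf{1}$). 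The prefix $\Gamma_k\otimes-$ is what makes the reduction work; without it one is stuck with exactly the obstruction you ran into.

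A smaller issue in your base case: you compute the fiber of $\Gamma_1\otimes x\to\Gamma_1\otimes\intmap(\Gamma_1,x)$ as $\Gamma_1\otimes\intmap(L_1,x)$ by tensoring the fiber sequence $\intmap(L_1,x)\to x\to\intmap(\Gamma_1,x)$ with $\Gamma_1$. But $\mathcal{X}$ is not stable, this sequence is only a fiber sequence (not fiber-cofiber), and $\Gamma_1\otimes-$ is only assumed to preserve fiber-cofiber sequences. The conclusion you want---that $\Gamma_1\otimes x\to\Gamma_1\otimes\jmath_*\jmath^*(x)$ is an equivalence---is in fact true in $\mathcal{X}$ directly from $\jmath^!\jmath_*\simeq\mathrm{id}$, so the detour through $\intmap(L_1,x)$ is both unjustified and unnecessary.
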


We are primarily concerned about the case of the $\infty$-category $\mathcal{X} = \Mod_{\mathcal{R}}(\Prl_{\st})^{\dual}$ of dualizable $\mathcal{R}$-modules in $\Prl_{\st}$, where $\mathcal{R}$ is a rigid $\Sp$-algebra, e.g., $\Mod_{R}$ for an $\bbE_{\infty}$-ring $R$, and $E$ coming from a localizing invariant on dualizable presentable stable $\infty$-categories which does not necessarily commute with $\kappa$-filtered colimits. When $n=1$, i.e., when we are given a single idempotent fiber-cofiber sequence, Theorem \ref{thm:intromotiviclimitdiag} gives the following:

\begin{corollary}[Example \ref{ex:motivicgluingsqdualcats} and Proposition \ref{prop:endocomputation}] \label{cor:intromotivicgluingsq}
Let $\mathcal{R}\in\CAlg(\Prl_{\st})$ be rigid and let $\Gamma\to\mathbf{1}\to L$ be an idempotent fiber-cofiber sequence in $\Mod_{\mathcal{R}}(\Prl_{\st})^{\dual}$. Then, for each $\mathcal{C}\in\Mod_{\mathcal{R}}(\Prl_{\st})^{\dual}$, there is a natural square 
\begin{equation*}
\begin{tikzcd}
\mathcal{C} \arrow[rr] \arrow[d] & & L\otimes_{\mathcal{R}}\mathcal{C}  \arrow[d] \\
\intmap_{\mathcal{R}}^{\dual}(\Gamma, \Gamma\otimes_{\mathcal{R}}\mathcal{C}) \arrow[rr] & & L\otimes_{\mathcal{R}}\intmap_{\mathcal{R}}^{\dual}(\Gamma, \Gamma\otimes_{\mathcal{R}}\mathcal{C}) 
\end{tikzcd}
\end{equation*}
in $\Mod_{\mathcal{R}}(\Prl_{\st})^{\dual}$ which is a motivic pullback-pushout square, i.e., any localizing invariants on dualizable presentable stable $\infty$-categories map the square to a pullback-pushout square. \\
\indent Moreover, if $\Gamma$ is in addition $\omega_{1}$-compact in $\Mod_{\mathcal{R}}(\Prl_{\st})^{\dual}$, then the motivic pullback-pushout square above takes the form 
\begin{equation*}
\begin{tikzcd}
\mathcal{C} \arrow[rr] \arrow[d] & & L\otimes_{\mathcal{R}}\mathcal{C}  \arrow[d] \\
\intmap_{\mathcal{R}}^{\dual}(\Gamma, \mathcal{C}) \arrow[rr] & & L\otimes_{\mathcal{R}}\intmap_{\mathcal{R}}^{\dual}(\Gamma, \mathcal{C}). 
\end{tikzcd}
\end{equation*}
\end{corollary}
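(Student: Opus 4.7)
The plan is to apply the case $n=1$ of Theorem \ref{thm:intromotiviclimitdiag} to the $\infty$-category $\mathcal{X}=\Mod_{\mathcal{R}}(\Prl_{\st})^{\dual}$ of dualizable $\mathcal{R}$-modules, taking $\Gamma_{1}=\Gamma$ and $L_{1}=L$. The first task is to verify that $\mathcal{X}$ satisfies the hypotheses of Theorem \ref{thm:intromotiviclimitdiag}. The category $\mathcal{X}$ is pointed (the zero category is dualizable), and it inherits a closed symmetric monoidal presentable structure from $\Mod_{\mathcal{R}}(\Prl_{\st})$ thanks to the rigidity of $\mathcal{R}$: rigidity guarantees that the relative tensor product of dualizable $\mathcal{R}$-module categories is again dualizable and that internal mapping objects restricted to the dualizable subcategory exist as $\intmap_{\mathcal{R}}^{\dual}$. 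The condition that $e\otimes_{\mathcal{R}}-$ preserves fiber-cofiber sequences for every dualizable $e$ should also be a consequence of rigidity. I expect Example \ref{ex:motivicgluingsqdualcats} to establish exactly these points, after which the vacuous condition (ii) for $n=1$ lets us invoke the theorem directly.

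Once the hypotheses are verified, the endofunctor formulas in Theorem \ref{thm:intromotiviclimitdiag}(1), combined with the conventions $\Gamma_{0}=\mathbf{1}$ and $L_{2}=\mathbf{1}$, specialize to
\[
\phi_{0}=\intmap_{\mathcal{R}}^{\dual}(\mathbf{1},\,L\otimes_{\mathcal{R}}-)\simeq L\otimes_{\mathcal{R}}-, \qquad \phi_{1}=\intmap_{\mathcal{R}}^{\dual}(\Gamma,\,\Gamma\otimes_{\mathcal{R}}-),
\]
and the $1$-cubical diagram evaluated at $\mathcal{C}$ is precisely the first square of the corollary. Part (2) of the theorem then asserts that any $E$ mapping fiber-cofiber sequences in $\mathcal{X}$ to fiber-cofiber sequences sends this square to a pullback square, which in the stable target is automatically a pullback-pushout square. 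By definition, every localizing invariant on dualizable presentable stable $\infty$-categories (for instance $\K^{\cont}$) has this property when regarded as a functor out of $\mathcal{X}$, so the first claim follows.

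For the refinement when $\Gamma$ is $\omega_{1}$-compact in $\mathcal{X}$, I would invoke Proposition \ref{prop:endocomputation} to identify $\intmap_{\mathcal{R}}^{\dual}(\Gamma,\,\Gamma\otimes_{\mathcal{R}}-)$ naturally with $\intmap_{\mathcal{R}}^{\dual}(\Gamma,-)$, after which the second square is obtained by direct substitution into the first. The underlying reason for this identification is that $\omega_{1}$-compactness of $\Gamma$ provides a workable presentation of $\intmap_{\mathcal{R}}^{\dual}(\Gamma,-)$; applied to the fiber-cofiber sequence $\Gamma\otimes_{\mathcal{R}}\mathcal{C}\to\mathcal{C}\to L\otimes_{\mathcal{R}}\mathcal{C}$, this reduces the claim to the vanishing $\intmap_{\mathcal{R}}^{\dual}(\Gamma,\,L\otimes_{\mathcal{R}}\mathcal{C})\simeq 0$, which one extracts from the idempotence $\Gamma\otimes_{\mathcal{R}}L\simeq 0$.

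The main obstacle lies in the preliminary verification that $\mathcal{X}=\Mod_{\mathcal{R}}(\Prl_{\st})^{\dual}$ satisfies the hypotheses of Theorem \ref{thm:intromotiviclimitdiag}, most notably the fiber-cofiber preservation of tensoring, and in the endofunctor computation of Proposition \ref{prop:endocomputation}: internal mapping objects within the dualizable category behave less transparently than in $\Prl_{\st}$ itself, so the simplification in the $\omega_{1}$-compact case requires genuine care. Both of these are where essential input from Efimov's theory and the rigidity of $\mathcal{R}$ enters; the remainder of the argument is a direct specialization of Theorem \ref{thm:intromotiviclimitdiag}.
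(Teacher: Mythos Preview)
Your proposal is correct and follows essentially the same route as the paper: the first square is Example \ref{ex:motivicgluingsqdualcats}, which verifies the hypotheses on $\mathcal{X}=\Mod_{\mathcal{R}}(\Prl_{\st})^{\dual}$ via Lemma \ref{lem:intflat} and \cite[Th.~4.1, Cor.~4.2]{ramzi} and then applies Proposition \ref{prop:motivicgluingsq} (the $n=1$ case), while the $\omega_{1}$-compact refinement is exactly Proposition \ref{prop:endocomputation}. One small sharpening: the paper makes explicit that a localizing invariant $E$ is applied after composing with the restriction $\Mod_{\mathcal{R}}(\Prl_{\st})^{\dual}\to\Pr^{\L,\dual}_{\st}$, which preserves fiber-cofiber sequences by Lemma \ref{lem:fibcofibseq}; and in the $\omega_{1}$-compact step the key input is not merely a ``workable presentation'' but that $\Gamma$, being an open idempotent, is self-dual and hence proper, so together with $\omega_{1}$-compactness one gets that $\intmap_{\mathcal{R}}^{\dual}(\Gamma,-)$ preserves fiber-cofiber sequences (Efimov), after which the vanishing $\intmap_{\mathcal{R}}^{\dual}(\Gamma,L\otimes_{\mathcal{R}}\mathcal{C})\simeq 0$ comes from $\Fun^{\L}_{\mathcal{R}}(\Gamma,L\otimes_{\mathcal{R}}\mathcal{C})\simeq 0$ via Corollary \ref{cor:intmapcontrol}.
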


\begin{example}[Remark \ref{rem:formalgluingsq}]
Corollary \ref{cor:intromotivicgluingsq} recovers the aforementioned gluing square for continuous K-theory spectra of Clausen--Scholze. As above, let $R$ be a Noetherian commutative ring and let $I$ be its ideal. In the case of the idempotent fiber-cofiber sequence $\Mod_{R}^{\Nil(I)}\to\Mod_{R}\to\Mod_{R}^{\Loc(I)}$, the motivic pullback-pushout square of Corollary \ref{cor:intromotivicgluingsq} takes the following form
\begin{equation*}
\begin{tikzcd}
\Mod_{R} \arrow[r] \arrow[d] & \Mod_{R}^{\Loc(I)} \arrow[d]\\
\mNuc_{R^{\wedge_{I}}} \arrow[r] & \mNuc_{R^{\wedge_{I}}}^{\Loc(I)},
\end{tikzcd}
\end{equation*}
where $\mNuc_{R^{\wedge_{I}}}$ stands for Efimov's modified nuclear module category associated with the adic ring $R^{\wedge_{I}}$; note that the internal mapping object term in the diagram of Corollary \ref{cor:intromotivicgluingsq} recovers the rigidification description of the modified nuclear module category. Upon applying the continuous K-theory functor, above motivic pullback-pushout square recovers the pullback square (\ref{eq:introformalgluingsq}) of spectra through Efimov's identification of two versions of nuclear module categories as noncommutative motives. This form of motivic pullback-pushout square exists without requiring $R$ to be static or Noetherian, cf. Example \ref{ex:formalgluingsq}. 
\end{example}

Our construction allows us to handle a more complicated, yet closely related situation where all flags of ideals of $R$ are considered at once. More precisely, we have the following adelic descent result for localizing invariants on dualizable presentable stable $\infty$-categories:

\begin{theorem}[Corollary \ref{cor:adelicdescent}]\label{thm:introadelicdescent}
Let $R$ be an $\bbE_{\infty}$-ring such that $\pi_{0}R$ is Noetherian and of finite Krull dimension $n$. Then, for any $\mathcal{C}\in\Mod_{\Mod_{R}}(\Prl_{\st})^{\dual}$ and any localizing invariant $E$ on dualizable presentable stable $\infty$-categories valued in a stable $\infty$-category $\mathcal{V}$, there is a natural equivalence 
\begin{equation*}
E(\mathcal{C})\simeq \lim_{0\leq i_{1}<\cdots<i_{r}\leq n} E\left(\sideset{}{^{\dual}_{\mathfrak{p_{1}}\in(\Spec \pi_{0}R)^{i_{1}}}}\prod\left(\cdots\left(\sideset{}{^{\dual}_{\mathfrak{p}_{r}\in(\Spec \pi_{0}R)^{i_{r}},\;\mathfrak{p}_{r}\in V(\mathfrak{p}_{r-1})}}\prod \mathcal{C}^{\wedge}_{\mathfrak{p}_{r}}\right)\cdots\right)^{\wedge}_{\mathfrak{p}_{1}}\right)
\end{equation*}
in $\mathcal{V}$. 
\end{theorem}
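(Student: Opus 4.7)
The plan is to apply Theorem \ref{thm:intromotiviclimitdiag} to the $\infty$-category $\mathcal{X} = \Mod_{\Mod_{R}}(\Prl_{\st})^{\dual}$, using the chain of idempotent fiber-cofiber sequences coming from the codimension filtration of $X := \Spec \pi_{0}R$. Specifically, for each $1 \leq k \leq n$, let $Z_k \subset X$ denote the closed subset of primes of codimension $\geq k$, and set $\Gamma_k := \Mod_R^{\Nil(Z_k)}$ and $L_k := \Mod_R^{\Loc(Z_k)}$. These yield idempotent fiber-cofiber sequences $\Gamma_k \to \mathbf{1} \to L_k$ in $\mathcal{X}$, and the inclusion $Z_{k+1} \subset Z_k$ immediately gives $L_k \otimes_{\Mod_R} \Gamma_{k+1} \simeq 0$ for $1 \leq k < n$, so both hypotheses of Theorem \ref{thm:intromotiviclimitdiag} are satisfied.

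Applying the theorem then produces a cubical diagram indexed by subsets of $\{0, 1, \ldots, n\}$ whose image under any localizing invariant $E$ is a limit diagram, with nonempty vertices $E(\phi_{i_1} \circ \cdots \circ \phi_{i_r}(\mathcal{C}))$ for $\phi_k = \intmap_{\Mod_R}^{\dual}(\Gamma_k, \Gamma_k \otimes_{\Mod_R} L_{k+1} \otimes_{\Mod_R} -)$ with conventions $\Gamma_0 = L_{n+1} = \mathbf{1}$.

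To match the formula in the statement, it suffices to identify these iterated endofunctors with nested products of completions along flags of primes. The crucial single-step identification is
\begin{equation*}
\phi_k(\mathcal{C}) \simeq \sideset{}{^{\dual}_{\mathfrak{p}\in(\Spec \pi_{0}R)^{k}}}\prod \mathcal{C}^{\wedge}_{\mathfrak{p}},
\end{equation*}
which reflects the Noetherian fact that the locally closed stratum $Z_k \setminus Z_{k+1}$ consists of the codimension-$k$ primes and constitutes a discrete set of closed points inside the quasicompact open $U_{k+1} := X \setminus Z_{k+1}$. Concretely, $\Gamma_k \otimes_{\Mod_R} L_{k+1} \otimes_{\Mod_R} \mathcal{C}$ splits as a dualizable product indexed by codim-$k$ primes, and $\intmap_{\Mod_R}^{\dual}(\Gamma_k, -)$ applied factor-by-factor realizes the completion $\mathcal{C}^{\wedge}_{\mathfrak{p}}$. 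Iterating, and noting that completion at a codim-$i_{r-1}$ prime $\mathfrak{p}_{r-1}$ retains only the factors from the preceding product indexed by primes specializing $\mathfrak{p}_{r-1}$, a short induction on the flag length $r$ recovers the nested product formula in the theorem, with the specialization constraint $\mathfrak{p}_r \in V(\mathfrak{p}_{r-1})$ arising automatically.

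The main obstacle is establishing the single-step identification rigorously within $\Mod_{\Mod_R}(\Prl_{\st})^{\dual}$. This requires, on the one hand, the $\omega_1$-compactness of $\Gamma_k$ under the Noetherian hypothesis (so that $\intmap_{\Mod_R}^{\dual}(\Gamma_k, \Gamma_k \otimes_{\Mod_R} -)$ reduces to $\intmap_{\Mod_R}^{\dual}(\Gamma_k, -)$ as in the simplified form of Corollary \ref{cor:intromotivicgluingsq}), and on the other hand, promoting the topological disjointness of codim-$k$ primes to a genuine product decomposition at the level of dualizable module categories that is compatible with taking internal mapping objects into $\Gamma_k$. Once this local-global decomposition is in place, the bookkeeping for the iterated compositions is a direct induction.
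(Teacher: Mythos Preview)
Your overall strategy matches the paper's: apply Theorem \ref{thm:intromotiviclimitdiag} to $\mathcal{X} = \Mod_{\Mod_{R}}(\Prl_{\st})^{\dual}$ with idempotent fiber-cofiber sequences coming from the codimension filtration, and then identify the endofunctors $\phi_{k}$ as dualizable products of completions. However, the geometric setup you propose is incorrect and this is precisely where the real work lies.

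The set $Z_{k}$ of primes of codimension $\geq k$ is \emph{not} a closed subset of $\Spec\pi_{0}R$ in general, and its complement $U_{k+1}$ is not open. For instance, in $\Spec\bbZ$ or $\Spec k[x,y]$ the set $Z_{1}$ is the complement of the generic point, which is not closed since the intersection of all nonzero primes is zero. Hence $\Mod_{R}^{\Nil(Z_{k})}$ and $\Mod_{R}^{\Loc(Z_{k})}$ are not defined as written, and the intuition that the codimension-$k$ points form a ``discrete set of closed points inside a quasicompact open'' has no literal meaning. The paper's Construction \ref{constr:adelicseq} defines $\Gamma_{k}$ and $L_{k}$ instead as \emph{filtered colimits} $\colim_{V(I)\in S_{k}}\Mod_{R}^{\Nil(I)}$ and $\colim_{V(I)\in S_{k}}\Mod_{R}^{\Loc(I)}$ over genuine closed subsets of codimension $\geq k$, and verifies directly that the resulting sequences are idempotent and satisfy condition (\ref{eq:idempincs}).

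This change is not cosmetic: the single-step identification of $\phi_{k}(\mathcal{C})$ with $\prod^{\dual}_{\mathfrak{p}}\mathcal{C}^{\wedge}_{\mathfrak{p}}$ (Proposition \ref{prop:localstratum}) now requires unwinding an internal mapping object out of a filtered colimit and a double filtered colimit in the second variable. The paper does this through a sequence of lemmas (\ref{lem:localstratumstep1}--\ref{lem:localstratumstep3}) using Zariski co-excision (Lemma \ref{lem:nilobjpushout}), the $\omega_{1}$-compactness of each individual $\Mod_{R}^{\Nil(I)}$ for finitely generated $I$ (Lemma \ref{lem:nilproper1compact}), and the control on $\intmap^{\dual}$ from Corollary \ref{cor:intmapcontrol}. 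Your sketch that ``$\Gamma_{k}\otimes L_{k+1}\otimes\mathcal{C}$ splits as a dualizable product'' does not hold before taking $\intmap^{\dual}(\Gamma_{k},-)$, and the reduction you propose via $\omega_{1}$-compactness of $\Gamma_{k}$ itself is delicate since $\Gamma_{k}$ is a possibly large filtered colimit; the paper sidesteps this by first rewriting $\intmap^{\dual}(\Gamma_{k},-)$ as a dualizable limit over $V(I)\in S_{=k}$ and then arguing with each $\Mod_{R}^{\Nil(I)}$ separately. The iteration step and the specialization constraint are then handled by Lemma \ref{lem:projequiv}, along the lines you indicate.
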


Here, for each point $\mathfrak{p}$ of $\Spec\pi_{0}R$ and each dualizable $\Mod_{R}$-module $\mathcal{C}$, we write $\mathcal{C}_{\mathfrak{p}} = \Mod_{R_{\mathfrak{p}}}\otimes_{R}\mathcal{C}$ and $\mathcal{C}_{\mathfrak{p}}^{\wedge} = \intmap^{\dual}_{R}\left(\Mod_{R}^{\Nil(\mathfrak{p})}, \mathcal{C}_{\mathfrak{p}}\right)$, cf. Notation \ref{not:loccompl}. When $\Spec \pi_{0}R$ has dimension 1, we in particular deduce the following:

\begin{corollary}[Corollary \ref{cor:adelicdescentcurve} and Example \ref{ex:adelicdescentcurve}]\label{cor:introadelicdescentcurve}
Let $R$ be an $\bbE_{\infty}$-ring such that $\pi_{0}R$ is Noetherian of Krull dimension $1$. \\
(1) For each $\mathcal{C}\in\Mod_{\Mod_{R}}(\Prl_{\st})^{\dual}$, there is a natural pullback-pushout square of spectra 
\begin{equation*}
\begin{tikzcd}
\K^{\cont}(\mathcal{C}) \arrow[r] \arrow[d] & \sideset{}{^{}_{\eta\in(\Spec\pi_{0}R)^{0}}}\prod \K^{\cont}(\mathcal{C}_{\eta}) \arrow[d]\\
\sideset{}{^{}_{\mathfrak{p}\in(\Spec\pi_{0}R)^{1}}}\prod \K^{\cont}(\mathcal{C}^{\wedge}_{\mathfrak{p}}) \arrow[r] & \sideset{}{^{}_{\eta\in(\Spec\pi_{0}R)^{0}}}\prod \K^{\cont}\left(\left(\sideset{}{^{\dual}_{\mathfrak{p}\in(\Spec\pi_{0}R)^{1}\cap V(\eta)}}\prod \mathcal{C}^{\wedge}_{\mathfrak{p}}\right)_{\eta}\right).
\end{tikzcd}
\end{equation*}
Moreover, the bottom right object is naturally equivalent to 
\begin{align*}
\sideset{}{^{}_{\eta\in(\Spec\pi_{0}R)^{0}}}\prod \colim_{S\in\mathcal{P}_{\mathrm{fin}}((\Spec\pi_{0}R)^{1}\cap V(\eta))}\bigg( & \sideset{}{^{}_{\mathfrak{p}\in S}}\prod \K^{\cont}\left((\mathcal{C}^{\wedge}_{\mathfrak{p}})^{\Loc(\mathfrak{p})}\right)\\
& \times \sideset{}{^{}_{\mathfrak{p}\in(\Spec\pi_{0}R)^{1}\cap V(\eta),\; \mathfrak{p}\notin S}}\prod \K^{\cont}(\mathcal{C}^{\wedge}_{\mathfrak{p}})\bigg).
\end{align*}
(2) In particular, there is a natural pullback-pushout square of spectra 
\begin{equation*}
\begin{tikzcd}
\K(R) \arrow[r] \arrow[d] & \sideset{}{^{}_{\eta\in(\Spec\pi_{0}R)^{0}}}\prod \K(R_{\eta}) \arrow[d]\\
\sideset{}{^{}_{\mathfrak{p}\in(\Spec\pi_{0}R)^{1}}}\prod \K^{\cont}\left((\Mod_{R})^{\wedge}_{\mathfrak{p}}\right) \arrow[r] & \sideset{}{^{}_{\eta\in(\Spec\pi_{0}R)^{0}}}\prod \K^{\cont}\left(\left(\sideset{}{^{\dual}_{\mathfrak{p}\in(\Spec\pi_{0}R)^{1}\cap V(\eta)}}\prod (\Mod_{R})^{\wedge}_{\mathfrak{p}}\right)_{\eta}\right),
\end{tikzcd}
\end{equation*}
whose bottom right object is naturally equivalent to 
\begin{align*}
\sideset{}{^{}_{\eta\in(\Spec\pi_{0}R)^{0}}}\prod \colim_{S\in\mathcal{P}_{\mathrm{fin}}((\Spec\pi_{0}R)^{1}\cap V(\eta))}\bigg( & \sideset{}{^{}_{\mathfrak{p}\in S}}\prod \K^{\cont}\left(((\Mod_{R})^{\wedge}_{\mathfrak{p}})^{\Loc(\mathfrak{p})}\right)\\
& \times \sideset{}{^{}_{\mathfrak{p}\in(\Spec\pi_{0}R)^{1}\cap V(\eta),\; \mathfrak{p}\notin S}}\prod \K^{\cont}\left((\Mod_{R})^{\wedge}_{\mathfrak{p}}\right)\bigg).
\end{align*}
If $R$ is furthermore an animated commutative ring with $\pi_{0}R$ being Noetherian of Krull dimension 1, then for each $\mathfrak{p}\in(\Spec\pi_{0}R)^{1}$, we have $\K^{\cont}\left((\Mod_{R})^{\wedge}_{\mathfrak{p}}\right)\simeq \lim_{n}\K(R_{\mathfrak{p}}\sslash \mathfrak{p}^{n}R_{\mathfrak{p}})$, and the spectrum $\K^{\cont}\left(((\Mod_{R})^{\wedge}_{\mathfrak{p}})^{\Loc(\mathfrak{p})}\right)$ fits into the pushout square 
\begin{equation*}
\begin{tikzcd}
\K(R_{\mathfrak{p}}) \arrow[r] \arrow[d] & \K\left(\Spec R_{\mathfrak{p}}\backslash V(\mathfrak{p}R_{\mathfrak{p}})\right) \arrow[d] \\
\lim_{n}\K(R_{\mathfrak{p}}\sslash\mathfrak{p}^{n}R_{\mathfrak{p}}) \arrow[r] & \K^{\cont}\left(((\Mod_{R})^{\wedge}_{\mathfrak{p}})^{\Loc(\mathfrak{p})}\right)
\end{tikzcd}
\end{equation*}
of spectra. 
\end{corollary}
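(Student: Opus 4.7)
The plan is to derive part (1) by applying Theorem~\ref{thm:introadelicdescent} with $n=1$ and then computing the bottom-right corner explicitly. For $n=1$ the theorem produces a pullback-pushout square of spectra whose corners are indexed by subsets of $\{0,1\}$. The $\emptyset$-corner is $\K^{\cont}(\mathcal{C})$. For corners involving a generic-point completion $(-)^{\wedge}_{\eta}$ with $\eta\in(\Spec\pi_{0}R)^{0}$, I would use that $\eta$ being a minimal prime makes $\eta R_{\eta}$ nilpotent, so $\Mod_{R_{\eta}}^{\Nil(\eta)}=\Mod_{R_{\eta}}$, and the defining internal mapping object of $(-)^{\wedge}_{\eta}$ collapses to $(-)_{\eta}=\Mod_{R_{\eta}}\otimes_{R}(-)$. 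Because $\pi_{0}R$ is Noetherian, $(\Spec\pi_{0}R)^{0}$ is finite, so the outer $\prod^{\dual}_{\eta}$ is automatically a finite product; the inner infinite dualizable product over closed points is handled by the known fact that $\K^{\cont}$ sends dualizable products to spectrum-level products. This gives the displayed square.

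For the explicit colimit description of the bottom-right term, set $\mathcal{D}_{\eta}=\prod^{\dual}_{\mathfrak{p}\in(\Spec\pi_{0}R)^{1}\cap V(\eta)}\mathcal{C}^{\wedge}_{\mathfrak{p}}$ and compute $\Mod_{R_{\eta}}\otimes_{R}\mathcal{D}_{\eta}$. For a single codimension-one prime $\mathfrak{p}\supsetneq\eta$, inverting an element of $\mathfrak{p}\setminus\eta$ identifies $\Mod_{R_{\eta}}\otimes_{R}\mathcal{C}^{\wedge}_{\mathfrak{p}}\simeq(\mathcal{C}^{\wedge}_{\mathfrak{p}})^{\Loc(\mathfrak{p})}$. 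For each finite $S\subseteq(\Spec\pi_{0}R)^{1}\cap V(\eta)$ the smashing with $\Mod_{R_{\eta}}$ distributes over the finite factor $\prod_{\mathfrak{p}\in S}\mathcal{C}^{\wedge}_{\mathfrak{p}}$, producing $\prod_{\mathfrak{p}\in S}(\mathcal{C}^{\wedge}_{\mathfrak{p}})^{\Loc(\mathfrak{p})}$ on the $S$-coordinates; I would realize $\mathcal{D}_{\eta}$ as the filtered colimit, indexed by the poset of such finite subsets $S$, of categories $\prod_{\mathfrak{p}\in S}\mathcal{C}^{\wedge}_{\mathfrak{p}}\times\prod^{\dual}_{\mathfrak{p}\notin S}\mathcal{C}^{\wedge}_{\mathfrak{p}}$, with transition maps smashing the newly included coordinates. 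Applying $\K^{\cont}$, which preserves filtered colimits of dualizable categories along such smashing transition functors, yields the stated colimit formula for the bottom-right corner.

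Part (2) is the specialization of (1) to $\mathcal{C}=\Mod_{R}$, using $\mathcal{C}_{\eta}=\Mod_{R_{\eta}}$ and $\K^{\cont}(\Mod_{R_{\eta}})=\K(R_{\eta})$. For the animated commutative ring case, the identification of $\K^{\cont}((\Mod_{R})^{\wedge}_{\mathfrak{p}})$ with the inverse limit of K-theories of the derived adic quotients of $R_{\mathfrak{p}}$ follows from the identification $(\Mod_{R})^{\wedge}_{\mathfrak{p}}\simeq\mNuc_{R_{\mathfrak{p}}^{\wedge_{\mathfrak{p}}}}$ with the modified nuclear module category of the adic ring $R_{\mathfrak{p}}^{\wedge_{\mathfrak{p}}}$ together with Efimov's description of continuous K-theory on such a category as the cofiltered limit along the adic tower. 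The pushout square identifying $\K^{\cont}(((\Mod_{R})^{\wedge}_{\mathfrak{p}})^{\Loc(\mathfrak{p})})$ is the image under $\K^{\cont}$ of an instance of Corollary~\ref{cor:intromotivicgluingsq} applied over the base $R_{\mathfrak{p}}$ with the idempotent fiber-cofiber sequence $\Mod_{R_{\mathfrak{p}}}^{\Nil(\mathfrak{p})}\to\Mod_{R_{\mathfrak{p}}}\to\Mod_{R_{\mathfrak{p}}}^{\Loc(\mathfrak{p})}$ and the module $\mathcal{C}=\Mod_{R_{\mathfrak{p}}}$.

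The main obstacle is the filtered colimit description in the second paragraph: smashing localization at $\eta$ does not commute with the infinite dualizable product, so one must justify the filtered colimit presentation of $\mathcal{D}_{\eta}$, verify that the transition functors act as smashing localizations on the newly included coordinates and identities on the tail, and carefully apply the preservation of such filtered colimits by $\K^{\cont}$ to recover the stated formula.
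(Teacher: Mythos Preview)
Your overall strategy matches the paper's: specialize the adelic descent theorem to $n=1$, collapse $(-)^{\wedge}_{\eta}$ to $(-)_{\eta}$ using $\Mod_{R_\eta}^{\Nil(\eta R_\eta)}=\Mod_{R_\eta}$ for minimal primes, invoke that $\K^{\cont}$ preserves small products, and then analyze the bottom-right corner via a filtered colimit. Part~(2) and the animated-ring addendum are handled exactly as you sketch, the latter via the formal gluing square of Example~\ref{ex:formalgluingsq} for the local ring $R_{\mathfrak{p}}$.

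The filtered colimit step is where your account is garbled, and you correctly flag it as the obstacle. You write that you would ``realize $\mathcal{D}_{\eta}$ as the filtered colimit'' of $\prod_{\mathfrak{p}\in S}\mathcal{C}^{\wedge}_{\mathfrak{p}}\times\prod^{\dual}_{\mathfrak{p}\notin S}\mathcal{C}^{\wedge}_{\mathfrak{p}}$, but each of these is canonically $\mathcal{D}_{\eta}$ itself, so as stated this is a constant diagram. The paper instead writes $\Mod_{R_\eta}\simeq\colim_{S}\Mod_R^{\Loc(Z_S)}$ with $Z_S=V(S)\cup\bigcup_{\eta'\neq\eta}V(\eta')$, and tensors this filtered colimit with the fixed $\mathcal{D}_\eta$. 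The substantive point, which your sketch does not supply, is why each stage $\Mod_R^{\Loc(Z_S)}\otimes_R\mathcal{D}_\eta$ splits in the claimed way despite the infinite product. The paper's key observation is that for $\mathfrak{p}\notin S$ the factor $\mathcal{C}^{\wedge}_{\mathfrak{p}}$, being a $\Mod_{R_{\mathfrak{p}}}$-module with $\mathfrak{p}\notin Z_S$, is already a $\Mod_R^{\Loc(Z_S)}$-module; hence so is the entire infinite tail $\prod^{\dual}_{\mathfrak{p}\notin S}\mathcal{C}^{\wedge}_{\mathfrak{p}}$, and tensoring with the closed idempotent does nothing there. One then peels off the finite factor over $S$ (finite products in $\Mod_{\Mod_R}(\Prl_{\st})^{\dual}$ coincide with finite coproducts, so tensoring distributes) and uses a Zariski co-excision square to identify $\Mod_R^{\Loc(Z_S)}\otimes_R\mathcal{C}^{\wedge}_{\mathfrak{p}}\simeq(\mathcal{C}^{\wedge}_{\mathfrak{p}})^{\Loc(\mathfrak{p})}$ for $\mathfrak{p}\in S$. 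Your single-prime computation $\Mod_{R_\eta}\otimes_R\mathcal{C}^{\wedge}_{\mathfrak{p}}\simeq(\mathcal{C}^{\wedge}_{\mathfrak{p}})^{\Loc(\mathfrak{p})}$ is the right endpoint but does not by itself address the intermediate stages needed for the colimit description.
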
 

\begin{example}\label{ex:introadelicdescentdedekind}
Suppose that $R$ is a Dedekind ring which is not a field. Then, Corollary \ref{cor:introadelicdescentcurve} implies that we have a natural pullback square of spectra 
\begin{equation*}
\begin{tikzcd}
\K(R) \arrow[r] \arrow[d] & \K(\mathrm{Frac}(R)) \arrow[d]\\
\sideset{}{^{}_{\mathfrak{p}\in(\Spec R)^{1}}}\prod \lim_{n}\K(R_{\mathfrak{p}}/\mathfrak{p}^{n}R_{\mathfrak{p}}) \arrow[r] & \K^{\cont}\left(\Mod_{\mathrm{Frac}(R)}\otimes_{R}\sideset{}{^{\dual}_{\mathfrak{p}\in(\Spec R)^{1}}}\prod (\Mod_{R})^{\wedge}_{\mathfrak{p}}\right),
\end{tikzcd}
\end{equation*}
whose bottom right object is naturally equivalent to 
\begin{equation*}
\colim_{S\in\mathcal{P}_{\mathrm{fin}}((\Spec R)^{1})}\bigg( \sideset{}{^{}_{\mathfrak{p}\in S}}\prod \K^{\cont}\left(((\Mod_{R})^{\wedge}_{\mathfrak{p}})^{\Loc(\mathfrak{p})}\right) \times \sideset{}{^{}_{\mathfrak{p}\in(\Spec\pi_{0}R)^{1},\; \mathfrak{p}\notin S}}\prod \lim_{n}\K(R_{\mathfrak{p}}/\mathfrak{p}^{n}R_{\mathfrak{p}})\bigg).
\end{equation*}
Also, by Quillen's devissage, there is a cofiber sequence of spectra of the form
\begin{equation*}
\K(\kappa(\mathfrak{p}))\to \lim_{n}\K(R_{\mathfrak{p}}/\mathfrak{p}^{n}R_{\mathfrak{p}})\to \K^{\cont}\left(((\Mod_{R})^{\wedge}_{\mathfrak{p}})^{\Loc(\mathfrak{p})}\right)
\end{equation*}
for each closed point $\mathfrak{p}$ of $\Spec R$. 
\end{example}

In \cite{adelick}, we investigated a different form of adelic descent statement for localizing invariants; one formulation of the main theorem is as follows:

\begin{theorem}(\cite[Th. 3.2.1]{adelick})
Let $X$ be a Noetherian scheme of finite Krull dimension $n$. Then, for any localizing invariant $E$ on small stable $\infty$-categories valued in a stable $\infty$-category $\mathcal{V}$, there is a natural equivalence $E(\Perf_{X})\simeq \lim_{0\leq i_{1}<\cdots<i_{r}\leq n}E(A_{X}(i_{0},...,i_{r}))$ in $\mathcal{V}$. Here, $A_{X}:\N\mathcal{P}([n])\backslash\emptyset\to\CAlg_{\Gamma(\mathscr{O}_{X})}^{\heartsuit}$ is the cubical diagram of Beilinson-Parshin adele rings associated with $X$. 
\end{theorem}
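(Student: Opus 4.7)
The plan is to adapt the cubical motivic limit diagram construction of Theorem \ref{thm:intromotiviclimitdiag} to the setting of perfect complexes on Noetherian schemes, taking as input the idempotent fiber-cofiber sequences arising from Thomason--Trobaugh's localization theorem applied to the stratification of $X$ by codimension. Throughout, I would exploit Efimov's essentially unique extension $\bar E$ of $E$ from $\Cat^\perf_\infty$ to dualizable presentable stable $\infty$-categories, together with the fact that for a compactly generated $\Mod_A$ (with $A$ an ordinary or $\bbE_\infty$-ring) the value $\bar E(\Mod_A)$ agrees with $E(\Perf_A)$ via the identification of compact objects.

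First I would reduce to the affine case $X = \Spec R$ via Zariski descent for localizing invariants on $\Cat^\perf_\infty$. For each codimension $1 \leq i \leq n$, the open-closed decomposition along the codimension-$\geq i$ stratum $X^{(\geq i)} \hookrightarrow X$ yields a Thomason--Trobaugh localization sequence $\Perf_X^{X^{(\geq i)}} \to \Perf_X \to \Perf_{X \setminus X^{(\geq i)}}$ in $\Cat^\perf_\infty$, whose Ind-completion is the idempotent fiber-cofiber sequence $\Mod_R^{\Nil(X^{(\geq i)})} \to \Mod_R \to \Mod_R^{\Loc(X^{(\geq i)})}$ in $\Mod_R(\Prl_\st)^\dual$. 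These $n$ sequences together satisfy the hypotheses of Theorem \ref{thm:intromotiviclimitdiag}, producing a cubical limit diagram to which $\bar E$ can be applied. The codimension filtration chain condition $L_i \otimes \Gamma_{i+1} \simeq 0$ translates into the geometric statement that a module supported on $X^{(\geq i+1)}$ already vanishes after localization away from $X^{(\geq i)}$, which is immediate since $X^{(\geq i+1)} \subseteq X^{(\geq i)}$.

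The key remaining task is to identify each vertex of the resulting cube with the $\Mod$-category of a Beilinson--Parshin adele ring, so that on compact objects one obtains $\Perf_{A_X(i_1,\ldots,i_r)}$ and therefore $E(\Perf_{A_X(i_1,\ldots,i_r)}) = \bar E(\Mod_{A_X(i_1,\ldots,i_r)})$. For a single index $i$, the endofunctor $\phi_i$ of Theorem \ref{thm:intromotiviclimitdiag} specializes, on compacts, to a product of completions $\prod_{x \in X^{(i)}} \hat R_x = A_X(i)$ via the Noetherian hypothesis; for a flag $i_1 < \cdots < i_r$, one iterates this using Beilinson's inductive definition of adeles as flat base change of completion along localizations at deeper primes. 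The main obstacle is this combinatorial identification of the iterated composite $\phi_{i_1} \cdots \phi_{i_r}$ applied to $\Mod_R$ with $\Mod$ over the Beilinson--Parshin adele ring; it requires careful bookkeeping of the specialization order of primes in a flag, together with the compatibility of completion with localization in the Noetherian setting, to match the internal-mapping-object construction of Theorem \ref{thm:intromotiviclimitdiag} with Beilinson's classical iterative formula for $A_X(i_1,\ldots,i_r)$.
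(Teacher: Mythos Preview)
The statement you are attempting to prove is not proved in this paper at all: it is quoted from \cite{adelick} purely for the purpose of contrast with the new continuous adelic descent result (Corollary \ref{cor:adelicdescent}). So there is no ``paper's own proof'' to compare against here.

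More importantly, your proposed strategy does not prove the cited theorem; it proves a different one. If you feed the adelic idempotent fiber-cofiber sequences of Construction \ref{constr:adelicseq} into Theorem \ref{thm:intromotiviclimitdiag} and compute the vertices, what you get is exactly Proposition \ref{prop:localstratumcomposition}: the vertex at $(i_{1}<\cdots<i_{r})$ is
\[
\sideset{}{^{\dual}_{\mathfrak{p}_{1}}}\prod\Bigl(\cdots\Bigl(\sideset{}{^{\dual}_{\mathfrak{p}_{r}}}\prod \mathcal{C}^{\wedge}_{\mathfrak{p}_{r}}\Bigr)\cdots\Bigr)^{\wedge}_{\mathfrak{p}_{1}},
\]
where $(-)^{\wedge}_{\mathfrak{p}}=\intmap^{\dual}_{R}(\Mod_{R}^{\Nil(\mathfrak{p})},\Mod_{R_{\mathfrak{p}}}\otimes_{R}-)$ and the products are dualizable products. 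This is \emph{not} $\Mod_{A_{X}(i_{1},\ldots,i_{r})}$ for the classical Beilinson--Parshin adele ring, and the paper says so explicitly in the paragraph following the cited theorem: already for a Dedekind ring the bottom row of the two squares differs, because ``algebraic K-theory functor restricted to rings does not preserve infinite products and cofiltered limits.'' Concretely, your claimed identification ``$\phi_{i}$ specializes on compacts to $\prod_{x\in X^{(i)}}\widehat{R}_{x}=A_{X}(i)$'' fails: $\phi_{i}(\Mod_{R})$ is $\prod^{\dual}_{\mathfrak{p}}(\Mod_{R})^{\wedge}_{\mathfrak{p}}$, whose continuous $K$-theory is $\prod_{\mathfrak{p}}\lim_{n}\K(R_{\mathfrak{p}}/\mathfrak{p}^{n})$, whereas $\K(A_{X}(i))=\K\bigl(\prod_{\mathfrak{p}}\widehat{R_{\mathfrak{p}}}\bigr)$, and these are genuinely different spectra.

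In short, the ``main obstacle'' you flag in your last paragraph is not merely combinatorial bookkeeping; it is an actual obstruction. The machinery of this paper yields the continuous refinement (Corollary \ref{cor:adelicdescent}), and the cited theorem from \cite{adelick} requires a separate argument carried out in that reference.
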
 

While Theorem \ref{thm:introadelicdescent} provides, in a sense, a natural motivic resolution for all $\mathcal{C}$ in $\Mod_{\Mod_{R}}(\Prl_{\st})^{\dual}$, Theorem \emph{loc. cit.} provides a motivic resolution for the small stable $\infty$-category $\Perf_{X}$ natural in $X$, in a way compatible with Beilinson's adelic resolution of quasicoherent sheaves. When the input is the unit $\mathcal{C} = \Mod_{R}$, the two become comparable. The difference between these two results already become conspicuous in the case of curves. Let $R$ be a Dedekind ring which is not a field. Then, \cite[Th. 3.2.1]{adelick} above tells us that there is a pullback-pushout square of algebraic K-theory spectra
\begin{equation*}
\begin{tikzcd}
\K(R) \arrow[r] \arrow[d] & \K(\mathrm{Frac}(R)) \arrow[d] \\
\K\left(\prod_{\mathfrak{p}\in(\Spec R)^{1}}\widehat{R_{\mathfrak{p}}}\right) \arrow[r] & \K(A),
\end{tikzcd}
\end{equation*}
where $A = \mathrm{Frac}(R)\otimes_{R}\prod_{\mathfrak{p}\in(\Spec R)^{1}}\widehat{R_{\mathfrak{p}}}$ is the ring of finite adeles associated with $R$. Compared to Example \ref{ex:introadelicdescentdedekind}, we see that the bottom two terms are replaced by algebraic K-theory of integral and finite adele rings. Note that the bottom horizontal maps of these two pullback squares are far from being equivalent to each other; algebraic K-theory functor restricted to rings does not preserve infinite products and cofiltered limits. Thus, our Theorem \ref{thm:introadelicdescent} can be understood as a continuous refinement of \cite[Th. 3.2.1]{adelick} that applies to all dualizable presentable stable $\infty$-categories over the given base. \\
\indent Let us briefly summarize our approach to the main results. The construction of 'motivic' limit diagrams in the given $\infty$-category $\mathcal{X}$ out of sequences $(\Gamma_{i}\to\mathbf{1}\to L_{i})$ of idempotent fiber-cofiber sequences illustrated in Theorem \ref{thm:intromotiviclimitdiag} admits the following geometric intermediate step; we consider the module categories $\cdots\hookrightarrow \Mod_{L_{i}}\hookrightarrow \Mod_{L_{i+1}}\hookrightarrow\cdots\hookrightarrow\mathcal{X}$ which stratify $\mathcal{X}$, and construct a motivic limit diagram by considering 'locally closed' strata $\mathcal{X}_{i}$ and localization functors to each of them. When the given sequence consists of a single idempotent fiber-cofiber sequence, this amounts to the consideration of the sequence $\Mod_{L}\hookrightarrow\mathcal{X}\to\co\Mod_{\Gamma}$; while this only acts as an analogue of unstable recollement, its associated gluing square behaves as a motivic pullback-pushout square. Hence, Theorem \ref{thm:intromotiviclimitdiag} can be viewed as an adaptation of \cite[Th. A (3)]{sng} to an unstable setting of our interest; while the microcosm reconstruction of \emph{loc. cit.} requires the presentable $\infty$-category $\mathcal{X}$ to be stable, Theorem \ref{thm:intromotiviclimitdiag} works in the setting of possibly non-stable presentable $\infty$-category $\mathcal{X}$, and such non-stability is indeed crucial due to our purpose, at the expense of providing a motivic reconstruction. In fact, our proof of Theorem \ref{thm:intromotiviclimitdiag} and relevant statements are independent of the results of \cite{sng}. For the sake of expositional convenience, we first study the case of a single idempotent fiber-cofiber sequence and the associated gluing square in \ref{subsec:motivicgluingsq} and proceed to the general case in \ref{subsec:motiviclimitdiag}. \\
\indent Our applications of Theorem \ref{thm:intromotiviclimitdiag} concern the case of $\mathcal{X} = \Mod_{\mathcal{R}}(\Prl_{\st})^{\dual}$, the $\infty$-category of dualizable modules in $\Prl_{\st}$ over the rigid base $\mathcal{R}$, and localizing invariants on such $\infty$-categories. Although $\Mod_{\mathcal{R}}(\Prl_{\st})^{\dual}$ is, as other typical $\infty$-categories of stable $\infty$-categories, not stable itself, it satisfies the conditions required for $\mathcal{X}$ in the theorem, and in particular its fiber-cofiber sequences behave reasonably. In \ref{subsec:dualmods}, after discussing a few useful generalities on $\Prl_{\st}$ in \ref{subsec:mods}, we recall and verify certain properties related to dualizable presentable stable $\infty$-categories over rigid bases that are used in the later part of this article. We don't intend to be exhaustive, and we refer readers to the references in the subsection for more comprehensive treatments. In the second half of \ref{subsec:motivicgluingsq}, we specialize to the case of $\mathcal{X} = \Mod_{\mathcal{R}}(\Prl_{\st})^{\dual}$ and explain how the pullback square (\ref{eq:introformalgluingsq}) of Clausen--Scholze can be recovered in this setting. \\
\indent We study the continuous version of adelic descent for localizing invariants on dualizable categories in \ref{subsec:adelicdescent}. We deduce this as an application of Theorem \ref{thm:intromotiviclimitdiag} for $\mathcal{X} = \Mod_{\Mod_{R}}(\Prl_{\st})^{\dual}$ and an appropriate sequence of adelic idempotent fiber-cofiber sequences associated with the given $R$. In fact, we define the aforementioned idempotent fiber-cofiber sequences as filtered colimits of $\Mod_{R}^{\Nil(I)}\to\Mod_{R}\to\Mod_{R}^{\Loc(I)}$ over closed subsets $V(I)$ of certain bounded codimensions in $\Spec \pi_{0}R$. Then, we proceed to describe the terms appearing in the motivic limit diagram and verify that they are of the expected form, i.e., given by successive localizations and completions at each point of $\Spec\pi_{0}R$ in the suitable sense. Our proof involves the description of internal mapping objects in $\Mod_{\mathcal{R}}(\Prl_{\st})^{\dual}$ and its consequences discussed in \ref{subsec:dualmods} as essential ingredients. The appearance of these internal mapping objects between dualizable categories, as dictated by our formulation of Theorem \ref{thm:intromotiviclimitdiag}, is rather subtle. In fact, replacing them with corresponding internal mapping objects in $\Mod_{\mathcal{R}}(\Prl_{\st})$ would incorrectly render many of the terms as zero. Our approach to the adelic descent statement is in a sense dual to \cite{bk}, where solid adele rings are constructed within the stable setting. However, aside from the conceptual similarity, the results and proofs are incomparable and independent of each other. Also, we do not expect that directly analyzing nuclear module categories on analytic rings will yield the categories involved in our adelic descent statement. We intend to address points relevant to this and further description of higher dimensional cases in future works. 

\begin{ack}
The author is grateful to Dustin Clausen, Alexander Efimov, Matthew Morrow, Maxime Ramzi and Vova Sosnilo for helpful discussions related to the subject of this article, and also to Michael Groechenig for initial discussions when the project was first conceived. The author was supported by the European Research Council (ERC) under the European Union's Horizon 2020 research and innovation programme (grant agreement No. 101001474). 
\end{ack}

\section{Categorical preliminaries}\label{sec:catprelim}

In this section, we collect and verify some useful properties concerning presentable stable $\infty$-categories, especially dualizable presentable stable $\infty$-categories over rigid bases, which will be relevant in the later part of this article.  

\subsection{Module objects in $\Prl_{\st}$}\label{subsec:mods}

Let us begin by fixing some relevant notations and conventions. The full subcategory $\Prl_{\st}$ of $\Prl$, the $\infty$-category of presentable $\infty$-categories and left adjoint functors, spanned by presentable stable $\infty$-categories admits a standard symmetric monoidal structure whose tensor product is given by the Lurie tensor product and having the $\infty$-category of spectra $\Sp$ as a unit. For each $\mathcal{T}\in\CAlg(\Prl_{\st})$, we denote $\Mod_{\mathcal{T}}(\Prl_{\st})$ for the symmetric monoidal $\infty$-category of $\mathcal{T}$-module objects in $\Prl_{\st}$. \\
\indent For the purpose of studying $\infty$-categories of dualizable modules, the following convention of \cite{hsss, ramzi} is useful. Note that $\Prl_{\st}$ can be viewed as a symmetric monoidal $(\infty,2)$-category, which we temporarily denote as $\mathbf{Pr}^{\L}_{\st}$, whose underlying symmetric monoidal $\infty$-category obtained by discarding non-invertible 2-morphisms recovers $\Prl_{\st}$. For each $\mathcal{T}\in\CAlg(\Prl_{\st})$, the symmetric monoidal $(\infty,2)$-category $\Mod_{\mathcal{T}}(\mathbf{Pr}^{\L}_{\st})$ admits $\Mod_{\mathcal{T}}(\Prl_{\st})$ as its underlying symmetric monoidal $\infty$-category. Following \cite{hsss}, for any symmetric monoidal $(\infty,2)$-category $\mathcal{C}$, the $\infty$-category $\mathcal{C}^{\dual}$ stands for the (non-full-)subcategory of the underlying $\infty$-category of $\mathcal{C}$, whose objects are $1$-dualizable objects and whose morphisms are the right adjointable morphisms, also known as internally left adjoint morphisms, of the $(\infty,2)$-category $\mathcal{C}$. 

\begin{lemma}
For a symmetric monoidal $(\infty,2)$-category $\mathcal{C}$, the $\infty$-category $\mathcal{C}^{\dual}$ of dualizable objects and right adjointable morphisms admits a symmetric monoidal structure inherited from $\mathcal{C}$. 
\begin{proof}
For objects $x,y\in\mathcal{C}^{\dual}$, their tensor product $x\otimes y$ remains in $\mathcal{C}^{\dual}$; by symmetric monoidality, tensor products of evaluation and coevaluation maps for $x$ and $y$ exhibit the tensor product $x^{\vee}\otimes y^{\vee}$ of duals as a dual of $x\otimes y$. Morphisms of $\mathcal{C}^{\dual}$ are also closed under tensor products. In fact, it suffices to check that for a morphism $f:x\to y$ of $\mathcal{C}^{\dual}$ and an object $z\in\mathcal{C}^{\dual}$, their tensor product $id_{z}\otimes f:z\otimes x\to z\otimes y$ is in $\mathcal{C}^{\dual}$. This follows from the fact that $z\otimes-:\mathcal{C}\to\mathcal{C}$ is an $(\infty,2)$-functor, and hence preserves data of adjunctions; in particular, $id_{z}\otimes f^{\R}$ remains to be a right adjoint of $id_{z}\otimes f$ if $f^{\R}$ was a right adjoint of $f$ in $\mathcal{C}$. 
\end{proof}
\end{lemma}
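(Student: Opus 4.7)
The plan is to verify the two closure properties required for $\mathcal{C}^{\dual}$ to inherit a symmetric monoidal structure from the underlying symmetric monoidal $\infty$-category of $\mathcal{C}$: closure under tensor products at the level of objects (including the unit), and closure under tensor products at the level of morphisms.

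First, I would check closure on objects. Given $x,y\in\mathcal{C}^{\dual}$ with duals $x^{\vee},y^{\vee}$ and their (co)evaluation maps, the tensor product $x\otimes y$ admits $x^{\vee}\otimes y^{\vee}$ as a dual; the required (co)evaluation maps are constructed by tensoring those of $x$ and $y$ and composing with the associativity and symmetry constraints of $\otimes$, and the triangle identities for $x\otimes y$ follow termwise from those for $x$ and $y$ separately. The unit $\mathbf{1}$ is self-dual and hence lies in $\mathcal{C}^{\dual}$.

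Second, I would check closure on morphisms. By bilinearity of $\otimes$, it suffices to prove that for any right adjointable $f:x\to y$ in $\mathcal{C}$ and any $z\in\mathcal{C}^{\dual}$, the morphism $id_{z}\otimes f$ is again right adjointable. Since the monoidal product on $\mathcal{C}$ is an $(\infty,2)$-functor --- this is built into the very definition of a symmetric monoidal $(\infty,2)$-category --- the functor $z\otimes-$ carries the unit--counit data exhibiting $f\dashv f^{\R}$ to analogous data exhibiting $id_{z}\otimes f\dashv id_{z}\otimes f^{\R}$, so the latter is right adjointable.

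With these closures in hand, the symmetric monoidal structure on $\mathcal{C}^{\dual}$ can be extracted from that of $\mathcal{C}$ by standard operadic considerations: encoding the monoidal structure on $\mathcal{C}$ as a coCartesian fibration $\mathcal{C}^{\otimes}\to\N(\mathrm{Fin}_{\ast})$, one forms the subcategory of $\mathcal{C}^{\otimes}$ whose objects are tuples of dualizable objects and whose active morphisms lie factorwise in $\mathcal{C}^{\dual}$ (with all inert projections retained). The two closure properties above guarantee that this remains a coCartesian fibration over $\N(\mathrm{Fin}_{\ast})$, and hence encodes a symmetric monoidal $\infty$-category whose underlying $\infty$-category is precisely $\mathcal{C}^{\dual}$. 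The main mild subtlety lies in the morphism closure: at the level of the bare underlying $\infty$-category one could only say that $id_{z}\otimes f$ possesses \emph{some} right adjoint, without canonically identifying it with $id_{z}\otimes f^{\R}$. The $(\infty,2)$-enhancement of $\mathcal{C}$ is precisely what transports unit--counit data along $z\otimes-$ and makes this identification canonical.
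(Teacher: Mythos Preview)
Your proof is correct and follows essentially the same approach as the paper: both verify closure of objects under $\otimes$ via tensoring (co)evaluation data, and closure of morphisms by using that $z\otimes-$ is an $(\infty,2)$-functor and hence preserves adjunction data. Your additional paragraph making explicit the operadic extraction of the monoidal structure (via the coCartesian fibration over $\N(\mathrm{Fin}_{\ast})$) is a nice elaboration that the paper leaves implicit.
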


We will predominantly focus on the case of $\mathcal{C} = \Mod_{\mathcal{T}}(\mathbf{Pr}^{\L}_{\st})$; there, rephrasing the definition, a morphism $\mathcal{C}\xrightarrow{f}\mathcal{D}$ of $\Mod_{\mathcal{T}}(\Prl_{\st})$ is in $\Mod_{\mathcal{T}}(\mathbf{Pr}^{\L}_{\st})^{\dual}$ precisely when the source and the target objects are dualizable objects of $\Mod_{\mathcal{T}}(\Prl_{\st})$ and $f$ admits a right adjoint $f^{\R}$ which is a $\mathcal{T}$-linear left adjoint functor. In the rest of the article, for the sake of brevity, we do not distinguish between the symmetric monoidal $(\infty,2)$-category $\Mod_{\mathcal{T}}(\mathbf{Pr}^{\L}_{\st})$ and its underlying symmetric monoidal $\infty$-category $\Mod_{\mathcal{T}}(\Prl_{\st})$ notationally, whenever the context is clear. In particular, $\Mod_{\mathcal{T}}(\Prl_{\st})^{\dual}$ stands for the symmetric monoidal $\infty$-category $\Mod_{\mathcal{T}}(\mathbf{Pr}^{\L}_{\st})^{\dual}$. When $\mathcal{T} = \Sp$, we also write $\Pr^{\L,\dual}_{\st} = \Mod_{\Sp}(\Prl_{\st})^{\dual}$.\\
\indent We end this subsection with the following lemma and the subsequent remark, concerning certain filtered colimits in $\Mod_{\mathcal{T}}(\Prl_{\st})$, which will be used in \ref{subsec:adelicdescent}.  

\begin{lemma}\label{lem:filtcolimrightadjointable}
Let $\mathcal{T}\in\CAlg(\Prl_{\st})$. Let $\mathcal{K}$ be a filtered $\infty$-category, and suppose that we are given a morphism $f = f_{(-)}:\mathcal{C}_{(-)}\to\mathcal{D}_{(-)}$ in $\Fun(\mathcal{K},\Mod_{\mathcal{T}}(\Prl_{\st}))$ such that all of the morphisms $f_{k}:\mathcal{C}_{k}\to\mathcal{D}_{k}$, $\mathcal{C}_{k}\to\mathcal{C}_{\ell}$, and $\mathcal{D}_{k}\to\mathcal{D}_{\ell}$ of the diagrams are right adjointable in $\Mod_{\mathcal{T}}(\Prl_{\st})$. (For instance, $f$ can be taken as a morphism in $\Fun\left(\mathcal{K},\Mod_{\mathcal{T}}(\Prl_{\st})^{\dual}\right)$.)\\
\indent Suppose that each $f_{k}$ is fully faithful as a functor for all $k\in \mathcal{K}$. Then, the induced morphism $\colim_{\mathcal{K}}f_{k}:\colim_{\mathcal{K}}\mathcal{C}_{k}\to\colim_{\mathcal{K}}\mathcal{D}_{k}$ is fully faithful as a functor. 
\begin{proof}
As $\mathcal{K}$ is filtered, there is a cofinal (i.e., right cofinal) functor $\N(K)\to\mathcal{K}$ from the nerve of a filtered partially ordered set $K$ \cite[Prop. 5.3.1.18]{htt}, and hence we can replace $\mathcal{K}$ by the nerve of $K$. By right adjointability assumption on the morphisms in the diagram $\mathcal{C}_{(-)}$, we have a natural equivalence $\colim_{K}\mathcal{C}_{k}\simeq \lim_{K^{\op}}\mathcal{C}_{k}$ in $\Mod_{\mathcal{T}}(\Prl_{\st})$, where the diagram for the right hand side limit is given by taking right adjoints of the morphisms of the original diagram, and each of the canonical maps $\mathcal{C}_{i}\to\colim_{K}\mathcal{C}_{k}$ is given by a left adjoint of the canonical projection map $p_{i}:\lim_{K^{\op}}\mathcal{C}_{k}\to\mathcal{C}_{i}$ in $\Mod_{\mathcal{T}}(\Prl_{\st})$; let us write $p_{i}^{\L}$ for a left adjoint of $p_{i}$, and retain the same notations for the case of $\mathcal{D}_{(-)}$. Also, by right adjointability assumption on each of the component morphisms of $f_{(-)}$, we have an adjunction $\colim_{K}f_{k}\dashv \lim_{K^{\op}}f^{\R}_{k}:\colim_{K}\mathcal{D}_{k}\to\colim_{K}\mathcal{C}_{k}$ with both of the functors being morphisms in $\Mod_{\mathcal{T}}(\Prl_{\st})$. \\
\indent To prove the fully faithfulness of the left adjoint $\colim_{K}f_{k}:\colim_{K}\mathcal{C}_{k}\to\colim_{K}\mathcal{D}_{k}$, we can equivalently check that the unit map $id\to (\lim_{K^{\op}}f^{\R}_{k})\circ (\colim_{K}f_{k})$ is an equivalence in $\Fun^{\L}_{\mathcal{T}}(\colim_{K}\mathcal{C}_{k},\colim_{K}\mathcal{C}_{k})$. From the coinitiality (i.e., left cofinality) of the diagonal $\N K^{\op}\to \N K^{\op}\times \N K^{\op}$ and the aforementioned identification of the colimit as a limit in $\Mod_{\mathcal{T}}(\Prl_{\st})$, we have equivalences $\Fun^{\L}_{\mathcal{T}}(\colim_{K}\mathcal{C}_{k},\colim_{K}\mathcal{C}_{k})\simeq \Fun^{\L}_{\mathcal{T}}(\colim_{K}\mathcal{C}_{k},\lim_{K^{\op}}\mathcal{C}_{k})\simeq \lim_{K^{\op}}\lim_{K^{\op}}\Fun^{\L}_{\mathcal{T}}(\mathcal{C}_{k},\mathcal{C}_{\ell})\simeq \lim_{K^{\op}}\Fun^{\L}_{\mathcal{T}}(\mathcal{C}_{k},\mathcal{C}_{k})$. Thus, the morphism $id\to (\lim_{K^{\op}}f^{\R}_{k})\circ (\colim_{K}f_{k})$ is an equivalence in the left hand side $\infty$-category if and only if $p_{i}\circ p^{\L}_{i}\to p_{i}\circ (\lim_{K^{\op}}f^{\R}_{k})\circ (\colim_{K}f_{k})\circ p^{\L}_{i}$ is an equivalence in $\Fun_{\mathcal{T}}(\mathcal{C}_{k},\mathcal{C}_{k})$ for all $i\in K$. The right hand side of the latter morphism is equivalent to $f^{\R}_{i}\circ p_{i}\circ p^{\L}_{i}\circ f_{i}$; from the fully faithfulness of the morphisms $p_{i}^{\L}$ (associated with the diagrams $\mathcal{C}_{(-)}$ and $\mathcal{D}_{(-)}$) which we explain below, the morphism of question is identified with the unit map $id\to f_{i}^{\R}\circ f_{i}$, which in turn is an equivalence due to the fully faithfulness of $f_{i}$ by assumption. \\
\indent It remains to check that the functor $p_{i}^{\L}:\mathcal{C}_{i}\to\colim_{K}\mathcal{C}_{k}$ is fully faithful. Equivalently, we check that the right adjoint functor $p_{i}^{\R}$ of $p_{i}:\lim_{K^{\op}}\mathcal{C}_{k}\to\mathcal{C}_{i}$ is fully faithful, i.e., the counit map $p_{i}\circ p_{i}^{\R}\to id$ is an equivalence. Let us describe $p_{i}^{\R}$ explicitly as follows. For each morphism $k\to \ell$ of $K$, denote $\imath_{k\to \ell}:\mathcal{C}_{k}\to\mathcal{C}_{\ell}$ for the morphism comprising the diagram $\mathcal{C}_{(-)}$, and write $\imath_{k\to\ell}\dashv \imath_{k\to\ell}^{\R}\dashv \imath_{k\to\ell}^{\R\R}$ for the successive right adjoints of it. Then, the right adjoint $p_{i}^{\R}$ is equivalent to the functor $\mathcal{C}_{i}\to\lim_{K^{\op}}\mathcal{C}_{k}$ determined by $(\mathcal{C}_{i}\xrightarrow{\imath_{i\to k}^{\R\R}}\mathcal{C}_{k})_{k\in K_{\geq i}}$, and the fully faithfulness follows from this description. More precisely, we observe:\\
(i) $(\mathcal{C}_{i}\xrightarrow{\imath_{i\to k}^{\R\R}}\mathcal{C}_{k})_{k\in K_{\geq i}}$ is a source for the diagram $(\mathcal{C}_{(-)})^{\R}$, i.e., for edges $i\to \ell$ and $\ell\to k$ of $K$, one has $\imath_{\ell\to k}^{\R}\circ\imath_{i\to k}^{\R\R}\simeq \imath_{i\to \ell}^{\R\R}$. The latter statement is equivalent to $\imath_{i\to k}^{\R}\circ\imath_{\ell\to k}\simeq \imath_{i\to\ell}^{\R}$, and this follows from the fully faithfulness of $\imath_{\ell\to k}$; namely, $\imath_{i\to k}^{\R}\circ\imath_{\ell\to k}\simeq \imath_{i\to \ell}^{\R}\circ\imath_{\ell\to k}^{\R}\circ\imath_{\ell\to k}\simeq \imath_{i\to \ell}^{\R}$. \\
(ii) $p_{i}^{\R}$ is a right adjoint functor of the functor $p_{i}$. For $x\in\lim_{K^{\op}}\mathcal{C}_{k}$ and $c\in\mathcal{C}_{i}$, we have natural equivalences
\begin{align*}
\Map_{\lim_{K^{\op}}\mathcal{C}_{k}}\left(x,p_{i}^{\R}(c)\right) & \simeq \lim_{K^{\op}}\Map_{\mathcal{C}_{k}}\left(p_{k}(x),p_{k}(p_{i}^{\R}(c))\right)\simeq \lim_{K_{\geq i}^{\op}}\Map_{\mathcal{C}_{k}}\left(p_{k}(x),\imath_{i\to k}^{\R\R}(c)\right)\\
 & \simeq \lim_{K_{\geq i}^{\op}}\Map_{\mathcal{C}_{i}}\left(\imath_{i\to k}^{\R}(p_{k}(x)), c\right)\simeq \Map_{\mathcal{C}_{i}}\left(\colim_{K_{\geq i}}\imath_{i\to k}^{\R}(p_{k}(x)),c\right)\simeq \Map_{\mathcal{C}_{i}}(p_{i}(x),c)
\end{align*}
by construction, showing the claimed adjunction. \\
(iii) $p_{i}\circ p_{i}^{\R}\simeq \imath_{i\to i}^{\R\R} = id_{\mathcal{C}_{i}}$ by construction. Thus, the counit map $p_{i}\circ p_{i}^{\R}\to id$ is automatically an equivalence. This finishes the desired verification of the fully faithfulness of $p_{i}^{\L}$. 
\end{proof}
\end{lemma}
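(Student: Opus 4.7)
The plan is to reduce to a concrete indexing poset, use the right adjointability assumptions to turn the colimits into limits along right adjoints in $\Mod_{\mathcal{T}}(\Prl_{\st})$, and then check fully faithfulness by proving the unit of the resulting adjunction is an equivalence. Since any filtered $\infty$-category admits a cofinal map from the nerve of a filtered poset, I would first replace $\mathcal{K}$ by such a poset $K$. The right adjointability of the transition maps and of each $f_{k}$ then gives two structural facts that I would record at the outset: the colimits $\colim_{K}\mathcal{C}_{k}$ and $\colim_{K}\mathcal{D}_{k}$ in $\Mod_{\mathcal{T}}(\Prl_{\st})$ coincide with the limits $\lim_{K^{\op}}\mathcal{C}_{k}$ and $\lim_{K^{\op}}\mathcal{D}_{k}$ taken along the right adjoints; and the induced morphism $\colim_{K}f_{k}$ admits, as a right adjoint in $\Mod_{\mathcal{T}}(\Prl_{\st})$, the morphism $\lim_{K^{\op}}f_{k}^{\R}$ obtained by applying objectwise right adjoints.

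Fully faithfulness is equivalent to the unit map $\eta: id \to (\lim_{K^{\op}}f_{k}^{\R})\circ (\colim_{K}f_{k})$ being an equivalence in $\Fun^{\L}_{\mathcal{T}}(\colim_{K}\mathcal{C}_{k},\colim_{K}\mathcal{C}_{k})$. To test this, I would use the cofinality of the diagonal $\N K^{\op}\to\N K^{\op}\times\N K^{\op}$ to decompose this endofunctor category as $\lim_{K^{\op}}\Fun^{\L}_{\mathcal{T}}(\mathcal{C}_{k},\mathcal{C}_{k})$, identifying an equivalence there with the statement that for each $i\in K$, the morphism $p_{i}\circ\eta\circ p_{i}^{\L}$ is an equivalence, where $p_{i}^{\L}:\mathcal{C}_{i}\to\colim_{K}\mathcal{C}_{k}$ is the canonical insertion (a left adjoint of the projection $p_{i}$). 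Unwinding, this comes down to the unit map of $f_{i}\dashv f_{i}^{\R}$ being an equivalence, which is exactly the hypothesized fully faithfulness of $f_{i}$ — provided one knows that $p_{i}^{\L}$ is itself fully faithful, so that the unit factors display the reduction.

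The main technical step, and the place I expect to do the real work, is the claim that $p_{i}^{\L}$ is fully faithful, equivalently that the counit $p_{i}\circ p_{i}^{\R}\to id$ of $p_{i}\dashv p_{i}^{\R}$ is an equivalence. To access this I would produce $p_{i}^{\R}$ explicitly using the double right adjoints of the transition maps: writing $\imath_{i\to k}:\mathcal{C}_{i}\to\mathcal{C}_{k}$ with successive right adjoints $\imath_{i\to k}^{\R}$ and $\imath_{i\to k}^{\R\R}$, I would define $p_{i}^{\R}(c) = (\imath_{i\to k}^{\R\R}(c))_{k\in K_{\geq i}}$. Checking that this assembles into an object of $\lim_{K^{\op}}\mathcal{C}_{k}$ amounts to the relation $\imath_{\ell\to k}^{\R}\circ\imath_{i\to k}^{\R\R}\simeq\imath_{i\to\ell}^{\R\R}$ for $i\leq\ell\leq k$, which I would deduce by passing to left adjoints and using fully faithfulness of $\imath_{\ell\to k}$; verifying the adjunction $p_{i}\dashv p_{i}^{\R}$ is then a mapping-space computation that collapses by the defining formula $p_{i}(x) = \colim_{k\in K_{\geq i}}\imath_{i\to k}^{\R}(p_{k}(x))$, and this same formula makes $p_{i}\circ p_{i}^{\R}\simeq\imath_{i\to i}^{\R\R} = id_{\mathcal{C}_{i}}$ transparent. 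The principal obstacle is bookkeeping: keeping all the adjunctions $\mathcal{T}$-linear and invoking each right adjointability assumption at the correct place so that the above construction genuinely lives in $\Mod_{\mathcal{T}}(\Prl_{\st})$, rather than merely among functors of underlying $\infty$-categories.
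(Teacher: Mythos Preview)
Your proposal is essentially identical to the paper's proof: you reduce to a filtered poset, identify the colimits with limits along right adjoints, test the unit of $\colim_{K} f_{k}\dashv \lim_{K^{\op}} f_{k}^{\R}$ componentwise via the diagonal cofinality, and reduce to full faithfulness of each $f_{i}$ by first establishing that $p_{i}^{\L}$ is fully faithful through the explicit description of $p_{i}^{\R}$ in terms of the double right adjoints $\imath_{i\to k}^{\R\R}$. Every step you outline, including the compatibility check $\imath_{\ell\to k}^{\R}\circ\imath_{i\to k}^{\R\R}\simeq\imath_{i\to\ell}^{\R\R}$ via full faithfulness of $\imath_{\ell\to k}$ and the mapping-space verification of the adjunction $p_{i}\dashv p_{i}^{\R}$, appears in the paper in the same order and with the same justification.
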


\begin{remarkn}\label{rem:filtcolimrightadjointable}
Let $\mathcal{T}$ and $\mathcal{C}_{(-)}:\mathcal{K}\to\Mod_{\mathcal{T}}(\Prl_{\st})$ be as in Lemma \ref{lem:filtcolimrightadjointable} above. From this Lemma, we in particular know:\\
(1) If all the functors $\mathcal{C}_{k}\to\mathcal{C}_{\ell}$ for morphisms $k\to \ell$ in $\mathcal{K}$ are fully faithful, then the natural map $\mathcal{C}_{i}\to\colim_{\mathcal{K}}\mathcal{C}_{k}$ is fully faithful as a functor for each $i\in \mathcal{K}$. \\
(2) If $\mathcal{D}\in\Mod_{\mathcal{T}}(\Prl_{\st})$ is a sink for the diagram $\mathcal{C}_{(-)}$ such that all the functors $\mathcal{C}_{i}\to\mathcal{D}$ for $i\in\mathcal{K}$ are fully faithful, then the induced morphism $\colim_{\mathcal{K}}\mathcal{C}_{k}\to\mathcal{D}$ is fully faithful as a functor. 
\end{remarkn}

\subsection{Dualizable categories over rigid base}\label{subsec:dualmods}

Recall that for any $\bbE_{\infty}$-ring $R$, the symmetric monoidal $\infty$-category $\Mod_{R}$ is compactly generated, and its compact objects are precisely dualizable objects. In connective spectral algebraic geometry, this property is captured by perfect stacks, e.g., quasicompact quasiseparated spectral algebraic spaces \cite[Prop. 9.6.1.1]{sag}, for each of which the $\infty$-category of quasicoherent sheaves is compactly generated and its compact objects agree with dualizable objects. \\
\indent In practice, however, not all symmetric monoidal presentable $\infty$-categories which one might wish to behave as $\infty$-categories of quasicoherent sheaves on perfect stacks have enough compact objects. The notion of rigidity for symmetric monoidal presentable stable $\infty$-categories, introduced in \cite{gaitsgory,gr1}, specifies conditions which guarantee that module categories over rigid algebras in $\CAlg(\Prl_{\st})$ automatically enjoy certain desirable finiteness properties which are available for module categories over aforementioned type of compactly generated categories. In fact, rigidity is designed to generalize such type of compactly generated categories; an algebra $\mathcal{T}\in\CAlg(\Prl_{\st})$ whose underlying $\infty$-category is compactly generated is rigid if and only if compact objects agree with dualizable objects in $\mathcal{T}$. The relative notion of rigidity, i.e., that of rigid maps in $\CAlg(\Prl_{\st})$, was introduced and studied in \cite{hsss}, and was subsequently elaborated upon and explored extensively in \cite{ramzi}; while we review and verify some properties related to rigidity that are pertinent to this article in this subsection, we refer to these works for detailed accounts. \\
\indent Following \cite{hsss,ramzi}, we call a morphism $\mathcal{T}\xrightarrow{f}\mathcal{U}$ of $\CAlg(\Prl_{\st})$ rigid if the multiplication map $\mathcal{U}\otimes_{\mathcal{T}}\mathcal{U}\xrightarrow{m}\mathcal{U}$ is right adjointable in $\Mod_{\mathcal{U}\otimes_{\mathcal{T}}\mathcal{U}}(\Prl_{\st})$ and the morphism $\mathcal{T}\xrightarrow{f}\mathcal{U}$ is right adjointable in $\Mod_{\mathcal{T}}(\Prl_{\st})$; in this case, $\mathcal{U}$ is called a rigid $\mathcal{T}$-algebra. An object $\mathcal{R}\in\CAlg(\Prl_{\st})$ is called rigid if the natural morphism $\Sp\to\mathcal{R}$ in $\CAlg(\Prl_{\st})$ is rigid, i.e., it is a rigid $\Sp$-algebra. For each rigid map $\mathcal{T}\to\mathcal{U}$, the $(\infty,2)$-adjunction $\mathcal{U}\otimes_{\mathcal{T}}-\dashv \Res:\Mod_{\mathcal{U}}(\Prl_{\st})\to\Mod_{\mathcal{T}}(\Prl_{\st})$ is symmetric monoidal ambidexterous \cite[Prop. 2.21]{hsss}, which can be construed as an instance of the finiteness property. \\
\indent Recall that for any $\mathcal{T}\in\CAlg(\Prl_{\st})$, the main theorem of \cite{ramzi} guarantees that the $\infty$-category $\Mod_{\mathcal{T}}(\Prl_{\st})^{\dual}$ is presentable \cite[Th. 4.1]{ramzi}; in particular, $\Mod_{\mathcal{T}}(\Prl_{\st})^{\dual}$ is closed symmetric monoidal \cite[Cor. 4.2]{ramzi}. We write $\intmap^{\dual}_{\mathcal{T}}$ to denote internal mapping objects of $\Mod_{\mathcal{T}}(\Prl_{\st})^{\dual}$. 

\begin{lemma}\label{lem:rigidadjunction}
Let $\mathcal{T}\to\mathcal{U}$ be a map in $\CAlg(\Prl_{\st})$ which is rigid. Then, there is a symmetric monoidal adjunction $\mathcal{U}\otimes_{\mathcal{T}}-\dashv \Res:\Mod_{\mathcal{U}}(\Prl_{\st})^{\dual}\to\Mod_{\mathcal{T}}(\Prl_{\st})^{\dual}$ induced from the adjunction $\mathcal{U}\otimes_{\mathcal{T}}-\dashv \Res:\Mod_{\mathcal{U}}(\Prl_{\st})\to\Mod_{\mathcal{T}}(\Prl_{\st})$. In particular, we have an equivalence 
\begin{equation*}
\intmap^{\dual}_{\mathcal{T}}(-,\Res(\ast))\simeq \Res\left(\intmap^{\dual}_{\mathcal{U}}(\mathcal{U}\otimes_{\mathcal{T}}-,\ast)\right)
\end{equation*} 
of $\Fun((\Mod_{\mathcal{T}}(\Prl_{\st})^{\dual})^{\op}\times \Mod_{\mathcal{T}}(\Prl_{\st})^{\dual}, \Mod_{\mathcal{T}}(\Prl_{\st})^{\dual})$. 
\begin{remark}
While the original adjunction $\mathcal{U}\otimes_{\mathcal{T}}-\dashv \Res:\Mod_{\mathcal{U}}(\Prl_{\st})\to\Mod_{\mathcal{T}}(\Prl_{\st})$ is ambidexterous, the induced adjunction $\mathcal{U}\otimes_{\mathcal{T}}-\dashv \Res:\Mod_{\mathcal{U}}(\Prl_{\st})^{\dual}\to\Mod_{\mathcal{T}}(\Prl_{\st})^{\dual}$ need not be ambidexterous. This is analogous to the fact that the induced adjunction $\mathcal{U}\otimes_{\mathcal{T}}-\dashv \Res:\CAlg\left(\Mod_{\mathcal{U}}(\Prl_{\st})\right)\to\CAlg\left(\Mod_{\mathcal{T}}(\Prl_{\st})\right)$ need not be ambidexterous. 
\end{remark}
\begin{proof}
By \cite[Prop. 2.21 and Prop. 2.12]{hsss}, we know the ambidexterous symmetric monoidal $(\infty,2)$-adjunction $\mathcal{U}\otimes_{\mathcal{T}}-\dashv \Res:\Mod_{\mathcal{U}}(\Prl_{\st})\to\Mod_{\mathcal{T}}(\Prl_{\st})$ restricts to the adjunction $\mathcal{U}\otimes_{\mathcal{T}}-\dashv \Res:\Mod_{\mathcal{U}}(\Prl_{\st})^{\dual}\to\Mod_{\mathcal{T}}(\Prl_{\st})^{\dual}$. Since the functor $\mathcal{U}\otimes_{\mathcal{T}}-$ on dualizable objects is symmetric monoidal, the induced adjunction is symmetric monoidal. The formula for the internal mapping object follows from direct computations relating $\Map(Z,\intmap^{\dual}_{\mathcal{T}}(X,\Res(Y)))$ naturally to $\Map(Z,\Res(\intmap_{\mathcal{U}}^{\dual}(\mathcal{U}\otimes_{\mathcal{T}}X,Y)))$ for $X,Z\in\Mod_{\mathcal{T}}(\Prl_{\st})^{\dual}$ and $Y\in\Mod_{\mathcal{U}}(\Prl_{\st})^{\dual}$, using that the adjunction is symmetric monoidal. 
\end{proof}
\end{lemma}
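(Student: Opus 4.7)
The plan is to combine the rigidity package of \cite{hsss} with a Yoneda-style computation to extract the mapping object formula. The input is the ambidexterous symmetric monoidal $(\infty,2)$-adjunction $\mathcal{U}\otimes_{\mathcal{T}}-\dashv \Res:\Mod_{\mathcal{U}}(\Prl_{\st})\to\Mod_{\mathcal{T}}(\Prl_{\st})$ supplied by the rigidity assumption via \cite[Prop. 2.21]{hsss}.

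To establish the restricted adjunction, I would first check that both $\mathcal{U}\otimes_{\mathcal{T}}-$ and $\Res$ preserve dualizable objects and right adjointable morphisms. Preservation of dualizability by $\mathcal{U}\otimes_{\mathcal{T}}-$ is immediate from its symmetric monoidality, while preservation by $\Res$ is the content of \cite[Prop. 2.12]{hsss}, using ambidexterity crucially. Preservation of right adjointable morphisms is then automatic, since both functors underlie $(\infty,2)$-functors carrying adjunction data to adjunction data: for $\mathcal{U}\otimes_{\mathcal{T}}-$ this follows from $(\infty,2)$-functoriality, and for $\Res$ it follows because a $\mathcal{U}$-linear left adjoint remains a $\mathcal{T}$-linear left adjoint with the same underlying right adjoint. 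The symmetric monoidal structure on the restricted adjunction then descends since the tensor product on dualizable objects is inherited from the ambient $\infty$-categories, and $\mathcal{U}\otimes_{\mathcal{T}}-$ on dualizable objects is already symmetric monoidal.

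To produce the internal mapping object formula, I would run a Yoneda computation: for $X,Z\in\Mod_{\mathcal{T}}(\Prl_{\st})^{\dual}$ and $Y\in\Mod_{\mathcal{U}}(\Prl_{\st})^{\dual}$,
\begin{align*}
\Map(Z,\intmap^{\dual}_{\mathcal{T}}(X,\Res(Y))) & \simeq \Map(Z\otimes_{\mathcal{T}}X,\Res(Y)) \\
 & \simeq \Map(\mathcal{U}\otimes_{\mathcal{T}}(Z\otimes_{\mathcal{T}}X),Y) \\
 & \simeq \Map((\mathcal{U}\otimes_{\mathcal{T}}Z)\otimes_{\mathcal{U}}(\mathcal{U}\otimes_{\mathcal{T}}X),Y) \\
 & \simeq \Map(\mathcal{U}\otimes_{\mathcal{T}}Z,\intmap^{\dual}_{\mathcal{U}}(\mathcal{U}\otimes_{\mathcal{T}}X,Y)) \\
 & \simeq \Map(Z,\Res(\intmap^{\dual}_{\mathcal{U}}(\mathcal{U}\otimes_{\mathcal{T}}X,Y)))
\end{align*}
by successively applying the universal property of $\intmap^{\dual}_{\mathcal{T}}$, the restricted adjunction, the symmetric monoidality of base change, the universal property of $\intmap^{\dual}_{\mathcal{U}}$, and the adjunction a second time. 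The Yoneda lemma then gives the claimed equivalence of bifunctors, naturally in both variables.

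The only step where I expect to spend any real care is the preservation of dualizability by $\Res$: dualizability of a $\mathcal{U}$-module $Y$ furnishes $\mathcal{U}$-linear evaluation and coevaluation maps, and one needs the ambidexterity of the base change adjunction to transport these into $\mathcal{T}$-linear dualizing data witnessing dualizability of $\Res(Y)$. Once this preservation is in hand, everything else is entirely formal and proceeds as above.
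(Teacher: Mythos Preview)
Your proposal is correct and follows essentially the same approach as the paper: both invoke \cite[Prop.~2.21 and Prop.~2.12]{hsss} to restrict the ambidexterous adjunction to dualizable objects and right adjointable morphisms, then deduce the internal mapping object formula by the Yoneda computation you spell out explicitly. The paper's proof is simply a terser version of yours, leaving the chain of equivalences implicit under the phrase ``direct computations.''
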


Following \cite{ramzi}, we denote $\CAlg^{\rig}_{\Sp}$ for the full subcategory of $\CAlg(\Prl_{\st})$ spanned by rigid $\Sp$-algebras, which we simply call as rigid categories. An important finiteness property of rigid categories is that the notion of dualizable module categories over a rigid base is equivalent to that of modules over a rigid base in dualizable categories \cite[Cor 3.40]{ramzi}:

\begin{remarkn}\label{rem:dualmodulesequiv}
Let $\mathcal{R}\in\CAlg(\Prl_{\st})$ be rigid, so the restriction of scalars functor $\Mod_{\mathcal{R}}(\Prl_{\st})^{\dual}\to\Mod_{\Sp}(\Prl_{\st})^{\dual} = \Pr^{\L,\dual}_{\st}$ in particular is well-defined, cf., Lemma \ref{lem:rigidadjunction}. From the lax-symmetric monoidality of this functor, we have an induced functor $\Mod_{\mathcal{R}}(\Prl_{\st})^{\dual}\to \Mod_{\mathcal{R}}(\Pr^{\L,\dual}_{\st})$ between the module categories over the unit $\mathcal{R}$ of the source and over the image $\mathcal{R}$ of the former unit in the target $\Pr^{\L,\dual}_{\st}$ respectively. By \cite[Cor. 3.40]{ramzi}, this functor is an equivalence $\Mod_{\mathcal{R}}(\Prl_{\st})^{\dual}\simeq \Mod_{\mathcal{R}}(\Pr^{\L,\dual}_{\st})$. In fact, \cite[Cor. 3.39]{ramzi} shows that such an identification holds precisely when the base category in $\CAlg(\Prl_{\st})$ is rigid. In particular, for any morphism $f:\mathcal{C}\to\mathcal{D}$ of $\Mod_{\mathcal{R}}(\Pr^{\L}_{\st})^{\dual}$, the right adjoint $f^{\R}$ of its underlying functor is $\mathcal{R}$-linear. 
\end{remarkn}

The $\infty$-category $\Pr^{\L,\dual}_{\st}$ acts as a natural source category on which localizing invariants are defined \cite{efimlarge, hoyefim}; these are functors which map fiber-cofiber sequences of $\Pr^{\L,\dual}_{\st}$ to fiber-cofiber sequences of the target. Let us briefly recall the following characterizations of fiber-cofiber sequences in $\Prl_{\st}$ and $\Pr^{\L,\dual}_{\st}$, which in this context are also known as Verdier localization sequences. 

\begin{remarkn}\label{rem:fibcofibseqnondual}
Let $\mathcal{T}\in\CAlg(\Prl_{\st})$, and let $(\ast) = \mathcal{A}\xrightarrow{f}\mathcal{B}\xrightarrow{g}\mathcal{C}$ be a sequence in $\Mod_{\mathcal{T}}(\Prl_{\st})$. \\
(1) The sequence $(\ast)$ is a fiber-cofiber sequence in $\Mod_{\mathcal{T}}(\Prl_{\st})$ if and only if the image of $(\ast)$ in $\Prl_{\st}$ is a fiber-cofiber sequence. This is due to the fact that the forgetful functor $\Mod_{\mathcal{T}}(\Prl_{\st})\to\Prl_{\st}$ preserves small limits and small colimits, and reflects equivalences. \\
(2) Also, the followings are equivalent:
\begin{adjustwidth}{15pt}{}
(i) $(\ast)$ is a fiber-cofiber sequence. \\
(ii) $(\ast)$ is a cofiber sequence and the underlying functor of $\mathcal{A}\xrightarrow{f}\mathcal{B}$ is fully faithful.
\end{adjustwidth}
In fact, these conditions are equivalent to the condition that the underlying functor of $\mathcal{B}\xrightarrow{g}\mathcal{A}$ admits a fully faithful right adjoint, and the underlying functor of $f$ is equivalent to the fully faithful canonical inclusion functor $\fib(g)\hookrightarrow\mathcal{B}$, cf. \cite[Prop. A.20]{ramzi}. 
\end{remarkn}

\begin{lemma}\label{lem:fibcofibseq}
Let $\mathcal{R}\in\CAlg^{\rig}_{\Sp}$, and let $(\ast) = \mathcal{A}\xrightarrow{f}\mathcal{B}\xrightarrow{g}\mathcal{C}$ be a sequence in $\Mod_{\mathcal{R}}(\Prl_{\st})^{\dual}$. Then,  \\
(1) The followings are equivalent:
\begin{adjustwidth}{15pt}{}
(i) $(\ast)$ is a fiber-cofiber sequence. \\
(ii) $(\ast)$ is a cofiber sequence and the underlying functor of $\mathcal{A}\xrightarrow{f}\mathcal{B}$ is fully faithful.
\end{adjustwidth}
(2) A sequence $(\ast)$ in $\Mod_{\mathcal{R}}(\Prl_{\st})^{\dual}$ is a fiber-cofiber sequence if and only if the image of $(\ast)$ in $\Mod_{\mathcal{R}}(\Prl_{\st})$ is a fiber-cofiber sequence. 
\begin{proof}
The case of $\mathcal{R} = \Sp$ follows from Remark \ref{rem:fibcofibseqnondual} and \cite[Lem. 2.45]{ramzi}, which asserts that the functor $\Pr^{\L,\dual}_{\st}\to\Prl_{\st}$ preserves fiber of our given $g$, i.e., the underlying object of $0\times^{\dual}_{\mathcal{C}}\mathcal{B}$ in $\Prl_{\st}$ is $0\times_{\mathcal{C}}\mathcal{B}$; see also the paragraph below \emph{loc. cit.}. The general case follows from the case of $\mathcal{R}=\Sp$; we have a natural diagram
\begin{equation*}
\begin{tikzcd}
\Mod_{\mathcal{R}}(\Prl_{\st})^{\dual} \arrow[r, "\sim"] \arrow[rd] \arrow[rr, bend left=15] & \Mod_{\mathcal{R}}(\Pr^{\L,\dual}_{\st}) \arrow[r] \arrow[d] & \Mod_{\mathcal{R}}(\Prl_{\st}) \arrow[d] \\
 & \Pr^{\L,\dual}_{\st} \arrow[r] & \Prl_{\st},
\end{tikzcd}
\end{equation*}
where the uppermost bent arrow is the natural inclusion functor, the upper horizontal arrow is the one as in Remark \ref{rem:dualmodulesequiv} which is an equivalence \cite[Cor. 3.40]{ramzi}, and the right-down diagonal arrow as well as the vertical arrows are the restriction of scalars functors. As the vertical arrows preserve small limits and small colimits \cite[Cor. 4.2.3.3 and Cor. 4.2.3.5]{ha} and reflects equivalences \cite[Cor. 4.2.3.2]{ha}, we know the sequence $(\ast)$ in $\Mod_{\mathcal{R}}(\Prl_{\st})^{\dual}\simeq \Mod_{\mathcal{R}}(\Pr^{\L,\dual}_{\st})$ is a fiber-cofiber sequence if and only if the image of $(\ast)$ in $\Pr^{\L,\dual}_{\st}$ is a fiber-cofiber sequence, which in turn holds if and only if the image of $(\ast)$ in $\Prl_{\st}$ (and equivalently in $\Mod_{\mathcal{R}}(\Prl_{\st})$) is a fiber-cofiber sequence due to the aforementioned case of $\mathcal{R}=\Sp$.
\end{proof}
\end{lemma}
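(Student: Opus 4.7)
The plan is to first settle the absolute case $\mathcal{R}=\Sp$ and then bootstrap to an arbitrary rigid base via the identification $\Mod_{\mathcal{R}}(\Prl_{\st})^{\dual}\simeq \Mod_{\mathcal{R}}(\Pr^{\L,\dual}_{\st})$ recalled in Remark \ref{rem:dualmodulesequiv}. For the absolute case, I would reduce both (1) and (2) to the corresponding statements in $\Prl_{\st}$ already recorded in Remark \ref{rem:fibcofibseqnondual}. The key technical input is that the forgetful functor $\Pr^{\L,\dual}_{\st}\to\Prl_{\st}$ preserves the fiber of a given morphism $g\colon \mathcal{B}\to\mathcal{C}$ in $\Pr^{\L,\dual}_{\st}$, i.e., the underlying presentable stable $\infty$-category of the fiber computed inside $\Pr^{\L,\dual}_{\st}$ coincides with the ordinary fiber computed in $\Prl_{\st}$. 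This is Ramzi's \cite[Lem. 2.45]{ramzi}. Combined with the fact that cofibers in $\Pr^{\L,\dual}_{\st}$ are likewise computed compatibly with those in $\Prl_{\st}$, I obtain that a sequence in $\Pr^{\L,\dual}_{\st}$ is a fiber-cofiber sequence precisely when its image in $\Prl_{\st}$ is; this immediately gives (2) in the absolute case and, in conjunction with Remark \ref{rem:fibcofibseqnondual}(2), also gives (1).

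For the general case of a rigid base $\mathcal{R}$, I would factor the restriction-of-scalars functor $\Mod_{\mathcal{R}}(\Prl_{\st})^{\dual}\to\Prl_{\st}$ through $\Mod_{\mathcal{R}}(\Pr^{\L,\dual}_{\st})$ and $\Pr^{\L,\dual}_{\st}$, and likewise factor $\Mod_{\mathcal{R}}(\Prl_{\st})\to\Prl_{\st}$ through $\Mod_{\mathcal{R}}(\Prl_{\st})$. The horizontal comparison $\Mod_{\mathcal{R}}(\Prl_{\st})^{\dual}\simeq \Mod_{\mathcal{R}}(\Pr^{\L,\dual}_{\st})$ is the content of Ramzi's \cite[Cor. 3.40]{ramzi}, which is valid precisely because $\mathcal{R}$ is rigid. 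The vertical restriction-of-scalars functors preserve small limits and small colimits by \cite[Cor. 4.2.3.3, Cor. 4.2.3.5]{ha} and reflect equivalences by \cite[Cor. 4.2.3.2]{ha}. Tracing a sequence $(\ast)$ through this commutative web shows that $(\ast)$ in $\Mod_{\mathcal{R}}(\Prl_{\st})^{\dual}$ is a fiber-cofiber sequence if and only if its image in $\Pr^{\L,\dual}_{\st}$ is, and in turn if and only if its image in $\Prl_{\st}$ (equivalently in $\Mod_{\mathcal{R}}(\Prl_{\st})$) is, yielding both (1) and (2).

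The main obstacle is really the absolute case, and more specifically the comparison of fibers in $\Pr^{\L,\dual}_{\st}$ and $\Prl_{\st}$ encoded by Ramzi's lemma; once this is invoked, the rigid bootstrap is a formal consequence of the good behavior of module-category forgetful functors under limits, colimits, and detection of equivalences.
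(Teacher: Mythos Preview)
Your proposal is correct and follows essentially the same approach as the paper: both reduce first to the absolute case $\mathcal{R}=\Sp$ via \cite[Lem. 2.45]{ramzi} together with Remark \ref{rem:fibcofibseqnondual}, and then bootstrap to a general rigid base using the equivalence $\Mod_{\mathcal{R}}(\Prl_{\st})^{\dual}\simeq\Mod_{\mathcal{R}}(\Pr^{\L,\dual}_{\st})$ of \cite[Cor. 3.40]{ramzi} combined with the limit/colimit preservation and conservativity of the restriction-of-scalars functors from \cite[Cor. 4.2.3.2--4.2.3.5]{ha}.
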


The characterization of fiber-cofiber sequences of dualizable presentable stable categories from Lemma \ref{lem:fibcofibseq} has the following consequence, which asserts that fiber-cofiber sequences of dualizable presentable stable categories over a rigid $\mathcal{R}$ are preserved under tensor products with arbitrary objects in $\Mod_{\mathcal{R}}(\Prl_{\st})^{\dual}$:

\begin{lemma}\label{lem:intflat}
The followings hold:\\
(1) Let $\mathcal{T}\in\CAlg(\Prl_{\st})$, and let $\mathcal{E}\in\Mod_{\mathcal{T}}(\Prl_{\st})$. Suppose that $f:\mathcal{A}\hookrightarrow\mathcal{B}$ is a right adjointable functor in $\Mod_{\mathcal{T}}(\Prl_{\st})$ which is fully faithful. Then, $id_{\mathcal{E}}\otimes f:\mathcal{E}\otimes_{\mathcal{T}}\mathcal{A}\to\mathcal{E}\otimes_{\mathcal{T}}\mathcal{B}$ is fully faithful.\\
(2) Let $\mathcal{R}\in\CAlg^{\rig}_{\Sp}$, and let $\mathcal{E}\in \Mod_{\mathcal{R}}(\Prl_{\st})^{\dual}$. Then, for any fiber-cofiber sequence $\mathcal{A}\to\mathcal{B}\to\mathcal{C}$ in $\Mod_{\mathcal{R}}(\Prl_{\st})^{\dual}$, the sequence $\mathcal{E}\otimes_{\mathcal{R}}\mathcal{A}\to\mathcal{E}\otimes_{\mathcal{R}}\mathcal{B}\to\mathcal{E}\otimes_{\mathcal{R}}\mathcal{C}$ is a fiber-cofiber sequence in $\Mod_{\mathcal{R}}(\Prl_{\st})^{\dual}$. 
\begin{proof}
We first explain how (2) follows from (1). As in the case of closed symmetric monoidal categories in general, the functor $\mathcal{E}\otimes_{\mathcal{R}}-$ is left adjoint and in particular preserves cofiber sequences. By Lemma \ref{lem:fibcofibseq}, it remains to check that the functor $id_{\mathcal{E}}\otimes f:\mathcal{E}\otimes_{\mathcal{R}}\mathcal{A}\to\mathcal{E}\otimes_{\mathcal{R}}\mathcal{B}$ is fully faithful, where we denote $f:\mathcal{A}\to\mathcal{B}$ for the first map of the original sequence; this reduces us to the verification of (1). The statement (1) follows from the fact that the fully faithfulness amounts to the unit map being an equivalence, and that $\mathcal{E}\otimes_{\mathcal{T}}-:\Mod_{\mathcal{T}}(\Prl_{\st})\to\Mod_{\mathcal{T}}(\Prl_{\st})$ is an $(\infty,2)$-functor for any $\mathcal{E}\in\Mod_{\mathcal{T}}(\Prl_{\st})$, and hence preserves data of adjunctions. This finishes the proof of (1). \\
\indent For convenience, let us spell out what the last statement entails. By assumption, $f$ is right adjointable in $\Mod_{\mathcal{T}}(\Prl_{\st})$, i.e., admits a 1-morphism $f^{\R}$ and 2-morphisms $\eta:id\to f^{\R}f$ and $\epsilon:ff^{\R}\to id$ such that $\left(f\xrightarrow{f\eta}ff^{\R}f\xrightarrow{\epsilon f}f\right)\simeq id$ and $\left(f^{\R}\xrightarrow{\eta f^{\R}}f^{\R}ff^{\R}\xrightarrow{f^{\R}\epsilon} f^{\R}\right)\simeq id$. The condition that $f$ is fully faithful precisely amounts to $\eta:id\to f^{\R}f$ being an equivalence. Upon tensoring with $\mathcal{E}$, we have 1-morphisms $id_{\mathcal{E}}\otimes f$ and $id_{\mathcal{E}}\otimes f^{\R}$ and 2-morphisms $\left(id_{\mathcal{E}\otimes_{\mathcal{T}}\mathcal{A}}\xrightarrow{\eta_{\mathcal{E}}}(id_{\mathcal{E}}\otimes f^{\R})\circ(id_{\mathcal{E}}\otimes f)\right) = \left(id_{\mathcal{E}}\otimes id_{\mathcal{A}}\xrightarrow{id_{\mathcal{E}}\otimes \eta}id_{\mathcal{E}}\otimes (f^{\R}f)\right)$ and $\left((id_{\mathcal{E}}\otimes f)\circ(id_{\mathcal{E}}\otimes f^{\R})\xrightarrow{\epsilon_{\mathcal{E}}}  id_{\mathcal{E}\otimes_{\mathcal{T}}\mathcal{B}}\right) = \left(id_{\mathcal{E}}\otimes (ff^{\R})\xrightarrow{id_{\mathcal{E}}\otimes \epsilon} id_{\mathcal{E}}\otimes id_{\mathcal{B}}\right)$, which satisfy 
\begin{equation*}
\left(id_{\mathcal{E}}\otimes f\xrightarrow{(id_{\mathcal{E}}\otimes f)(id_{\mathcal{E}}\otimes \eta)}id_{\mathcal{E}}\otimes ff^{\R}f\xrightarrow{(id_{\mathcal{E}}\otimes\epsilon)(id_{\mathcal{E}}\otimes f)} id_{\mathcal{E}}\otimes f\right)\simeq id
\end{equation*} 
and 
\begin{equation*}
\left(id_{\mathcal{E}}\otimes f^{\R}\xrightarrow{(id_{\mathcal{E}}\otimes \eta)(id_{\mathcal{E}}\otimes f^{\R})}id_{\mathcal{E}}\otimes f^{\R}ff^{\R}\xrightarrow{(id_{\mathcal{E}}\otimes f^{\R})(id_{\mathcal{E}}\otimes \epsilon)} id_{\mathcal{E}}\otimes f^{\R}\right)\simeq id
\end{equation*} 
from the triangle identity equivalences of the original adjunction. In particular, we know $id_{\mathcal{E}}\otimes f$ is right adjointable in $\Mod_{\mathcal{T}}(\Prl_{\st})$ with a right adjoint $id_{\mathcal{E}}\otimes f^{\R}$. Since the unit $\eta_{\mathcal{E}} = id_{\mathcal{E}}\otimes \eta$ witnessing the adjunction is an equivalence, we know $id_{\mathcal{E}}\otimes f$ remains to be fully faithful. 
\end{proof}
\end{lemma}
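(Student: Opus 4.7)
The plan is to handle (2) as a straightforward consequence of (1) combined with the characterization of fiber-cofiber sequences from Lemma~\ref{lem:fibcofibseq}, and then to prove (1) directly by leveraging the $(\infty,2)$-categorical enhancement of the symmetric monoidal structure on $\Mod_{\mathcal{T}}(\Prl_{\st})$.

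For the reduction of (2) to (1): given a fiber-cofiber sequence $\mathcal{A}\xrightarrow{f}\mathcal{B}\to\mathcal{C}$ in $\Mod_{\mathcal{R}}(\Prl_{\st})^{\dual}$, I first note that $\mathcal{E}\otimes_{\mathcal{R}}-$ is a left adjoint on $\Mod_{\mathcal{R}}(\Prl_{\st})^{\dual}$ (this uses Ramzi's result that $\Mod_{\mathcal{R}}(\Prl_{\st})^{\dual}$ is closed symmetric monoidal, cited earlier), hence it preserves cofiber sequences. By Lemma~\ref{lem:fibcofibseq}(1), to conclude that the image sequence is fiber-cofiber it suffices to check that $id_{\mathcal{E}}\otimes f$ is fully faithful as a functor. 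The original $f$ is fully faithful by Lemma~\ref{lem:fibcofibseq}(1), and it is right adjointable in $\Mod_{\mathcal{R}}(\Prl_{\st})$ by the definition of $\Mod_{\mathcal{R}}(\Prl_{\st})^{\dual}$ together with Remark~\ref{rem:dualmodulesequiv}, which ensures the right adjoint is $\mathcal{R}$-linear. Thus the hypotheses of (1) are satisfied with $\mathcal{T}=\mathcal{R}$, and the conclusion follows.

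For part (1) itself, the key idea is that $\mathcal{E}\otimes_{\mathcal{T}}-:\Mod_{\mathcal{T}}(\Prl_{\st})\to\Mod_{\mathcal{T}}(\Prl_{\st})$ is an $(\infty,2)$-endofunctor, since the tensor product of $\Mod_{\mathcal{T}}(\Prl_{\st})$ lifts to the $(\infty,2)$-categorical level. Consequently, it sends adjunction data to adjunction data: writing $\eta:id\to f^{\R}f$ and $\epsilon:ff^{\R}\to id$ for the unit and counit of the adjunction $f\dashv f^{\R}$ witnessing right adjointability, the whiskered 2-morphisms $id_{\mathcal{E}}\otimes \eta$ and $id_{\mathcal{E}}\otimes \epsilon$ exhibit $id_{\mathcal{E}}\otimes f\dashv id_{\mathcal{E}}\otimes f^{\R}$ as an adjunction in $\Mod_{\mathcal{T}}(\Prl_{\st})$; the triangle identities transport along the $(\infty,2)$-functoriality.

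Once this adjunction is in hand, the conclusion is immediate: fully faithfulness of $f$ is equivalent to $\eta$ being an equivalence, and any functor sends equivalences to equivalences, so $id_{\mathcal{E}}\otimes \eta$ is an equivalence, i.e., $id_{\mathcal{E}}\otimes f$ is fully faithful. The only point requiring care is the bookkeeping that the induced unit really is $id_{\mathcal{E}}\otimes \eta$ rather than an unrelated map, which is where the $(\infty,2)$-functoriality of tensoring must be invoked cleanly; this is the main, and essentially only, conceptual step of the argument.
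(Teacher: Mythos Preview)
Your proposal is correct and follows essentially the same approach as the paper: reduce (2) to (1) via the cofiber-preservation of $\mathcal{E}\otimes_{\mathcal{R}}-$ together with Lemma~\ref{lem:fibcofibseq}, and prove (1) by transporting the adjunction data $(f,f^{\R},\eta,\epsilon)$ along the $(\infty,2)$-functor $\mathcal{E}\otimes_{\mathcal{T}}-$ so that the unit $id_{\mathcal{E}}\otimes\eta$ remains an equivalence. The paper merely spells out the transported triangle identities explicitly where you summarize them, but the argument is the same.
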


\begin{remarkn}
The proof of Lemma \ref{lem:intflat} shows that the analogous statement of (2) holds for any $\mathcal{T}\in\CAlg(\Prl_{\st})$, $\mathcal{E}\in\Mod_{\mathcal{T}}(\Prl_{\st})$, and fiber-cofiber sequences in $\Mod_{\mathcal{T}}(\Prl_{\st})$, given that the first map $\mathcal{A}\to\mathcal{B}$ of the sequence is right adjointable in $\Mod_{\mathcal{T}}(\Prl_{\st})$.
\end{remarkn}

\begin{lemma}\label{lem:intproj}
Let $\mathcal{R}\in\CAlg^{\rig}_{\Sp}$. Suppose that $\mathcal{E}\in\Mod_{\mathcal{R}}(\Prl_{\st})^{\dual}$ and that $\mathcal{A}\to\mathcal{B}\to\mathcal{C}$ is a fiber-cofiber sequence in $\Mod_{\mathcal{R}}(\Prl_{\st})^{\dual}$. Then, the induced sequence $\Fun^{\L}_{\mathcal{R}}(\mathcal{E},\mathcal{A})\to\Fun^{\L}_{\mathcal{R}}(\mathcal{E},\mathcal{B})\to\Fun^{\L}_{\mathcal{R}}(\mathcal{E},\mathcal{C})$ a fiber-cofiber sequence in $\Mod_{\mathcal{R}}(\Prl_{\st})^{\dual}$.
\begin{proof}
From the assumption that the object $\mathcal{E}$ is dualizable in the closed symmetric monoidal $\infty$-category $\Mod_{\mathcal{R}}(\Prl_{\st})$, we have an equivalence $\Fun^{\L}_{\mathcal{R}}(\mathcal{E},-)\simeq \mathcal{E}^{\vee}\otimes_{\mathcal{R}}-$ of endofunctors of $\Mod_{\mathcal{R}}(\Prl_{\st})$; here, $\mathcal{E}^{\vee}\simeq \Fun^{\L}_{\mathcal{R}}(\mathcal{E},\mathcal{R})$, and $\mathcal{E}^{\vee}$ is dualizable by dualizability of $\mathcal{E}$. In particular, the functor $\Fun^{\L}_{\mathcal{R}}(\mathcal{E},-)$ preserves dualizability and right adjointability of morphisms, and hence induces a functor $\Fun^{\L}_{\mathcal{R}}(\mathcal{E},-)\simeq \mathcal{E}^{\vee}\otimes_{\mathcal{R}}-:\Mod_{\mathcal{R}}(\Prl_{\st})^{\dual}\to \Mod_{\mathcal{R}}(\Prl_{\st})^{\dual}$. By Lemma \ref{lem:intflat}, this functor preserves fiber-cofiber sequences of $\Mod_{\mathcal{R}}(\Prl_{\st})^{\dual}$. 
\end{proof}
\end{lemma}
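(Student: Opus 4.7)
My plan is to reduce the statement to Lemma \ref{lem:intflat} by identifying the functor $\Fun^{\L}_{\mathcal{R}}(\mathcal{E},-)$ with tensoring against the dual object $\mathcal{E}^{\vee}$. Since $\Mod_{\mathcal{R}}(\Prl_{\st})$ is closed symmetric monoidal and $\mathcal{E}$ is a dualizable object therein, standard formal properties of dualizable objects in closed symmetric monoidal $\infty$-categories yield a natural equivalence
\begin{equation*}
\Fun^{\L}_{\mathcal{R}}(\mathcal{E},-)\simeq \mathcal{E}^{\vee}\otimes_{\mathcal{R}}-
\end{equation*}
of endofunctors of $\Mod_{\mathcal{R}}(\Prl_{\st})$, where $\mathcal{E}^{\vee}:=\Fun^{\L}_{\mathcal{R}}(\mathcal{E},\mathcal{R})$ is itself dualizable (with double dual equivalent to $\mathcal{E}$).

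With that identification in hand, I would verify that the functor $\mathcal{E}^{\vee}\otimes_{\mathcal{R}}-$ actually restricts to an endofunctor of $\Mod_{\mathcal{R}}(\Prl_{\st})^{\dual}$. Closure of dualizable objects under tensor products takes care of objects, and closure of right adjointable morphisms under tensoring with a fixed object, which holds because $\mathcal{E}^{\vee}\otimes_{\mathcal{R}}-$ is part of the underlying functor of an $(\infty,2)$-functor on $\Mod_{\mathcal{R}}(\mathbf{Pr}^{\L}_{\st})$, takes care of morphisms. Hence the tensoring functor defines a morphism in $\Fun(\Mod_{\mathcal{R}}(\Prl_{\st})^{\dual},\Mod_{\mathcal{R}}(\Prl_{\st})^{\dual})$, and by naturality the equivalence above persists at the level of $\Mod_{\mathcal{R}}(\Prl_{\st})^{\dual}$.

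The conclusion then follows by applying Lemma \ref{lem:intflat}(2) to the dualizable object $\mathcal{E}^{\vee}\in \Mod_{\mathcal{R}}(\Prl_{\st})^{\dual}$ and the given fiber-cofiber sequence $\mathcal{A}\to\mathcal{B}\to\mathcal{C}$: tensoring with $\mathcal{E}^{\vee}$ sends the sequence to a fiber-cofiber sequence in $\Mod_{\mathcal{R}}(\Prl_{\st})^{\dual}$, which by the equivalence above is precisely $\Fun^{\L}_{\mathcal{R}}(\mathcal{E},\mathcal{A})\to\Fun^{\L}_{\mathcal{R}}(\mathcal{E},\mathcal{B})\to\Fun^{\L}_{\mathcal{R}}(\mathcal{E},\mathcal{C})$. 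The only place requiring any care is ensuring that the identification with $\mathcal{E}^{\vee}\otimes_{\mathcal{R}}-$ is indeed well-defined at the level of dualizable categories and right adjointable morphisms, but this is immediate from the $(\infty,2)$-functoriality of the tensor product, as already exploited in the proof of Lemma \ref{lem:intflat}(1).
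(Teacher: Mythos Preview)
Your proposal is correct and follows essentially the same approach as the paper's proof: identify $\Fun^{\L}_{\mathcal{R}}(\mathcal{E},-)$ with $\mathcal{E}^{\vee}\otimes_{\mathcal{R}}-$ via dualizability of $\mathcal{E}$, note that this restricts to an endofunctor of $\Mod_{\mathcal{R}}(\Prl_{\st})^{\dual}$, and invoke Lemma~\ref{lem:intflat}.
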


We also observe the following form of 'octahedron axiom' in $\Mod_{\mathcal{R}}(\Prl_{\st})^{\dual}$:

\begin{lemma}\label{lem:octahedron}
Let $\mathcal{R}\in\CAlg^{\rig}_{\Sp}$. Suppose that we are given morphisms $f:\mathcal{A}\hookrightarrow\mathcal{B}$ and $g:\mathcal{B}\hookrightarrow\mathcal{C}$ of $\Mod_{\mathcal{R}}(\Prl_{\st})^{\dual}$ which are fully faithful as functors. Denote $\mathcal{A}\xrightarrow{f}\mathcal{B}\to\mathcal{B}/\mathcal{A}$, $\mathcal{B}\xrightarrow{g}\mathcal{C}\to\mathcal{C}/\mathcal{B}$ and $\mathcal{A}\xrightarrow{g\circ f}\mathcal{C}\to\mathcal{C}/\mathcal{A}$ for the associated fiber-cofiber sequences of $\Mod_{\mathcal{R}}(\Prl_{\st})^{\dual}$. Then, the natural sequence $\mathcal{B}/\mathcal{A}\xrightarrow{\overline{g}}\mathcal{C}/\mathcal{A}\to\mathcal{C}/\mathcal{B}$ is a fiber-cofiber sequence of $\Mod_{\mathcal{R}}(\Prl_{\st})^{\dual}$. 
\begin{proof}
First, the fact that the sequence $\mathcal{B}/\mathcal{A}\xrightarrow{\overline{g}}\mathcal{C}/\mathcal{A}\to\mathcal{C}/\mathcal{B}$ is a cofiber sequence follows immediately from the diagram
\begin{equation*}
\begin{tikzcd}
\mathcal{A} \arrow[r, "f"] \arrow[d] & \mathcal{B} \arrow[r, "g"] \arrow[d] & \mathcal{C} \arrow[d] \\
0 \arrow[r] & \mathcal{B}/\mathcal{A} \arrow[r, "\overline{g}"] \arrow[d] & \mathcal{C}/\mathcal{A} \arrow[d] \\
 & 0 \arrow[r] &\mathcal{C}/\mathcal{B};
\end{tikzcd}
\end{equation*}
since the upper left square and the upper outer rectangle are pushout squares, the upper right square is a pushout square, and since the right hand side outer rectangle is a pushout square, the lower square is a pushout square. \\
\indent By Lemma \ref{lem:fibcofibseq}, it remains to check that the functor $\mathcal{B}/\mathcal{A}\xrightarrow{\overline{g}}\mathcal{C}/\mathcal{A}$ is fully faithful. For the sake of convenience, let us write $h = g\circ f$. Consider the diagram
\begin{equation*}
\begin{tikzcd}
\mathcal{A} \arrow[r,"f", shift left=1] \arrow[d, equal] & \mathcal{B} \arrow[r, shift left=1] \arrow[l, "f^{\R}", shift left=1] \arrow[d, "g"', shift right=1] & \mathcal{B}/\mathcal{A}\simeq\fib(f^{\R}) \arrow[l, hook, shift left=1] \arrow[d, "\overline{g}"', shift right=1] \\
\mathcal{A} \arrow[r, "h", shift left=1] & \mathcal{C} \arrow[l, "h^{\R}", shift left=1] \arrow[r, shift left=1] \arrow[u, "g^{\R}"', shift right=1]& \mathcal{C}/\mathcal{A}\simeq\fib(h^{\R}). \arrow[u, "\overline{g^{\R}}"', shift right=1] \arrow[l, hook, shift left=1]
\end{tikzcd}
\end{equation*}
Here, among the double arrows, left or upper arrows are left adjoints. By construction, a right adjoint of $\overline{g}$ is equivalent to the morphism $\overline{g^{\R}}:\fib(h^{\R})\to\fib(f^{\R})$ of $\Mod_{\mathcal{R}}(\Prl_{\st})^{\dual}$ induced from $g^{\R}$ by restrictions. We have to check that the counit for $\overline{g}\dashv\overline{g^{\R}}$ is an equivalence, i.e., equivalently, $\overline{g}\circ\overline{g^{\R}}\simeq id$. This follows from analyzing the adjunctions involved in the diagram; namely, we compute
\begin{align*}
\overline{g}\circ\overline{g^{\R}} &\simeq \left(\overline{g}\circ(\mathcal{B}\to\mathcal{B}/\mathcal{A})\right)\circ\left((\fib(f^{\R})\hookrightarrow\mathcal{B})\circ\overline{g^{\R}}\right)\\
& \simeq \left((\mathcal{C}\to\mathcal{C}/\mathcal{B})\circ g\right)\circ\left(g^{\R}\circ(\fib(h^{\R})\hookrightarrow\mathcal{C})\right)\\
& \simeq (\mathcal{C}\to\mathcal{C}/\mathcal{B})\circ(\fib(h^{\R})\hookrightarrow\mathcal{C}) \simeq id,
\end{align*}
verifying the desired fully faithfulness of $\overline{g}$. 
\end{proof}
\end{lemma}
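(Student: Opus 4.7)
The plan is to apply Lemma \ref{lem:fibcofibseq}(1), which reduces the claim to two sub-assertions: the sequence $\mathcal{B}/\mathcal{A}\to\mathcal{C}/\mathcal{A}\to\mathcal{C}/\mathcal{B}$ is a cofiber sequence in $\Mod_{\mathcal{R}}(\Prl_{\st})^{\dual}$, and the first morphism $\overline{g}$ is fully faithful as a functor.

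For the cofiber sequence assertion, I would arrange a $3\times 3$ commutative diagram whose rows are $\mathcal{A}\xrightarrow{f}\mathcal{B}\xrightarrow{g}\mathcal{C}$, $0\to\mathcal{B}/\mathcal{A}\xrightarrow{\overline{g}}\mathcal{C}/\mathcal{A}$, and $0\to 0\to\mathcal{C}/\mathcal{B}$. The upper-left square (with corners $\mathcal{A},\mathcal{B},0,\mathcal{B}/\mathcal{A}$) and the upper $2\times 3$ rectangle (with corners $\mathcal{A},\mathcal{C},0,\mathcal{C}/\mathcal{A}$) are pushouts by the definition of the quotients; pushout-pasting then gives that the upper-right square (with corners $\mathcal{B},\mathcal{C},\mathcal{B}/\mathcal{A},\mathcal{C}/\mathcal{A}$) is a pushout. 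Similarly, the right-column $3\times 2$ rectangle (with corners $\mathcal{B},\mathcal{C},0,\mathcal{C}/\mathcal{B}$) is a pushout by definition of $\mathcal{C}/\mathcal{B}$, and pasting with the previous pushout yields that the lower-right square (with corners $\mathcal{B}/\mathcal{A},\mathcal{C}/\mathcal{A},0,\mathcal{C}/\mathcal{B}$) is a pushout, producing the desired cofiber sequence.

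For fully faithfulness of $\overline{g}$, I would use the identifications $\mathcal{B}/\mathcal{A}\simeq\fib(f^{\R})\hookrightarrow\mathcal{B}$ and $\mathcal{C}/\mathcal{A}\simeq\fib(h^{\R})\hookrightarrow\mathcal{C}$ supplied by Lemma \ref{lem:fibcofibseq} and Remark \ref{rem:fibcofibseqnondual}, where $h:=g\circ f$. The composite $\mathcal{B}/\mathcal{A}\hookrightarrow\mathcal{B}\xrightarrow{g}\mathcal{C}$ is fully faithful as a composition of fully faithful maps. Moreover, for $x\in\fib(f^{\R})$, fully faithfulness of $g$ gives $g^{\R}\circ g\simeq id_{\mathcal{B}}$, hence $h^{\R}(g(x))\simeq f^{\R}(g^{\R}(g(x)))\simeq f^{\R}(x)\simeq 0$, so this composite factors through $\fib(h^{\R})\simeq\mathcal{C}/\mathcal{A}$. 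By the universal property of the quotient $\mathcal{B}/\mathcal{A}$, the resulting factorization agrees with $\overline{g}$. Since the total composite $\mathcal{B}/\mathcal{A}\to\mathcal{C}$ and the inclusion $\mathcal{C}/\mathcal{A}\hookrightarrow\mathcal{C}$ are both fully faithful, $\overline{g}$ is fully faithful by two-out-of-three.

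The main delicate step is matching the factorization through $\fib(h^{\R})$ with the canonical map $\overline{g}$ induced via the universal property; this requires careful bookkeeping of the adjunctions involved, but once established, fully faithfulness follows formally from two-out-of-three. An alternative would be to work directly with the right adjoint $\overline{g^{\R}}$ of $\overline{g}$, obtained by restricting $g^{\R}$ to $\fib(h^{\R})$, and verify that the unit of the adjunction $\overline{g}\dashv\overline{g^{\R}}$ is an equivalence; this reduces to the analogous statement for the original adjunction $g\dashv g^{\R}$.
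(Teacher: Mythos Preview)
Your argument is correct and follows essentially the same strategy as the paper. The cofiber-sequence step via pushout pasting is identical; for fully faithfulness, both proofs rest on the identifications $\mathcal{B}/\mathcal{A}\simeq\fib(f^{\R})$ and $\mathcal{C}/\mathcal{A}\simeq\fib(h^{\R})$, with the paper concluding via a direct computation with the adjunction $\overline{g}\dashv\overline{g^{\R}}$ (your stated alternative) rather than your two-out-of-three argument.
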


We continue to explore how certain useful properties and constructions associated with dualizable presentable stable categories remain valid over any rigid bases. First, we observe the following:

\begin{lemma}\label{lem:RmodInd}
Let $\mathcal{R}\in\CAlg^{\rig}_{\Sp}$ and let $\mathcal{C}\in\Mod_{\mathcal{R}}(\Prl_{\st})$. Let $\kappa$ and $\lambda$ be regular cardinals. Suppose that $\kappa>\omega$ and that both $\mathcal{R}$ and $\mathcal{C}$ are $\kappa$-compactly generated. \\
(1) $\mathcal{C}^{\kappa}$ is left-tensored over $\mathcal{R}^{\kappa}$, and the left module structure is obtained as a restriction of that of $\mathcal{C}$ over $\mathcal{R}$.  \\
(2) $\kappa$-compact objects of $\mathcal{C}$ are precisely the objects $x\in\mathcal{C}$ such that the functor $\intmap_{\mathcal{C}/\mathcal{R}}(x,-):\mathcal{C}\to\mathcal{R}$ preserves small $\kappa$-filtered colimits. \\
(3) $\Ind_{\lambda}(\mathcal{C}^{\kappa})$ is left-tensored over $\Ind_{\lambda}(\mathcal{R}^{\kappa})$; the structure maps are compatible with the Yoneda embeddings and preserve small $\lambda$-filtered colimits separately on each variables. Moreover, when $\lambda = \omega$, the structure maps preserve small colimits separately on each variables due to the fact that the structure maps for the left $\mathcal{R}^{\kappa}$-module $\mathcal{C}^{\kappa}$ structure preserve finite colimits. 
\begin{remark}
Similarly, $\Ind(\mathcal{C})$ is left-tensored over $\Ind(\mathcal{R})$; the structure maps are compatible with the Yoneda embeddings and preserve small filtered colimits separately on each variables. Here, $\Ind(\mathcal{C}) = \Ind_{\omega}(\mathcal{C})$, as usual, is the full subcategory of $\Fun(\mathcal{C}^{\op}, \Ani)$ generated under small filtered colimits by images of the Yoneda embedding.
\end{remark}
\begin{proof}
(1) It suffices to check that the structure maps $\otimes:\mathcal{R}\times\mathcal{R}\to\mathcal{R}$ and $\otimes:\mathcal{R}\times\mathcal{C}\to\mathcal{C}$ send pairs of $\kappa$-compact objects to $\kappa$-compact objects; the case of the former can be viewed as a special case of the latter, so let us check the latter claim. Denote $m:\mathcal{R}\otimes\mathcal{R}\to\mathcal{R}$ and $s:\mathcal{R}\otimes\mathcal{C}\to\mathcal{C}$ for the morphisms in $\Prl_{\st}$ induced from the left $\mathcal{R}$-module structure maps for $\mathcal{C}$; by rigidity assumption, $m$ is right adjointable in $\Prl_{\st}$, whose right adjoint is denoted by $m^{\R}$. The morphism $s:\mathcal{R}\otimes\mathcal{C}\to\mathcal{C}$ is also right adjointable in $\Prl_{\st}$, where the right adjoint takes the form $\mathcal{C}\simeq \Sp\otimes\mathcal{C}\to\mathcal{R}\otimes\mathcal{C}\xrightarrow{m^{\R}\otimes id_{\mathcal{C}}}\mathcal{R}\otimes\mathcal{R}\otimes\mathcal{C}\xrightarrow{id_{\mathcal{R}}\otimes s}\mathcal{R}\otimes\mathcal{C}$, cf. \cite[Lem. 9.3.2]{gr1}. Thus, $s$ induces a functor $(\mathcal{R}\otimes\mathcal{C})^{\kappa}\to\mathcal{C}^{\kappa}$ upon restriction. On the other hand, the functor $\mathcal{R}^{\kappa}\times\mathcal{C}^{\kappa}\to\mathcal{R}\times\mathcal{C}\to\mathcal{R}\otimes\mathcal{C}$ preserves $\kappa$-small colimits separately on each variables, and hence factors through $\mathcal{R}^{\kappa}\otimes\mathcal{C}^{\kappa}$. The induced map $\mathcal{R}^{\kappa}\otimes\mathcal{C}^{\kappa}\to\mathcal{R}\otimes\mathcal{C}$ is in $\Cat^{\rex(\kappa)}_{\st}$, so this map further induces a small colimit preserving functor $\Ind_{\kappa}(\mathcal{R}^{\kappa}\otimes\mathcal{C}^{\kappa})\to\mathcal{R}\otimes\mathcal{C}$. Now, by $\kappa$-compact generation assumption on $\mathcal{R}$ and $\mathcal{C}$, we can identify the inclusion functors $\mathcal{R}^{\kappa}\to\mathcal{R}$ and $\mathcal{C}^{\kappa}\to\mathcal{C}$ with the Yoneda embeddings $\mathcal{R}^{\kappa}\to\Ind_{\kappa}(\mathcal{R}^{\kappa})$ and $\mathcal{C}^{\kappa}\to\Ind_{\kappa}(\mathcal{C}^{\kappa})$, and from this we deduce that the functor $\Ind_{\kappa}(\mathcal{R}^{\kappa}\otimes\mathcal{C}^{\kappa})\to\mathcal{R}\otimes\mathcal{C}\simeq \Ind_{\kappa}(\mathcal{R}^{\kappa})\otimes\Ind_{\kappa}(\mathcal{C}^{\kappa})$ is an equivalence, from the symmetric monoidal equivalence $\Ind_{\kappa}:\Cat^{\rex(\kappa)}_{\st}\simeq\Prl_{\st,\kappa}$, cf. \cite[Prop. 5.5.7.10]{htt} and \cite[4.8.1]{ha}. In particular, the functor $\mathcal{R}\times\mathcal{C}\to\mathcal{R}\otimes\mathcal{C}$ induces $\mathcal{R}^{\kappa}\times\mathcal{C}^{\kappa}\to(\mathcal{R}\otimes\mathcal{C})^{\kappa}$ upon restriction. Thus, we know the structure map induces $\mathcal{R}^{\kappa}\times\mathcal{C}^{\kappa}\to(\mathcal{R}\otimes\mathcal{C})^{\kappa}\to\mathcal{C}^{\kappa}$ on $\kappa$-compact objects as desired. \\
(2) Fix $x\in\mathcal{C}$. If $\intmap_{\mathcal{C}/\mathcal{R}}(x,-)$ preserves small $\kappa$-filtered colimits, then from the equivalence $\Map_{\mathcal{C}}(x,-)\simeq \Map_{\mathcal{R}}(\mathbf{1}_{\mathcal{R}},\intmap_{\mathcal{C}/\mathcal{R}}(x,-))$ and $\kappa$-compactness of the unit $\mathbf{1}_{\mathcal{R}}$, we know the $\kappa$-compactness of $x$. For the reverse implication, we need to check that given $x\in\mathcal{C}^{\kappa}$, the natural map $\Map_{\mathcal{R}}(a,\colim_{I}\intmap_{\mathcal{C}/\mathcal{R}}(x, y_{i}))\to \Map_{\mathcal{R}}(a,\intmap_{\mathcal{C}/\mathcal{R}}(x,\colim_{I}y_{i}))$ is an equivalence for all $a\in\mathcal{R}$ and a small $\kappa$-filtered diagram $(y_{i})_{I}$ in $\mathcal{C}$. Due to the $\kappa$-compact generation assumption on $\mathcal{R}$, it suffices to check the case of $a\in\mathcal{R}^{\kappa}$. Then, one has $a\otimes x\in\mathcal{C}^{\kappa}$ from (1), and hence verifies $\Map_{\mathcal{R}}(a,\intmap_{\mathcal{C}/\mathcal{R}}(x,\colim_{I}y_{i}))\simeq \Map_{\mathcal{C}}(a\otimes x,\colim_{I}y_{i}) \simeq \colim_{I}\Map_{\mathcal{C}}(a\otimes x, y_{i})\simeq \colim_{I}\Map_{\mathcal{R}}(a,\intmap_{\mathcal{C}/\mathcal{R}}(x, y_{i}))\simeq\Map_{\mathcal{R}}(a,\colim_{I}\intmap_{\mathcal{C}/\mathcal{R}}(x,y_{i}))$.\\
(3) The first statement is a consequence of \cite[Prop. 4.8.1.10]{ha}. The second statement can be proved as in the proof of \cite[Cor. 4.8.1.14]{ha}; for convenience, let us repeat the argument here. Let $\mathcal{A} = \mathcal{R}^{\kappa}$ and $\mathcal{M} = \mathcal{C}^{\kappa}$. We check that for $A\in\Ind(\mathcal{A})$, the functor $A\otimes-:\Ind(\mathcal{M})\to\Ind(\mathcal{M})$ preserves all small colimits; the other variable case is proved analogously. The full subcategory $\mathcal{D}$ of $\Ind(\mathcal{A})$ spanned by $A$ such that $A\otimes-$ preserves small colimits is closed under filtered colimits. Thus, it suffices to check $\mathcal{D}$ contains the essential image of the Yoneda embedding $\jmath_{\mathcal{A}}$ for $\mathcal{A}$ to conclude $\mathcal{D} = \Ind(\mathcal{A})$. We need to check $\jmath_{\mathcal{A}}(a)\otimes-$ preserves small colimits for each $a\in\mathcal{A}$, and this statement is equivalent to checking that $\jmath_{\mathcal{A}}(a)\otimes-$ preserves filtered colimits and $(\jmath_{\mathcal{A}}(a)\otimes-)|_{(\Ind(\mathcal{M}))^{\omega}}\simeq (\jmath_{\mathcal{A}}(a)\otimes-)\circ \jmath_{\mathcal{A}}$ preserves finite colimits \cite[Prop. 5.5.1.9]{htt}. These conditions follow from the compatibility of the Yoneda embeddings and the left module structure maps, which in particular gives the diagram
\begin{equation*}
\begin{tikzcd}
\mathcal{A} \arrow[r, "a\otimes-"] \arrow[d, "\jmath_{\mathcal{A}}"'] & \mathcal{M} \arrow[d, "\jmath_{\mathcal{M}}"] \\
\Ind(\mathcal{A}) \arrow[r, "\jmath_{\mathcal{A}}(a)\otimes-"'] & \Ind(\mathcal{M}).
\end{tikzcd}
\end{equation*}
\end{proof}
\end{lemma}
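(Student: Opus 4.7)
The plan is to prove the three parts in sequence, leveraging the rigidity of $\mathcal{R}$ to lift properties from $\mathcal{R}$ to the left $\mathcal{R}$-module structure on $\mathcal{C}$. For part (1), the key observation is that rigidity makes the multiplication $m:\mathcal{R}\otimes\mathcal{R}\to\mathcal{R}$ right adjointable in $\Prl_{\st}$, with right adjoint $m^{\R}\in\Prl_{\st}$. I would first write down an explicit right adjoint of the action map $s:\mathcal{R}\otimes\mathcal{C}\to\mathcal{C}$ as the composite $\mathcal{C}\simeq \Sp\otimes\mathcal{C}\to \mathcal{R}\otimes\mathcal{C}\xrightarrow{m^{\R}\otimes id}\mathcal{R}\otimes\mathcal{R}\otimes\mathcal{C}\xrightarrow{id\otimes s}\mathcal{R}\otimes\mathcal{C}$; each factor preserves small colimits, so the resulting $s^{\R}$ lies in $\Prl_{\st}$ and thus $s$ preserves $\kappa$-compact objects. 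Independently, the universal property of the tensor product combined with the symmetric monoidal equivalence $\Ind_{\kappa}:\Cat^{\rex(\kappa)}_{\st}\simeq\Prl_{\st,\kappa}$ yields a canonical functor $\mathcal{R}^{\kappa}\otimes\mathcal{C}^{\kappa}\to(\mathcal{R}\otimes\mathcal{C})^{\kappa}$, and composing with $s$ gives the desired restriction $\mathcal{R}^{\kappa}\times\mathcal{C}^{\kappa}\to\mathcal{C}^{\kappa}$ of the action.

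For part (2), I would treat the forward implication first: for rigid $\mathcal{R}$ the unit $\mathbf{1}_{\mathcal{R}}$ is compact, hence $\kappa$-compact since $\kappa>\omega$, so the equivalence $\Map_{\mathcal{C}}(x,-)\simeq \Map_{\mathcal{R}}(\mathbf{1}_{\mathcal{R}},\intmap_{\mathcal{C}/\mathcal{R}}(x,-))$ transfers $\kappa$-filtered colimit preservation from $\intmap_{\mathcal{C}/\mathcal{R}}(x,-)$ to $\Map_{\mathcal{C}}(x,-)$. For the reverse direction, I would check the candidate equivalence $\colim_{I}\intmap_{\mathcal{C}/\mathcal{R}}(x,y_{i})\to\intmap_{\mathcal{C}/\mathcal{R}}(x,\colim_{I}y_{i})$ by mapping out of a generating set of objects $a\in\mathcal{R}^{\kappa}$ in $\mathcal{R}$; part (1) shows $a\otimes x\in\mathcal{C}^{\kappa}$, and the tensor-hom adjunction then transfers the $\kappa$-filtered colimit commutation of $\Map_{\mathcal{C}}(a\otimes x,-)$ to the relevant mapping spaces.

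For part (3), the assertion that $\Ind_{\lambda}(\mathcal{C}^{\kappa})$ is left-tensored over $\Ind_{\lambda}(\mathcal{R}^{\kappa})$, with compatibility with the Yoneda embeddings and $\lambda$-filtered colimit preservation in each variable, is an application of \cite[Prop. 4.8.1.10]{ha} to the $\mathcal{R}^{\kappa}$-module structure on $\mathcal{C}^{\kappa}$ established in (1). For the additional claim when $\lambda=\omega$, I would argue as in the proof of \cite[Cor. 4.8.1.14]{ha}: the full subcategory of $\Ind(\mathcal{R}^{\kappa})$ consisting of those $A$ for which $A\otimes-:\Ind(\mathcal{C}^{\kappa})\to\Ind(\mathcal{C}^{\kappa})$ preserves all small colimits is closed under filtered colimits; it thus suffices to verify the claim on representables $\jmath_{\mathcal{R}^{\kappa}}(a)$, and for these, the $\omega$-filtered case is already known and preservation of finite colimits reduces to exactness of $a\otimes-:\mathcal{C}^{\kappa}\to\mathcal{C}^{\kappa}$, which holds by stability of $\mathcal{C}^{\kappa}$.

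The main obstacle I anticipate is in part (1), specifically in producing the factorization $\mathcal{R}^{\kappa}\otimes\mathcal{C}^{\kappa}\to(\mathcal{R}\otimes\mathcal{C})^{\kappa}$ and simultaneously verifying that the action map descends to $\kappa$-compact objects; this genuinely uses rigidity of $\mathcal{R}$ rather than mere $\kappa$-compact generation, since in the latter case the action need not be right adjointable and could fail to preserve $\kappa$-compacts. Once (1) is established, parts (2) and (3) follow fairly formally.
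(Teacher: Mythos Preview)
Your proposal is correct and follows essentially the same approach as the paper: the same explicit right adjoint of the action map via rigidity in (1), the same two-way argument through $\Map_{\mathcal{C}}(x,-)\simeq\Map_{\mathcal{R}}(\mathbf{1}_{\mathcal{R}},\intmap_{\mathcal{C}/\mathcal{R}}(x,-))$ and testing on $a\in\mathcal{R}^{\kappa}$ in (2), and the same appeal to \cite[Prop.~4.8.1.10]{ha} and the filtered-colimit-closure argument of \cite[Cor.~4.8.1.14]{ha} in (3). The only cosmetic difference is that the paper phrases the finite-colimit step in (3) via a commuting square with the Yoneda embeddings rather than invoking exactness of $a\otimes-$ directly, but this amounts to the same observation.
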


\begin{proposition}\label{prop:dblRlinretract}
Let $\mathcal{R}\in\CAlg^{\rig}_{\Sp}$ and let $\mathcal{C}\in\Mod_{\mathcal{R}}(\Prl_{\st})$. Suppose that $\kappa$ is a regular cardinal such that $\kappa\geq\omega_{1}$ and that $\mathcal{C}$ is $\kappa$-compactly generated. Also, write $k\dashv \jmath:\mathcal{C}\to\Ind(\mathcal{C}^{\kappa})$ for the colimit-Yoneda adjunction, cf. \cite[Lem. 2.1.35]{kndual}. Then, $\Ind(\mathcal{C}^{\kappa})$ admits a natural $\mathcal{R}$-module structure rendering the functors $\jmath:\mathcal{C}\to\Ind(\mathcal{C}^{\kappa})$ and $k:\Ind(\mathcal{C}^{\kappa})\to\mathcal{C}$ as $\mathcal{R}$-linear.\\
\indent Suppose moreover that $\mathcal{C}\in\Mod_{\mathcal{R}}(\Prl_{\st})^{\dual}$, so there is a further adjunction $\wjmath\dashv k\dashv \jmath:\mathcal{C}\to\Ind(\mathcal{C}^{\kappa})$, cf. \cite[Proof of Lem. 2.3.18]{kndual}. Then, $\wjmath:\mathcal{C}\to\Ind(\mathcal{C}^{\kappa})$ is also $\mathcal{R}$-linear, and in particular is a morphism in $\Mod_{\mathcal{R}}(\Prl_{\st})^{\dual}$. 
\begin{remark}
By arguing as the proof below, one similarly knows that $\Ind(\mathcal{C})$ admits a natural $\mathcal{R}$-module structure rendering the functors $\jmath:\mathcal{C}\to\Ind(\mathcal{C})$ and $k:\Ind(\mathcal{C})\to\mathcal{C}$, as well as $\wjmath:\mathcal{C}\to\Ind(\mathcal{C})$ when $\mathcal{C}$ is dualizable, as $\mathcal{R}$-linear. 
\end{remark}
\begin{proof}
Let $\imath:\mathcal{C}^{\kappa}\to\mathcal{C}$ be the inclusion functor, which can be identified with the Yoneda embedding $\mathcal{C}^{\kappa}\to\Ind_{\kappa}(\mathcal{C}^{\kappa})$. Also, denote $\jmath^{\omega}:\mathcal{C}^{\kappa}\to\Ind(\mathcal{C}^{\kappa})$ for the Yoneda embedding. By Lemma \ref{lem:RmodInd}, $\Ind(\mathcal{C}^{\kappa})$ admits a left $\Ind(\mathcal{R}^{\kappa})$-module structure and $\jmath^{\omega}$ is compatible with the left tensor structures of the source and the target. Also, note that the left $\mathcal{R}$-module structure on $\mathcal{C}$ is equivalent to the left $\Ind_{\kappa}(\mathcal{R}^{\kappa})$-module structure on $\Ind_{\kappa}(\mathcal{C}^{\kappa})$ obtained via Lemma \ref{lem:RmodInd}.  \\
\indent We check that $\Ind(\mathcal{C}^{\kappa})$ admits a left $\mathcal{R}$-module structure such that the functor $\jmath:\mathcal{C}\to\Ind(\mathcal{C}^{\kappa})$ is $\mathcal{R}$-linear. From the adjunction $\Ind(\imath)\dashv \imath^{\ast} = (-)|_{(\mathcal{C}^{\kappa})^{\op}}:\Ind(\mathcal{C})\to\Ind(\mathcal{C}^{\kappa})$ and the fully faithfulness of $\Ind(\imath)$, we know $id\simeq \imath^{\ast}\circ\Ind(\imath)$. Also, $\jmath$ is equivalent to the composition $\mathcal{C}\xrightarrow{\widetilde{\jmath}}\Ind(\mathcal{C})\xrightarrow{\imath^{\ast}}\Ind(\mathcal{C}^{\kappa})$, where $\widetilde{\jmath}$ is the temporary notation for the Yoneda embedding for $\mathcal{C}$; from this, we know $\jmath\simeq (x\mapsto \Map_{\mathcal{C}}(-,x)|_{(\mathcal{C}^{\kappa})^{\op}})$ preserves small $\kappa$-filtered colimits. Now, from the diagram
\begin{equation*}
\begin{tikzcd}
\mathcal{C}^{\kappa} \arrow[r, "\jmath^{\omega}"] \arrow[d, "\imath"'] & \Ind(\mathcal{C}^{\kappa}) \arrow[d, "\Ind(\imath)"] \arrow[rd, "id"] & \\
\mathcal{C} \arrow[r, "\widetilde{\jmath}"] \arrow[rr, "\jmath"', bend right=20]& \Ind(\mathcal{C}) \arrow[r, "\imath^{\ast}"] & \Ind(\mathcal{C}^{\kappa}),
\end{tikzcd}
\end{equation*} 
we know $\jmath\circ\imath\simeq \jmath^{\omega}$. Since $\jmath^{\omega}$ extends uniquely along $\imath$ to a $\kappa$-filtered colimit preserving functor compatible with the left tensor structures of the source and the target \cite[Prop. 4.8.1.10 (4)]{ha}, we know $\jmath:\mathcal{C}\to\Ind(\mathcal{C}^{\kappa})$ satisfies this compatibility condition. By restriction along the natural functor $\mathcal{R}\to\Ind(\mathcal{R}^{\kappa})$, i.e., the one equivalent to $\jmath$ for the case of $\mathcal{C} = \mathcal{R}$, the left $\Ind(\mathcal{R}^{\kappa})$-module structure of the target $\Ind(\mathcal{C}^{\kappa})$ restricts to a left $\mathcal{R}$-module structure such that $\jmath:\mathcal{C}\to\Ind(\mathcal{C}^{\kappa})$ is $\mathcal{R}$-linear with respect to this left tensor structure. \\
\indent The remaining statements can be checked as follows. Since $\mathcal{C}^{\kappa}$ is essentially small by accessibility of $\mathcal{C}$ \cite[Rem. 5.4.2.11]{htt}, we know $\Ind(\mathcal{C}^{\kappa})\in\Mod_{\Mod_{R}}(\Prl_{\st})^{\dual}$. The oplax $\mathcal{R}$-linear structure on the functor $k:\Ind(\mathcal{C}^{\kappa})\to\mathcal{C}$ left adjoint to $\jmath$ is $\mathcal{R}$-linear by (local-)rigidity of $\mathcal{R}$ \cite[Lem. 3.53]{ramzi}. Applying the same argument once again to the functor $\wjmath:\mathcal{C}\to\Ind(\mathcal{C}^{\kappa})$ left adjoint to $k$, when $\mathcal{C}$ is dualizable, we know $\wjmath$ is also $\mathcal{R}$-linear. 
\end{proof}
\end{proposition}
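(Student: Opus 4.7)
The plan is to first construct the $\mathcal{R}$-module structure on $\Ind(\mathcal{C}^{\kappa})$ by invoking Lemma \ref{lem:RmodInd}(3) with $\lambda=\omega$: this equips $\Ind(\mathcal{C}^{\kappa})$ with a left $\Ind(\mathcal{R}^{\kappa})$-module structure whose action preserves small colimits separately in each variable, and for which the Yoneda embedding $\jmath^{\omega}:\mathcal{C}^{\kappa}\to\Ind(\mathcal{C}^{\kappa})$ is compatible with the $\mathcal{R}^{\kappa}$-action on the source. I would then restrict scalars along the natural map $\jmath_{\mathcal{R}}:\mathcal{R}\to\Ind(\mathcal{R}^{\kappa})$, i.e.\ the instance of $\jmath$ for $\mathcal{C}=\mathcal{R}$, which is lax symmetric monoidal as the right adjoint of the symmetric monoidal colimit functor, to obtain a left $\mathcal{R}$-module structure on $\Ind(\mathcal{C}^{\kappa})$ with colimit-preserving action.

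To verify $\jmath:\mathcal{C}\to\Ind(\mathcal{C}^{\kappa})$ is $\mathcal{R}$-linear, the first step is to observe that $\jmath$ preserves $\kappa$-filtered colimits. This follows from the factorization $\jmath\simeq \imath^{\ast}\circ\widetilde{\jmath}$, where $\widetilde{\jmath}:\mathcal{C}\to\Ind(\mathcal{C})$ is the Yoneda embedding and $\imath^{\ast}$ is the right adjoint to the fully faithful $\Ind(\imath):\Ind(\mathcal{C}^{\kappa})\hookrightarrow\Ind(\mathcal{C})$. Precomposition $\jmath\circ\imath$ recovers $\jmath^{\omega}$, so via the universal property of $\mathcal{C}\simeq \Ind_{\kappa}(\mathcal{C}^{\kappa})$ for $\kappa$-filtered colimit preserving and module-compatible extensions \cite[Prop.~4.8.1.10]{ha}, $\jmath$ is the unique such extension of the $\mathcal{R}^{\kappa}$-linear $\jmath^{\omega}$, and hence is linear over $\Ind_{\kappa}(\mathcal{R}^{\kappa})=\mathcal{R}$ once we restrict scalars along $\jmath_{\mathcal{R}}$.

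The linearity of $k$ will then follow from rigidity: the mate construction endows $k$, as the left adjoint of the $\mathcal{R}$-linear $\jmath$, with a canonical oplax $\mathcal{R}$-linear structure, and by \cite[Lem.~3.53]{ramzi} the rigidity of $\mathcal{R}$ upgrades any such oplax structure to a genuine $\mathcal{R}$-linear one. When $\mathcal{C}$ is dualizable, essential smallness of $\mathcal{C}^{\kappa}$ places $\Ind(\mathcal{C}^{\kappa})$ in $\Pr^{\L,\dual}_{\st}$, and $k$ admits the further left adjoint $\wjmath$; applying the same rigidity-based upgrade to $\wjmath$, viewed as the mate of the $\mathcal{R}$-linear $k$, yields its $\mathcal{R}$-linearity and thereby places $\wjmath$ in $\Mod_{\mathcal{R}}(\Prl_{\st})^{\dual}$.

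The subtlest step I expect is the identification underlying the universal-property argument: one must match the left $\Ind(\mathcal{R}^{\kappa})$-module structure on $\Ind(\mathcal{C}^{\kappa})$ produced by Lemma \ref{lem:RmodInd}(3) with the one obtained by uniquely extending the $\mathcal{R}^{\kappa}$-action on $\mathcal{C}^{\kappa}$ along the Yoneda embedding in a module-compatible way, which is an application of the uniqueness clause of \cite[Prop.~4.8.1.10]{ha} at the level of modules. A secondary but essential subtlety is the appeal to rigidity to pass from oplax to strict linearity for $k$ and $\wjmath$; this step is trivial when $\mathcal{R}=\Sp$ but is the crucial ingredient for the relative statement.
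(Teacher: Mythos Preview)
Your proposal is correct and follows essentially the same route as the paper: both use Lemma~\ref{lem:RmodInd}(3) to obtain the $\Ind(\mathcal{R}^{\kappa})$-module structure on $\Ind(\mathcal{C}^{\kappa})$, establish that $\jmath$ preserves $\kappa$-filtered colimits via the factorization $\jmath\simeq\imath^{\ast}\circ\widetilde{\jmath}$, invoke the universal property \cite[Prop.~4.8.1.10]{ha} to identify $\jmath$ as the unique module-compatible extension of $\jmath^{\omega}$, restrict scalars along $\mathcal{R}\to\Ind(\mathcal{R}^{\kappa})$, and then apply \cite[Lem.~3.53]{ramzi} twice to upgrade the oplax structures on $k$ and $\wjmath$ to genuine $\mathcal{R}$-linear ones. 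The only cosmetic difference is that you front-load the restriction-of-scalars step, while the paper performs it after establishing the module compatibility of $\jmath$; the content is identical.
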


\begin{corollary}\label{cor:dblRlinretractmonoidal}
Let $\mathcal{R}\in\CAlg(\Prl_{\st})$ be rigid. \\
(1) Let $\mathcal{C}\in\Mod_{\mathcal{R}}(\Prl_{\st})$. Then, $\mathcal{C}$ is an object of $\Mod_{\mathcal{R}}(\Prl_{\st})^{\dual}$ if and only if there is a right adjointable morphism $\mathcal{C}\xhookrightarrow{\imath}\mathcal{C}'$ in $\Mod_{\mathcal{R}}(\Prl_{\st})$ into a compactly generated $\mathcal{C}'$, whose underlying functor is fully faithful.\\
(2) Let $f:\mathcal{C}\to\mathcal{D}$ be a morphism in $\Mod_{\mathcal{R}}(\Prl_{\st})$, and suppose $\mathcal{C},\mathcal{D}\in\Mod_{\mathcal{R}}(\Prl_{\st})^{\dual}$. Then, $f$ is a morphism of $\Mod_{\mathcal{R}}(\Prl_{\st})^{\dual}$ if and only if there is a diagram
\begin{equation*}
\begin{tikzcd}
\mathcal{C} \arrow[r, "\imath", hook] \arrow[d, "f"'] & \mathcal{C}' \arrow[d, "f'"]\\
\mathcal{D} \arrow[r, "\imath'"', hook] & \mathcal{D}'
\end{tikzcd}
\end{equation*}
in $\Mod_{\mathcal{R}}(\Prl_{\st})$ with $\mathcal{C}'$ and $\mathcal{D}'$ being compactly generated, $f'$ preserving compact objects, and $\imath$ and $\imath'$ are right adjointable and their underlying functors are fully faithful.\\
(3) The tensor product $\otimes_{\mathcal{R}}$ of the symmetric monoidal structure on $\Mod_{\mathcal{R}}(\Prl_{\st})^{\dual}$, restricted from that of $\Mod_{\mathcal{R}}(\Prl_{\st})$, satisfies the property that for any $\mathcal{C},\mathcal{D},\mathcal{E}\in\Mod_{\mathcal{R}}(\Prl_{\st})^{\dual}$, composition with $\mathcal{C}\times\mathcal{D}\to\mathcal{C}\otimes_{\mathcal{R}}\mathcal{D}$ defines an equivalence $\Fun^{i\L}_{\mathcal{R}}(\mathcal{C}\otimes_{\mathcal{R}}\mathcal{D},\mathcal{E})\simeq\Fun'_{\mathcal{R}}(\mathcal{C}\times\mathcal{D},\mathcal{E})$; here the right hand side is the full subcategory of $\Fun(\mathcal{C}\times\mathcal{D},\mathcal{E})$ spanned by functors which are separately $\mathcal{R}$-linear and preserve small colimits on each variables and send pairs of compact morphisms to compact morphisms. 
\begin{proof}
For $\mathcal{R} = \Sp$, (1) and (2) are \cite[Lem. 2.12.1]{kndual}, and (3) is \cite[Prop. 2.12.2]{kndual}; let us explain the general case using these facts. By rigidity of $\mathcal{R}$, dualizability of $\mathcal{C}\in\Mod_{\mathcal{R}}(\Prl_{\st})$ can be checked in $\Prl_{\st}$, and the if direction of (1) is immediate. The only if direction of (1) follows from Proposition \ref{prop:dblRlinretract}. Similarly, the if direction of (2) follows from the case of $\mathcal{R}=\Sp$ and Remark \ref{rem:dualmodulesequiv}, while the only if direction of (2) follows from Proposition \ref{prop:dblRlinretract}, by taking $f'$ to be the functor $\Ind(\mathcal{C}^{\omega_{1}})\to\Ind(\mathcal{D}^{\omega_{1}})$ induced from $f$ and the horizontal arrows to be $\wjmath_{\mathcal{C}}$ and $\wjmath_{\mathcal{D}}$. \\
\indent It remains to check (3). First, from the natural equivalence $\Mod_{\mathcal{R}}(\Prl_{\st})^{\dual}\simeq \Mod_{\mathcal{R}}(\Pr_{\st}^{\L,\dual})$ \cite[Cor. 3.40]{ramzi} and the case of $\mathcal{R} = \Sp$, we know the symmetric monoidal structure on $\Mod_{\mathcal{R}}(\Prl_{\st})$, whose tensor product is given by the relative tensor product in $\Prl_{\st}$ over $\mathcal{R}$, restricts to the symmetric monoidal structure on $\Mod_{\mathcal{R}}(\Prl_{\st})^{\dual}$. In particular, the tensor product of this symmetric monoidal structure is the relative tensor product in $\Pr^{\L,\dual}_{\st}$ over $\mathcal{R}$. For the universal property of $\otimes_{\mathcal{R}}$, note that for any $\mathcal{E}\in\Mod_{\mathcal{R}}(\Prl_{\st})^{\dual}$, composition with $\mathcal{C}\times\mathcal{D}\to\mathcal{C}\otimes_{\mathcal{R}}\mathcal{D}$ induces a natural equivalence $\Fun^{\L}_{\mathcal{R}}(\mathcal{C}\otimes_{\mathcal{R}}\mathcal{D},\mathcal{E})\simeq\Fun^{bi\L}_{\mathcal{R}}(\mathcal{C}\times\mathcal{D},\mathcal{E})$, where the right hand side is the full subcategory of $\Fun(\mathcal{C}\times\mathcal{D},\mathcal{E})$ spanned by functors separately $\mathcal{R}$-linear and preserve small colimits on each variables. The latter equivalence induces a functor $\Fun^{i\L}_{\mathcal{R}}(\mathcal{C}\otimes_{\mathcal{R}}\mathcal{D},\mathcal{E})\to\Fun'_{\mathcal{R}}(\mathcal{C}\times\mathcal{D},\mathcal{E})$ on full subcategories; in fact, we have a natural equivalence $\Fun^{i\L}(\mathcal{C}\otimes\mathcal{D},\mathcal{E})\simeq\Fun'_{\Sp}(\mathcal{C}\times\mathcal{D},\mathcal{E})$ induced from the composition with $\mathcal{C}\times\mathcal{D}\to\mathcal{C}\otimes\mathcal{D}$ from the case of $\mathcal{R} = \Sp$ which in particular says that $\mathcal{C}\times\mathcal{D}\to\mathcal{C}\otimes\mathcal{D}$ maps pairs of compact maps to compact maps, while we also know the natural functor $\mathcal{C}\otimes\mathcal{D}\xrightarrow{c}\mathcal{C}\otimes_{\mathcal{R}}\mathcal{D}$ is right adjointable in $\Prl_{\st}$ \cite[Prop. 8.7.2]{gr1} by rigidity of $\mathcal{R}$, and hence in particular sends compact maps to compact maps. It remains to check that the induced functor $\Fun^{i\L}_{\mathcal{R}}(\mathcal{C}\otimes_{\mathcal{R}}\mathcal{D},\mathcal{E})\to\Fun'_{\mathcal{R}}(\mathcal{C}\times\mathcal{D},\mathcal{E})$, which by construction is fully faithful, is essentially surjective. Given an object $\mathcal{C}\times\mathcal{D}\to\mathcal{E}$ of the target, we have a corresponding object $\mathcal{C}\otimes_{\mathcal{R}}\mathcal{D}\xrightarrow{g}\mathcal{E}$ of $\Fun^{\L}_{\mathcal{R}}(\mathcal{C}\otimes_{\mathcal{R}}\mathcal{D},\mathcal{E})$, whose composition $\mathcal{C}\otimes\mathcal{D}\xrightarrow{g\circ c}\mathcal{E}$ with $\mathcal{C}\otimes\mathcal{D}\xrightarrow{c}\mathcal{C}\otimes_{\mathcal{R}}\mathcal{D}$ we know to be in $\Fun^{i\L}(\mathcal{C}\otimes\mathcal{D},\mathcal{E})$ from the aforementioned equivalence for the case of $\mathcal{R}=\Sp$. As $c$ is epi in $\Prl$, i.e., the essential image of the morphism $\mathcal{C}\otimes\mathcal{D}\to\mathcal{C}\otimes_{\mathcal{R}}\mathcal{D}$ generates the target under colimits \cite[Lem. 8.2.6]{gr1}, we equivalently know its right adjoint $\mathcal{C}\otimes_{\mathcal{R}}\mathcal{D}\xrightarrow{c^{\R}}\mathcal{C}\otimes\mathcal{D}$ reflects equivalences. As $c^{\R}$ in addition is a morphism in $\Prl_{\st}$, we know $g^{\R}$ is a morphism in $\Prl_{\st}$ from the fact that $(g\circ c)^{\R}\simeq c^{\R}\circ g^{\R}$ preserves small colimits. In particular, $g$ is an object of $\Fun^{i\L}_{\mathcal{R}}(\mathcal{C}\otimes_{\mathcal{R}}\mathcal{D},\mathcal{E})$ from the equivalence $\Mod_{\mathcal{R}}(\Prl_{\st})^{\dual}\simeq\Mod_{\mathcal{R}}(\Pr^{\L,\dual}_{\st})$ as desired. 
\end{proof}
\end{corollary}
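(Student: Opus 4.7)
The plan is to bootstrap the result from the absolute case $\mathcal{R} = \Sp$ of \cite[Lem. 2.12.1, Prop. 2.12.2]{kndual} using Proposition \ref{prop:dblRlinretract}, which supplies the key $\mathcal{R}$-linear compactly generated resolution of any dualizable $\mathcal{R}$-module category. Throughout, rigidity of $\mathcal{R}$ enters in two ways: via \cite[Cor. 3.40]{ramzi} and Remark \ref{rem:dualmodulesequiv}, dualizability and right adjointability in $\Mod_{\mathcal{R}}(\Prl_{\st})$ can be detected after forgetting to $\Prl_{\st}$; and the natural comparison map $c:\mathcal{C}\otimes\mathcal{D}\to\mathcal{C}\otimes_{\mathcal{R}}\mathcal{D}$ is both right adjointable in $\Prl_{\st}$ and an epimorphism in $\Prl$ by \cite[Prop. 8.7.2, Lem. 8.2.6]{gr1}.

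For (1), the if direction follows since a compactly generated $\mathcal{C}'$ is dualizable in $\Prl_{\st}$ and $\mathcal{C}$ is a retract of $\mathcal{C}'$ in $\Prl_{\st}$ via the fully faithful right adjointable inclusion, so it is dualizable; then the detectability of dualizability upgrades this to the relative setting. The only if direction applies Proposition \ref{prop:dblRlinretract} with $\kappa=\omega_{1}$, yielding the fully faithful right adjointable $\mathcal{R}$-linear embedding $\wjmath:\mathcal{C}\hookrightarrow\Ind(\mathcal{C}^{\omega_{1}})$, whose target is compactly generated since $\mathcal{C}^{\omega_{1}}$ is essentially small by accessibility. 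Part (2) is obtained in the same manner: for the if direction, the absolute case gives right adjointability of the underlying functor of $f$, and its right adjoint is automatically $\mathcal{R}$-linear by Remark \ref{rem:dualmodulesequiv}; for the only if direction, one takes $\mathcal{C}'=\Ind(\mathcal{C}^{\omega_{1}})$, $\mathcal{D}'=\Ind(\mathcal{D}^{\omega_{1}})$, $\imath,\imath'$ the $\wjmath$ maps of Proposition \ref{prop:dblRlinretract}, and $f'$ the Ind-extension of the restriction $f|_{\mathcal{C}^{\omega_{1}}}:\mathcal{C}^{\omega_{1}}\to\mathcal{D}^{\omega_{1}}$.

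Part (3) is the main task. Using the equivalence $\Mod_{\mathcal{R}}(\Prl_{\st})^{\dual}\simeq \Mod_{\mathcal{R}}(\Pr^{\L,\dual}_{\st})$ together with the absolute analogue, the tensor product on $\Mod_{\mathcal{R}}(\Prl_{\st})$ restricts to a symmetric monoidal structure on the dualizable subcategory whose tensor product agrees with the relative tensor product $\otimes_{\mathcal{R}}$. The universal property of $\otimes_{\mathcal{R}}$ gives
\begin{equation*}
\Fun^{\L}_{\mathcal{R}}(\mathcal{C}\otimes_{\mathcal{R}}\mathcal{D},\mathcal{E})\simeq \Fun^{bi\L}_{\mathcal{R}}(\mathcal{C}\times\mathcal{D},\mathcal{E}),
\end{equation*}
which induces a fully faithful functor on the relevant full subcategories $\Fun^{i\L}_{\mathcal{R}}(\mathcal{C}\otimes_{\mathcal{R}}\mathcal{D},\mathcal{E})\to\Fun'_{\mathcal{R}}(\mathcal{C}\times\mathcal{D},\mathcal{E})$, since the composition $\mathcal{C}\times\mathcal{D}\to\mathcal{C}\otimes\mathcal{D}\xrightarrow{c}\mathcal{C}\otimes_{\mathcal{R}}\mathcal{D}$ sends pairs of compact maps to compact maps (the first factor by the absolute case, the second by right adjointability of $c$).

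The main obstacle is essential surjectivity: given $g:\mathcal{C}\otimes_{\mathcal{R}}\mathcal{D}\to\mathcal{E}$ whose pre-composition with $\mathcal{C}\times\mathcal{D}\to\mathcal{C}\otimes_{\mathcal{R}}\mathcal{D}$ lies in $\Fun'_{\mathcal{R}}$, one must verify that $g$ itself is internally left adjoint. The absolute equivalence $\Fun^{i\L}(\mathcal{C}\otimes\mathcal{D},\mathcal{E})\simeq \Fun'_{\Sp}(\mathcal{C}\times\mathcal{D},\mathcal{E})$ forces $g\circ c\in\Fun^{i\L}(\mathcal{C}\otimes\mathcal{D},\mathcal{E})$. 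The key transfer step is to use that $c$, being an epimorphism in $\Prl$, has a right adjoint $c^{\R}$ that reflects equivalences and lies in $\Prl_{\st}$; from $(g\circ c)^{\R}\simeq c^{\R}\circ g^{\R}$ preserving small colimits, one concludes $g^{\R}$ also preserves small colimits, so $g\in\Fun^{i\L}_{\mathcal{R}}$ after invoking the equivalence $\Mod_{\mathcal{R}}(\Prl_{\st})^{\dual}\simeq\Mod_{\mathcal{R}}(\Pr^{\L,\dual}_{\st})$ to upgrade the $\mathcal{R}$-linearity to the right adjoint. This detection principle for internal left adjointness along the epi $c$ is the crux of the argument.
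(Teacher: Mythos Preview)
Your proposal is correct and follows essentially the same approach as the paper's proof: parts (1) and (2) are deduced from the absolute case via Proposition~\ref{prop:dblRlinretract} and Remark~\ref{rem:dualmodulesequiv}, and part (3) is handled by the same detection argument along the right adjointable epimorphism $c:\mathcal{C}\otimes\mathcal{D}\to\mathcal{C}\otimes_{\mathcal{R}}\mathcal{D}$, using conservativity and colimit-preservation of $c^{\R}$ to lift internal left adjointness of $g\circ c$ to $g$.
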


Recall that, given a presentable stable $\infty$-category $\mathcal{C}$ and a class $S$ of morphisms in $\mathcal{C}$ which satisfies certain conditions guaranteeing that $S$ behaves as a class of compact morphisms, Clausen's construction \cite{claefim3} provides a terminal dualizable presentable stable $\infty$-category $(\mathcal{C},S)^{\dual}$ equipped with a map into $\mathcal{C}$ in $\Prl_{\st}$ which sends compact morphisms to $S$. See also \cite[Th. 5.34]{ramzi} and \cite[Th. 2.7.4]{kndual}; here, for convenience, we adopt the notion of precompact ideals \cite[Def. 2.7.1]{kndual} from the latter to specify the aforementioned condition on $S$. Below, we observe that under a relatively mild assumption on $S$, Clausen's construction remains to have the same universal property over any rigid base in place of $\Sp$:

\begin{proposition}\label{prop:terminaldualizable}
Let $\mathcal{R}\in\CAlg(\Prl_{\st})$ be rigid, and let $T$ be a set of trace-class maps of $\mathcal{R}$ such that sequential colimits along maps of $T$ generate $\mathcal{R}$ under colimits, cf. \cite[Cor. 3.50]{ramzi}. Let $\mathcal{C}\in\Mod_{\mathcal{R}}(\Prl_{\st})$ and let $S$ be a precompact ideal of the presentable stable $\infty$-category $\mathcal{C}$ which is closed under desuspensions and satisfies the following condition: 
\begin{adjustwidth}{12pt}{}
$(\ast)$ For any $x\to y$ in $T$ and any $c\to c'$ in $S$, their tensor product $x\otimes c\to y\otimes c'$ through the left $\mathcal{R}$-module structure map for $\mathcal{C}$ is in $S$.
\end{adjustwidth}
Then, the left adjoint functor $(\mathcal{C},S)^{\dual}\to \mathcal{C}$ is a morphism in $\Mod_{\mathcal{R}}(\Prl_{\st})$ with the source being in $\Mod_{\mathcal{R}}(\Prl_{\st})^{\dual}$, and induces an equivalence $\Fun^{i\L}_{\mathcal{R}}(\mathcal{D}, (\mathcal{C},S)^{\dual})\simeq \Fun^{\L}_{\mathcal{R}}((\mathcal{D},C),(\mathcal{C},S))$ via composition for any $\mathcal{D}\in\Mod_{\mathcal{R}}(\Prl_{\st})^{\dual}$; here, the left hand side is the $\infty$-category of right adjointable (i.e., internally left adjoint) functors in the $(\infty,2)$-category $\Mod_{\mathcal{R}}(\Prl_{\st})$, and the right hand side denotes the full subcategory of $\Fun^{\L}_{\mathcal{R}}(\mathcal{D},\mathcal{C})$, the $\infty$-category of $\mathcal{R}$-linear left adjoint functors, spanned by functors sending compact morphisms of $\mathcal{D}$ to $S$. 
\begin{proof}
By assumption on $S$, there is a regular cardinal $\kappa\geq\omega_{1}$ such that $\mathcal{C}$ is $\kappa$-compactly generated, that the collection of all $S$-exhaustible objects of $\Ind(\mathcal{C})$, i.e., those of the form $\colim_{\bbQ}\jmath(x_{\alpha})$ where each of the maps $x_{\alpha}\to x_{\beta}$ for $\alpha<\beta$ is in $S$, is small and is in $\Ind(\mathcal{C}^{\kappa})$, and that $(\mathcal{C}, S)^{\dual}$ is realized as the full subcategory of $\Ind(\mathcal{C}^{\kappa})$ generated under colimits by $S$-exhaustible objects equipped with the left adjoint functor $(\mathcal{C}, S)^{\dual}\to\mathcal{C}$ which is the restriction of $k:\Ind(\mathcal{C}^{\kappa})\to\mathcal{C}$. \\
\indent We check that $(\mathcal{C},S)^{\dual}$ is a $\mathcal{R}$-linear stable subcategory of $\Ind(\mathcal{C}^{\kappa})$ and the functor $(\mathcal{C},S)^{\dual}\to\mathcal{C}$ is $\mathcal{R}$-linear. Here, we are using the $\mathcal{R}$-module structure on $\Ind(\mathcal{C}^{\kappa})$ from Proposition \ref{prop:dblRlinretract}. Since $S$ is closed under desuspensions, the set $E$ of $S$-exhaustible objects is closed under desuspensions, and the full subcategory $(\mathcal{C}, S)^{\dual}$ of $\Ind(\mathcal{C}^{\kappa})$ generated under colimits by $E$ is a stable subcategory. Condition $(\ast)$ guarantees that $(\mathcal{C},S)^{\dual}$ is in addition an $\mathcal{R}$-linear subcategory of $\Ind(\mathcal{C}^{\kappa})$. Thus, the functor $(\mathcal{C},S)^{\dual}\to\mathcal{C}$, being a composition of the $\mathcal{R}$-linear inclusion functor $(\mathcal{C},S)^{\dual}\to\Ind(\mathcal{C}^{\kappa})$ with the functor $k:\Ind(\mathcal{C}^{\kappa})\to\mathcal{C}$ which is $\mathcal{R}$-linear by Proposition \ref{prop:dblRlinretract}, is by construction $\mathcal{R}$-linear. By rigidity of $\mathcal{R}$, we know the object $(\mathcal{C},S)^{\dual}\in\Pr^{\L,\dual}_{\st}$ together with the $\mathcal{R}$-module structure on it is an object of $\Mod_{\mathcal{R}}(\Prl_{\st})^{\dual}\simeq\Mod_{\mathcal{R}}(\Pr^{\L,\dual}_{\st})$. \\
\indent It remains to check the universality statement. Fix $\mathcal{D}\in\Mod_{\mathcal{R}}(\Prl_{\st})^{\dual}$. We have an equivalence $\Fun^{i\L}(\mathcal{D}, (\mathcal{C},S)^{\dual})\simeq \Fun^{\L}((\mathcal{D},C),(\mathcal{C},S))$ induced by composition with $(\mathcal{C},S)^{\dual}\to\mathcal{C}$. Here, the left hand side is the $\infty$-category of right adjointable functors in $\Prl_{\st}$ and the right hand side is the full subcategory of $\Fun^{\L}(\mathcal{D},\mathcal{C})$ spanned by functors sending compact morphisms of $\mathcal{D}$, denoted by $C$, to $S$. As noted in Remark \ref{rem:dualmodulesequiv}, the full subcategory $\Fun^{i\L}_{\mathcal{R}}(\mathcal{D}, (\mathcal{C},S)^{\dual})$ of $\Fun^{i\L}(\mathcal{D}, (\mathcal{C},S)^{\dual})$ is spanned by $\mathcal{R}$-linear functors. Now, we observe that an object $\mathcal{D}\xrightarrow{f}(\mathcal{C},S)^{\dual}$ of $\Fun^{i\L}(\mathcal{D},(\mathcal{C},S)^{\dual})$ is $\mathcal{R}$-linear if and only if the composition $\mathcal{D}\xrightarrow{f}(\mathcal{C},S)^{\dual}\to\mathcal{C}$ in $\Fun^{\L}((\mathcal{D},C),(\mathcal{C},S))$ is $\mathcal{R}$-linear. Due to the $\mathcal{R}$-linearity of $(\mathcal{C},S)^{\dual}\to\mathcal{C}$, the only if direction is immediate. Conversely, if the latter composition is $\mathcal{R}$-linear, we can consider the diagram
\begin{equation*}
\begin{tikzcd}
\Ind(\mathcal{D}^{\kappa}) \arrow[r] & \Ind(\mathcal{C}^{\kappa}) \\
(\mathcal{D},C)^{\dual} \arrow[r, "f"] \arrow[u, hook] \arrow[d, "\sim"']& (\mathcal{C},S)^{\dual} \arrow[u, hook] \arrow[d]\\
\mathcal{D} \arrow[r] & \mathcal{C}
\end{tikzcd}
\end{equation*}
which realizes $f$ as a functor induced via restrictions from the top horizontal $\mathcal{R}$-linear functor $\Ind(\mathcal{D}^{\kappa})\to\Ind(\mathcal{C}^{\kappa})$, which is the one induced from the bottom horizontal $\mathcal{R}$-linear composition functor $\mathcal{D}\xrightarrow{f}(\mathcal{C},S)^{\dual}\to\mathcal{C}$; in particular, $f$ is $\mathcal{R}$-linear. Thus, from the equivalence $\Fun^{i\L}(\mathcal{D}, (\mathcal{C},S)^{\dual})\simeq \Fun^{\L}((\mathcal{D},C),(\mathcal{C},S))$, we induce the claimed equivalence between the full subcategories of $\mathcal{R}$-linear functors $\Fun^{i\L}_{\mathcal{R}}(\mathcal{D}, (\mathcal{C},S)^{\dual})\simeq \Fun^{\L}_{\mathcal{R}}((\mathcal{D},C),(\mathcal{C},S))$. 
\end{proof}
\end{proposition}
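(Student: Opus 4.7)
The plan is to enhance Clausen's construction of $(\mathcal{C},S)^{\dual}$, as recorded in \cite[Th. 2.7.4]{kndual} and \cite[Th. 5.34]{ramzi}, so as to carry an $\mathcal{R}$-linear structure. Choosing a sufficiently large regular cardinal $\kappa\geq\omega_{1}$ for which $\mathcal{C}$ is $\kappa$-compactly generated and every $S$-exhaustible colimit $\colim_{\bbQ}\jmath(x_{\alpha})$ lies in $\Ind(\mathcal{C}^{\kappa})$, I would realize $(\mathcal{C},S)^{\dual}$ as the smallest full subcategory of $\Ind(\mathcal{C}^{\kappa})$ closed under small colimits and containing the set $E$ of $S$-exhaustible objects, together with the left adjoint $(\mathcal{C},S)^{\dual}\to\mathcal{C}$ obtained as the restriction of $k\colon\Ind(\mathcal{C}^{\kappa})\to\mathcal{C}$.

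The first step is to install an $\mathcal{R}$-module structure on $(\mathcal{C},S)^{\dual}$. By Proposition \ref{prop:dblRlinretract}, $\Ind(\mathcal{C}^{\kappa})$ inherits a canonical left $\mathcal{R}$-module structure with respect to which $k$ is $\mathcal{R}$-linear. The crucial verification is that the subcategory $(\mathcal{C},S)^{\dual}$ is stable under this action. Since the action preserves colimits separately in each variable, and since $\mathcal{R}$ is generated under colimits by sequential colimits of $T$-maps, it is enough to check that for any $x\to y$ in $T$ and any generator $\colim_{\bbQ}\jmath(c_{\alpha})\in E$, the image $x\otimes\colim_{\bbQ}\jmath(c_{\alpha})$ again lies in the full subcategory generated by $E$. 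Condition $(\ast)$ yields $S$-maps between the tensored stages, and together with closure of $S$ under desuspensions this delivers an $S$-exhaustible model for the result. Hence the inclusion $(\mathcal{C},S)^{\dual}\hookrightarrow\Ind(\mathcal{C}^{\kappa})$ and the restricted functor $(\mathcal{C},S)^{\dual}\to\mathcal{C}$ are both $\mathcal{R}$-linear, and rigidity of $\mathcal{R}$ combined with Remark \ref{rem:dualmodulesequiv} promotes $(\mathcal{C},S)^{\dual}$ to an object of $\Mod_{\mathcal{R}}(\Prl_{\st})^{\dual}$.

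For the universal property, I would begin with the absolute equivalence $\Fun^{i\L}(\mathcal{D},(\mathcal{C},S)^{\dual})\simeq\Fun^{\L}((\mathcal{D},C),(\mathcal{C},S))$ provided by composition with $(\mathcal{C},S)^{\dual}\to\mathcal{C}$, and restrict both sides to their full subcategories of $\mathcal{R}$-linear functors. The only substantive point is that an internally left adjoint $f\colon\mathcal{D}\to(\mathcal{C},S)^{\dual}$ is $\mathcal{R}$-linear if and only if the composition $k\circ f$ is $\mathcal{R}$-linear. The only if direction is automatic from the $\mathcal{R}$-linearity of $k$. For the converse, I would $\Ind$-extend an $\mathcal{R}$-linear lift of $k\circ f$ at the compact-object level (using Lemma \ref{lem:RmodInd} and the identification $(\mathcal{D},C)^{\dual}\simeq\mathcal{D}$) to an $\mathcal{R}$-linear functor $\Ind(\mathcal{D}^{\kappa})\to\Ind(\mathcal{C}^{\kappa})$, and then observe via the $\mathcal{R}$-linear inclusions and retractions of Proposition \ref{prop:dblRlinretract} that $f$ is recovered by restriction, hence is $\mathcal{R}$-linear.

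I expect the main obstacle to be the closure verification in the first step: turning condition $(\ast)$ into an actual $S$-exhaustible presentation of $x\otimes\colim_{\bbQ}\jmath(c_{\alpha})$ requires a careful interplay between the $\bbQ$-indexing of the exhaustion and the sequential $T$-presentation of $x$. Once that closure is in hand, all subsequent claims reduce to formal consequences of the material in \ref{subsec:dualmods}.
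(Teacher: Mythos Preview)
Your proposal is correct and follows essentially the same approach as the paper: realize $(\mathcal{C},S)^{\dual}$ inside $\Ind(\mathcal{C}^{\kappa})$, use condition $(\ast)$ together with Proposition \ref{prop:dblRlinretract} to verify it is an $\mathcal{R}$-linear subcategory with $\mathcal{R}$-linear structure map to $\mathcal{C}$, and then deduce the $\mathcal{R}$-linear universal property by restricting the absolute equivalence to $\mathcal{R}$-linear functors via the same $\Ind(\mathcal{D}^{\kappa})\to\Ind(\mathcal{C}^{\kappa})$ diagram. The paper treats the closure step you flag as the main obstacle in one sentence, so your more explicit outline of that verification is a reasonable expansion rather than a deviation.
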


\begin{remarkn}
When the base rigid category $\mathcal{R}$ takes a relatively simple form, e.g., $\Mod_{R}$ for an $\bbE_{\infty}$-ring $R$, then the condition $(\ast)$ of Proposition \ref{prop:terminaldualizable} is redundant; in fact, $(\mathcal{C},S)^{\dual}$ is an $\mathcal{R}$-linear subcategory of $\Ind(\mathcal{C}^{\kappa})$ for any choice of $S$. More precisely, we can observe the following:
\begin{lemma}
Suppose that $\mathcal{T}\in\CAlg(\Prl_{\st})$ is generated under colimits by desuspensions of the unit $\mathbf{1}$. Also, suppose that $\mathcal{C}$ and $\mathcal{D}$ are stable $\infty$-categories admitting small colimits and are left tensored over $\mathcal{T}$, and that the module structure maps $\otimes:\mathcal{T}\times\mathcal{C}\to\mathcal{C}$ and $\otimes:\mathcal{T}\times\mathcal{D}\to\mathcal{D}$ preserve small colimits on the first variable $\mathcal{T}$.\\
(1) Let $\mathcal{C}'$ be a stable subcategory of $\mathcal{C}$ which is closed under all small colimits in $\mathcal{C}$. Then, $\mathcal{C}'$ inherits a left $\mathcal{T}$-module structure from $\mathcal{C}$.  \\
(2) Let $g:\mathcal{D}\to\mathcal{C}$ be either a lax $\mathcal{T}$-linear or an oplax $\mathcal{T}$-linear functor. If $g$ preserves small colimits (and hence finite limits, in particular desuspensions), then $g$ is $\mathcal{T}$-linear. 
\begin{proof}
(1) It suffices to check that the $\mathcal{T}$-module structure map $\mathcal{T}\times \mathcal{C}\xrightarrow{\otimes}\mathcal{C}$ restricts to $\mathcal{C}'$. Let $\mathcal{T}'$ be the full subcategory of $\mathcal{T}$ spanned by $x\in\mathcal{T}$ such that $x\otimes d\in\mathcal{C}'$ for all $d\in\mathcal{C}'$. Then, we observe that $\mathcal{T}'$ contains the unit $\mathbf{1}$, that $\mathcal{T}'$ is closed under small colimits in $\mathcal{T}$, and that $\mathcal{T}'$ is closed under desuspensions in $\mathcal{T}$ from the fact that $-\otimes d:\mathcal{T}\to\mathcal{C}$ preserves small colimits and desuspensions for all $d\in\mathcal{C}'$. Thus, $\mathcal{T}' = \mathcal{T}$ and the claim follows.\\
(2) The case of oplax $\mathcal{T}$-linear functors is proved exactly in the same way as the lax $\mathcal{T}$-linear functors case, so let us check the latter case. As above, let $\mathcal{T}'$ be the full subcategory of $\mathcal{T}$ spanned by objects $x$ such that the natural map $x\otimes g(d)\to g(x\otimes d)$ is an equivalence for all $d\in\mathcal{D}$. To have $\mathcal{T}' = \mathcal{T}$, it again suffices to check that $\mathcal{T}'$ contains $\mathbf{1}$ and is closed under small colimits and desuspensions. The first condition is immediate; the closure under small colimits and desuspensions follows from the fact that the functors $g$, $-\otimes c:\mathcal{T}\to\mathcal{C}$ and $-\otimes d:\mathcal{T}\to\mathcal{D}$ for each $c\in\mathcal{C}$ and $d\in\mathcal{D}$ preserve small colimits and desuspensions.  
\end{proof}
\end{lemma}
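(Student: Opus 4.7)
My plan is to reduce both parts to a single generation argument: given a full subcategory $\mathcal{T}' \subset \mathcal{T}$ that contains the unit $\mathbf{1}$, is closed under desuspensions, and is closed under small colimits in $\mathcal{T}$, the generation hypothesis forces $\mathcal{T}' = \mathcal{T}$. In each part I will fix an object in the target and identify the "good locus" in $\mathcal{T}$ as such a subcategory.

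For (1), it suffices to check that the action functor $\otimes:\mathcal{T}\times\mathcal{C}\to\mathcal{C}$ restricts to $\mathcal{T}\times\mathcal{C}'\to\mathcal{C}'$. Once this is verified, the fact that $\mathcal{C}'\hookrightarrow\mathcal{C}$ is a full subcategory lets one transport the entire $\infty$-operadic coherence data of the $\mathcal{T}$-action from $\mathcal{C}$ to $\mathcal{C}'$ level by level. Fix $d\in\mathcal{C}'$ and consider $\mathcal{T}_d = \{x\in\mathcal{T}\mid x\otimes d\in\mathcal{C}'\}$. Unitality gives $\mathbf{1}\otimes d\simeq d\in\mathcal{C}'$, so $\mathbf{1}\in\mathcal{T}_d$. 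The functor $-\otimes d:\mathcal{T}\to\mathcal{C}$ preserves small colimits by hypothesis; since both categories are stable, it is therefore exact and preserves desuspensions. Because $\mathcal{C}'\subset\mathcal{C}$ is stable and closed under small colimits, $\mathcal{T}_d$ is closed under small colimits and desuspensions, hence equals $\mathcal{T}$.

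For (2), I focus on the lax case; the oplax case is formally dual, interchanging the roles of domain and codomain of the structure map. Fix $d\in\mathcal{D}$ and let $\mathcal{T}_d\subset \mathcal{T}$ be the full subcategory of $x$ such that the lax structure map $\alpha_{x,d}:x\otimes g(d)\to g(x\otimes d)$ is an equivalence. Unit coherence of a lax linear structure gives $\mathbf{1}\in\mathcal{T}_d$. Both functors $\mathcal{T}\to\mathcal{C}$ given by $x\mapsto x\otimes g(d)$ and $x\mapsto g(x\otimes d)$ preserve small colimits and desuspensions: the first by the colimit preservation hypothesis on the $\mathcal{C}$-action, the second by composing the corresponding property for $-\otimes d:\mathcal{T}\to\mathcal{D}$ with that for $g$. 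Since equivalences between functors valued in a stable $\infty$-category are closed under small colimits and desuspensions pointwise, $\mathcal{T}_d$ inherits these closure properties and hence equals $\mathcal{T}$.

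The only mildly delicate point is in (1), where "inheriting a left $\mathcal{T}$-module structure" means more than just having the action functor restrict; it also requires that all higher coherence data for the $\mathcal{T}$-action descend. I expect this to be formal rather than a real obstacle: once the action restricts, fullness of $\mathcal{C}'\hookrightarrow\mathcal{C}$ implies that every $n$-ary structure map $\mathcal{T}^{\times n}\times\mathcal{C}\to\mathcal{C}$ restricts to $\mathcal{T}^{\times n}\times\mathcal{C}'\to\mathcal{C}'$ by iterating the argument, and all coherence $2$-cells and higher cells restrict for the same reason.
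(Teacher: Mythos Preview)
Your proof is correct and follows essentially the same approach as the paper: both argue by defining the ``good locus'' in $\mathcal{T}$ (the paper uses $\mathcal{T}' = \bigcap_d \mathcal{T}_d$ while you fix $d$ and work with $\mathcal{T}_d$, a cosmetic difference) and then invoke the generation hypothesis after checking closure under the unit, small colimits, and desuspensions. Your extra remark on transporting the higher coherence data via fullness of $\mathcal{C}'\hookrightarrow\mathcal{C}$ is a point the paper leaves implicit.
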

\end{remarkn}

\begin{remarkn}
The followings, concerning Proposition \ref{prop:terminaldualizable}, were pointed out to the author by Maxime Ramzi. The condition $(\ast)$ on $S$ cannot be dropped in general for given rigid $\mathcal{R}$ and $\mathcal{C}\in\Mod_{\mathcal{R}}(\Prl_{\st})^{\dual}$. In practice, however, this would not be much of a restriction. For example, the condition $(\ast)$ is satisfied for the $S$ used in the description of internal mapping objects for any rigid $\mathcal{R}$. 
\end{remarkn}

Below, we note that the description of internal mapping objects in the case of $\mathcal{R} = \Sp$ \cite[Th. 5.34]{ramzi}, see also \cite[Lem. 2.12.6]{kndual}, remains the same over rigid bases.

\begin{proposition}\label{prop:intmap}
Let $\mathcal{R}\in\CAlg^{\rig}_{\Sp}$ and let $\mathcal{C},\mathcal{D}\in\Mod_{\mathcal{R}}(\Prl_{\st})^{\dual}$. Denote $S_{(\mathcal{C},\mathcal{D})}$ for the class of morphisms $f\xrightarrow{\tau}g$ of $\Fun^{\L}_{\mathcal{R}}(\mathcal{C},\mathcal{D})$ such that for any compact morphism $x\to y$ of $\mathcal{C}$, the composition $f(x)\to f(y)\xrightarrow{\tau_{y}} g(y)$ is a compact morphism in $\mathcal{D}$. Then, $\left(\Fun^{\L}_{\mathcal{R}}(\mathcal{C},\mathcal{D}),S_{(\mathcal{C},\mathcal{D})}\right)^{\dual}$ is an internal mapping object $\intmap^{\dual}_{\mathcal{R}}(\mathcal{C},\mathcal{D})$ of $\mathcal{C}$ and $\mathcal{D}$ in $\Mod_{\mathcal{R}}(\Prl_{\st})^{\dual}$. 
\begin{proof}
We first check that the precompact ideal $S_{\mathcal{C},\mathcal{D}}$ closed under desuspensions in addition satisfies the condition $(\ast)$ of Proposition \ref{prop:terminaldualizable}. By definition, it suffices to check that for any trace-class morphism $x\to y$ of $\mathcal{R}$ and any compact morphism $c\to d$ of $\mathcal{D}$, their tensor product $x\otimes c\to y\otimes d$ remains to be a compact morphism of $\mathcal{D}$. By rigidity of $\mathcal{R}$, for $\mathcal{D}\in\Mod_{\mathcal{R}}(\Prl_{\st})^{\dual}$ we know $\mathcal{R}$-atomic maps are precisely compact morphisms \cite[Cor 3.45 and Cor. 2.79]{ramzi}. Since tensor products of trace-class maps of $\mathcal{R}$ and $\mathcal{R}$-atomic maps of $\mathcal{D}$ are $\mathcal{R}$-atomic \cite[Lem. 2.100]{ramzi}, we have the claimed statement. Alternatively, one can use the universal property of $\otimes$ in Corollary \ref{cor:dblRlinretractmonoidal} (3) and \cite[Lem. 9.3.2]{gr1} to explain this as well.  \\
\indent Now, let $\mathcal{E}\in\Mod_{\mathcal{R}}(\Prl_{\st})^{\dual}$, and write $C$ for the set of all compact morphisms of $\mathcal{E}$. By Proposition \ref{prop:terminaldualizable}, we have a natural equivalence
\begin{equation*}
\Fun^{i\L}_{\mathcal{R}}\left(\mathcal{E}, \left(\Fun^{\L}_{\mathcal{R}}(\mathcal{C},\mathcal{D}),S_{(\mathcal{C},\mathcal{D})}\right)^{\dual}\right)\simeq \Fun^{\L}_{\mathcal{R}}\left((\mathcal{E},C), \left(\Fun^{\L}_{\mathcal{R}}(\mathcal{C},\mathcal{D}),S_{(\mathcal{C},\mathcal{D})}\right)\right),
\end{equation*}
and the right hand side is, by definition, further naturally equivalent to the full subcategory $\Fun'_{\mathcal{R}}\left(\mathcal{E}\times\mathcal{C}, \mathcal{D}\right)$ of $\Fun^{bi\L}_{\mathcal{R}}\left(\mathcal{E}\times\mathcal{C}, \mathcal{D}\right)$ spanned by functors which send pairs of compact morphisms to compact morphisms, via restriction of the equivalence $\Fun^{bi\L}_{\mathcal{R}}\left(\mathcal{E}\times\mathcal{C}, \mathcal{D}\right)\simeq \Fun^{\L}_{\mathcal{R}}\left(\mathcal{E},\Fun^{\L}_{\mathcal{R}}(\mathcal{C},\mathcal{D})\right)$. By Corollary \ref{cor:dblRlinretractmonoidal} (3), we know the right hand side $\Fun'_{\mathcal{R}}\left(\mathcal{E}\times\mathcal{C}, \mathcal{D}\right)$ is further naturally equivalent to $\Fun^{i\L}_{\mathcal{R}}(\mathcal{E}\otimes_{\mathcal{R}}\mathcal{C},\mathcal{D})$; thus, we have established a natural equivalence 
\begin{equation*}
\Fun^{i\L}_{\mathcal{R}}\left(\mathcal{E}, \left(\Fun^{\L}_{\mathcal{R}}(\mathcal{C},\mathcal{D}),S_{(\mathcal{C},\mathcal{D})}\right)^{\dual}\right)\simeq \Fun^{i\L}_{\mathcal{R}}(\mathcal{E}\otimes_{\mathcal{R}}\mathcal{C},\mathcal{D})
\end{equation*}
as desired. 
\end{proof}
\end{proposition}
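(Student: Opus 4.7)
The plan is to deduce this by combining Clausen's construction over a rigid base (Proposition \ref{prop:terminaldualizable}) with the universal property of the relative tensor product in $\Mod_{\mathcal{R}}(\Prl_{\st})^{\dual}$ (Corollary \ref{cor:dblRlinretractmonoidal} (3)). The strategy is to test the proposed object against an arbitrary $\mathcal{E}\in\Mod_{\mathcal{R}}(\Prl_{\st})^{\dual}$ and produce a natural equivalence $\Fun^{i\L}_{\mathcal{R}}(\mathcal{E}, (\Fun^{\L}_{\mathcal{R}}(\mathcal{C},\mathcal{D}),S_{(\mathcal{C},\mathcal{D})})^{\dual})\simeq \Fun^{i\L}_{\mathcal{R}}(\mathcal{E}\otimes_{\mathcal{R}}\mathcal{C},\mathcal{D})$, which by the Yoneda lemma identifies the object on the left with an internal mapping object in $\Mod_{\mathcal{R}}(\Prl_{\st})^{\dual}$.

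First, I would verify that $S_{(\mathcal{C},\mathcal{D})}$ is a precompact ideal of $\Fun^{\L}_{\mathcal{R}}(\mathcal{C},\mathcal{D})$ closed under desuspensions and satisfying condition $(\ast)$ of Proposition \ref{prop:terminaldualizable}; this is the step I expect to require the most care. The closure and ideal properties follow formally from the corresponding closure properties of compact morphisms in $\mathcal{D}$. For $(\ast)$, I would unpack the definition: given a trace-class map $x\to y$ in $\mathcal{R}$ and a map $\tau:f\to g$ in $S_{(\mathcal{C},\mathcal{D})}$, the tensor product $x\otimes \tau$ sends a compact map $c\to d$ of $\mathcal{C}$ to a composition of the form $x\otimes f(c)\to y\otimes g(d)$ in $\mathcal{D}$, and the point is to show this is compact in $\mathcal{D}$. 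Here I would invoke the fact that, over a rigid base, compact morphisms in a dualizable $\mathcal{R}$-module agree with $\mathcal{R}$-atomic morphisms (as recorded earlier in the paper, cf.\ \cite{ramzi}), and that tensor products of trace-class maps of $\mathcal{R}$ with $\mathcal{R}$-atomic maps in $\mathcal{D}$ remain $\mathcal{R}$-atomic.

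With $(\ast)$ in hand, Proposition \ref{prop:terminaldualizable} supplies an equivalence
\begin{equation*}
\Fun^{i\L}_{\mathcal{R}}\left(\mathcal{E},\left(\Fun^{\L}_{\mathcal{R}}(\mathcal{C},\mathcal{D}),S_{(\mathcal{C},\mathcal{D})}\right)^{\dual}\right)\simeq \Fun^{\L}_{\mathcal{R}}\left((\mathcal{E},C),\left(\Fun^{\L}_{\mathcal{R}}(\mathcal{C},\mathcal{D}),S_{(\mathcal{C},\mathcal{D})}\right)\right),
\end{equation*}
where $C$ denotes the class of compact morphisms of $\mathcal{E}$. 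The next step is to rewrite the right-hand side using the standard tensor-hom adjunction for $\mathcal{R}$-linear left adjoints: an $\mathcal{R}$-linear left adjoint $\mathcal{E}\to \Fun^{\L}_{\mathcal{R}}(\mathcal{C},\mathcal{D})$ corresponds to an $\mathcal{R}$-bilinear functor $\mathcal{E}\times\mathcal{C}\to\mathcal{D}$ preserving small colimits in each variable, and by chasing through the definitions the condition of sending $C$ into $S_{(\mathcal{C},\mathcal{D})}$ translates exactly into the condition of sending pairs of compact morphisms in $\mathcal{E}\times\mathcal{C}$ to compact morphisms in $\mathcal{D}$.

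Finally, I would apply Corollary \ref{cor:dblRlinretractmonoidal} (3) to identify the latter $\infty$-category with $\Fun^{i\L}_{\mathcal{R}}(\mathcal{E}\otimes_{\mathcal{R}}\mathcal{C},\mathcal{D})$, which concludes the desired natural equivalence and hence identifies $(\Fun^{\L}_{\mathcal{R}}(\mathcal{C},\mathcal{D}),S_{(\mathcal{C},\mathcal{D})})^{\dual}$ with $\intmap^{\dual}_{\mathcal{R}}(\mathcal{C},\mathcal{D})$. The verification of $(\ast)$ is the conceptual core of the argument; once it is in place, the rest is a formal chain of adjunctions that mirrors the proof of the analogous statement over $\Sp$ from \cite{ramzi, kndual}.
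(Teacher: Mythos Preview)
Your proposal is correct and follows essentially the same approach as the paper's proof: verify condition $(\ast)$ for $S_{(\mathcal{C},\mathcal{D})}$ via the identification of compact morphisms with $\mathcal{R}$-atomic maps over a rigid base, then chain together Proposition \ref{prop:terminaldualizable}, the tensor--hom adjunction, and Corollary \ref{cor:dblRlinretractmonoidal} (3). The paper's argument is structurally identical, with the same key lemmas invoked at the same points.
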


\begin{corollary}\label{cor:intmapcontrol}
Let $\mathcal{R}\in\CAlg^{\rig}_{\Sp}$. Also, let $\mathcal{C}$ be an object of $\Mod_{\mathcal{R}}(\Prl_{\st})^{\dual}$ and let $h:\mathcal{D}\hookrightarrow\mathcal{E}$ be a morphism in $\Mod_{\mathcal{R}}(\Prl_{\st})^{\dual}$ whose underlying functor is fully faithful. \\
(1) If $\Fun^{\L}_{\mathcal{R}}(\mathcal{C},\mathcal{D})\simeq0$, then $\intmap^{\dual}_{\mathcal{R}}(\mathcal{C},\mathcal{D})\simeq0$.\\
(2) If the map $h\circ-:\Fun^{\L}_{\mathcal{R}}(\mathcal{C},\mathcal{D})\to\Fun^{\L}_{\mathcal{R}}(\mathcal{C},\mathcal{E})$ is an equivalence in $\Mod_{\mathcal{R}}(\Prl_{\st})$, then the $h$-induced map $\intmap^{\dual}_{\mathcal{R}}(\mathcal{C},\mathcal{D})\to\intmap^{\dual}_{\mathcal{R}}(\mathcal{C},\mathcal{E})$ is an equivalence. 
\begin{proof}
(1) By Proposition \ref{prop:intmap} and the description of the terminal dualizable category in Proposition \ref{prop:terminaldualizable}, the category $\intmap^{\dual}_{\mathcal{R}}(\mathcal{C},\mathcal{D})$ embeds fully faithfully into the Ind-completion of $\Fun^{\L}_{\mathcal{R}}(\mathcal{C},\mathcal{D})$; since the latter category is equivalent to $0$, the former category is also equivalent to $0$. \\
(2) The equivalence $h\circ-:\Fun^{\L}_{\mathcal{R}}(\mathcal{C},\mathcal{D})\simeq\Fun^{\L}_{\mathcal{R}}(\mathcal{C},\mathcal{E})$ induces a bijection between the classes of morphisms $S_{(\mathcal{C},\mathcal{D})}$ and $hS_{(\mathcal{C},\mathcal{D})}$, with the latter being a subclass of $S_{(\mathcal{C},\mathcal{E})}$ as $h$ preserves compact morphisms. On the other hand, any object of $S_{(\mathcal{C},\mathcal{E})}$ takes the form of $h\tau:h\circ f\to h\circ g$ for an essentially unique morphism $\tau:f\to g$ in $\Fun^{\L}_{\mathcal{R}}(\mathcal{C},\mathcal{D})$ such that for any compact morphism $x\to y$ of $\mathcal{C}$, the morphism $hf(x)\to hf(y)\xrightarrow{h\tau_{y}}hg(y)$ in $\mathcal{E}$ is compact. By fully faithfulness and right adjointability of $h$ in $\Prl_{\st}$, morphisms of $\mathcal{D}$ are compact if and only if their image by $h$ in $\mathcal{E}$ are compact; in particular, we have $hS_{(\mathcal{C},\mathcal{D})} = S_{(\mathcal{C},\mathcal{E})}$. By Proposition \ref{prop:intmap}, this implies that $h\circ -$ induces an equivalence $\intmap^{\dual}_{\mathcal{R}}(\mathcal{C},\mathcal{D})\simeq\intmap^{\dual}_{\mathcal{R}}(\mathcal{C},\mathcal{E})$. 
\end{proof}
\end{corollary}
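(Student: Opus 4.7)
Both parts should be approached via the explicit description of $\intmap^{\dual}_{\mathcal{R}}(\mathcal{C},\mathcal{D})$ from Proposition \ref{prop:intmap}, namely as the dualizable completion $\left(\Fun^{\L}_{\mathcal{R}}(\mathcal{C},\mathcal{D}), S_{(\mathcal{C},\mathcal{D})}\right)^{\dual}$, which in turn, by the construction in Proposition \ref{prop:terminaldualizable}, sits fully faithfully inside $\Ind\left(\Fun^{\L}_{\mathcal{R}}(\mathcal{C},\mathcal{D})^{\kappa}\right)$ for a suitable regular cardinal $\kappa\geq\omega_{1}$ as the stable $\mathcal{R}$-linear subcategory generated under colimits by the $S_{(\mathcal{C},\mathcal{D})}$-exhaustible objects.

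For part (1), if $\Fun^{\L}_{\mathcal{R}}(\mathcal{C},\mathcal{D})\simeq 0$ then its $\kappa$-compact subcategory is zero, so the Ind-completion is zero and hence the subcategory $\intmap^{\dual}_{\mathcal{R}}(\mathcal{C},\mathcal{D})$ is zero; this is immediate.

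For part (2), the plan is to show that the hypothesized equivalence $h\circ-:\Fun^{\L}_{\mathcal{R}}(\mathcal{C},\mathcal{D})\xrightarrow{\sim}\Fun^{\L}_{\mathcal{R}}(\mathcal{C},\mathcal{E})$ carries $S_{(\mathcal{C},\mathcal{D})}$ bijectively onto $S_{(\mathcal{C},\mathcal{E})}$. The inclusion $hS_{(\mathcal{C},\mathcal{D})}\subseteq S_{(\mathcal{C},\mathcal{E})}$ is automatic since $h$, as a morphism of $\Mod_{\mathcal{R}}(\Prl_{\st})^{\dual}$, preserves compact morphisms. For the reverse, every element of $S_{(\mathcal{C},\mathcal{E})}$ takes the form $h\tau$ for a unique $\tau:f\to g$ in $\Fun^{\L}_{\mathcal{R}}(\mathcal{C},\mathcal{D})$, and the point is to transfer the compactness condition back, i.e., to verify that $h$ also \emph{reflects} compact morphisms. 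Granting this bijection, the universal property of Proposition \ref{prop:terminaldualizable} yields, for any $\mathcal{F}\in\Mod_{\mathcal{R}}(\Prl_{\st})^{\dual}$ with class $C$ of compact morphisms, a chain of natural equivalences $\Fun^{i\L}_{\mathcal{R}}(\mathcal{F},\intmap^{\dual}_{\mathcal{R}}(\mathcal{C},\mathcal{D}))\simeq\Fun^{\L}_{\mathcal{R}}((\mathcal{F},C),(\Fun^{\L}_{\mathcal{R}}(\mathcal{C},\mathcal{D}),S_{(\mathcal{C},\mathcal{D})}))\simeq\Fun^{\L}_{\mathcal{R}}((\mathcal{F},C),(\Fun^{\L}_{\mathcal{R}}(\mathcal{C},\mathcal{E}),S_{(\mathcal{C},\mathcal{E})}))\simeq\Fun^{i\L}_{\mathcal{R}}(\mathcal{F},\intmap^{\dual}_{\mathcal{R}}(\mathcal{C},\mathcal{E}))$, and Yoneda then gives the $h$-induced equivalence.

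The main obstacle is therefore the reflection of compact morphisms by the fully faithful right adjointable embedding $h$: given a morphism $d\to d'$ in $\mathcal{D}$ such that $h(d)\to h(d')$ is compact in $\mathcal{E}$, one must conclude that $d\to d'$ was already compact in $\mathcal{D}$. This is the single step that genuinely uses both halves of the hypothesis on $h$ (fully faithfulness together with right adjointability in $\Prl_{\st}$). I expect the argument to run by exploiting the equivalence $h^{\R}\circ h\simeq id_{\mathcal{D}}$ together with the intrinsic characterization of compactness in a dualizable category, for instance through factorization after applying $\wjmath:\mathcal{D}\to\Ind(\mathcal{D}^{\omega_{1}})$, which is transported compatibly by the fully faithful embedding $h$ and its right adjoint. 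Once this reflection property is in hand, the remainder of the argument in (2) is formal manipulation of the universal property.
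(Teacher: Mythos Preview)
Your proposal is correct and follows essentially the same route as the paper: part (1) via the full embedding of $\intmap^{\dual}_{\mathcal{R}}(\mathcal{C},\mathcal{D})$ into an Ind-category of $\Fun^{\L}_{\mathcal{R}}(\mathcal{C},\mathcal{D})$, and part (2) by showing that $h\circ-$ identifies the precompact ideals $S_{(\mathcal{C},\mathcal{D})}$ and $S_{(\mathcal{C},\mathcal{E})}$, the key point being that $h$ both preserves and reflects compact morphisms. The paper's proof is terser on the reflection step (it simply asserts it from fully faithfulness and right adjointability of $h$), and concludes directly from Proposition~\ref{prop:intmap} rather than spelling out the Yoneda argument, but the substance is the same.
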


\section{Formal gluing diagrams}\label{sec:formalgluing}

In this section, we study certain cubical diagrams, which can be construed as motivic limit diagrams, constructed out of sequences of idempotent fiber-cofiber sequences; having applications to localizing invariants in mind, our main interest would be in the case of diagrams of dualizable presentable stable $\infty$-categories over rigid bases. In subsection \ref{subsec:motivicgluingsq}, we separately treat the case of motivic pullback-pushout squares each of which is associated with a single idempotent fiber-cofiber sequence; as an archetypal example, we recover the formal gluing of continuous K-theory along punctured formal neighborhood using Efimov's nuclear modules categories. In subsection \ref{subsec:motiviclimitdiag}, we study the construction of a motivic cubical limit diagram out of a stratification associated with a sequence of idempotent fiber-cofiber sequences, generalizing the construction of \ref{subsec:motivicgluingsq}. As an application of this, we verify an adelic descent statement for localizing invariants on dualizable presentable stable $\infty$-categories in \ref{subsec:adelicdescent}. 

\subsection{Idempotent fiber-cofiber sequences and gluing squares}\label{subsec:motivicgluingsq}

We start with the notion of idempotent fiber-cofiber sequences, which will serve as inputs for our construction of motivic pullback-pushout squares. 

\begin{definition}\label{def:fibcofibidem}
Let $\mathcal{X}$ be a symmetric monoidal $\infty$-category which admits finite limits and finite colimits, is pointed (i.e., has zero objects), and satisfies the condition that for any object $e$ of $\mathcal{X}$, the functor $e\otimes-:\mathcal{X}\to\mathcal{X}$ preserves fiber-cofiber sequences (i.e., sequences $x\to y\to z$ in $\mathcal{X}$ each of which is simultaneously a fiber sequence and a cofiber sequence). \\
\indent We call a fiber-cofiber sequence $\Gamma\xrightarrow{\epsilon}\mathbf{1}\xrightarrow{\eta} L$ in $\mathcal{X}$ satisfying the equivalent conditions of Lemma \ref{lem:fibcofibidem} below an idempotent fiber-cofiber sequence in $\mathcal{X}$. 
\end{definition}

\begin{lemma}\label{lem:fibcofibidem}
Suppose that $\mathcal{X}$ is a symmetric monoidal $\infty$-category satisfying the conditions as in Definition \ref{def:fibcofibidem}, and that we are given a fiber-cofiber sequence of the form $\Gamma\xrightarrow{\epsilon}\mathbf{1}\xrightarrow{\eta} L$ in $\mathcal{X}$. Then, the following conditions are equivalent:\\
(1) $\epsilon:\Gamma\to\mathbf{1}$ is an open idempotent, i.e., $\Gamma\otimes\Gamma\xrightarrow{\Gamma\otimes\epsilon}\Gamma\otimes\mathbf{1}\simeq\Gamma$ is an equivalence. \\
(2) $\eta:\mathbf{1}\to L$ is a closed idempotent, i.e., $L\simeq L\otimes\mathbf{1}\xrightarrow{L\otimes\eta}L\otimes L$ is an equivalence.\footnote{The terms open idempotent and closed idempotent are commandeered from \cite{bd}, cf. \cite[Def. 2.2.1]{campion}. See also the notations in Proposition \ref{prop:recollementadjunctions} (1).}\\
(3) $\Gamma\otimes L\simeq 0$.  
\begin{proof}
By assumption on $\mathcal{X}$, the functors $L\otimes-$ and $\Gamma\otimes-$ preserve fiber-cofiber sequences, and we in particular have the following two fiber-cofiber sequences $L\otimes\Gamma\xrightarrow{L\otimes\epsilon} L\xrightarrow{L\otimes\eta} L\otimes L$ and $\Gamma\otimes \Gamma\xrightarrow{\Gamma\otimes\epsilon}\Gamma\xrightarrow{\Gamma\otimes\eta}\Gamma\otimes L$. Hence, each of the stated conditions (1) and (2) are equivalent to the statement $\Gamma\otimes L\simeq 0$, verifying the claim.
\end{proof} 
\end{lemma}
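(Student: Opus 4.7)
The plan is to deduce the claimed equivalences by producing two derived fiber-cofiber sequences from the given $\Gamma \xrightarrow{\epsilon} \mathbf{1} \xrightarrow{\eta} L$, exploiting the standing hypothesis that $e \otimes -$ preserves fiber-cofiber sequences for every $e \in \mathcal{X}$. Tensoring with $\Gamma$ and invoking unitality of $\mathbf{1}$ yields the fiber-cofiber sequence $\Gamma \otimes \Gamma \xrightarrow{\Gamma \otimes \epsilon} \Gamma \xrightarrow{\Gamma \otimes \eta} \Gamma \otimes L$, while tensoring with $L$ yields $L \otimes \Gamma \xrightarrow{L \otimes \epsilon} L \xrightarrow{L \otimes \eta} L \otimes L$. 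Note that condition (1) is by definition the statement that the first map of the first derived sequence is an equivalence, while (2) is the statement that the second map of the second derived sequence is an equivalence.

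The general tool I will apply is the following elementary fact in any pointed $\infty$-category: for a fiber-cofiber sequence $A \to B \to C$, the first map is an equivalence if and only if $C \simeq 0$, and the second map is an equivalence if and only if $A \simeq 0$. This follows directly from the identifications $A \simeq \fib(B \to C)$ and $C \simeq \cofib(A \to B)$, since vanishing of one endpoint forces the middle map to degenerate to the identity on the other endpoint.

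Applying this dictionary to the first derived sequence, condition (1) translates to the vanishing of the cofiber $\Gamma \otimes L$, which is (3). Applying it to the second derived sequence, condition (2) translates to the vanishing of the fiber $L \otimes \Gamma$, which is again (3) by the symmetry of the tensor product. The argument is entirely formal, and I do not anticipate any genuine obstacle; the only point worth recording cleanly is the fiber-cofiber-sequence dichotomy above, after which the proof is a routine substitution.
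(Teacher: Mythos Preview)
Your argument is correct and coincides with the paper's proof: both tensor the given fiber-cofiber sequence with $\Gamma$ and with $L$ to obtain the two derived fiber-cofiber sequences, and then read off that (1) and (2) are each equivalent to the vanishing $\Gamma\otimes L\simeq 0$. The only difference is that you explicitly state the elementary fiber-cofiber dichotomy that the paper leaves implicit.
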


\begin{remarkn}\label{rem:recollementadjunctions}
Let $\mathcal{X} = (\mathcal{X},\otimes, \mathbf{1})$ be a symmetric monoidal $\infty$-category. \\
(1) Suppose that $\eta:\mathbf{1}\to L$ is a closed idempotent of $\mathcal{X}$. Then, the symmetric monoidal $\infty$-category $\Mod_{L} = \Mod_{L}(\mathcal{X})$ of $L$-module objects of $\mathcal{X}$ can be described as follows: its underlying $\infty$-category is a full subcategory of $\mathcal{X}$ spanned by objects $x\in\mathcal{X}$ such that $\eta\otimes x:x\to L\otimes x$ is an equivalence (or equivalently, $x\simeq L\otimes x$ in $\mathcal{X}$), its tensor product $\otimes_{L}$ is $\otimes$, and its unit object is $L$. \\
(2) Dually, suppose that $\epsilon:\Gamma\to\mathbf{1}$ is an open idempotent of $\mathcal{X}$. Then, the symmetric monoidal $\infty$-category $\co\Mod_{\Gamma} = \co\Mod_{\Gamma}(\mathcal{X})$ of $\Gamma$-comodule objects of $\mathcal{X}$ can be described as follows: its underlying $\infty$-category is a full subcategory of $\mathcal{X}$ spanned by objects $x\in\mathcal{X}$ such that $\epsilon\otimes x:\Gamma\otimes x\to x$ is an equivalence (or equivalently, $\Gamma\otimes x\simeq x$ in $\mathcal{X}$), its (co)tensor product $\otimes_{\Gamma}$ is $\otimes$, and its unit object is $\Gamma$. 
\end{remarkn}

\begin{proposition}\label{prop:recollementadjunctions}
Let $\mathcal{X}$ be a closed symmetric monoidal presentable $\infty$-category; let us write $\intmap$ to denote the internal mapping object. Suppose that $\eta:\mathbf{1}\to L$ is a closed idempotent of $\mathcal{X}$ and that $\epsilon:\Gamma\to\mathbf{1}$ is an open idempotent of $\mathcal{X}$. Then, the followings hold:\\
(1) There are adjunctions of the following form:
\begin{equation*}
\begin{tikzcd}
\Mod_{L} \arrow[rrr, " \inc", hook] & & & \mathcal{X} \arrow[lll, "L\otimes-"', bend right = 20] \arrow[lll, "\intmap{(L,-)}", bend left = 20] \arrow[rrr, "\Gamma\otimes -"] & & & \co\Mod_{\Gamma}. \arrow[lll, hook', "\inc"', bend right = 20] \arrow[lll, hook, "\intmap{(\Gamma,\inc(-))}" xshift=25, bend left = 20]
\end{tikzcd}
\end{equation*}
Here, given a pair of adjacent horizontal arrows, the upper one is left adjoint to the lower one; also, $\Mod_{L} = \Mod_{L}(\mathcal{X})$ and $\co\Mod_{\Gamma} = \co\Mod_{\Gamma}(\mathcal{X})$. We use the following standard notations for the above adjunctions:
\begin{equation}\label{eq:recollementadjunctions}
\begin{tikzcd}
\Mod_{L} \arrow[rrr, "\imath_{\ast} = \imath_{!}", hook] & & & \mathcal{X} \arrow[lll, "\imath^{\ast}"', bend right = 20] \arrow[lll, "\imath^{!}", bend left = 20] \arrow[rrr, "\jmath^{\ast} = \jmath^{!}"] & & & \co\Mod_{\Gamma}, \arrow[lll, hook', "\jmath_{!}"', bend right = 20] \arrow[lll, hook, "\jmath_{\ast}", bend left = 20]
\end{tikzcd}
\end{equation}
and $\eta_{k,\star}$ and $\epsilon_{k,\star}$, where $k$ is either $\imath$ or $\jmath$ and $\star$ is either $\ast$ or $!$, for the unit and the counit of the four adjunctions.\\
(2) $\Mod_{L}$ is closed symmetric monoidal presentable, and $\imath_{\ast}\intmap_{L}(x,y)\simeq \intmap(\imath_{\ast}(x),\imath_{\ast}(y))$ naturally for $x,y\in\Mod_{L}$ (here, $\intmap_{L}$ denotes the internal mapping object of $\Mod_{L}$).  \\
(3) $\co\Mod_{\Gamma}$ is closed symmetric monoidal presentable, and $\jmath_{\ast}\intmap_{\Gamma}(x,y)\simeq \intmap(\jmath_{\ast}(x),\jmath_{\ast}(y))$ naturally for $x,y\in\co\Mod_{\Gamma}$ (here, $\intmap_{\Gamma}$ denotes the internal mapping object of $\co\Mod_{\Gamma}$).
\begin{proof}
First, the adjunction $L\otimes-\dashv \inc:\Mod_{L}\to\mathcal{X}$ is the free-forgetful adjunction for the module category $\Mod_{L}$, and the presentability of $\Mod_{L}$ follows from the assumption that $\mathcal{X}$ is closed symmetric monoidal presentable \cite[Cor. 4.2.3.7]{ha}. Moreover, $\Mod_{L}$ is closed symmetric monoidal presentable, as $\otimes_{L} = \otimes:\Mod_{L}\times\Mod_{L}\to\Mod_{L}$ preserves small colimits separately in each variables \cite[Prop. 4.4.2.14]{ha}; alternatively, one can deduce this from the facts that the forgetful functor $\imath_{\ast}:\Mod_{L}\hookrightarrow\mathcal{X}$ preserves small colimits \cite[Cor. 4.2.3.5]{ha}, and that $\imath_{\ast}$ is symmetric monoidal and fully faithful (or just conservative, which holds in general) in our situation, together with the closed symmetric monoidal presentability of $\mathcal{X}$. At this point, we know $\imath_{\ast}$ is a left adjoint functor between presentable $\infty$-categories; to describe its right adjoint $\imath^{!}:\mathcal{X}\to\Mod_{L}$, take $x,y\in\mathcal{X}$ and use the natural equivalences $\Map(x,\imath_{\ast}\imath^{!}(y))\simeq\Map(\imath^{\ast}(x),\imath^{!}(y))\simeq \Map(\imath_{\ast}\imath^{\ast}(x), y)\simeq \Map(L\otimes x,y)\simeq\Map(x,\intmap(L,y))$ to conclude $\imath_{\ast}\imath^{!}\simeq\intmap(L,-)$. \\
\indent We describe the adjunctions associated with $\co\Mod_{\Gamma}$ on the right side of the diagram. The adjunction $\inc\dashv \Gamma\otimes-:\mathcal{X}\to\co\Mod_{\Gamma}$ is the forgetful-cofree adjunction for the comodule category $\co\Mod_{\Gamma}$; in other words, it is the opposite $(\inc)^{\op}\dashv (\Gamma\otimes-)^{\op}:(\mathcal{X}^{\op})^{\op}\to(\Mod_{\Gamma}(\mathcal{X}^{\op}))^{\op}$ of the free-forgetful adjunction for the module category $\Mod_{\Gamma}(\mathcal{X}^{\op})$ over the closed idempotent $\mathbf{1}_{\mathcal{X}^{\op}}\to \Gamma$ of $\mathcal{X}^{\op}$. For the second adjunction, take $x\in\mathcal{X}$ and $y\in\co\Mod_{\Gamma}$, and observe the natural equivalences $\Map(\Gamma\otimes x,y)\simeq \Map(\inc(\Gamma\otimes x),\inc(y))\simeq\Map(\Gamma\otimes x,\inc(y))\simeq \Map(x,\intmap(\Gamma,\inc(y)))$. In particular, we know both $\jmath_{!} = \inc$ and $\jmath^{!} = \Gamma\otimes-$ are left adjoint functors. The presentability of $\co\Mod_{\Gamma}$ now follows from Lemma \ref{lem:prlretractpresentable}, noting that $\co\Mod_{\Gamma}$ is a retract of $\mathcal{X}\in\Prl$. Moreover, $\co\Mod_{\Gamma}$ is closed symmetric monoidal presentable; one has to check that for any $x\in\co\Mod_{\Gamma}$ the functor $x\otimes-:\co\Mod_{\Gamma}\to\co\Mod_{\Gamma}$ preserves small colimits, and this follows from the facts that the left adjoint functor $\jmath_{!} = \inc:\co\Mod_{L}\hookrightarrow\mathcal{X}$ preserves small colimits and that $\jmath_{!}$ is symmetric monoidal and fully faithful (or just conservative, which again holds in general) in our situation, combined with the fact that $x\otimes-:\mathcal{X}\to\mathcal{X}$ preserves small colimits.\\
\indent The remaining proof of (2) concerning $\intmap_{L}$ and that of (3) concerning $\intmap_{\Gamma}$ are analogous to each other, so let us explain the case of (2) for convenience. For any $z\in\mathcal{X}$ and $x,y\in\Mod_{L}$, symmetric monoidality of $\imath^{\ast}$ and fully faithfulness of $\imath_{\ast}$ enables us to have natural equivalences $\Map(z,\imath_{\ast}\intmap_{L}(x,y))\simeq\Map(\imath^{\ast}z,\intmap_{L}(x,y))\simeq \Map((\imath^{\ast}z)\otimes_{L} x,y)\simeq \Map((\imath^{\ast}z)\otimes_{L} \imath^{\ast}\imath_{\ast}x,y)\simeq \Map(\imath^{\ast}(z\otimes \imath_{\ast}x),y)\simeq\Map(z\otimes\imath_{\ast}x,\imath_{\ast}y)\simeq \Map(z,\intmap(\imath_{\ast}x,\imath_{\ast}y))$. 
\end{proof}
\end{proposition}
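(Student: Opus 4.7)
The proof naturally splits into parallel treatments of the module side ($\Mod_L$) and the comodule side ($\co\Mod_\Gamma$). For each side, the plan is to establish the two adjunctions, exhibit the closed symmetric monoidal presentable structure, and deduce the internal-hom formula by a $\Map$-chain. The module side will be essentially formal once one uses that closed idempotents are commutative algebras, whereas the comodule side will require one nontrivial vanishing.

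On the module side, $L$ as a closed idempotent is canonically a commutative algebra of $\mathcal{X}$, so $\Mod_L(\mathcal{X})$ is presentable by \cite[Cor. 4.2.3.7]{ha} with forgetful functor $\imath_\ast$ preserving small colimits \cite[Cor. 4.2.3.5]{ha}. By Remark \ref{rem:recollementadjunctions}(1), $\imath_\ast$ is fully faithful and $\otimes_L=\otimes$, so closed symmetric monoidality of $\Mod_L$ is inherited from $\mathcal{X}$. The free-forgetful adjunction is $\imath^\ast = L\otimes - \dashv \imath_\ast$; since $\imath_\ast$ is thus a left adjoint between presentable $\infty$-categories, it has a right adjoint $\imath^!$, and the identification $\imath_\ast\imath^! \simeq \intmap(L,-)$ will come from the chain $\Map(x,\imath_\ast\imath^! y) \simeq \Map(\imath^\ast x,\imath^! y) \simeq \Map(\imath_\ast\imath^\ast x,y) \simeq \Map(L\otimes x,y) \simeq \Map(x,\intmap(L,y))$. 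The formula $\imath_\ast\intmap_L(x,y)\simeq \intmap(\imath_\ast x,\imath_\ast y)$ will then follow from a parallel $\Map$-chain using symmetric monoidality of $\imath^\ast$ and the counit equivalence $\imath^\ast\imath_\ast\simeq id$ from fully faithfulness of $\imath_\ast$.

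On the comodule side, $\jmath_! = \inc$ is fully faithful and symmetric monoidal by Remark \ref{rem:recollementadjunctions}(2); a formally dual treatment in $\mathcal{X}^{\op}$ is tempting but $\mathcal{X}^{\op}$ need not be presentable, so I would proceed directly. The adjunction $\jmath^! = \Gamma\otimes - \dashv \jmath_\ast = \intmap(\Gamma,\inc(-))$ will follow from tensor-hom in $\mathcal{X}$ combined with fully faithfulness of $\jmath_!$. Granted also the adjunction $\jmath_!\dashv \jmath^!$, presentability of $\co\Mod_\Gamma$ will follow by Lemma \ref{lem:prlretractpresentable} as a retract of $\mathcal{X}$ in $\Prl$; closed symmetric monoidality will follow because the fully faithful symmetric monoidal left adjoint $\jmath_!$ forces each $x\otimes_{\co\Mod_\Gamma}-$ to preserve small colimits; and $\jmath_\ast\intmap_\Gamma(x,y)\simeq \intmap(\jmath_\ast x,\jmath_\ast y)$ will be obtained by a parallel $\Map$-chain to the module side.

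The hard part will be establishing $\jmath_!\dashv \jmath^!$, i.e., that the counit $\epsilon\otimes x: \Gamma\otimes x\to x$ is the coreflection of $x\in \mathcal{X}$ into $\co\Mod_\Gamma$. Applying $\Map(y,-)$ for $y\in \co\Mod_\Gamma$ to the fiber-cofiber sequence $\Gamma\otimes x \to x \to L\otimes x$ reduces the claim to $\Map(y, L\otimes x)\simeq \ast$. Since $y\simeq \Gamma\otimes y$ and $L\otimes x\in \Mod_L$, the tensor-hom adjunction and the already-established $\imath^\ast\dashv \imath_\ast$ will identify $\Map(y,L\otimes x)\simeq \Map_{\Mod_L}(L\otimes \Gamma\otimes y, L\otimes x)$, and the idempotent vanishing $L\otimes \Gamma\simeq 0$ will force the source to be zero.
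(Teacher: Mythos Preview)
Your treatment of the $\Mod_L$ side, of the adjunction $\jmath^! \dashv \jmath_\ast$, of presentability of $\co\Mod_\Gamma$ via Lemma \ref{lem:prlretractpresentable}, and of the internal-hom formulas matches the paper's proof essentially verbatim.

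There is, however, a genuine gap in your argument for the adjunction $\jmath_! \dashv \jmath^!$. The proposition as stated assumes only that $\mathcal{X}$ is closed symmetric monoidal presentable, that $\eta:\mathbf{1}\to L$ is a closed idempotent, and that $\epsilon:\Gamma\to\mathbf{1}$ is an open idempotent; it does \emph{not} assume that $\mathcal{X}$ is pointed, that $e\otimes-$ preserves fiber-cofiber sequences, that $\Gamma\to\mathbf{1}\to L$ is a fiber-cofiber sequence, or that $L\otimes\Gamma\simeq 0$. Your ``hard part'' invokes all four of these: you use a fiber-cofiber sequence $\Gamma\otimes x\to x\to L\otimes x$, and then the vanishing $L\otimes\Gamma\simeq 0$ to kill $\Map(y,L\otimes x)$. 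None of this is available here; those hypotheses appear only later (Definition \ref{def:fibcofibidem}, Proposition \ref{prop:motivicgluingsq}), and in the present proposition $\Gamma$ and $L$ are entirely unrelated.

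The paper's fix is exactly the ``formally dual'' route you dismissed: in $\mathcal{X}^{\op}$ the map $\epsilon^{\op}:\mathbf{1}\to\Gamma$ is a closed idempotent, so one has the free--forgetful adjunction $\Gamma\otimes-\dashv\inc$ for $\Mod_\Gamma(\mathcal{X}^{\op})$, and passing to opposites yields $\inc\dashv\Gamma\otimes-$ for $\co\Mod_\Gamma(\mathcal{X})$. You were right that $\mathcal{X}^{\op}$ need not be presentable, but presentability is irrelevant for this step: the free--forgetful adjunction over an idempotent algebra is purely formal (unit $\eta\otimes x$, counit the inverse of the equivalence $\eta\otimes y$ for $y\in\Mod_L$), requiring only the monoidal structure. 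Presentability is used elsewhere---for the existence of $\imath^!$ and for Lemma \ref{lem:prlretractpresentable}---but not here.
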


\begin{lemma}\label{lem:prlretractpresentable}
Let $\mathcal{X}$ be a presentable $\infty$-category and let $\mathcal{C}$ be its full subcategory, such that the canonical fully faithful inclusion $\imath:\mathcal{C}\hookrightarrow\mathcal{X}$ is a left adjoint functor, whose right adjoint $\imath^{\R}:\mathcal{X}\to\mathcal{C}$ is again a left adjoint functor. Then, $\mathcal{C}$ is presentable. 
\begin{proof}
Consider the 2-simplex $\N\Delta^{2}\to\CAT_{\infty}$ determined by the unit of the adjunction $\imath\dashv\imath^{\R}$, i.e., 
\begin{equation*}
\begin{tikzcd}
& \mathcal{C}\\
\mathcal{C} \arrow[ru, "id"] \arrow[r, "\imath"', hook] & \mathcal{X}. \arrow[u, "\imath^{\R}"']
\end{tikzcd}
\end{equation*}
By \cite[Cor. 4.4.5.7]{htt}, this determines a strong retraction diagram $F\in \Fun(\Idem^{+},\CAT_{\infty})$ in $\CAT_{\infty}$. Taking restriction along $\Idem\hookrightarrow\Idem^{+}$, one obtains an effective idempotent $F|_{\Idem}:\Idem\to\CAT_{\infty}$ in $\CAT_{\infty}$. Since $\imath\circ\imath^{\R}$ is a left adjoint functor and $\mathcal{X}$ is presentable by assumption, $F|_{\Idem}$ is an image of an idempotent $F':\Idem\to\Prl$ in $\Prl$ by the natural functor $\Fun(\Idem,\Prl)\to\Fun(\Idem,\CAT_{\infty})$. Since the natural functor $\Prl\to\CAT_{\infty}$ preserves small limits \cite[Prop. 5.5.3.13]{htt}, we know $\mathcal{C}\simeq \lim(F|_{\Idem})$ is an underlying $\infty$-category of $\lim F'$, and hence is presentable. 
\end{proof}
\end{lemma}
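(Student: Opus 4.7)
The plan is to realize $\mathcal{C}$ as a retract of $\mathcal{X}$ inside the $\infty$-category $\Prl$ of presentable $\infty$-categories, and then deduce the claim from the good limit-theoretic behavior of $\Prl$.

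First, because $\imath$ is fully faithful, the unit of the adjunction $\imath\dashv\imath^{\R}$ is an equivalence $id_{\mathcal{C}}\xrightarrow{\sim}\imath^{\R}\circ\imath$. Together with the edges $\imath$ and $\imath^{\R}$, this data assembles into a $2$-simplex $\N\Delta^{2}\to\CAT_{\infty}$, which is precisely the input to Lurie's machinery of strong retraction diagrams; invoking \cite[Cor. 4.4.5.7]{htt}, we extract an effective idempotent $F:\N\Idem\to\CAT_{\infty}$ sending the unique object to $\mathcal{X}$ and whose splitting is $\mathcal{C}$.

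Next, I would observe that since both $\imath$ and $\imath^{\R}$ are left adjoint functors between presentable $\infty$-categories by hypothesis, the idempotent $F$ factors as $\N\Idem\to\Prl\to\CAT_{\infty}$, i.e., lifts to a functor $F'$ in $\Fun(\N\Idem,\Prl)$. Because $\Prl$ admits small limits and the forgetful functor $\Prl\to\CAT_{\infty}$ preserves them \cite[Prop. 5.5.3.13]{htt}, the idempotent $F'$ is effective in $\Prl$, and its splitting there is computed by the same limit as in $\CAT_{\infty}$, hence must coincide with $\mathcal{C}$. Therefore $\mathcal{C}\in\Prl$, i.e., $\mathcal{C}$ is presentable.

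The main subtlety, and the reason for passing through Lurie's formalism rather than arguing directly at the level of homotopy categories, is ensuring that the retract data is coherent as an $\infty$-categorical diagram; this is exactly what the effective idempotent apparatus of \cite[Section 4.4.5]{htt} supplies as soon as the unit equivalence is in hand, and this coherence is then preserved under change of ambient $\infty$-category from $\CAT_{\infty}$ to $\Prl$.
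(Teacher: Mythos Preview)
Your proof is correct and follows essentially the same approach as the paper's: both build the retraction $2$-simplex from the unit of $\imath\dashv\imath^{\R}$, invoke \cite[Cor.~4.4.5.7]{htt} to obtain an effective idempotent in $\CAT_{\infty}$, lift it to $\Prl$ using that $\imath\circ\imath^{\R}$ is a left adjoint endofunctor of the presentable $\mathcal{X}$, and conclude via \cite[Prop.~5.5.3.13]{htt}. One small wording issue: when you say ``both $\imath$ and $\imath^{\R}$ are left adjoint functors between presentable $\infty$-categories by hypothesis,'' note that you do not yet know $\mathcal{C}$ is presentable---what you actually need (and what the paper states more carefully) is only that the composite $\imath\circ\imath^{\R}:\mathcal{X}\to\mathcal{X}$ is a left adjoint, which follows since each factor is a left adjoint regardless of presentability.
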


Our main interest is in the situation that the idempotents constitute an idempotent fiber-cofiber sequence $\Gamma\to\mathbf{1}\to L$, and the notation (\ref{eq:recollementadjunctions}) for the associated adjunctions and their variants will mostly be used in this situation. In this case, we have $\jmath^{\ast}\imath_{\ast} = \jmath^{!}\imath_{!}\simeq 0$, as $\Gamma\otimes L\simeq0$. Moreover, we have the following 'motivic' analogue of the gluing square associated with a recollement, as evinced by the choice of notation:

\begin{proposition}\label{prop:motivicgluingsq}
Let $\mathcal{X}$ be a closed symmetric monoidal presentable $\infty$-category which is pointed and satisfies the condition that for any object $e$ of $\mathcal{X}$, the functor $e\otimes-:\mathcal{X}\to\mathcal{X}$ preserves fiber-cofiber sequences. Suppose that $\Gamma\to\mathbf{1}\to L$ is an idempotent fiber-cofiber sequence of $\mathcal{X}$. Then, the square
\begin{equation}\label{eq:gluingsq}
\begin{tikzcd}
id \arrow[rr, "\eta_{\imath,\ast}"] \arrow[d, "\eta_{\jmath, \ast}"'] & & \imath_{\ast}\imath^{\ast} \arrow[d, "\imath_{\ast}\imath^{\ast}\eta_{\jmath, \ast}"] \\
\jmath_{\ast}\jmath^{\ast} \arrow[rr, "\eta_{\imath, \ast}\jmath_{\ast}\jmath^{\ast}"'] & & \imath_{\ast}\imath^{\ast}\jmath_{\ast}\jmath^{\ast} 
\end{tikzcd}
\end{equation}
in $\Fun(\mathcal{X},\mathcal{X})$, which more concretely takes the form 
\begin{equation*}
\begin{tikzcd}
id \arrow[rr] \arrow[d] & & L\otimes- \arrow[d] \\
\intmap(\Gamma,\Gamma\otimes-) \arrow[rr] & & L\otimes\intmap(\Gamma, \Gamma\otimes-),
\end{tikzcd}
\end{equation*}
satisfies the following property:\\
\indent For any functor $E:\mathcal{X}\to\mathcal{V}$ into a stable $\infty$-category $\mathcal{V}$ which maps fiber-cofiber sequences of $\mathcal{X}$ to fiber-cofiber sequences of $\mathcal{V}$, the image 
\begin{equation*}
\begin{tikzcd}
E(x) \arrow[rr, "E(\eta_{\imath,\ast})"] \arrow[d, "E(\eta_{\jmath, \ast})"'] & & E(\imath_{\ast}\imath^{\ast}x) \arrow[d, "E(\imath_{\ast}\imath^{\ast}\eta_{\jmath, \ast})"] \\
E(\jmath_{\ast}\jmath^{\ast}x) \arrow[rr, "E(\eta_{\imath, \ast}\jmath_{\ast}\jmath^{\ast})"'] & & E(\imath_{\ast}\imath^{\ast}\jmath_{\ast}\jmath^{\ast}x) 
\end{tikzcd}
\end{equation*}
of the square in $\mathcal{X}$ obtained by evaluating the diagram (\ref{eq:gluingsq}) at any object $x\in\mathcal{X}$ by the functor $E$ is a pullback-pushout square in $\mathcal{V}$. In other words, for each $x\in\mathcal{X}$, the natural diagram
\begin{equation*}
\begin{tikzcd}
E(x) \arrow[rr] \arrow[d] & & E(L\otimes x) \arrow[d] \\
E(\intmap(\Gamma,\Gamma\otimes x)) \arrow[rr] & & E(L\otimes\intmap(\Gamma, \Gamma\otimes x))
\end{tikzcd}
\end{equation*}
is a pullback-pushout square in $\mathcal{V}$. 
\begin{proof}
We check the followings. \\
(i) $\jmath_{!}\jmath^{!}\xrightarrow{\epsilon_{\jmath,!}} id\xrightarrow{\eta_{\imath, \ast}}\imath_{\ast}\imath^{\ast}$ is a fiber-cofiber sequence, i.e., for each $x\in\mathcal{X}$, the sequence $\jmath_{!}\jmath^{!}x\to x\to\imath_{\ast}\imath^{\ast}x$ is a fiber-cofiber sequence. The sequence is nothing but the sequence $\Gamma\otimes x\to x\to L\otimes x$ obtained by applying $-\otimes x$ to the fiber-cofiber sequence $\Gamma\to\mathbf{1}\to L$, and such sequence is a fiber-cofiber sequence by our assumption on $\mathcal{X}$. \\
(ii) Consider the diagram
\begin{equation*}
\begin{tikzcd}
\jmath_{!}\jmath^{!} \arrow[rr, "\epsilon_{\jmath,!}"] \arrow[d, "\jmath_{!}\jmath^{!}\eta_{\jmath,\ast}"'] && id \arrow[rr, "\eta_{\imath,\ast}"] \arrow[d, "\eta_{\jmath, \ast}"'] & & \imath_{\ast}\imath^{\ast} \arrow[d, "\imath_{\ast}\imath^{\ast}\eta_{\jmath, \ast}"] \\
\jmath_{!}\jmath^{!}\jmath_{\ast}\jmath^{\ast} \arrow[rr, "\epsilon_{\jmath,!}\jmath_{\ast}\jmath^{\ast}"'] && \jmath_{\ast}\jmath^{\ast} \arrow[rr, "\eta_{\imath, \ast}\jmath_{\ast}\jmath^{\ast}"'] & & \imath_{\ast}\imath^{\ast}\jmath_{\ast}\jmath^{\ast},
\end{tikzcd}
\end{equation*} 
where the right hand side square is the square (\ref{eq:gluingsq}). The two horizontal sequences are fiber-cofiber sequences, as checked in (i) above. To verify the claimed property, it suffices to check that the leftmost vertical arrow of the diagram is an equivalence. For that, it suffices to check that, before taking $\jmath_{!}$, the map $\jmath^{!}\xrightarrow{\jmath^{!}\eta_{\jmath,\ast}}\jmath^{!}\jmath_{\ast}\jmath^{\ast}$ is already an equivalence. Considering the composition 
\begin{equation*}
\jmath^{!}\xrightarrow{\jmath^{!}\eta_{\jmath, \ast}} \jmath^{!}\jmath_{\ast}\jmath^{\ast}\xrightarrow{\epsilon_{\jmath,\ast}\jmath^{\ast}}\jmath^{\ast}
\end{equation*}
and noting that the second map is an equivalence, as $\epsilon_{\jmath,\ast}$ is an equivalence, and that the composition is equivalent to $id_{\jmath^{!} = \jmath^{\ast}}$ from $\jmath^{\ast}\dashv\jmath_{\ast}$, we conclude that the first map of the composition is also an equivalence as desired. 
\end{proof}
\end{proposition}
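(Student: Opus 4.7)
The plan is to realize the square (\ref{eq:gluingsq}) as the right-hand square of a rectangle whose two rows are fiber-cofiber sequences extracted from the idempotent sequence $\Gamma\to\mathbf{1}\to L$, and then to argue that the leftmost vertical map of that rectangle is already an equivalence in $\mathcal{X}$, prior to applying $E$.

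First I would observe that, by the hypothesis that $e\otimes-$ preserves fiber-cofiber sequences for every $e\in\mathcal{X}$, tensoring the idempotent fiber-cofiber sequence $\Gamma\to\mathbf{1}\to L$ with any object $y\in\mathcal{X}$ yields a fiber-cofiber sequence $\Gamma\otimes y\to y\to L\otimes y$, which in the notation of Proposition \ref{prop:recollementadjunctions} reads as $\jmath_{!}\jmath^{!}y\xrightarrow{\epsilon_{\jmath,!}}y\xrightarrow{\eta_{\imath,\ast}}\imath_{\ast}\imath^{\ast}y$, natural in $y$. Instantiating this naturality at $y=x$ and at $y=\jmath_{\ast}\jmath^{\ast}x$ and connecting the two by the unit $\eta_{\jmath,\ast}:\mathrm{id}\to\jmath_{\ast}\jmath^{\ast}$ produces a commutative rectangle whose right-hand square is precisely the square (\ref{eq:gluingsq}) evaluated at $x$.

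The crux of the argument is then to show that the leftmost vertical arrow $\jmath_{!}\jmath^{!}x\to\jmath_{!}\jmath^{!}\jmath_{\ast}\jmath^{\ast}x$, induced by $\eta_{\jmath,\ast}$, is an equivalence. For this it suffices to show that $\jmath^{!}\to\jmath^{!}\jmath_{\ast}\jmath^{\ast}$ is already an equivalence before applying $\jmath_{!}$. Using the description of the adjunctions in Proposition \ref{prop:recollementadjunctions}, the underlying endofunctor satisfies $\jmath^{!}=\jmath^{\ast}=\Gamma\otimes-$, and the counit $\jmath^{\ast}\jmath_{\ast}\xrightarrow{\sim}\mathrm{id}$ is an equivalence by fully faithfulness of $\jmath_{\ast}$. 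Considering the composition $\jmath^{!}\to\jmath^{!}\jmath_{\ast}\jmath^{\ast}\xrightarrow{\sim}\jmath^{\ast}$, which is the identity on $\jmath^{!}=\jmath^{\ast}$ by a triangle identity, forces the first arrow to be an equivalence as well.

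To conclude, I apply $E$ to the rectangle. Both rows become fiber-cofiber sequences in the stable $\infty$-category $\mathcal{V}$ by hypothesis on $E$, and the leftmost vertical morphism becomes an equivalence. Standard stability then forces the right-hand square, which is the image under $E$ of (\ref{eq:gluingsq}) at $x$, to be a pullback-pushout square in $\mathcal{V}$. The main obstacle is essentially bookkeeping: correctly identifying $\jmath^{!}$ with $\jmath^{\ast}$ from the descriptions in Proposition \ref{prop:recollementadjunctions} and tracking that the rectangle produced really has (\ref{eq:gluingsq}) as its right square; once this is in place, the reduction to an equivalence of leftmost columns and the stable argument are immediate.
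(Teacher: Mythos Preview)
Your proposal is correct and follows essentially the same approach as the paper's proof: both build the same rectangle whose rows are the fiber-cofiber sequences $\Gamma\otimes y\to y\to L\otimes y$ at $y=x$ and $y=\jmath_{\ast}\jmath^{\ast}x$, and both reduce to showing that $\jmath^{!}\eta_{\jmath,\ast}:\jmath^{!}\to\jmath^{!}\jmath_{\ast}\jmath^{\ast}$ is an equivalence via the triangle identity and the fully faithfulness of $\jmath_{\ast}$.
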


\begin{remarkn}
In general, the diagram (\ref{eq:recollementadjunctions}) associated with an idempotent fiber-cofiber sequence $\Gamma\to\mathbf{1}\to L$ in $\mathcal{X}$ need not be an unstable recollement in the sense of \cite[Def. A.8.1]{ha}, and the associated gluing square (\ref{eq:gluingsq}) need not be a pullback diagram. 
\end{remarkn}

\begin{example}\label{ex:motivicgluingsqdualcats}
Let $\mathcal{R}\in\CAlg^{\rig}_{\Sp}$ be rigid, and take $\mathcal{X} = \Mod_{\mathcal{R}}(\Prl_{\st})^{\dual} = (\Mod_{\mathcal{R}}(\Prl_{\st})^{\dual},\otimes_{\mathcal{R}},\mathcal{R})$ with $\intmap = \intmap^{\dual}_{\mathcal{R}}$. The assumptions on $\mathcal{X}$ in Proposition \ref{prop:motivicgluingsq} are satisfied by $\Mod_{\mathcal{R}}(\Prl_{\st})^{\dual}$ due to Lemma \ref{lem:intflat} and the presentability theorem \cite[Th. 4.1 and Cor. 4.2]{ramzi}. Since the map $\Sp\to\mathcal{R}$ of $\CAlg(\Prl_{\st})$ is rigid, the restriction of scalars functor $\Mod_{\mathcal{R}}(\Prl_{\st})\to\Mod_{\Sp}(\Prl_{\st})$ restricts to $\Mod_{\mathcal{R}}(\Prl_{\st})^{\dual}\xrightarrow{\Res}\Mod_{\Sp}(\Prl_{\st})^{\dual}$ by Lemma \ref{lem:rigidadjunction}, and the latter functor preserves fiber-cofiber sequences, cf. Lemma \ref{lem:fibcofibseq}. Now, Proposition \ref{prop:motivicgluingsq} applied to $\mathcal{X} = \Mod_{\mathcal{R}}(\Prl_{\st})^{\dual}$ says that for any $\mathcal{C}\in\Mod_{\mathcal{R}}(\Prl_{\st})^{\dual}$ and a localizing invariant $E:\Pr^{\L,\dual}_{\st}\to \mathcal{V}$ into a stable $\infty$-category $\mathcal{V}$, the square 
\begin{equation*}
\begin{tikzcd}
E(\mathcal{C}) \arrow[rr, "E(\eta_{\imath,\ast})"] \arrow[d, "E(\eta_{\jmath, \ast})"'] & & E(\imath_{\ast}\imath^{\ast}\mathcal{C})  \arrow[d, "E(\imath_{\ast}\imath^{\ast}\eta_{\jmath, \ast})"] \\
E(\jmath_{\ast}\jmath^{\ast}\mathcal{C}) \arrow[rr, "E(\eta_{\imath, \ast}\jmath_{\ast}\jmath^{\ast})"'] & & E(\imath_{\ast}\imath^{\ast}\jmath_{\ast}\jmath^{\ast}\mathcal{C})
\end{tikzcd}
\end{equation*}
obtained by applying $E$ to the diagram (\ref{eq:gluingsq}) evaluated at $\mathcal{C}$ is a pullback-pushout square in $\mathcal{V}$; note that the composition $\Mod_{\mathcal{R}}(\Prl_{\st})^{\dual}\xrightarrow{\Res}\Mod_{\Sp}(\Prl_{\st})^{\dual}\xrightarrow{E}\mathcal{V}$ preserves fiber-cofiber sequences, as each of the component functors does so. Even more concretely, this pullback-pushout square takes the form
\begin{equation*}
\begin{tikzcd}
E(\mathcal{C}) \arrow[rr] \arrow[d] & & E(L\otimes_{\mathcal{R}}\mathcal{C})  \arrow[d] \\
E(\intmap_{\mathcal{R}}^{\dual}(\Gamma, \Gamma\otimes_{\mathcal{R}}\mathcal{C})) \arrow[rr] & & E(L\otimes_{\mathcal{R}}\intmap_{\mathcal{R}}^{\dual}(\Gamma, \Gamma\otimes_{\mathcal{R}}\mathcal{C})). 
\end{tikzcd}
\end{equation*}
Thus, we can say that the square (\ref{eq:gluingsq}) of Proposition \ref{prop:motivicgluingsq} acts as a 'motivic' pullback-pushout square in the context of dualizable presentable stable $\infty$-categories. 
\end{example}

\begin{example}\label{ex:formalgluingsq}
Let $R$ be an $\bbE_{\infty}$-ring and let $I$ be a finitely generated ideal of $\pi_{0}R$. Consider the fiber-cofiber sequence $\Mod_{R}^{\Nil(I)}\to\Mod_{R}\to\Mod_{R}^{\Loc(I)}$ in $\Mod_{\Mod_{R}}(\Prl_{\st})^{\dual}$; here, following \cite[Chapter 7]{sag}, $\Mod_{R}^{\Nil(I)}$ stands for the full subcategory of $I$-nilpotent objects in $\Mod_{R}$ and $\Mod_{R}^{\Loc(I)} = (\Mod_{R}^{\Nil(I)})^{\perp}$ stands for the full subcategory of $I$-local objects in $\Mod_{R}$. It is moreover idempotent, since $\Mod_{R}^{\Nil(I)}\otimes_{R}\Mod_{R}^{\Nil(I)}\simeq (\Mod_{R}^{\Nil(I)})^{\Nil(I)} = \Mod_{R}^{\Nil(I)}$ \cite[Cor. 7.1.2.11 and its proof]{sag}. Applying Proposition \ref{prop:motivicgluingsq} to the idempotent fiber-cofiber sequence $\Mod_{R}^{\Nil(I)}\to\Mod_{R}\to\Mod_{R}^{\Loc(I)}$ and the object $\mathcal{C} = \mathbf{1} = \Mod_{R}$ in $\Mod_{\Mod_{R}}(\Prl_{\st})^{\dual}$, we have a motivic pullback-pushout square 
\begin{equation}\label{eq:ex:formalgluingsq}
\begin{tikzcd}
\Mod_{R} \arrow[r] \arrow[d] & \Mod_{R}^{\Loc(I)} \arrow[d]\\
\intmap_{R}^{\dual}(\Mod_{R}^{\Nil(I)},\Mod_{R}) \arrow[r] & \Mod_{R}^{\Loc(I)}\otimes_{R}\intmap_{R}^{\dual}(\Mod_{R}^{\Nil(I)},\Mod_{R}) 
\end{tikzcd}
\end{equation}
in $\Mod_{\Mod_{R}}(\Prl_{\st})^{\dual}$. Here, we used Corollary \ref{cor:endocomputationnil} below to further compute the lower two terms for the case of $\mathcal{C}$ being the unit object. 
\end{example}

\begin{remarkn}\label{rem:formalgluingsq}
Let $R$ be a Noetherian commutative ring and let $I$ be an ideal of $R$. Write $\Spf(R^{\wedge_{I}})$ to denote the $I$-adic formal scheme associated with the adic ring $R^{\wedge_{I}}$, and let $\Spf(R^{\wedge_{I}})_{\eta} = \Spa(R^{\wedge_{I}}, R^{\wedge_{I}})_{\eta}$ be the adic generic fiber of $\Spf(R^{\wedge_{I}})$. Also, write $\Nuc_{R^{\wedge_{I}}}$ for the nuclear solid module category associated with the analytic ring $(R^{\wedge_{I}},\Solid_{R^{\wedge_{I}}})$, cf. \cite{angeom}; there are equivalences $\Nuc_{\Spf(R^{\wedge_{I}})}\simeq \Nuc_{R^{\wedge_{I}}}$ and $\Nuc_{\Spf(R^{\wedge_{I}})_{\eta}}\simeq \Nuc_{R^{\wedge_{I}}}^{\Loc(I)}$, interpreting the right hand side objects in $\Mod_{\Mod_{R}}(\Prl_{\st})^{\dual}$ as categories of nuclear solid modules on adic spaces. By directly analyzing these nuclear solid module categories, Clausen and Scholze proved that the natural diagram of spectra
\begin{equation}\label{eq:rem:formalgluingsq}
\begin{tikzcd}
\K(R) \arrow[r] \arrow[d] & \K(\Spec(R)\backslash V(I)) \arrow[d]\\
\K^{\cont}\left(\Nuc_{\Spf(R^{\wedge_{I}})}\right) \arrow[r] & \K^{\cont}\left(\Nuc_{\Spf(R^{\wedge_{I}})_{\eta}}\right)
\end{tikzcd}
\end{equation}
is a pullback-pushout square \cite{clakadic}. \\
\indent Here, we explain how the motivic pullback-pushout square (\ref{eq:ex:formalgluingsq}) of Example \ref{ex:formalgluingsq} recovers the aforementioned result of Clausen--Scholze through Efimov's theorems relating modified nuclear module categories to nuclear solid module categories. First, note that through Efimov's description $\mNuc_{R^{\wedge_{I}}}\simeq \intmap_{R}^{\dual}(\Mod_{R}^{\Nil(I)},\Mod_{R})$, where the left hand side denotes $\lim^{\dual}_{n}\Mod_{R/I^{n}}$ \cite{efiminverse}, the motivic pullback-pushout square (\ref{eq:ex:formalgluingsq}) of Example \ref{ex:formalgluingsq} takes the form
\begin{equation}\label{eq:rem:formalgluingsq2}
\begin{tikzcd}
\Mod_{R} \arrow[r] \arrow[d] & \Mod_{R}^{\Loc(I)} \arrow[d]\\
\mNuc_{R^{\wedge_{I}}} \arrow[r] & \Mod_{R}^{\Loc(I)}\otimes_{R}\mNuc_{R^{\wedge_{I}}}.
\end{tikzcd}
\end{equation}
Let us write $[-]^{\cont}_{\loc}:\Pr^{\L,\dual}_{\st}\to\mathcal{M}_{\loc}$ to denote the universal finitary localizing invariant over $\Sp$. Efimov's theorems \cite{efiminverse,efimrigid}, cf. \cite[Cor. 3.24.1]{cordova}, assert that the natural morphism $\Nuc_{R^{\wedge_{I}}}\to\mNuc_{R^{\wedge_{I}}}$ in $\Mod_{\Mod_{R}}(\Prl_{\st})^{\dual}$ from the nuclear solid module category to the modified nuclear module category induces equivalences of noncommutative localizing motives $[\Nuc_{R^{\wedge_{I}}}]^{\cont}_{\loc}\simeq [\mNuc_{R^{\wedge_{I}}}]^{\cont}_{\loc}$ and $\left[\Mod_{R}^{\Loc(I)}\otimes_{R}\Nuc_{R^{\wedge_{I}}}\right]^{\cont}_{\loc}\simeq \left[\Mod_{R}^{\Loc(I)}\otimes_{R}\mNuc_{R^{\wedge_{I}}}\right]^{\cont}_{\loc}$. In particular, the morphism $\Nuc_{R^{\wedge_{I}}}\to\mNuc_{R^{\wedge_{I}}}$ induces an equivalence of spectra $\K^{\cont}(\Nuc_{\Spf(R^{\wedge_{I}})})\simeq \K^{\cont}(\mNuc_{R^{\wedge_{I}}})$; the target is further naturally equivalent to $\lim_{n}\K(R/I^{n})$ by Efimov's theorem \cite{efiminverse}. Thus, we know the pullback-pushout square of spectra obtained by applying the continuous K-theory functor $\K^{\cont}$ on the motivic pullback-pushout diagram (\ref{eq:rem:formalgluingsq2}) via Example \ref{ex:motivicgluingsqdualcats} is equivalent to the square (\ref{eq:rem:formalgluingsq}), which in particular implies that the latter square is a pullback-pushout square.  
\end{remarkn}

In Example \ref{ex:formalgluingsq}, we gave a further description of the bottom objects of the motivic pullback-pushout square in $\Mod_{\Mod_{R}}(\Prl_{\st})^{\dual}$ through Corollary \ref{cor:endocomputationnil}. Below, we observe how this, or more generally Proposition \ref{prop:endocomputation}, follows from a specific property of the open idempotent object $\Gamma$ constituting the idempotent fiber-cofiber sequence. 

\begin{lemma}\label{lem:comodtomodzero}
Let $\mathcal{R}\in\CAlg^{\rig}_{\Sp}$, and let $\Gamma\to\mathbf{1}\to L$ be an idempotent fiber-cofiber sequence in $\Mod_{\mathcal{R}}(\Prl_{\st})^{\dual}$. Then, for any $\mathcal{C}\in\co\Mod_{\Gamma}(\Mod_{\mathcal{R}}(\Prl_{\st})^{\dual})$ and $\mathcal{D}\in\Mod_{L}(\Mod_{\mathcal{R}}(\Prl_{\st})^{\dual})$, one has $\Fun^{\L}_{\mathcal{R}}(\mathcal{C},\mathcal{D})\simeq 0$. 
\begin{proof}
Since $\Gamma\to\mathbf{1}$ is an open idempotent and $\Gamma$ is dualizable in $\Mod_{\mathcal{R}}(\Prl_{\st})$, we know $\Gamma$ is self-dual in $\Mod_{\mathcal{R}}(\Prl_{\st})$, cf. \cite[Prop. 2.3.1 (2)]{campion}. In particular, the endofunctor $\Gamma\otimes_{\mathcal{R}}-$ of $\Mod_{\mathcal{R}}(\Prl_{\st})$ is right adjoint to itself. Thus, we have $\Fun^{\L}_{\mathcal{R}}(\mathcal{C},\mathcal{D})\simeq\Fun^{\L}_{\mathcal{R}}(\Gamma\otimes_{\mathcal{R}} \mathcal{C},L\otimes_{\mathcal{R}} \mathcal{D})\simeq \Fun^{\L}_{\mathcal{R}}(\mathcal{C},\Gamma\otimes_{\mathcal{R}} L\otimes_{\mathcal{R}} \mathcal{D})\simeq 0$. Note that Lemma \ref{lem:fibcofibidem} applies to $\Mod_{\mathcal{R}}(\Prl_{\st})^{\dual}$ and gives $\Gamma\otimes_{\mathcal{R}} L\simeq0$ by Lemma \ref{lem:intflat}. 
\end{proof}
\end{lemma}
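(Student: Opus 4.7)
The plan is to exploit the self-duality of $\Gamma$ that is forced by it being a dualizable open idempotent, and then collapse the mapping category by using the defining property $\Gamma\otimes_{\mathcal{R}}L\simeq 0$ of an idempotent fiber-cofiber sequence.

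First I would verify that $\Gamma$ is self-dual in $\Mod_{\mathcal{R}}(\Prl_{\st})$. Since $\epsilon\colon \Gamma\to\mathbf{1}$ is an open idempotent, the multiplication $\Gamma\otimes_{\mathcal{R}}\Gamma\xrightarrow{\sim}\Gamma$ is an equivalence (Lemma \ref{lem:fibcofibidem}), and combined with dualizability of $\Gamma$ in the closed symmetric monoidal $\infty$-category $\Mod_{\mathcal{R}}(\Prl_{\st})$ this yields an equivalence $\Gamma^{\vee}\simeq \Gamma$; this is a standard formal statement for dualizable smashing colocalizations, and I would just cite the version of this appearing e.g.\ in Campion's paper rather than repeat the unwinding of evaluation/coevaluation maps. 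The upshot is that the endofunctor $\Gamma\otimes_{\mathcal{R}}-$ of $\Mod_{\mathcal{R}}(\Prl_{\st})$ is right adjoint to itself, and since it is $\mathcal{R}$-linear, the same self-adjointness holds on $\mathcal{R}$-linear left adjoint functor categories.

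Next I would rewrite the mapping category using the comodule/module conditions. The condition $\mathcal{C}\in\co\Mod_{\Gamma}$ means $\mathcal{C}\simeq\Gamma\otimes_{\mathcal{R}}\mathcal{C}$, and $\mathcal{D}\in\Mod_{L}$ means $\mathcal{D}\simeq L\otimes_{\mathcal{R}}\mathcal{D}$ (Remark \ref{rem:recollementadjunctions}), so
\[
\Fun^{\L}_{\mathcal{R}}(\mathcal{C},\mathcal{D}) \simeq \Fun^{\L}_{\mathcal{R}}(\Gamma\otimes_{\mathcal{R}}\mathcal{C},\, L\otimes_{\mathcal{R}}\mathcal{D}) \simeq \Fun^{\L}_{\mathcal{R}}(\mathcal{C},\, \Gamma\otimes_{\mathcal{R}}L\otimes_{\mathcal{R}}\mathcal{D}),
\]
where the second equivalence is the self-adjointness above. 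Now Lemma \ref{lem:fibcofibidem} applies to $\mathcal{X}=\Mod_{\mathcal{R}}(\Prl_{\st})^{\dual}$ (its hypotheses are supplied by Lemma \ref{lem:intflat} together with presentability of this $\infty$-category), so $\Gamma\otimes_{\mathcal{R}}L\simeq 0$, and tensoring by $\mathcal{D}$ preserves zero objects (again Lemma \ref{lem:intflat}). Hence the target is $0$ and so is $\Fun^{\L}_{\mathcal{R}}(\mathcal{C},\mathcal{D})$.

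The main obstacle is really only the self-duality of $\Gamma$; everything else is bookkeeping with the hypotheses and with Lemma \ref{lem:intflat}. One could alternatively try to avoid self-duality by arguing directly on an arbitrary $f\in \Fun^{\L}_{\mathcal{R}}(\mathcal{C},\mathcal{D})$, but without $\Gamma$-linearity of $f$ there is no natural map $\Gamma\otimes f(c)\to f(\Gamma\otimes c)$ to exploit, so the self-adjoint endofunctor approach is cleaner.
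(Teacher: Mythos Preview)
Your proof is correct and follows essentially the same route as the paper's: self-duality of the dualizable open idempotent $\Gamma$ (via Campion), hence self-adjointness of $\Gamma\otimes_{\mathcal{R}}-$, then the chain $\Fun^{\L}_{\mathcal{R}}(\mathcal{C},\mathcal{D})\simeq\Fun^{\L}_{\mathcal{R}}(\Gamma\otimes_{\mathcal{R}}\mathcal{C},L\otimes_{\mathcal{R}}\mathcal{D})\simeq\Fun^{\L}_{\mathcal{R}}(\mathcal{C},\Gamma\otimes_{\mathcal{R}}L\otimes_{\mathcal{R}}\mathcal{D})\simeq 0$ using $\Gamma\otimes_{\mathcal{R}}L\simeq 0$ from Lemma~\ref{lem:fibcofibidem} and Lemma~\ref{lem:intflat}. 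The only difference is that you spell out a bit more of the justification and add a closing remark on why the self-duality route is preferable.
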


For the sake of convenience, let us separately state the following special case of Lemma \ref{lem:comodtomodzero}:

\begin{lemma}\label{lem:niltoloczero}
Let $R$ be an $\bbE_{\infty}$-ring and let $I$ be a finitely generated ideal of $\pi_{0}R$. For any $\mathcal{C}\in\Mod_{\Mod_{R}}(\Prl_{\st})$ which is $I$-local, i.e., $\mathcal{C} = \mathcal{C}^{\Loc(I)}$, we have $\Fun^{\L}_{R}\left(\Mod_{R}^{\Nil(I)},\mathcal{C}\right)\simeq0$.
\begin{proof}
This is just a particular case of Lemma \ref{lem:comodtomodzero} for $\mathcal{R} = \Mod_{R}$ and the idempotent fiber-cofiber sequence $\Mod_{R}^{\Nil(I)}\to\Mod_{R}\to\Mod_{R}^{\Loc(I)}$. Alternatively, any $\Mod_{R}$-linear functors between stable $\Mod_{R}$-linear categories preserve $I$-nilpotent objects, and hence the claim follows. More precisely, $f:\Mod_{R}^{\Nil(I)}\to\mathcal{C}$ maps an $I$-nilpotent module $N$ to $f(N) = f(\Gamma_{I}(N)\otimes_{R}\Gamma_{I}(R))\simeq N\otimes_{R}f(\Gamma_{I}(R))$ which is an $I$-nilpotent object of $\mathcal{C}$, and by assumption $\mathcal{C}^{\Nil(I)}\simeq0$.  
\end{proof}
\end{lemma}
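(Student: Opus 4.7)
The plan is to deduce this as an immediate consequence of Lemma \ref{lem:comodtomodzero}. First, take $\mathcal{R} = \Mod_{R}$, which is rigid (it is compactly generated with compacts agreeing with dualizables). The sequence $\Mod_{R}^{\Nil(I)} \to \Mod_{R} \to \Mod_{R}^{\Loc(I)}$ was shown in Example \ref{ex:formalgluingsq} to be an idempotent fiber-cofiber sequence in $\Mod_{\Mod_{R}}(\Prl_{\st})^{\dual}$, so in the notation of that lemma we may take $\Gamma = \Mod_{R}^{\Nil(I)}$ and $L = \Mod_{R}^{\Loc(I)}$. The source $\Mod_{R}^{\Nil(I)} = \Gamma$ is tautologically a $\Gamma$-comodule since $\Gamma \otimes_{R} \Gamma \simeq \Gamma$, while the $I$-locality hypothesis $\mathcal{C} = \mathcal{C}^{\Loc(I)}$ is precisely the $L$-module condition $L \otimes_{R} \mathcal{C} \simeq \mathcal{C}$ of Remark \ref{rem:recollementadjunctions}(1). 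Applying Lemma \ref{lem:comodtomodzero} yields $\Fun^{\L}_{R}(\Mod_{R}^{\Nil(I)}, \mathcal{C}) \simeq 0$.

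A minor bookkeeping point: Lemma \ref{lem:comodtomodzero} is stated with both its $\mathcal{C}$ and $\mathcal{D}$ in $\Mod_{\mathcal{R}}(\Prl_{\st})^{\dual}$, whereas here $\mathcal{C}$ is only assumed to lie in $\Mod_{\Mod_{R}}(\Prl_{\st})$. Inspection of that proof shows, however, that it uses only self-duality of $\Gamma$ in $\Mod_{\mathcal{R}}(\Prl_{\st})$ (which holds because $\Gamma \to \mathbf{1}$ is an open idempotent with $\Gamma$ dualizable, cf.\ \cite[Prop.\ 2.3.1(2)]{campion}) together with $\Gamma \otimes_{R} L \simeq 0$. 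The chain $\Fun^{\L}_{R}(\mathcal{C},\mathcal{D}) \simeq \Fun^{\L}_{R}(\Gamma \otimes_{R} \mathcal{C}, L \otimes_{R} \mathcal{D}) \simeq \Fun^{\L}_{R}(\mathcal{C}, \Gamma \otimes_{R} L \otimes_{R} \mathcal{D}) \simeq 0$ thus goes through identically when the target is merely in $\Mod_{\Mod_{R}}(\Prl_{\st})$.

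As a transparent alternative worth recording, one can give a direct argument via the smashing nature of $L_{I}$. For $N \in \Mod_{R}^{\Nil(I)}$, the fiber sequence $\Gamma_{I}(R) \to R \to L_{I}(R)$ tensored with $N$ shows $\Gamma_{I}(N) \simeq N \otimes_{R} \Gamma_{I}(R)$, and since $N$ is $I$-nilpotent this yields $N \simeq N \otimes_{R} \Gamma_{I}(R)$. Any $R$-linear left adjoint $f \colon \Mod_{R}^{\Nil(I)} \to \mathcal{C}$ then satisfies $f(N) \simeq N \otimes_{R} f(\Gamma_{I}(R))$, which is $I$-nilpotent in $\mathcal{C}$ because $L_{I}(R) \otimes_{R} N \simeq 0$. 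As $\mathcal{C} = \mathcal{C}^{\Loc(I)}$ forces $\mathcal{C}^{\Nil(I)} \simeq 0$, we conclude $f(N) \simeq 0$ for all $N$, hence $f \simeq 0$.

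I do not anticipate any real obstacle: the reduction to Lemma \ref{lem:comodtomodzero} is formal once the open/closed idempotent structure of the nilpotent/local decomposition is in hand, and the dualizability gap in the target is harmless since the proof of that lemma does not use it.
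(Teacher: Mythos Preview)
Your proposal is correct and follows essentially the same approach as the paper: both invoke Lemma~\ref{lem:comodtomodzero} for the idempotent fiber-cofiber sequence $\Mod_{R}^{\Nil(I)}\to\Mod_{R}\to\Mod_{R}^{\Loc(I)}$, and both record the same direct alternative via $f(N)\simeq N\otimes_{R} f(\Gamma_{I}(R))$ being $I$-nilpotent. You are in fact slightly more careful than the paper in explicitly addressing the gap that $\mathcal{C}$ is only assumed to lie in $\Mod_{\Mod_{R}}(\Prl_{\st})$ rather than $\Mod_{\Mod_{R}}(\Prl_{\st})^{\dual}$, which the paper glosses over.
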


\begin{proposition}\label{prop:endocomputation}
Let $\mathcal{R}\in\CAlg^{\rig}_{\Sp}$ and let $\Gamma\to\mathbf{1}\to L$ be an idempotent fiber-cofiber sequence of $\Mod_{\mathcal{R}}(\Prl_{\st})^{\dual}$ such that $\Gamma$ is $\omega_{1}$-compact. Also, let $\mathcal{C}\in\Mod_{\mathcal{R}}(\Prl_{\st})^{\dual}$. Then, upon taking $\intmap_{\mathcal{R}}^{\dual}(\Gamma,-)$, the canonical map $\Gamma\otimes_{\mathcal{R}}\mathcal{C}\to\mathcal{C}$ in $\Mod_{\mathcal{R}}(\Prl_{\st})^{\dual}$ induces an equivalence 
\begin{equation*}
\intmap_{\mathcal{R}}^{\dual}(\Gamma,\Gamma\otimes_{\mathcal{R}}\mathcal{C})\simeq\intmap_{\mathcal{R}}^{\dual}(\Gamma,\mathcal{C}).
\end{equation*} 
\begin{proof} 
Consider the fiber-cofiber sequence $\Gamma\otimes_{\mathcal{R}}\mathcal{C}\to\mathcal{C}\to L\otimes_{\mathcal{R}}\mathcal{C}$ in $\Mod_{\mathcal{R}}(\Prl_{\st})^{\dual}$. Note that $\Gamma\in\Mod_{\mathcal{R}}(\Prl_{\st})^{\dual}$ is proper. In fact, as $\Gamma\to\mathbf{1}$ is an open idempotent and $\Gamma$ is dualizable in $\Mod_{\mathcal{R}}(\Prl_{\st})$, the object is self-dual in $\Mod_{\mathcal{R}}(\Prl_{\st})$. The evaluation map, under the self-duality identification, is given by $\Gamma\otimes_{\mathcal{R}}\Gamma\simeq \Gamma\to\mathbf{1}$, and hence it is right adjointable in $\Mod_{\mathcal{R}}(\Prl_{\st})$. Thus, as $\Gamma$ is proper and $\omega_{1}$-compact by assumption, we know the functor $\intmap_{\mathcal{R}}^{\dual}(\Gamma,-)$ preserves fiber-cofiber sequences of $\Mod_{\mathcal{R}}(\Prl_{\st})^{\dual}$, cf. \cite[Th. 5.1]{efimcopenhagen}. In particular, we have a fiber-cofiber sequence 
\begin{equation*}
\intmap_{\mathcal{R}}^{\dual}(\Gamma,\Gamma\otimes_{\mathcal{R}}\mathcal{C})\to \intmap_{\mathcal{R}}^{\dual}(\Gamma,\mathcal{C})\to \intmap_{\mathcal{R}}^{\dual}(\Gamma,L\otimes_{\mathcal{R}}\mathcal{C}). 
\end{equation*}
To check that the first map of the sequence is an equivalence, it suffices to check that the third term $\intmap_{\mathcal{R}}^{\dual}(\Gamma,L\otimes_{\mathcal{R}}\mathcal{C})$ is equivalent to $0$. In fact, we have $\Fun^{\L}_{\mathcal{R}}(\Gamma,L\otimes_{\mathcal{R}}\mathcal{C})\simeq0$ by Lemma \ref{lem:comodtomodzero}. By Corollary \ref{cor:intmapcontrol}, we conclude that $\intmap_{\mathcal{R}}^{\dual}(\Gamma,L\otimes_{\mathcal{R}}\mathcal{C})$ is equivalent to $0$. 
\end{proof}
\end{proposition}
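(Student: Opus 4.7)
The plan is to apply $\intmap_{\mathcal{R}}^{\dual}(\Gamma,-)$ to the fiber-cofiber sequence
\begin{equation*}
\Gamma\otimes_{\mathcal{R}}\mathcal{C}\to\mathcal{C}\to L\otimes_{\mathcal{R}}\mathcal{C}
\end{equation*}
in $\Mod_{\mathcal{R}}(\Prl_{\st})^{\dual}$, which exists by Lemma \ref{lem:intflat} applied to the idempotent fiber-cofiber sequence $\Gamma\to\mathbf{1}\to L$. If I can show that this application yields a fiber-cofiber sequence and that its third term vanishes, the desired equivalence follows immediately.

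To establish the first point, I would verify that $\Gamma$ is a proper object of $\Mod_{\mathcal{R}}(\Prl_{\st})^{\dual}$, so that together with the $\omega_{1}$-compactness assumption the functor $\intmap_{\mathcal{R}}^{\dual}(\Gamma,-)$ preserves fiber-cofiber sequences by (the appropriate version of) the criterion from \cite{efimcopenhagen}. Properness follows from the fact that $\Gamma$ is self-dual in $\Mod_{\mathcal{R}}(\Prl_{\st})$: since $\epsilon:\Gamma\to\mathbf{1}$ is an open idempotent in $\Mod_{\mathcal{R}}(\Prl_{\st})^{\dual}$ and $\Gamma$ is dualizable, the standard argument (cf.\ \cite[Prop. 2.3.1(2)]{campion}) identifies $\Gamma^{\vee}\simeq \Gamma$, with the evaluation map corresponding under this self-duality to the structure map $\Gamma\otimes_{\mathcal{R}}\Gamma\simeq \Gamma\xrightarrow{\epsilon}\mathbf{1}$. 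This latter map is a morphism of $\Mod_{\mathcal{R}}(\Prl_{\st})^{\dual}$ and hence is right adjointable in $\Mod_{\mathcal{R}}(\Prl_{\st})$, which is precisely the properness condition on $\Gamma$.

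The second point, that $\intmap_{\mathcal{R}}^{\dual}(\Gamma,L\otimes_{\mathcal{R}}\mathcal{C})\simeq 0$, is the step where the structure of the idempotent fiber-cofiber sequence is used. I would observe that $L\otimes_{\mathcal{R}}\mathcal{C}$ is an $L$-module in $\Mod_{\mathcal{R}}(\Prl_{\st})^{\dual}$, while $\Gamma$ is a $\Gamma$-comodule, so Lemma \ref{lem:comodtomodzero} yields $\Fun^{\L}_{\mathcal{R}}(\Gamma,L\otimes_{\mathcal{R}}\mathcal{C})\simeq 0$. Then Corollary \ref{cor:intmapcontrol}(1) upgrades this vanishing of the category of $\mathcal{R}$-linear left adjoint functors to the vanishing of the internal mapping object in $\Mod_{\mathcal{R}}(\Prl_{\st})^{\dual}$. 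The main subtlety I would expect is the bookkeeping around properness: one has to be careful to invoke the preservation-of-fiber-cofiber-sequences result in the relative setting over a rigid base $\mathcal{R}$ rather than just over $\Sp$, but given the equivalence $\Mod_{\mathcal{R}}(\Prl_{\st})^{\dual}\simeq \Mod_{\mathcal{R}}(\Pr^{\L,\dual}_{\st})$ of Remark \ref{rem:dualmodulesequiv} and Lemma \ref{lem:fibcofibseq}, this reduction is essentially formal.
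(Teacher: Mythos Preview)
Your proposal is correct and follows essentially the same approach as the paper: apply $\intmap_{\mathcal{R}}^{\dual}(\Gamma,-)$ to the fiber-cofiber sequence $\Gamma\otimes_{\mathcal{R}}\mathcal{C}\to\mathcal{C}\to L\otimes_{\mathcal{R}}\mathcal{C}$, use self-duality of the open idempotent $\Gamma$ to get properness and hence preservation of fiber-cofiber sequences via \cite{efimcopenhagen}, and then kill the third term via Lemma~\ref{lem:comodtomodzero} and Corollary~\ref{cor:intmapcontrol}(1). The paper's proof is identical in structure and in the lemmas invoked.
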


\begin{lemma}\label{lem:nilproper1compact}
Let $R$ be an $\bbE_{\infty}$-ring and let $I$ be a finitely generated ideal of $\pi_{0}R$. Then, $\Mod_{R}^{\Nil(I)}$ is proper and $\omega_{1}$-compact in $\Mod_{\Mod_{R}}(\Prl_{\st})^{\dual}$. In particular, the endofunctor $\intmap_{R}^{\dual}\left(\Mod_{R}^{\Nil(I)},-\right)$ preserves fiber-cofiber sequences in $\Mod_{\Mod_{R}}(\Prl_{\st})^{\dual}$. 
\begin{proof}
First, the properness is checked as in the Proof of Proposition \ref{prop:endocomputation}; let us repeat the arguments here. As $\Mod_{R}^{\Nil(I)}\to\Mod_{R}$ is an open idempotent and $\Mod_{R}^{\Nil(I)}$ is dualizable in $\Mod_{\Mod_{R}}(\Prl_{\st})$, the object is self-dual in $\Mod_{\Mod_{R}}(\Prl_{\st})$. The evaluation map, under the self-duality identification, is given by $\Mod_{R}^{\Nil(I)}\otimes_{R}\Mod_{R}^{\Nil(I)}\simeq \Mod_{R}^{\Nil(I)}\hookrightarrow\Mod_{R}$, and hence it is right adjointable in $\Mod_{\Mod_{R}}(\Prl_{\st})$. \\
\indent For the $\omega_{1}$-compactness, we have to check that viewing $\Mod_{R}^{\Nil(I)}$ as a dualizable object of $\Mod_{\Mod_{R}}(\Prl_{\st})$, its coevaluation map sends the unit to an $\omega_{1}$-compact object. The coevaluation map, again under the self-duality, takes the form $\Mod_{R}\xrightarrow{\Gamma_{I}} \Mod_{R}^{\Nil(I)}\simeq\Mod_{R}^{\Nil(I)}\otimes_{R}\Mod_{R}^{\Nil(I)}$. Thus, we are reduced to check that the object $\Gamma_{I}(R)$ is in $\left(\Mod_{R}^{\Nil(I)}\right)^{\omega_{1}}$. It suffices to consider the case of $I=(x)$, so assume that is the case. Considering the fiber sequences $\fib(x^{n})\to R\xrightarrow{x^{n}}R$ in $\Mod_{R}$ associated with $x^{n}$-multiplication map on $R$ and taking the filtered colimit, where the middle term is the constant ind-object and the third term is given by $R\xrightarrow{x}R\xrightarrow{x}R\xrightarrow{x}\cdots$, one obtains the fiber sequence $\colim_{n}\fib(x^{n})\to R\to R[x^{-1}]$ in $\Mod_{R}$. In particular, $\Gamma_{I}(R)\simeq\colim_{n}(\fib(x^{n}))$; since each $\fib(x^{n})$ is in $\Mod_{R}^{\omega}$ and also in $\Mod_{R}^{\Nil(I)}$ (i.e., $\fib(x^{n})[x^{-1}]\simeq0$), one knows the object is compact in $\Mod_{R}^{\Nil(I)}$, and hence taking $\omega_{1}$-small colimit of such remains to be an $\omega_{1}$-compact object of $\Mod_{R}^{\Nil(I)}$. Finally, the last statement follows from \cite[Th. 5.1]{efimcopenhagen}.
\end{proof}
\end{lemma}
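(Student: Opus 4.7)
The plan is to exploit the self-duality of $\Gamma = \Mod_R^{\Nil(I)}$ as an object of $\Mod_{\Mod_R}(\Prl_{\st})$, which stems from $\Mod_R^{\Nil(I)} \to \Mod_R$ being an open idempotent (cf. Example \ref{ex:formalgluingsq} and the proof of Proposition \ref{prop:endocomputation}). Under this self-duality identification, the evaluation map takes the form $\Mod_R^{\Nil(I)} \otimes_R \Mod_R^{\Nil(I)} \simeq \Mod_R^{\Nil(I)} \hookrightarrow \Mod_R$, where the first equivalence comes from the idempotent property $\Gamma \otimes_R \Gamma \simeq \Gamma$ and the second is the canonical fully faithful inclusion. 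This inclusion admits $\Gamma_I$ as a right adjoint in $\Mod_{\Mod_R}(\Prl_{\st})$, yielding right adjointability of the evaluation map; hence $\Mod_R^{\Nil(I)}$ is proper.

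For $\omega_1$-compactness, I would use the characterization that $\omega_1$-compactness of a dualizable object in $\Mod_{\Mod_R}(\Prl_{\st})^{\dual}$ is detected by the coevaluation map taking an $\omega_1$-compact value on the unit. Under the self-duality identification, the coevaluation $\Mod_R \to \Mod_R^{\Nil(I)} \otimes_R \Mod_R^{\Nil(I)} \simeq \Mod_R^{\Nil(I)}$ is precisely the derived $I$-torsion functor $\Gamma_I$, so it suffices to show that $\Gamma_I(R)$ is $\omega_1$-compact in $\Mod_R^{\Nil(I)}$. Since $\Mod_R^{\Nil(I)}$ is compactly generated, I will exhibit $\Gamma_I(R)$ as a countable filtered colimit of compact objects. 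Choosing generators $I = (x_1,\ldots,x_n)$ of $\pi_0 R$, the stable Koszul complex computation presents $\Gamma_I(R)$ as a sequential colimit $\colim_k K(x_1^k,\ldots,x_n^k;R)$, where each term is a perfect $R$-module which is manifestly $I$-nilpotent, hence compact in $\Mod_R^{\Nil(I)}$.

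The main technical step is verifying the Koszul-type colimit presentation of $\Gamma_I(R)$. The one-variable case is immediate from the fiber sequence $\Gamma_{(x)}(R) \to R \to R[x^{-1}]$ combined with $R[x^{-1}] \simeq \colim(R \xrightarrow{x} R \xrightarrow{x} \cdots)$, which gives $\Gamma_{(x)}(R) \simeq \colim_n \fib(x^n\colon R\to R)$; the multi-variable case can be reduced to this inductively using $\Gamma_I \simeq \Gamma_{(x_1)} \circ \cdots \circ \Gamma_{(x_n)}$ on nilpotent objects, or equivalently by iterating fiber sequences in each generator. The final claim about preservation of fiber-cofiber sequences by $\intmap^{\dual}_R(\Mod_R^{\Nil(I)},-)$ is then a direct application of \cite[Th. 5.1]{efimcopenhagen}, applied to the proper and $\omega_1$-compact dualizable object $\Mod_R^{\Nil(I)}$.
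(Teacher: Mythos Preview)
Your proposal is correct and follows essentially the same approach as the paper: both use the self-duality of $\Mod_R^{\Nil(I)}$ coming from the open idempotent structure to identify the evaluation and coevaluation maps, deduce properness from right adjointability of the inclusion, reduce $\omega_1$-compactness to showing $\Gamma_I(R)$ is $\omega_1$-compact in $\Mod_R^{\Nil(I)}$, and cite \cite[Th.~5.1]{efimcopenhagen} for the final statement. The only minor difference is that the paper reduces to the principal case $I=(x)$ without elaboration, whereas you handle the general finitely generated case directly via the Koszul colimit presentation; your treatment is slightly more explicit on this point.
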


\begin{corollary}\label{cor:endocomputationnil}
Let $R$ be an $\bbE_{\infty}$-ring and let $I$ be a finitely generated ideal of $\pi_{0}R$. Also, let $\mathcal{C}\in\Mod_{\Mod_{R}}(\Prl_{\st})^{\dual}$. Then, upon taking $\intmap_{R}^{\dual}\left(\Mod_{R}^{\Nil(I)},-\right)$, the map $\mathcal{C}^{\Nil(I)}\hookrightarrow\mathcal{C}$ in $\Mod_{\Mod_{R}}(\Prl_{\st})^{\dual}$ induces an equivalence $\intmap_{R}^{\dual}\left(\Mod_{R}^{\Nil(I)},\mathcal{C}^{\Nil(I)}\right)\simeq\intmap_{R}^{\dual}\left(\Mod_{R}^{\Nil(I)},\mathcal{C}\right)$. 
\begin{proof} 
This is a particular case of Proposition \ref{prop:endocomputation} for the idempotent fiber-cofiber sequence $\Mod_{R}^{\Nil(I)}\to\Mod_{R}\to\Mod_{R}^{\Loc(I)}$ in $\Mod_{\Mod_{R}}(\Prl_{\st})^{\dual}$; the required $\omega_{1}$-compactness of $\Mod_{R}^{\Nil(I)}$ follows from Lemma \ref{lem:nilproper1compact}. 
\end{proof}
\end{corollary}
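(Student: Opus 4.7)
The plan is to realize this corollary as a direct specialization of Proposition \ref{prop:endocomputation}. We take the base rigid category to be $\mathcal{R} = \Mod_{R}$ and the idempotent fiber-cofiber sequence to be the canonical one
\begin{equation*}
\Mod_{R}^{\Nil(I)}\to\Mod_{R}\to\Mod_{R}^{\Loc(I)}
\end{equation*}
in $\Mod_{\Mod_{R}}(\Prl_{\st})^{\dual}$, whose idempotence was already observed in Example \ref{ex:formalgluingsq}. With $\Gamma = \Mod_{R}^{\Nil(I)}$ and $L = \Mod_{R}^{\Loc(I)}$, the conclusion of Proposition \ref{prop:endocomputation} reads
\begin{equation*}
\intmap_{R}^{\dual}\left(\Mod_{R}^{\Nil(I)},\Mod_{R}^{\Nil(I)}\otimes_{R}\mathcal{C}\right)\simeq \intmap_{R}^{\dual}\left(\Mod_{R}^{\Nil(I)},\mathcal{C}\right),
\end{equation*}
so the proof reduces to verifying the hypotheses of that proposition and identifying the left-hand term with $\intmap_{R}^{\dual}\left(\Mod_{R}^{\Nil(I)},\mathcal{C}^{\Nil(I)}\right)$.

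First, the required $\omega_{1}$-compactness of $\Gamma = \Mod_{R}^{\Nil(I)}$ as an object of $\Mod_{\Mod_{R}}(\Prl_{\st})^{\dual}$ is exactly the content of Lemma \ref{lem:nilproper1compact} (which in passing also gives properness, though Proposition \ref{prop:endocomputation} only explicitly requires $\omega_{1}$-compactness together with the intrinsic properness argument it contains). Second, the tensor product $\Mod_{R}^{\Nil(I)}\otimes_{R}\mathcal{C}$ in $\Mod_{\Mod_{R}}(\Prl_{\st})^{\dual}$ is naturally equivalent to $\mathcal{C}^{\Nil(I)}$, the full subcategory of $I$-nilpotent objects of $\mathcal{C}$; this is a standard consequence of the recollement description of Proposition \ref{prop:recollementadjunctions} applied in the symmetric monoidal $\infty$-category $\Mod_{\Mod_{R}}(\Prl_{\st})^{\dual}$, under which the canonical map $\Mod_{R}^{\Nil(I)}\otimes_{R}\mathcal{C}\to\mathcal{C}$ is identified with the fully faithful inclusion $\mathcal{C}^{\Nil(I)}\hookrightarrow\mathcal{C}$.

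Putting these two ingredients together, the equivalence of the proposition rewrites as the desired identification
\begin{equation*}
\intmap_{R}^{\dual}\left(\Mod_{R}^{\Nil(I)},\mathcal{C}^{\Nil(I)}\right)\simeq \intmap_{R}^{\dual}\left(\Mod_{R}^{\Nil(I)},\mathcal{C}\right),
\end{equation*}
induced by the inclusion $\mathcal{C}^{\Nil(I)}\hookrightarrow\mathcal{C}$. There is essentially no obstacle; all the nontrivial work was done in establishing Proposition \ref{prop:endocomputation}, whose own main inputs are the self-duality of $\Gamma$ coming from its being an open idempotent together with the vanishing $\Fun^{\L}_{\mathcal{R}}(\Gamma, L\otimes_{\mathcal{R}}\mathcal{C})\simeq 0$ of Lemma \ref{lem:comodtomodzero} (equivalently Lemma \ref{lem:niltoloczero} in the present setting), combined with Corollary \ref{cor:intmapcontrol} to pass from the vanishing of the functor category to the vanishing of the internal mapping object.
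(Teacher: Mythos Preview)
Your proof is correct and follows essentially the same approach as the paper: specialize Proposition \ref{prop:endocomputation} to the idempotent fiber-cofiber sequence $\Mod_{R}^{\Nil(I)}\to\Mod_{R}\to\Mod_{R}^{\Loc(I)}$, invoking Lemma \ref{lem:nilproper1compact} for the $\omega_{1}$-compactness hypothesis. You have simply made explicit the identification $\Mod_{R}^{\Nil(I)}\otimes_{R}\mathcal{C}\simeq\mathcal{C}^{\Nil(I)}$ that the paper leaves implicit.
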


\subsection{Motivic limit diagram associated with idempotent fiber-cofiber sequences}\label{subsec:motiviclimitdiag}

In this subsection, we study a generalization of Proposition \ref{prop:motivicgluingsq} which can be applied to the situation when a finite sequence of idempotent fiber-cofiber sequences satisfying certain compatibility condition is given. This embraces the case of a single idempotent fiber-cofiber sequence discussed in the previous subsection, and will be useful in our later application to the adelic descent statement in the next subsection. 

\begin{theorem}\label{thm:motiviclimitdiag}
Let $\mathcal{X}$ be a closed symmetric monoidal presentable $\infty$-category which is pointed and satisfies the condition that for any object $e$ of $\mathcal{X}$, the functor $e\otimes-:\mathcal{X}\to\mathcal{X}$ preserves fiber-cofiber sequences. Suppose that we are given a sequence $\cdots\to(\Gamma_{2}\to\mathbf{1}\to L_{2})\to(\Gamma_{1}\to\mathbf{1}\to L_{1})\to(\Gamma_{0}\to\mathbf{1}\to L_{0}) = (\mathbf{1}\to\mathbf{1}\to 0)$ of maps of idempotent fiber-cofiber sequences in $\mathcal{X}$, i.e., a diagram
\begin{equation*}
\begin{tikzcd}
\cdots \arrow[r] & \Gamma_{i+1} \arrow[r] \arrow[d, "\epsilon_{i+1}"] & \Gamma_{i} \arrow[r] \arrow[d, "\epsilon_{i}"] & \cdots \arrow[r] & \Gamma_{1} \arrow[r] \arrow[d, "\epsilon_{1}"] & \Gamma_{0} \arrow[r, equal] \arrow[d, "\epsilon_{0}"] & \mathbf{1} \arrow[d, "="] \\
\cdots \arrow[r, "="] & \mathbf{1} \arrow[r, "="] \arrow[d, "\eta_{i+1}"] & \mathbf{1} \arrow[r, "="] \arrow[d, "\eta_{i}"] & \cdots \arrow[r, "="] & \mathbf{1} \arrow[r, "="] \arrow[d, "\eta_{1}"] & \mathbf{1} \arrow[r, equal] \arrow[d, "\eta_{0}"] & \mathbf{1} \arrow[d] \\
\cdots \arrow[r] & L_{i+1} \arrow[r] & L_{i} \arrow[r] & \cdots \arrow[r] & L_{1} \arrow[r] & L_{0} \arrow[r, equal] & 0
\end{tikzcd}
\end{equation*}
in $\mathcal{X}$ such that each of the vertical sequences is an idempotent fiber-cofiber sequence. Suppose furthermore that the involved idempotent objects satisfy the following condition
\begin{equation}\label{eq:idempincs}
L_{i}\otimes \Gamma_{i+1}\simeq 0~~\text{for all}~~i\geq 0
\end{equation}
(note that the case of $i=0$, i.e., $L_{0}\otimes\Gamma_{1}\simeq0$, is automatic due to $L_{0} = 0$ by assumption). Then, the followings hold:\\
(1) The condition (\ref{eq:idempincs}) is equivalent to each of the following conditions:\\
\indent (i) For each $i\geq0$, we have $\Gamma_{i+1}\in\co\Mod_{\Gamma_{i}}$, i.e., $\epsilon_{i}$ induces an equivalence $\Gamma_{i+1}\otimes\Gamma_{i}\simeq \Gamma_{i+1}$.\\
\indent (ii) For each $i\geq0$, we have $L_{i}\in\Mod_{L_{i+1}}$, i.e., $\eta_{i+1}$ induces an equivalence $L_{i}\simeq L_{i}\otimes L_{i+1}$. \\
In particular, we have canonical inclusions
\begin{equation*}
0 = \Mod_{L_{0}}\hookrightarrow\Mod_{L_{1}}\hookrightarrow\cdots\hookrightarrow\Mod_{L_{i}}\hookrightarrow\Mod_{L_{i+1}}\hookrightarrow\cdots\hookrightarrow \mathcal{X} 
\end{equation*}
and maps
\begin{equation*}
\mathcal{X} = \co\Mod_{\Gamma_{0}}\xrightarrow{\Gamma_{1}\otimes-} \co\Mod_{\Gamma_{1}}\xrightarrow{\Gamma_{2}\otimes-}\cdots\xrightarrow{\Gamma_{i}\otimes-} \co\Mod_{\Gamma_{i}}\xrightarrow{\Gamma_{i+1}\otimes-}\co\Mod_{\Gamma_{i+1}}\xrightarrow{\Gamma_{i+2}\otimes-}\cdots,
\end{equation*}
where each of the maps $\Gamma_{i+1}\otimes-:\co\Mod_{\Gamma_{i}}\to\co\Mod_{\Gamma_{i+1}}$ is right adjoint to the canonical inclusion $\co\Mod_{\Gamma_{i+1}}\hookrightarrow\co\Mod_{\Gamma_{i}}$. \\
(2) For each $i\geq0$, we have canonical fiber-cofiber sequences 
\begin{equation*} 
\Gamma_{i+1}\to\Gamma_{i}\to \Gamma_{i}\otimes L_{i+1}~~~\text{and}~~~L_{i+1}\otimes\Gamma_{i}\to L_{i+1}\to L_{i}. 
\end{equation*}
(3) For each $i\geq0$, denote the adjunctions 
\begin{equation*}
\begin{tikzcd}
\Mod_{L_{i}} \arrow[rrr, " \inc", hook] & & & \mathcal{X} \arrow[lll, "L_{i}\otimes-"' xshift=-10, bend right = 20] \arrow[lll, "\intmap{(L_{i},-)}" xshift=-25, bend left = 20] \arrow[rrr, "\Gamma_{i}\otimes -"] & & & \co\Mod_{\Gamma_{i}} \arrow[lll, hook', "\inc"' xshift=5, bend right = 20] \arrow[lll, hook, "\intmap{(\Gamma_{i},\inc(-))}" xshift=35, bend left = 20]
\end{tikzcd}
\end{equation*}
associated with $\Gamma_{i}\to\mathbf{1}\to L_{i}$ from Proposition \ref{prop:recollementadjunctions} as  
\begin{equation*}
\begin{tikzcd}
\mathcal{Z}_{i-1} \arrow[rrr, "(\imath_{i-1})_{\ast} = (\imath_{i-1})_{!}", hook] & & & \mathcal{X} \arrow[lll, "\imath_{i-1}^{\ast}"', bend right = 30] \arrow[lll, "\imath_{i-1}^{!}", bend left = 30] \arrow[rrr, "\jmath_{i}^{\ast} = \jmath_{i}^{!}"] & & & \mathcal{U}_{i}. \arrow[lll, hook', "(\jmath_{i})_{!}"', bend right = 30] \arrow[lll, hook, "(\jmath_{i})_{\ast}", bend left = 30]
\end{tikzcd}
\end{equation*} 
Also, let us write $\mathcal{X}_{i}$ for the full subcategory $\co\Mod_{L_{i+1}\otimes\Gamma_{i}}(\Mod_{L_{i+1}}) = \Mod_{\Gamma_{i}\otimes L_{i+1}}(\co\Mod_{\Gamma_{i}})$ of $\mathcal{X}$. Then, we have adjunctions 
\begin{equation*}
\begin{tikzcd}
\mathcal{Z}_{i-1} = \Mod_{L_{i}} \arrow[rrrr, "(\imath_{i-1, i})_{\ast} = (\imath_{i-1, i})_{!} = \inc", hook] & & & & \mathcal{Z}_{i} = \Mod_{L_{i+1}} \arrow[llll, "\imath_{i-1, i}^{\ast} = L_{i}\otimes-"', bend right = 20] \arrow[llll, "\imath_{i-1, i}^{!} = \intmap{(L_{i},-)}", bend left = 20] \arrow[rrrr, "\overline{\jmath_{i}}^{\ast} = \overline{\jmath_{i}}^{!} = L_{i+1}\otimes\Gamma_{i}\otimes-"] & & & & \mathcal{X}_{i}=\co\Mod_{L_{i+1}\otimes\Gamma_{i}}(\Mod_{L_{i+1}}) \arrow[llll, hook', "(\overline{\jmath_{i}})_{!} = \inc"', bend right = 20] \arrow[llll, hook, "(\overline{\jmath_{i}})_{\ast} = \intmap{(L_{i+1}\otimes\Gamma_{i},\inc(-))}", bend left = 20]
\end{tikzcd}
\end{equation*} 
and 
\begin{equation*}
\begin{tikzcd}
\mathcal{X}_{i} = \Mod_{\Gamma_{i}\otimes L_{i+1}}(\co\Mod_{\Gamma_{i}}) \arrow[rrrr, "(\overline{\imath_{i}})_{\ast} = (\overline{\imath_{i}})_{!} = \inc", hook] & & & & \mathcal{U}_{i} = \co\Mod_{\Gamma_{i}} \arrow[llll, "\overline{\imath_{i}}^{\ast} = \Gamma_{i}\otimes L_{i+1}\otimes-"', bend right = 20] \arrow[llll, "\overline{\imath_{i}}^{!} = \intmap_{\Gamma_{i}}{(\Gamma_{i}\otimes L_{i+1},-)}", bend left = 20] \arrow[rrrr, "\jmath_{i, i+1}^{\ast} = \jmath_{i, i+1}^{!} = \Gamma_{i+1}\otimes-"] & & & & \mathcal{U}_{i+1} = \co\Mod_{\Gamma_{i+1}}. \arrow[llll, hook', "(\jmath_{i, i+1})_{!} = \inc"', bend right = 20] \arrow[llll, hook, "(\jmath_{i, i+1})_{\ast} = \intmap_{\Gamma_{i}}{(\Gamma_{i+1},\inc(-))}", bend left = 20]
\end{tikzcd}
\end{equation*} 
(4) For each $i\geq0$, denote 
\begin{equation*}
\phi_{i} =  (\jmath_{i})_{\ast}(\overline{\imath_{i}})_{\ast}(\overline{\imath_{i}})^{\ast}\jmath_{i}^{\ast}\simeq\intmap(\Gamma_{i},\Gamma_{i}\otimes L_{i+1}\otimes-)\in\Fun(\mathcal{X},\mathcal{X}).
\end{equation*}
These functors naturally define a cubical diagram $\sigma:\N\mathcal{P}(\bbN)\to\Fun(\mathcal{X},\mathcal{X})$,
\begin{equation}\label{eq:cubicaldiag}
\emptyset\mapsto id,~~~(0\leq i_{1}<\cdots<i_{r}\in\bbZ)\mapsto \phi_{i_{1}}\circ\cdots\circ\phi_{i_{r}}
\end{equation}
through unit maps for adjunctions, i.e., through maps $id\xrightarrow{\eta_{\jmath_{i},\ast}}(\jmath_{i})_{\ast}\jmath_{i}^{\ast}\xrightarrow{(\jmath_{i})_{\ast}\eta_{\overline{\imath_{i}},\ast}\jmath_{i}^{\ast}}(\jmath_{i})_{\ast}(\overline{\imath_{i}})_{\ast}(\overline{\imath_{i}})^{\ast}\jmath_{i}^{\ast} = \phi_{i}$ and their horizontal compositions with $\phi_{j}$'s. \\
\indent Suppose that the given sequence of idempotent fiber-cofiber sequences is finite of level $n$, i.e., for some $n\geq1$, one has $\cdots\xrightarrow{=}(\Gamma_{n+2}\to\mathbf{1}\to L_{n+2})\xrightarrow{=} (\Gamma_{n+1}\to\mathbf{1}\to L_{n+1}) = (0\to\mathbf{1}\xrightarrow{=}\mathbf{1})$. Then, the $n$-cubical diagram $\sigma:\N\mathcal{P}([n])\to\Fun(\mathcal{X},\mathcal{X})$, 
\begin{equation}\label{eq:finitecubicaldiag}
\emptyset\mapsto id,~~~(0\leq i_{1}<\cdots<i_{r}\leq n)\mapsto \phi_{i_{1}}\circ\cdots\circ\phi_{i_{r}}
\end{equation}
restricted from the previous diagram $\sigma$ on $\N\mathcal{P}(\bbN)$ satisfies the following property:\\
\indent For any functor $E:\mathcal{X}\to\mathcal{V}$ into a stable $\infty$-category $\mathcal{V}$ which maps fiber-cofiber sequences of $\mathcal{X}$ to fiber-cofiber sequences of $\mathcal{V}$, the image 
\begin{equation}\label{eq:finitecubicaldiag2}
\emptyset\mapsto E(x),~~~(0\leq i_{1}<\cdots<i_{r}\leq n)\mapsto E(\phi_{i_{1}}\cdots\phi_{i_{r}}(x))
\end{equation}
of the diagram in $\mathcal{X}$ obtained by evaluating the diagram (\ref{eq:finitecubicaldiag}) at any object $x\in\mathcal{X}$ by the functor $E$ is a limit diagram in $\mathcal{V}$. In other words, there is an equivalence 
\begin{equation*}
E(x)\simeq\lim_{0\leq i_{1}<\cdots<i_{r}\leq n}E(\phi_{i_{1}}\cdots\phi_{i_{r}}(x))
\end{equation*}
in $\mathcal{V}$, natural in $E$ and $x\in\mathcal{X}$. 
\begin{proof}
(1) As $L_{i}\otimes-$ and $\Gamma_{i+1}\otimes-$ preserve fiber-cofiber sequences, we have fiber-cofiber sequences $L_{i}\otimes\Gamma_{i+1}\to L_{i}\to L_{i}\otimes L_{i+1}$ and $\Gamma_{i+1}\otimes \Gamma_{i}\to\Gamma_{i+1}\to\Gamma_{i+1}\otimes L_{i}$. Thus, we know that the three conditions (i) $\Gamma_{i+1}\in\co\Mod_{\Gamma_{i}}$, the condition (\ref{eq:idempincs}): $L_{i}\otimes\Gamma_{i+1}\simeq0$, and (ii) $L_{i}\in\Mod_{L_{i+1}}$ are equivalent to each other. Combining this with the upper right adjunction from Proposition \ref{prop:recollementadjunctions}, the remaining statements follow immediately.\\
(2) Using the conditions (i) and (ii) from (1), as well as the preservation of fiber-cofiber sequences under $\Gamma_{i}\otimes-$ and $L_{i+1}\otimes-$, we obtain the stated fiber-cofiber sequences from the fiber-cofiber sequences $\Gamma_{i+1}\to\mathbf{1}\to L_{i+1}$ and $\Gamma_{i}\to \mathbf{1}\to L_{i}$.\\
(3) Note that $\co\Mod_{L_{i+1}\otimes\Gamma_{i}}(\Mod_{L_{i+1}}) = \Mod_{\Gamma_{i}\otimes L_{i+1}}(\co\Mod_{\Gamma_{i}})$; an object $x$ of $\mathcal{X}$ satisfies the condition that $x\simeq L_{i+1}\otimes x$ and $L_{i+1}\otimes \Gamma_{i}\otimes x\simeq L_{i+1}\otimes x$ canonically if and only if $\Gamma_{i}\otimes x\simeq x$ and $\Gamma_{i}\otimes x\simeq \Gamma_{i}\otimes L_{i+1}\otimes x$ canonically. Now, the first set of adjunctions is obtained by applying Proposition \ref{prop:recollementadjunctions} to the closed symmetric monoidal presentable $\infty$-category $\Mod_{L_{i+1}}$ and its idempotent fiber-cofiber sequence $L_{i+1}\otimes \Gamma_{i}\to L_{i+1}\to L_{i}$ from (2), while the second set of adjunctions is analogously obtained by applying Proposition \ref{prop:recollementadjunctions} to the closed symmetric monoidal presentable $\infty$-category $\co\Mod_{\Gamma_{i}}$ and its idempotent fiber-cofiber sequence $\Gamma_{i+1}\to \Gamma_{i}\to \Gamma_{i}\otimes L_{i+1}$ from (2). \\
(4) First, observe the following:
\begin{lemma}\label{lem:motiviclimitdiag}
For each $0\leq k\in\bbZ$, write $h_{k}$ for the map $id\xrightarrow{\eta_{\jmath_{k},\ast}}(\jmath_{k})_{\ast}\jmath_{k}^{\ast}\xrightarrow{(\jmath_{k})_{\ast}\eta_{\overline{\imath_{k}},\ast}\jmath_{k}^{\ast}}(\jmath_{k})_{\ast}(\overline{\imath_{k}})_{\ast}(\overline{\imath_{k}})^{\ast}\jmath_{k}^{\ast} = \phi_{k}$ constituting the cubical diagram $\sigma$. We have a fiber-cofiber sequence in $\Fun(\mathcal{X},\mathcal{X})$ of the form
\begin{equation*}
(\jmath_{k+1})_{!}\jmath_{k+1}^{!}\to (\jmath_{k})_{!}\jmath_{k}^{!}\xrightarrow{(\jmath_{k})_{!}\jmath_{k}^{!} h_{k}}(\jmath_{k})_{!}\jmath_{k}^{!}\phi_{k},
\end{equation*}
where the first map of the sequence $(\jmath_{k+1})_{!}\jmath_{k+1}^{!}\simeq (\jmath_{k})_{!}(\jmath_{k,k+1})_{!}\jmath_{k,k+1}^{!}\jmath_{k}^{!} \to (\jmath_{k})_{!}\jmath_{k}^{!}$ is induced from the counit map for the adjunction $(\jmath_{k,k+1})_{!}\dashv \jmath_{k,k+1}^{!}$. More concretely, this fiber-cofiber sequence takes the form 
\begin{equation*}
\Gamma_{k+1}\otimes-\to\Gamma_{k}\otimes-\to\Gamma_{k}\otimes L_{k+1}\otimes-,
\end{equation*} 
which is induced from the first fiber-cofiber sequence of Theorem \ref{thm:motiviclimitdiag} (2). 
\begin{proof}
We check that the second map of the sequence $(\jmath_{k})_{!}\jmath_{k}^{!} h_{k}$ is equivalent to $(\jmath_{k})_{!}(\eta_{\overline{\imath_{k}},\ast})\jmath_{k}^{\ast}:(\jmath_{k})_{!}\jmath_{k}^{!}\to (\jmath_{k})_{!}(\overline{\imath_{k}})_{\ast}\overline{\imath_{k}}^{\ast}\jmath_{k}^{\ast}$. It suffices to check that there is a diagram
\begin{equation*}
\begin{tikzcd}
\jmath_{k}^{!} \arrow[rrrrr, "\left(\jmath_{k}^{!}(\jmath_{k})_{\ast}(\eta_{\overline{\imath_{k}},\ast})\jmath_{k}^{\ast}\right)\circ \left(\jmath_{k}^{!}(\eta_{\jmath_{k},\ast})\right)"] \arrow[rrrrrrrr, "(\eta_{\overline{\imath_{k}},\ast})\jmath_{k}^{\ast}"', bend right=10] & & & & & \jmath_{k}^{!}(\jmath_{k})_{\ast}(\overline{\imath_{k}})_{\ast}\overline{\imath_{k}}^{\ast}\jmath_{k}^{\ast} \arrow[rrr, "(\epsilon_{\jmath_{k},\ast})(\overline{\imath_{k}})_{\ast}\overline{\imath_{k}}^{\ast}\jmath_{k}^{\ast}", "\sim"'] & & & (\overline{\imath_{k}})_{\ast}\overline{\imath_{k}}^{\ast}\jmath_{k}^{\ast},
\end{tikzcd}
\end{equation*} 
as applying $(\jmath_{k})_{!}\circ-$ to the diagram finds the stated equivalence. Note that $\epsilon_{\jmath_{k},\ast}$ is an equivalence due to the fully faithfulness of $(\jmath_{k})_{\ast}$, or equivalently that of $(\jmath_{k})_{!}$. Let us use the notation $\star$ for the horizontal compositions of 2-morphisms, while using $\circ$ for the vertical compositions of 2-morphisms as usual. Thus, we can write the maps involved in the horizontal arrows of the diagram above as $(\epsilon_{\jmath_{k},\ast})(\overline{\imath_{k}})_{\ast}\overline{\imath_{k}}^{\ast}\jmath_{k}^{\ast} = (\epsilon_{\jmath_{k},\ast})\star(id_{(\overline{\imath_{k}})_{\ast}\overline{\imath_{k}}^{\ast}})\star(id_{\jmath_{k}^{\ast}})$, $\jmath_{k}^{!}(\jmath_{k})_{\ast}(\eta_{\overline{\imath_{k}},\ast})\jmath_{k}^{\ast} = (id_{\jmath_{k}^{!}(\jmath_{k})_{\ast}})\star(\eta_{\overline{\imath_{k}},\ast})\star(id_{\jmath_{k}^{\ast}})$, and $\jmath_{k}^{!}(\eta_{\jmath_{k},\ast}) = id_{\jmath_{k}^{!}}\star \eta_{\jmath_{k},\ast}$. Using the exchange law for vertical and horizontal compositions, we can compute the upper horizontal composition of the diagram above as follows. First, 
\begin{align*}
\left((\epsilon_{\jmath_{k},\ast})(\overline{\imath_{k}})_{\ast}\overline{\imath_{k}}^{\ast}\jmath_{k}^{\ast}\right)\circ \left(\jmath_{k}^{!}(\jmath_{k})_{\ast}(\eta_{\overline{\imath_{k}},\ast})\jmath_{k}^{\ast}\right) & = \left((\epsilon_{\jmath_{k},\ast})\star(id_{(\overline{\imath_{k}})_{\ast}\overline{\imath_{k}}^{\ast}})\star(id_{\jmath_{k}^{\ast}})\right)\circ\left((id_{\jmath_{k}^{!}(\jmath_{k})_{\ast}})\star(\eta_{\overline{\imath_{k}},\ast})\star(id_{\jmath_{k}^{\ast}})\right)\\
 & \simeq \left(\epsilon_{\jmath_{k},\ast}\circ id_{\jmath_{k}^{!}(\jmath_{k})_{\ast}}\right)\star \left((id_{(\overline{\imath_{k}})_{\ast}\overline{\imath_{k}}^{\ast}}\star id_{\jmath_{k}^{\ast}})\circ (\eta_{\overline{\imath_{k}},\ast}\star id_{\jmath_{k}^{\ast}})\right)\\
  & \simeq (\epsilon_{\jmath_{k},\ast})\star(\eta_{\overline{\imath_{k}},\ast})\star(id_{\jmath_{k}^{\ast}}).
\end{align*}
The upper horizontal composition is the composition of above with $\jmath_{k}^{!}(\eta_{\jmath_{k},\ast})$; this takes the form
\begin{equation*}
\left((\epsilon_{\jmath_{k},\ast})\star(\eta_{\overline{\imath_{k}},\ast})\star(id_{\jmath_{k}^{\ast}})\right)\circ\left(id_{\jmath_{k}^{!}}\star \eta_{\jmath_{k},\ast}\right) \simeq (\epsilon_{\jmath_{k},\ast})\star(\eta_{\overline{\imath_{k}},\ast})\star(id_{\jmath_{k}^{\ast}})\star(\eta_{\jmath_{k},\ast}) \simeq (\epsilon_{\jmath_{k},\ast})\star(\eta_{\overline{\imath_{k}},\ast})\star(\jmath_{k}^{\ast}\eta_{\jmath_{k},\ast}).
\end{equation*}
To further compute this, recall that for 2-morphisms $\sigma:G\to G'$ and $\tau:F\to F'$, the horizontal composition takes the form $\sigma\star\tau = G'\tau\circ \sigma F\simeq \sigma F'\circ G\tau$. Thus, one has
\begin{equation*}
(\epsilon_{\jmath_{k},\ast})\star(\eta_{\overline{\imath_{k}},\ast})\simeq id_{\mathcal{U}_{k}}(\eta_{\overline{\imath_{k}},\ast})\circ (\epsilon_{\jmath_{k},\ast})id_{\mathcal{U}_{k}} = \eta_{\overline{\imath_{k}},\ast}\circ \epsilon_{\jmath_{k},\ast},
\end{equation*}
and hence the composition becomes the horizontal composition of $\eta_{\overline{\imath_{k}},\ast}\circ \epsilon_{\jmath_{k},\ast}:\jmath_{k}^{!}(\jmath_{k})_{\ast}\to (\overline{\imath_{k}})_{\ast}\overline{\imath_{k}}^{\ast}$ and $\jmath_{k}^{\ast}\eta_{\jmath_{k}, \ast}:\jmath_{k}^{\ast}\to\jmath_{k}^{\ast}(\jmath_{k})_{\ast}\jmath_{k}^{\ast}$. Again, by the construction of the horizontal composition, we compute
\begin{align*}
(\eta_{\overline{\imath_{k}},\ast}\circ \epsilon_{\jmath_{k},\ast})\star (\jmath_{k}^{\ast}\eta_{\jmath_{k}, \ast})& \simeq \left((\eta_{\overline{\imath_{k}},\ast}\circ \epsilon_{\jmath_{k},\ast})\jmath_{k}^{\ast}(\jmath_{k})_{\ast}\jmath_{k}^{\ast}\right)\circ \jmath_{k}^{!}(\jmath_{k})_{\ast}\jmath_{k}^{\ast}(\eta_{\jmath_{k},\ast})\\
 & \simeq \left((\eta_{\overline{\imath_{k}},\ast}\circ \epsilon_{\jmath_{k},\ast})\jmath_{k}^{\ast}\right)\circ \jmath_{k}^{\ast}(\eta_{\jmath_{k},\ast})\\
 & \simeq (\eta_{\overline{\imath_{k}},\ast})\jmath_{k}^{\ast}\circ (\epsilon_{\jmath_{k},\ast})\jmath_{k}^{\ast}\circ \jmath_{k}^{\ast}(\eta_{\jmath_{k},\ast})\\
 & \simeq (\eta_{\overline{\imath_{k}},\ast})\jmath_{k}^{\ast},
\end{align*}
where the second equivalence is induced via the equivalence $\epsilon_{\jmath_{k},\ast}$, and the fourth equivalence is induced from $(\epsilon_{\jmath_{k},\ast})\jmath_{k}^{\ast}\circ \jmath_{k}^{\ast}(\eta_{\jmath_{k},\ast})\simeq id_{\jmath_{k}^{\ast}}$ associated with the adjunction $\jmath_{k}^{\ast}\dashv (\jmath_{k})_{\ast}$. This finishes the verification of the equivalence between $(\jmath_{k})_{!}\jmath_{k}^{!} h_{k}$ and $(\jmath_{k})_{!}(\eta_{\overline{\imath_{k}},\ast})\jmath_{k}^{\ast}$. \\
\indent Now, from the fiber-cofiber sequence $(\jmath_{k,k+1})_{!}\jmath_{k,k+1}^{!}\to id_{\mathcal{U}_{k}}\to (\overline{\imath_{k}})_{\ast}\overline{\imath_{k}}^{\ast}$ of $\Fun(\mathcal{U}_{k},\mathcal{U}_{k})$, which is precisely the sequence $\Gamma_{k+1}\otimes-\to\Gamma_{k}\otimes-\to\Gamma_{k}\otimes L_{k+1}\otimes-$ induced from the idempotent fiber-cofiber sequence $\Gamma_{k+1}\to\Gamma_{k}\to\Gamma_{k}\otimes L_{k+1}$ of $\mathcal{U}_{k} = \co\Mod_{\Gamma_{k}}$, we have a fiber-cofiber sequence 
\begin{equation*}
(\jmath_{k+1})_{!}\jmath_{k+1}^{!}\to (\jmath_{k})_{!}\jmath_{k}^{!}\xrightarrow{(\jmath_{k})_{!}(\eta_{\overline{\imath_{k}},\ast})\jmath_{k}^{\ast}}(\jmath_{k})_{!}(\overline{\imath_{k}})_{\ast}\overline{\imath_{k}}^{\ast}\jmath_{k}^{\ast}
\end{equation*}
in $\Fun(\mathcal{X},\mathcal{X})$, whose first map is the one $(\jmath_{k+1})_{!}\jmath_{k+1}^{!}\simeq (\jmath_{k})_{!}(\jmath_{k,k+1})_{!}\jmath_{k,k+1}^{!}\jmath_{k}^{!} \to (\jmath_{k})_{!}\jmath_{k}^{!}$ induced from the counit map for the adjunction $(\jmath_{k,k+1})_{!}\dashv \jmath_{k,k+1}^{!}$. Through our identification of $(\jmath_{k})_{!}\jmath_{k}^{!} h_{k}$ and $(\jmath_{k})_{!}(\eta_{\overline{\imath_{k}},\ast})\jmath_{k}^{\ast}$ from the previous paragraph, we have the fiber-cofiber sequence as stated. By construction, this sequence is precisely the sequence $\Gamma_{k+1}\otimes-\to\Gamma_{k}\otimes-\to\Gamma_{k}\otimes L_{k+1}\otimes-$ induced from the fiber-cofiber sequence $\Gamma_{k+1}\to\Gamma_{k}\to\Gamma_{k}\otimes L_{k+1}$ of $\mathcal{X}$. 
\end{proof}
\end{lemma}
Let us finish the proof of (4). For each $0\leq k\leq n$, denote $[n]_{\geq k} = \{k, k+1,\cdots,n\}$ and consider the $(n-k)$-cubical diagram $\sigma_{k}: = (\jmath_{k})_{!}\jmath_{k}^{!}\sigma|_{\N\mathcal{P}([n]_{\geq k})}:\N\mathcal{P}([n]_{\geq k})\to\Fun(\mathcal{X},\mathcal{X})$, i.e., the diagram obtained by pointwisely applying $(\jmath_{k})_{!}\jmath_{k}^{!}\in\Fun(\mathcal{X},\mathcal{X})$ to $\sigma|_{\N\mathcal{P}([n]_{\geq k})}$. In particular, $\sigma_{0} = \sigma$. Being an $(n-k)$-cubical diagram, $\sigma_{k} = (\jmath_{k})_{!}\jmath_{k}^{!}\sigma|_{\N\mathcal{P}([n]_{\geq k})}$ can be identified with a morphism of $(n-k-1)$-cubical diagrams
\begin{equation*}
(\jmath_{k})_{!}\jmath_{k}^{!}\left(\sigma|_{\N\mathcal{P}([n]_{\geq k+1})}\to\sigma|_{\N(\mathcal{P}([n]_{\geq k+1})\sqcup\{k\})}\right) = (\jmath_{k})_{!}\jmath_{k}^{!}\sigma|_{\N\mathcal{P}([n]_{\geq k+1})}\xrightarrow{(\jmath_{k})_{!}\jmath_{k}^{!} h_{k}} (\jmath_{k})_{!}\jmath_{k}^{!}\phi_{k}\sigma|_{\N\mathcal{P}([n]_{\geq k+1})},
\end{equation*}
where $\mathcal{P}([n]_{\geq k+1})\sqcup\{k\}$ stands for the set of subsets of $[n]_{\geq k}$ each of which contains $k$ as an element. By Lemma \ref{lem:motiviclimitdiag}, this morphism fits into the following fiber-cofiber sequence 
\begin{equation}\label{eq:cubicaldiagfibercofiberseq}
\underset{=\sigma_{k+1}}{(\jmath_{k+1})_{!}\jmath_{k+1}^{!}\sigma|_{\N\mathcal{P}([n]_{\geq k+1})}}\to (\jmath_{k})_{!}\jmath_{k}^{!}\sigma|_{\N\mathcal{P}([n]_{\geq k+1})}\xrightarrow{(\jmath_{k})_{!}\jmath_{k}^{!} h_{k}} (\jmath_{k})_{!}\jmath_{k}^{!}\phi_{k}\sigma|_{\N\mathcal{P}([n]_{\geq k+1})}
\end{equation}
in $\Fun(\N\mathcal{P}([n]_{\geq k+1}),\Fun(\mathcal{X},\mathcal{X}))$; in fact, for each $(k<i_{1}<\cdots<i_{r}\leq n)\in\mathcal{P}([n]_{\geq k+1})$, we have a fiber-cofiber sequence 
\begin{equation*}
(\jmath_{k+1})_{!}\jmath_{k+1}^{!}\phi_{i_{1}}\cdots\phi_{i_{r}}\to (\jmath_{k})_{!}\jmath_{k}^{!}\phi_{i_{1}}\cdots\phi_{i_{r}}\xrightarrow{(\jmath_{k})_{!}\jmath_{k}^{!} h_{k}} (\jmath_{k})_{!}\jmath_{k}^{!}\phi_{k}\phi_{i_{1}}\cdots\phi_{i_{r}}
\end{equation*}
in $\Fun(\mathcal{X},\mathcal{X})$. \\
\indent Now, evaluating an element $x$ of $\mathcal{X}$ and applying the functor $E$ on (\ref{eq:cubicaldiagfibercofiberseq}), we have a fiber-cofiber sequence 
\begin{equation}\label{eq:cubicaldiagfibercofiberseq2}
E(\sigma_{k+1}(x))\to E\left((\jmath_{k})_{!}\jmath_{k}^{!}\sigma|_{\N\mathcal{P}([n]_{\geq k+1})}(x)\right)\xrightarrow{E(((\jmath_{k})_{!}\jmath_{k}^{!} h_{k})_{x})} E\left((\jmath_{k})_{!}\jmath_{k}^{!}\phi_{k}\sigma|_{\N\mathcal{P}([n]_{\geq k+1})}(x)\right)
\end{equation}
in $\Fun(\N\mathcal{P}([n]_{\geq k+1}),\mathcal{V})$; note that the second morphism is nothing but the $(n-k)$-cubical diagram $E(\sigma_{k}(x))$ in $\mathcal{V}$. By (\ref{eq:cubicaldiagfibercofiberseq2}), the $(n-k)$-cubical diagram $E(\sigma_{k}(x))$ is a limit diagram in $\mathcal{V}$ if and only if the $(n-k-1)$-cubical diagram $E(\sigma_{k+1}(x))$ is a limit diagram in $\mathcal{V}$ \cite[Lemma 1.2.4.15]{ha}. Thus, we know that the $n$-cubical diagram $E(\sigma(x))$ of question is a limit diagram if and only if the $0$-cubical diagram $E(\sigma_{n}(x))$ is a limit diagram, i.e., an equivalence as a morphism in $\mathcal{V}$. The latter statement follows from the fact that $\sigma_{n}$ as a morphism in $\Fun(\mathcal{X},\mathcal{X})$ is an equivalence. In fact, from $(\overline{\imath_{n}})_{\ast}\simeq id_{\mathcal{X}_{n} = \mathcal{U}_{n}}\simeq \overline{\imath_{n}}^{\ast}$, the $0$-cubical diagram $\sigma_{n}$ takes the form $(\jmath_{n})_{!}\jmath_{n}^{!}\xrightarrow{(\jmath_{n})_{!}\jmath_{n}^{!}\eta_{\jmath_{n},\ast}} (\jmath_{n})_{!}\jmath_{n}^{!}(\jmath_{n})_{\ast}\jmath_{n}^{\ast}$; let us repeat the arguments at the end of the proof of Proposition \ref{prop:motivicgluingsq} for convenience. To check this morphism is an equivalence, it suffices to check that, before taking $(\jmath_{n})_{!}$, the map $\jmath_{n}^{!}\xrightarrow{\jmath_{n}^{!}\eta_{\jmath_{n},\ast}}\jmath_{n}^{!}(\jmath_{n})_{\ast}\jmath_{n}^{\ast}$ is already an equivalence. Consider the composition 
\begin{equation*}
\jmath_{n}^{!}\xrightarrow{\jmath_{n}^{!}\eta_{\jmath_{n}, \ast}} \jmath_{n}^{!}(\jmath_{n})_{\ast}\jmath_{n}^{\ast}\xrightarrow{\epsilon_{\jmath_{n},\ast}\jmath_{n}^{\ast}}\jmath_{n}^{\ast},
\end{equation*}
which is equivalent to $id_{\jmath_{n}^{!} = \jmath_{n}^{\ast}}$ from the adjunction $\jmath^{\ast}\dashv\jmath_{\ast}$. Moreover, the second map induced from $\epsilon_{\jmath_{n},\ast}$ is an equivalence due to the fully faithfulness of $(\jmath_{n})_{!}$. From this, we conclude that the first map of the composition is also an equivalence as desired. 
\end{proof}
\end{theorem}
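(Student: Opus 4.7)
The plan is to dispatch parts (1)--(3) as routine consequences of the preservation hypothesis on $\mathcal{X}$ applied to the given idempotent sequences, and then to prove the limit-diagram assertion in (4) by an inductive stratification argument that reduces an $n$-cube to a $0$-cube, exploiting stability of $\mathcal{V}$ to convert fiber-cofiber sequences of cubical diagrams into equivalences of limit-diagram properties.

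For (1), I would apply the endofunctors $L_{i}\otimes-$ and $\Gamma_{i+1}\otimes-$ to the fiber-cofiber sequences $\Gamma_{i+1}\to\mathbf{1}\to L_{i+1}$ and $\Gamma_{i}\to\mathbf{1}\to L_{i}$ respectively, yielding $L_{i}\otimes\Gamma_{i+1}\to L_{i}\to L_{i}\otimes L_{i+1}$ and $\Gamma_{i+1}\otimes\Gamma_{i}\to\Gamma_{i+1}\to\Gamma_{i+1}\otimes L_{i}$; all three conditions then collapse to $L_{i}\otimes\Gamma_{i+1}\simeq 0$. The chains of inclusions and the right adjointness of $\Gamma_{i+1}\otimes-\colon\co\Mod_{\Gamma_{i}}\to\co\Mod_{\Gamma_{i+1}}$ are read off from Proposition \ref{prop:recollementadjunctions}. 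For (2), apply $\Gamma_{i}\otimes-$ and $L_{i+1}\otimes-$ to the same two fiber-cofiber sequences and use the compatibilities from (1). Part (3) is a direct application of Proposition \ref{prop:recollementadjunctions} to the closed symmetric monoidal presentable $\infty$-category $\Mod_{L_{i+1}}$ together with its idempotent fiber-cofiber sequence $L_{i+1}\otimes\Gamma_{i}\to L_{i+1}\to L_{i}$, and analogously to $\co\Mod_{\Gamma_{i}}$ with $\Gamma_{i+1}\to\Gamma_{i}\to\Gamma_{i}\otimes L_{i+1}$.

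The heart of the proof is (4). I would first construct the cube $\sigma\colon \N\mathcal{P}([n])\to\Fun(\mathcal{X},\mathcal{X})$ using the unit maps $h_{i}\colon id\to (\jmath_{i})_{\ast}\jmath_{i}^{\ast}\to (\jmath_{i})_{\ast}(\overline{\imath_{i}})_{\ast}(\overline{\imath_{i}})^{\ast}\jmath_{i}^{\ast}=\phi_{i}$; these commute coherently since distinct $\phi_{i}$'s are built out of the recollement-style adjunctions from (3) at independent indices. The central technical lemma I would prove is: for each $k$, there is a fiber-cofiber sequence in $\Fun(\mathcal{X},\mathcal{X})$
\begin{equation*}
(\jmath_{k+1})_{!}\jmath_{k+1}^{!}\to (\jmath_{k})_{!}\jmath_{k}^{!}\xrightarrow{(\jmath_{k})_{!}\jmath_{k}^{!}h_{k}} (\jmath_{k})_{!}\jmath_{k}^{!}\phi_{k},
\end{equation*}
the first map coming from the counit of $(\jmath_{k,k+1})_{!}\dashv\jmath_{k,k+1}^{!}$ under the identification $(\jmath_{k+1})_{!}\jmath_{k+1}^{!}\simeq (\jmath_{k})_{!}(\jmath_{k,k+1})_{!}\jmath_{k,k+1}^{!}\jmath_{k}^{!}$. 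Given this lemma, I set $\sigma_{k}:=(\jmath_{k})_{!}\jmath_{k}^{!}\circ\sigma|_{\N\mathcal{P}([n]_{\geq k})}$ and view $\sigma_{k}$ as a morphism of $(n-k-1)$-cubes; the lemma identifies its fiber pointwise with $\sigma_{k+1}$, so by \cite[Lem. 1.2.4.15]{ha} applied to $\mathcal{V}$, the cube $E(\sigma_{k}(x))$ is a limit diagram iff $E(\sigma_{k+1}(x))$ is. Inducting down, $E(\sigma(x))=E(\sigma_{0}(x))$ is a limit iff the $0$-cube $E(\sigma_{n}(x))$ is an equivalence. Since $\mathcal{X}_{n}=\mathcal{U}_{n}$ forces $(\overline{\imath_{n}})_{\ast}(\overline{\imath_{n}})^{\ast}\simeq id$, we have $\phi_{n}\simeq (\jmath_{n})_{\ast}\jmath_{n}^{\ast}$, and $\sigma_{n}$ becomes $(\jmath_{n})_{!}\jmath_{n}^{!}\to (\jmath_{n})_{!}\jmath_{n}^{!}(\jmath_{n})_{\ast}\jmath_{n}^{\ast}$, which is an equivalence by the triangle identity $\epsilon_{\jmath_{n},\ast}\jmath_{n}^{\ast}\circ \jmath_{n}^{!}\eta_{\jmath_{n},\ast}\simeq id$ combined with fully faithfulness of $(\jmath_{n})_{\ast}$ (so $\epsilon_{\jmath_{n},\ast}$ is an equivalence).

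The main obstacle will be proving the key lemma, specifically the identification of $(\jmath_{k})_{!}\jmath_{k}^{!}h_{k}$ with the map $(\jmath_{k})_{!}(\eta_{\overline{\imath_{k}},\ast})\jmath_{k}^{\ast}$ arising from the recollement fiber-cofiber sequence $(\jmath_{k,k+1})_{!}\jmath_{k,k+1}^{!}\to id_{\mathcal{U}_{k}}\to (\overline{\imath_{k}})_{\ast}\overline{\imath_{k}}^{\ast}$ on $\mathcal{U}_{k}=\co\Mod_{\Gamma_{k}}$. This reduces, before applying $(\jmath_{k})_{!}$, to verifying that a certain composite equals $\eta_{\overline{\imath_{k}},\ast}\jmath_{k}^{\ast}$, which requires a careful calculation with horizontal and vertical compositions of 2-morphisms (the exchange law, triangle identities for $\jmath_{k}^{\ast}\dashv (\jmath_{k})_{\ast}$, and the invertibility of $\epsilon_{\jmath_{k},\ast}$ from fully faithfulness). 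Once this identification is secured, the remainder of the argument is a formal cubical induction as sketched.
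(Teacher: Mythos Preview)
Your proposal is correct and follows essentially the same approach as the paper: parts (1)--(3) are dispatched exactly as you describe, and for (4) the paper proves precisely your key lemma (via the 2-categorical identification of $(\jmath_{k})_{!}\jmath_{k}^{!}h_{k}$ with $(\jmath_{k})_{!}(\eta_{\overline{\imath_{k}},\ast})\jmath_{k}^{\ast}$ using the exchange law and triangle identities), then runs the same cubical induction via $\sigma_{k}:=(\jmath_{k})_{!}\jmath_{k}^{!}\circ\sigma|_{\N\mathcal{P}([n]_{\geq k})}$ and \cite[Lem.~1.2.4.15]{ha}, terminating in the same base-case verification that $\sigma_{n}$ is an equivalence.
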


\begin{remarkn}
Proposition \ref{prop:motivicgluingsq} is a special case of Theorem \ref{thm:motiviclimitdiag}. In fact, given an idempotent fiber-cofiber sequence $\Gamma\to\mathbf{1}\to L$ in $\mathcal{X}$, one considers the finite sequence $(0\to\mathbf{1}\to\mathbf{1})\to (\Gamma\to\mathbf{1}\to L)\to (\mathbf{1}\to\mathbf{1}\to 0)$ of level 1, to which Theorem \ref{thm:motiviclimitdiag} is applicable. 
\end{remarkn}

\subsection{Adelic descent for localizing invariants on dualizable categories}\label{subsec:adelicdescent}

As an application of Theorem \ref{thm:motiviclimitdiag}, we verify an adelic descent statement for localizing invariants on dualizable presentable stable $\infty$-categories (Corollary \ref{cor:adelicdescent} and Corollary \ref{cor:adelicdescentcurve}). We consider the following form of idempotent fiber-cofiber sequences as input:

\begin{construction}[Adelic idempotent fiber-cofiber sequences] \label{constr:adelicseq}
Let $R$ be an $\bbE_{\infty}$-ring such that $\pi_{0}R$ is Noetherian, and denote $X = \Spec \pi_{0}R$. \\
(1) We consider the following idempotent fiber-cofiber sequence $\Gamma_{i}\to\mathbf{1}\to L_{i}$. For each nonnegative integer $i$, consider the filtered partially ordered set $S_{i} = \{Z\subseteq\Spec \pi_{0}R~|~\text{$Z$ closed with $\codim_{X}Z\geq i$}\}$, where the partial order is given by the inclusion of closed subsets of $X$. In particular, $S_{0}\supseteq S_{1}\supseteq\cdots$ and $S_{0}$ consists of all closed subsets of $X$. For each $i$, we have a diagram of idempotent fiber-cofiber sequences in $\Mod_{\Mod_{R}}(\Prl_{\st})^{\dual}$ over $S_{i}$; $V(I)\mapsto (\Mod_{R}^{\Nil(I)}\to\Mod_{R}\to\Mod_{R}^{\Loc(I)})$. More precisely, given $V(I)\supseteq V(J)$ (i.e., $I\subseteq\sqrt{J}$), the morphisms $\Mod_{R}^{\Nil(J)}\hookrightarrow \Mod_{R}^{\Nil(I)}$ and $\Mod_{R}^{\Loc(J)}\to \Mod_{R}^{\Loc(I)}$ in $\Mod_{\Mod_{R}}(\Prl_{\st})^{\dual}$ whose underlying functors are the canonical fully faithful embedding and the localization respectively constitute the diagram. Taking the filtered colimit, we obtain the fiber-cofiber sequence 
\begin{equation*}
\colim_{V(I)\in S_{i}}\Mod_{R}^{\Nil(I)}\to \Mod_{R}\to\colim_{V(I)\in S_{i}}\Mod_{R}^{\Loc(I)},
\end{equation*}
which we denote as $\Gamma_{i}\xrightarrow{\epsilon_{i}}\mathbf{1}\xrightarrow{\eta_{i}} L_{i}$. Note that it remains to be an idempotent fiber-cofiber sequence; one has $(\colim_{V(I)\in S_{i}}\Mod_{R}^{\Nil(I)})\otimes_{R}(\colim_{V(I)\in S_{i}}\Mod_{R}^{\Loc(I)})\simeq \colim_{(V(I),V(J))\in S_{i}\times S_{i}}\Mod_{R}^{\Nil(I)}\otimes_{R}\Mod_{R}^{\Loc(J)}\simeq \colim_{V(I)\in S_{i}}\Mod_{R}^{\Nil(I)}\otimes_{R}\Mod_{R}^{\Loc(I)}\simeq 0$. \\
(2) By definition, we have a sequence $\cdots\to(\Gamma_{2}\to\mathbf{1}\to L_{2})\to(\Gamma_{1}\to\mathbf{1}\to L_{1})\to(\Gamma_{0}\to\mathbf{1}\to L_{0}) = (\mathbf{1}\to\mathbf{1}\to 0)$ of maps of fiber-cofiber sequences in $\Mod_{\Mod_{R}}(\Prl_{\st})^{\dual}$; note that $\Gamma_{0} = \Mod_{R}^{\Nil(0)} = \Mod_{R}$. This sequence satisfies the condition (\ref{eq:idempincs}) of Theorem \ref{thm:motiviclimitdiag}. By (1)-(i) of \emph{loc. cit.}, we can equivalently check that $\Gamma_{i}\to\Mod_{R}$ induces an equivalence $\Gamma_{i+1}\otimes_{R}\Gamma_{i}\simeq\Gamma_{i+1}$. We have 
\begin{equation*}
\Gamma_{i+1}\otimes \Gamma_{i}\simeq \colim_{(V(I), V(J))\in S_{i+1}\times S_{i}}\Mod_{R}^{\Nil(I)}\otimes_{R}\Mod_{R}^{\Nil(J)}\simeq \colim_{(V(I), V(J))\in S_{i+1}\times S_{i}}\Mod_{R}^{\Nil(I+J)},
\end{equation*}
and since $\Mod_{R}^{\Nil(I+J)}$ only depends on the closed subset $V(I)\cap V(J)$ of $X$, the right hand side is further equivalent to $\colim_{(V(I), V(J))\in S_{i+1}\times S_{i+1}}\Mod_{R}^{\Nil(I+J)}\simeq \colim_{V(I)\in S_{i+1}}\Mod_{R}^{\Nil(I+I)}\simeq \Gamma_{i+1}$ as desired.  \\
(3) If $\pi_{0}R$ has finite Krull dimension $\dim X$, then $S_{i>\dim X} = \emptyset$, and hence the sequence $(0\to\mathbf{1}\to\mathbf{1})\to(\Gamma_{\dim X}\to\mathbf{1}\to L_{\dim X})\cdots (\Gamma_{1}\to\mathbf{1}\to L_{1})\to (\mathbf{1}\to\mathbf{1}\to 0)$ above becomes finite of level $\dim X$. 
\end{construction}

\begin{remarkn}\label{rem:adelicsinglesequencesq}
Let $R$ be a Noetherian $\bbE_{\infty}$-ring such that $\pi_{0}R$ is Noetherian. For each nonnegative integer $i$, we have the idempotent fiber-cofiber sequence $\Gamma_{i}\to\mathbf{1}\to L_{i}$ in $\Mod_{\Mod_{R}}(\Prl_{\st})^{\dual}$ from Construction \ref{constr:adelicseq}. Note that the first term of the sequence is $\omega_{1}$-compact by Lemma \ref{lem:nilproper1compact}. Applying Proposition \ref{prop:motivicgluingsq} to the unit object $\Mod_{R}$ and using Proposition \ref{prop:endocomputation}, one obtains the motivic pullback-pushout square 
\begin{equation*}
\begin{tikzcd}
\Mod_{R} \arrow[r] \arrow[d] & \colim_{V(I)\in S_{i}}\Mod_{R}^{\Loc(I)} \arrow[d]\\
\lim^{\dual}_{V(I)\in S_{i}}\intmap_{R}^{\dual}\left(\Mod_{R}^{\Nil(I)},\Mod_{R}\right) \arrow[r] & \left(\colim_{V(I)\in S_{i}}\Mod_{R}^{\Loc(I)}\right)\otimes_{R}\left(\lim^{\dual}_{V(I)\in S_{i}}\intmap_{R}^{\dual}\left(\Mod_{R}^{\Nil(I)},\Mod_{R}\right)\right)
\end{tikzcd}
\end{equation*}
in $\Mod_{\Mod_{R}}(\Prl_{\st})^{\dual}$. 
\end{remarkn}

Instead of studying this form of square in Remark \ref{rem:adelicsinglesequencesq} above individually for each $i$, we investigate the motivic limit diagram of Theorem \ref{thm:motiviclimitdiag} for the sequence of adelic idempotent fiber-cofiber sequences of Construction \ref{constr:adelicseq} by describing each of the objects of $\Mod_{\Mod_{R}}(\Prl_{\st})^{\dual}$ appearing as vertices of the diagram. The following notations will be useful for that purpose: 

\begin{notation}\label{not:loccompl}
Let $R$ be an $\bbE_{\infty}$-ring and let $\mathfrak{p}\in\Spec \pi_{0}R$. We denote:\\
(1) $(\Spec\pi_{0}R)^{i}$ for the set of codimension $i$ points in $\Spec \pi_{0}R$ for each $0\leq i\in\bbZ$,\\
(2) $R_{\mathfrak{p}}$ for the stalk $\mathscr{O}_{\Spec R,\mathfrak{p}}$ of the structure sheaf of the nonconnective spectral scheme $\Spec R$ at the point $\mathfrak{p}\in |\Spec R| = \Spec \pi_{0}R$, and \\
(3) $(-)^{\wedge}_{\mathfrak{p}}$ for the endofunctor 
\begin{equation*}
\intmap_{R}^{\dual}\left(\Mod_{R}^{\Nil(\mathfrak{p})}, \Mod_{R_{\mathfrak{p}}}\otimes_{R}-\right)
\end{equation*}
of $\Mod_{\Mod_{R}}(\Prl_{\st})^{\dual}$. 
\end{notation}

\begin{remarkn}\label{rem:loccompl}
Let $R$ and $\mathfrak{p}$ be as in Notation \ref{not:loccompl}. \\
(1) The functor $(-)^{\wedge}_{\mathfrak{p}}$ of Notation \ref{not:loccompl} can also be written as follows; by letting $\Res:\Mod_{\Mod_{R_{\mathfrak{p}}}}(\Prl_{\st})^{\dual}\to\Mod_{\Mod_{R}}(\Prl_{\st})^{\dual}$ stand for the right adjoint of $\Mod_{R_{\mathfrak{p}}}\otimes_{R}-$, we can write 
\begin{equation*}
(-)^{\wedge}_{\mathfrak{p}}\simeq \intmap_{R}^{\dual}\left(\Mod_{R}^{\Nil(\mathfrak{p})}, \Mod_{R_{\mathfrak{p}}}\otimes_{R}-\right)\simeq \Res\left(\intmap_{R_{\mathfrak{p}}}^{\dual}\left(\Mod_{R_{\mathfrak{p}}}^{\Nil(\mathfrak{p}R_{\mathfrak{p}})}, \Mod_{R_{\mathfrak{p}}}\otimes_{R}-\right)\right),
\end{equation*}
using Lemma \ref{lem:rigidadjunction} and that $\Mod_{R_{\mathfrak{p}}}\otimes_{R}\Mod_{R}^{\Nil(\mathfrak{p})}\simeq (\Mod_{R_{\mathfrak{p}}})^{\Nil(\mathfrak{p})} = \Mod_{R_{\mathfrak{p}}}^{\Nil(\mathfrak{p}R_{\mathfrak{p}})}$; here, we are writing $\Mod_{R_{\mathfrak{p}}}^{\Nil(\mathfrak{p}R_{\mathfrak{p}})} := \Mod_{R_{\mathfrak{p}}}^{\Nil(\mathfrak{p}\pi_{0}(R_{\mathfrak{p}}))} = \Mod_{R_{\mathfrak{p}}}^{\Nil(\mathfrak{p}\pi_{0}(R)_{\mathfrak{p}})}$.  \\
(2) The functor $(-)^{\wedge}_{\mathfrak{p}}$ is lax symmetric monoidal. In fact, the right hand side expression of the equivalences in (1) above expresses the functor as compositions of lax symmetric monoidal functors. Note that the lax symmetric monoidality of $\intmap_{R_{\mathfrak{p}}}^{\dual}\left(\Mod_{R_{\mathfrak{p}}}^{\Nil(\mathfrak{p}R_{\mathfrak{p}})},-\right)$ follows from the symmetric monoidality of $\Mod_{R_{\mathfrak{p}}}^{\Nil(\mathfrak{p}R_{\mathfrak{p}})}\otimes_{R_{\mathfrak{p}}}-$, which is due to the map $\Mod_{R_{\mathfrak{p}}}^{\Nil(\mathfrak{p}R_{\mathfrak{p}})}\to \Mod_{R_{\mathfrak{p}}}$ being an open idempotent of $\Mod_{\Mod_{R_{\mathfrak{p}}}}(\Prl_{\st})^{\dual}$.
\end{remarkn}

\begin{proposition}\label{prop:localstratum}
Let $R$ be an $\bbE_{\infty}$-ring whose underlying commutative ring $\pi_{0}R$ is Noetherian. For each $i\geq0$, there is an equivalence 
\begin{equation*}
\intmap(\Gamma_{i}, \Gamma_{i}\otimes L_{i+1}\otimes-)\simeq \sideset{}{^{\dual}_{\mathfrak{p}\in(\Spec \pi_{0}R)^{i}}}\prod (-)^{\wedge}_{\mathfrak{p}} = \sideset{}{^{\dual}_{\mathfrak{p}\in(\Spec \pi_{0}R)^{i}}}\prod \intmap_{R}^{\dual}\left(\Mod_{R}^{\Nil(\mathfrak{p})},\Mod_{R_{\mathfrak{p}}}\otimes_{R}-\right)
\end{equation*} 
of endofunctors of $\Mod_{\Mod_{R}}(\Prl_{\st})^{\dual}$. 
\end{proposition}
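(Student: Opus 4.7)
The plan is to express the internal mapping object on the left-hand side as a cofiltered limit of finite direct sums of completions at codimension $i$ points, which assembles into the dualizable product.

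Writing $\Gamma_i \simeq \colim_{V(I) \in S_i} \Mod_R^{\Nil(I)}$ and using that $\intmap^{\dual}_R(-, \mathcal{D})$ sends colimits in its first variable to limits in $\Mod_{\Mod_R}(\Prl_{\st})^{\dual}$ (by the defining universal property of the internal mapping object), I would reduce the left-hand side to $\lim^{\dual}_{V(I) \in S_i} \intmap^{\dual}_R(\Mod_R^{\Nil(I)}, \Gamma_i \otimes_R L_{i+1} \otimes_R \mathcal{C})$. A direct filtered colimit computation gives $\Mod_R^{\Nil(I)} \otimes_R \Gamma_i \simeq \Mod_R^{\Nil(I)}$ for each $V(I) \in S_i$ (the colimit stabilizes as soon as $V(J) \supseteq V(I)$); combined with the $\omega_1$-compactness and properness of $\Mod_R^{\Nil(I)}$ provided by Lemma~\ref{lem:nilproper1compact}, Proposition~\ref{prop:endocomputation} simplifies the inner term to $\intmap^{\dual}_R(\Mod_R^{\Nil(I)}, L_{i+1} \otimes_R \mathcal{C})$.

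By Noetherianity of $\pi_0 R$, each $V(I)$ has finitely many codimension $i$ generic points $S_I = \{\mathfrak{p}_1, \ldots, \mathfrak{p}_n\}$, with remaining components of codimension strictly greater than $i$. I would establish the decomposition
\[
\Mod_R^{\Nil(I)} \otimes_R L_{i+1} \simeq \bigoplus_{\mathfrak{p} \in S_I} \Mod_R^{\Nil(\mathfrak{p})} \otimes_R \Mod_{R_\mathfrak{p}}
\]
as follows: a cofinality argument (rewriting the filtered colimit defining $L_{i+1}$) shows that $L_{i+1}$ is a $\Mod_R^{\Loc(J)}$-module for every $J \in S_{i+1}$, so it kills both the codimension $>i$ components of $V(I)$ and the pairwise intersections $V(\mathfrak{p}) \cap V(\mathfrak{q})$ of distinct codimension $i$ components (these have codimension $>i$); for each $\mathfrak{p} \in S_I$, the term $\Mod_R^{\Nil(\mathfrak{p})} \otimes_R L_{i+1}$ reduces to $\Mod_R^{\Nil(\mathfrak{p})} \otimes_R \Mod_{R_\mathfrak{p}}$ because $L_{i+1}$ inverts all proper closed subsets of $V(\mathfrak{p})$, which have codimension $\geq i+1$ globally. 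Applying endocomputation (Proposition~\ref{prop:endocomputation}) to bring the factor of $\Mod_R^{\Nil(I)}$ inside, distributing $\intmap^{\dual}_R$ over the resulting finite direct sum, and exploiting the self-duality of the open idempotent $\Mod_R^{\Nil(\mathfrak{p})}$ via the formula $\intmap^{\dual}_R(\mathcal{A}, \Mod_R^{\Nil(\mathfrak{p})} \otimes_R \mathcal{B}) \simeq \intmap^{\dual}_R(\Mod_R^{\Nil(\mathfrak{p})} \otimes_R \mathcal{A}, \mathcal{B})$ together with $\Mod_R^{\Nil(\mathfrak{p})} \otimes_R \Mod_R^{\Nil(I)} \simeq \Mod_R^{\Nil(\mathfrak{p})}$ (valid since $V(\mathfrak{p}) \subseteq V(I)$), each summand collapses to $\intmap^{\dual}_R(\Mod_R^{\Nil(\mathfrak{p})}, \Mod_{R_\mathfrak{p}} \otimes_R \mathcal{C}) = (\mathcal{C})^{\wedge}_\mathfrak{p}$. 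This yields $\intmap^{\dual}_R(\Mod_R^{\Nil(I)}, L_{i+1} \otimes_R \mathcal{C}) \simeq \bigoplus_{\mathfrak{p} \in S_I}(\mathcal{C})^{\wedge}_\mathfrak{p}$ naturally in $V(I)$, with transition maps along $V(I') \subseteq V(I)$ acting as projections from $S_I$ to $S_{I'}$.

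Finally, since as $V(I)$ ranges over $S_i$ the sets $S_I$ cofinally exhaust the finite subsets of $(\Spec\pi_0 R)^i$, and the transition maps act as projections, the cofiltered limit $\lim^{\dual}_{V(I) \in S_i} \bigoplus_{\mathfrak{p} \in S_I}(\mathcal{C})^{\wedge}_\mathfrak{p}$ computes precisely the dualizable product $\prod^{\dual}_{\mathfrak{p} \in (\Spec\pi_0 R)^i}(\mathcal{C})^{\wedge}_\mathfrak{p}$, yielding the claim. The principal obstacle is rigorously establishing the direct sum decomposition of $\Mod_R^{\Nil(I)} \otimes_R L_{i+1}$ in $\Mod_{\Mod_R}(\Prl_{\st})^{\dual}$ (keeping track of the non-trivial intersection geometry between $V(\mathfrak{p}_k)$'s after $L_{i+1}$-localization), and verifying that the cofiltered limit of finite direct sums along projections indeed produces the dualizable product intrinsic to $\Mod_{\Mod_R}(\Prl_{\st})^{\dual}$ rather than a naive product or colimit.
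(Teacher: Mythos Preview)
Your overall architecture---rewrite the left side as $\lim^{\dual}_{V(I)}\intmap^{\dual}_R(\Mod_R^{\Nil(I)},\,L_{i+1}\otimes_R\mathcal{C})$, then decompose the inner term as a finite product over the height-$i$ generic points of $V(I)$---is sound and close to the paper's. But the step invoking ``self-duality of the open idempotent $\Mod_R^{\Nil(\mathfrak p)}$'' via
\[
\intmap^{\dual}_R(\mathcal{A},\,\Mod_R^{\Nil(\mathfrak p)}\otimes_R\mathcal{B})\;\simeq\;\intmap^{\dual}_R(\Mod_R^{\Nil(\mathfrak p)}\otimes_R\mathcal{A},\,\mathcal{B})
\]
is a genuine error. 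While $\Mod_R^{\Nil(\mathfrak p)}$ is self-dual in $\Mod_{\Mod_R}(\Prl_{\st})$, it is \emph{not} self-dual in $\Mod_{\Mod_R}(\Prl_{\st})^{\dual}$: the coevaluation map is $\Gamma_{\mathfrak p}:\Mod_R\to\Mod_R^{\Nil(\mathfrak p)}$, whose right adjoint $\intmap_R(\Gamma_{\mathfrak p}R,-)$ fails to preserve colimits since $\Gamma_{\mathfrak p}R$ is not compact. Concretely, setting $\mathcal{A}=\mathcal{B}=\Mod_R$ your formula would give $\Mod_R^{\Nil(\mathfrak p)}\simeq\intmap^{\dual}_R(\Mod_R^{\Nil(\mathfrak p)},\Mod_R)$, and the right side is the modified nuclear category $\mNuc$ (cf.\ Example~\ref{ex:formalgluingsq}), which is strictly larger.

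The fix is simple and stays within your framework: observe that $\sqrt{IR_{\mathfrak p}}=\mathfrak p R_{\mathfrak p}$ (as $\mathfrak p$ is minimal over $I$ among primes $\leq\mathfrak p$), so $\Mod_R^{\Nil(I)}\otimes_R\Mod_{R_{\mathfrak p}}\simeq\Mod_R^{\Nil(\mathfrak p)}\otimes_R\Mod_{R_{\mathfrak p}}$. Hence each summand is $\intmap^{\dual}_R\bigl(\Mod_R^{\Nil(I)},\,\Mod_R^{\Nil(I)}\otimes_R\Mod_{R_{\mathfrak p}}\otimes_R\mathcal{C}\bigr)$, and now Proposition~\ref{prop:endocomputation} (with $\Gamma=\Mod_R^{\Nil(I)}$) followed by Lemma~\ref{lem:rigidadjunction} gives $\intmap^{\dual}_{R_{\mathfrak p}}(\Mod_{R_{\mathfrak p}}^{\Nil(\mathfrak p R_{\mathfrak p})},\,\Mod_{R_{\mathfrak p}}\otimes_R\mathcal{C})=\mathcal{C}^{\wedge}_{\mathfrak p}$ as desired.

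For comparison, the paper avoids establishing your categorical direct-sum decomposition of $\Mod_R^{\Nil(I)}\otimes_R L_{i+1}$ altogether: it keeps the colimit in the second variable of $\intmap^{\dual}_R$, repeatedly uses Corollary~\ref{cor:intmapcontrol} (vanishing of $\Fun^{\L}_R(\Mod_R^{\Nil(I)},-)$ on $I$-local targets) to strip away irrelevant pieces, and only at the end applies the Zariski co-excision Lemma~\ref{lem:nilobjpushout} to the \emph{first} variable to split into a product over irreducible components. Your route---pushing the co-excision through the tensor product first and landing on an explicit product of localized-nilpotent categories---is arguably more transparent once the self-duality step is replaced as above, but it does require carefully justifying the cofinality replacing $L_{i+1}$ by $\Mod_{R_{\mathfrak p}}$ after tensoring with $\Mod_R^{\Nil(\mathfrak p)}$ (essentially the paper's use of Krull's principal ideal theorem in Lemma~\ref{lem:localstratumstep2}).
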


\begin{notation}
Let $R$ be an $\bbE_{\infty}$-ring such that $\pi_{0}R$ is Noetherian. For each $i\geq0$, write $\phi_{i}$ for the endofunctor $\intmap^{\dual}_{R}(\Gamma_{i}, \Gamma_{i}\otimes_{R} L_{i+1}\otimes_{R}-)$ of $\Mod_{\Mod_{R}}(\Prl_{\st})^{\dual}$ from Proposition \ref{prop:localstratum}. Note that this is consistent with the notation in Theorem \ref{thm:motiviclimitdiag} (4). 
\end{notation}

\begin{example}\label{ex:0thstratum}
Let $R$ be an $\bbE_{\infty}$-ring with $\pi_{0}R$ being Noetherian. By Proposition \ref{prop:localstratum}, one has 
\begin{equation*}
\phi_{0} =  L_{1}\otimes_{R}-\simeq \sideset{}{^{\dual}_{\eta\in(\Spec\pi_{0}R)^{0}}}\prod (-)_{\eta}^{\wedge}\simeq \sideset{}{_{\eta\in(\Spec \pi_{0}R)^{0}}\Mod_{R_{\eta}}}\prod \otimes_{R}-.
\end{equation*} 
Note that the set $(\Spec \pi_{0}R)^{0}$ is finite, and that $\Mod_{R_{\eta}}^{\Nil(\eta R_{\eta})} = \Mod_{R_{\eta}}$ for any $\eta\in(\Spec \pi_{0}R)^{0}$ as $R_{\eta}$ has the unique prime ideal $\eta R_{\eta}$; the latter implies that $(-)_{\eta}^{\wedge}\simeq\Res\left(\intmap^{\dual}_{R_{\eta}}\left(\Mod_{R_{\eta}}^{\Nil(\eta R_{\eta})},\Mod_{R_{\eta}}\otimes_{R}-\right)\right)\simeq \Res\left(\Mod_{R_{\eta}}\otimes_{R}-\right)$. 
\end{example}

We observe the following Zariski co-excision property, which will be useful in the proof of the Proposition \ref{prop:localstratum} below.

\begin{lemma}\label{lem:nilobjpushout}
Let $R$ be an $\bbE_{2}$-ring, and let $J$ and $K$ be finitely generated ideals of $\pi_{0}R$. Then, the diagram
\begin{equation*}
\begin{tikzcd}
\Mod_{R}^{\Nil(J+K)} \arrow[r, hook] \arrow[d, hook] & \Mod_{R}^{\Nil(K)} \arrow[d, hook]\\
\Mod_{R}^{\Nil(J)} \arrow[r, hook] & \Mod_{R}^{\Nil(JK)}
\end{tikzcd}
\end{equation*}
is a pushout diagram in $\Mod_{\Mod_{R}}(\Prl_{\st})$. In particular, it is a pushout diagram in $\Mod_{\Mod_{R}}(\Prl_{\st})^{\dual}$. 
\begin{remark}
Note that in the diagram above, the top left object and the bottom right object depend only on the closed subsets $V(J)\cap V(K)$ and $V(J)\cup V(K)$ of $\Spec \pi_{0}R$ respectively. 
\end{remark}
\begin{proof}
From the fiber-cofiber sequence $\Mod_{R}^{\Nil(I)}\to\Mod_{R}\to\Mod_{R}^{\Loc(I)}$ for ideals $I$ involved in the diagram above, the claim of the given diagram being a pushout diagram in $\Mod_{\Mod_{R}}(\Prl_{\st})$ is equivalent to the diagram
\begin{equation*}
\begin{tikzcd}
\Mod_{R}^{\Loc(J+K)} \arrow[r] \arrow[d] & \Mod_{R}^{\Loc(K)} \arrow[d]\\
\Mod_{R}^{\Loc(J)} \arrow[r] & \Mod_{R}^{\Loc(JK)}
\end{tikzcd}
\end{equation*}
being a pushout diagram in $\Mod_{\Mod_{R}}(\Prl_{\st})$, or equivalently a pushout diagram in $\Prl$; here, the arrows in the diagram above are all given by localizations to local objects. This statement is equivalent to the diagram 
\begin{equation*}
\begin{tikzcd}
\Mod_{R}^{\Loc(JK)} \arrow[r, hook] \arrow[d, hook] & \Mod_{R}^{\Loc(J)} \arrow[d, hook]\\
\Mod_{R}^{\Loc(K)} \arrow[r, hook] & \Mod_{R}^{\Loc(J+K)}
\end{tikzcd}
\end{equation*}
in $\Pr^{\R}$ obtained by taking right adjoints, i.e., natural inclusions, of arrows in the previous diagram being a pullback square. Since the full subcategories of local objects are closed under equivalences, the right vertical and bottom horizontal arrows are isofibrations, and in particular are categorical fibrations; thus, the pullback is computed as simplicial sets, i.e., as the intersection of the full subcategories $\Mod_{R}^{\Loc(J)}$ and $\Mod_{R}^{\Loc(K)}$ in $\Mod_{R}^{\Loc(J+K)}$, or equivalently in $\Mod_{R}$. \\
\indent We have reduced the problem to verifying $\Mod_{R}^{\Loc(J)}\cap\Mod_{R}^{\Loc(K)} = \Mod_{R}^{\Loc(JK)}$ in $\Mod_{R}$; the $\supseteq$-inclusion is immediate by definition of the local objects. For the reverse $\subseteq$-inclusion, we observe that for any $N\in\Mod_{R}^{\Nil(JK)}$, the natural diagram 
\begin{equation*}
\begin{tikzcd}
\Gamma_{J+K}(N) \arrow[r] \arrow[d] & \Gamma_{K}(N) \arrow[d]\\
\Gamma_{J}(N) \arrow[r] & N
\end{tikzcd}
\end{equation*}
in $\Mod_{R}$ induced from the counit maps associated with full subcategories of nilpotent objects is a pushout diagram in $\Mod_{R}$. In fact, from $\Gamma_{J+K}(N)\simeq\Gamma_{J}(\Gamma_{K}(N))$, cofiber of the upper horizontal arrow is $L_{J}(\Gamma_{K}(N))$, while cofiber of the bottom horizontal arrow is $L_{J}(N)$. Thus, the total cofiber of the diagram is $L_{J}(L_{K}(N))$, the cofiber of the induced map $L_{J}(\Gamma_{K}(N))\to L_{J}(N)$ between these objects. Our claim $L_{J}(L_{K}(N))\simeq0$ follows from $L_{K}(N)\in\Mod_{R}^{\Nil(J)}$; for any $x\in J$, finite generation of $K$ ensures $L_{K}$ as an endofunctor of $\Mod_{R}$ preserves small colimits \cite[Prop. 7.2.4.9]{sag}, and hence $L_{K}(N)[x^{-1}]\simeq L_{K}(N[x^{-1}])$. Then, as $N[x^{-1}]\in\Mod_{R}^{\Nil(K)}$ from the assumption $N\in\Mod_{R}^{\Nil(JK)}$, we conclude $L_{K}(N[x^{-1}])\simeq0$. Finally, the promised $\subseteq$-inclusion follows from the direct computation, that for any $M\in\Mod_{R}$ which is $J$-local and $K$-local and for any $N\in\Mod_{R}^{\Nil(JK)}$, one has $\intmap_{R}(N, M)\simeq0$ from the pushout diagram of nilpotent objects associated with $N$ and analogous vanishings replacing $N$ by $\Gamma_{I}(N)$ for $I$ being $J,K$ and $J+K$. 
\end{proof}
\end{lemma}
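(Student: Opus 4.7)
The plan is to convert the pushout claim about the Nil subcategories into a pullback claim about the Loc subcategories by combining the idempotent fiber-cofiber sequences $\Mod_R^{\Nil(I)} \hookrightarrow \Mod_R \twoheadrightarrow \Mod_R^{\Loc(I)}$ in $\Mod_{\Mod_R}(\Prl_{\st})$, one for each of the ideals $I \in \{J+K, J, K, JK\}$, with the duality $\Prl \simeq (\Prr)^{\op}$. First, I observe that the forgetful functor $\Mod_{\Mod_R}(\Prl_{\st})\to\Prl$ preserves small colimits, so it suffices to verify the pushout claim in $\Prl$. Taking cofibers in $\Prl$ of each corner of the Nil square mapping into the constant square at $\Mod_R$ then yields a square of Loc categories whose arrows are Bousfield localizations, and the Nil square is a pushout in $\Prl$ iff this Loc square is. Passing to right adjoints (which exist since each inclusion is reflective) trades the pushout in $\Prl$ for a pullback in $\Prr$ whose arrows are fully faithful inclusions of reflective subcategories.

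After these reductions, the question becomes whether
\[
\begin{tikzcd}
\Mod_R^{\Loc(JK)} \arrow[r, hook] \arrow[d, hook] & \Mod_R^{\Loc(J)} \arrow[d, hook] \\
\Mod_R^{\Loc(K)} \arrow[r, hook] & \Mod_R^{\Loc(J+K)}
\end{tikzcd}
\]
is a pullback in $\Prr$. Since each $\Mod_R^{\Loc(I)}$ is closed under equivalences inside $\Mod_R^{\Loc(J+K)}$, the bottom and right arrows are categorical fibrations, so the pullback is computed at the level of underlying simplicial sets as the intersection $\Mod_R^{\Loc(J)} \cap \Mod_R^{\Loc(K)}$. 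Thus everything reduces to verifying the identification $\Mod_R^{\Loc(J)} \cap \Mod_R^{\Loc(K)} = \Mod_R^{\Loc(JK)}$ as full subcategories of $\Mod_R$. The containment $\supseteq$ is immediate, since $JK \subseteq J$ and $JK \subseteq K$ force every $(JK)$-local object to be simultaneously $J$- and $K$-local.

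For the nontrivial containment $\subseteq$, I suppose $M$ is both $J$- and $K$-local and aim to show $\Map_R(N, M) \simeq 0$ for every $N \in \Mod_R^{\Nil(JK)}$. The strategy is to produce a natural pushout
\[
N \simeq \Gamma_J(N) \sqcup_{\Gamma_{J+K}(N)} \Gamma_K(N),
\]
where $\Gamma_I$ denotes the $I$-nilpotent reflector. Granting this, $\Map_R(-,M)$ carries the displayed pushout to a pullback of spectra whose three vertices other than $\Map_R(N,M)$ vanish: $\Gamma_J(N)$ and $\Gamma_{J+K}(N)$ are $J$-nilpotent while $M$ is $J$-local, and $\Gamma_K(N)$ is $K$-nilpotent while $M$ is $K$-local. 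Hence $\Map_R(N,M) \simeq 0$ as required.

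The main obstacle is establishing the pushout decomposition. Since $\cofib(\Gamma_I \to \mathrm{id}) \simeq L_I$ in $\Mod_R$, the total cofiber of the relevant square identifies with $L_J L_K N$, so it suffices to check $L_K N \in \Mod_R^{\Nil(J)}$. Here finite generation of $K$ enters critically: by \cite[Prop. 7.2.4.9]{sag}, $L_K$ preserves small colimits and hence commutes with localizations $(-)[x^{-1}]$. Fixing $x$ in a finite generating set of $J$, one has $L_K(N)[x^{-1}] \simeq L_K(N[x^{-1}])$; the support of $N[x^{-1}]$ lies in $(V(J) \cup V(K)) \setminus V(x) \subseteq V(K)$ because $x \in J$ implies $V(J) \subseteq V(x)$, so $N[x^{-1}]$ is $K$-nilpotent and $L_K(N[x^{-1}]) \simeq 0$. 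Varying $x$ across generators shows $L_K N$ is $J$-nilpotent, hence $L_J L_K N \simeq 0$ as needed.
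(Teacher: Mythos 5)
Your proposal is correct and follows essentially the same route as the paper's proof: the same reduction from the Nil pushout to the Loc pushout via the fiber-cofiber sequences, the same passage to right adjoints and the isofibration/intersection computation in $\Pr^{\R}$, the same identification $\Mod_{R}^{\Loc(J)}\cap\Mod_{R}^{\Loc(K)}=\Mod_{R}^{\Loc(JK)}$ via the pushout decomposition of $N$, and the same use of \cite[Prop.\ 7.2.4.9]{sag} to show $L_{K}(N)$ is $J$-nilpotent.
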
 

\begin{proof}[Proof of Proposition \ref{prop:localstratum}]
Let $S_{=i}$ (resp. $S^{\mathrm{irr}}_{=i}$) be the subset of $S_{i}$ consisting of closed subsets of codimension $i$ (resp. irreducible closed subsets of codimension $i$) in $X$. By definition, $S_{i} = S_{=i}\coprod S_{i+1}$; also, note that $S_{=i}$ is cofinal in $S_{i}$. Thus, the functor of question takes the form 
\begin{equation*}
\intmap(\Gamma_{i}, \Gamma_{i}\otimes L_{i+1}\otimes-)\simeq \sideset{}{^{\dual}_{V(I)\in S_{=i}^{\op}}}\lim \intmap^{\dual}_{R}\left(\Mod_{R}^{\Nil(I)},\Gamma_{i}\otimes_{R} L_{i+1}\otimes
_{R}-\right).
\end{equation*} 
To compute the functor term inside the limit, fix $\mathcal{C}\in\Mod_{\Mod
_{R}}(\Prl_{\st})^{\dual}$ for evaluation for the sake of convenience. Note that
\begin{equation*}
\Gamma_{i}\otimes_{R} L_{i+1}\simeq \colim_{V(K)\in S_{=i}}\left(\colim_{V(J)\in S_{=i+1},\; V(J)\subseteq V(K)}(\Mod_{R}^{\Nil(K)})^{\Loc(J)}\right),
\end{equation*}
from the fact that $\{(V(K), V(J))\in S_{=i}\times S_{=i+1}~|~V(J)\subseteq V(K)\}$ is cofinal in $S_{=i}\times S_{=i+1}$. Thus, the term inside the limit, after evaluation of $\mathcal{C}$, takes the form
\begin{align}\label{eq:limitterm}
& \intmap^{\dual}_{R}\left(\Mod_{R}^{\Nil(I)},\Gamma_{i}\otimes_{R} L_{i+1}\otimes_{R}\mathcal{C}\right)\\ \nonumber
& \simeq \intmap^{\dual}_{R}\left(\Mod_{R}^{\Nil(I)}, \colim_{V(K)\in S_{=i}}\left(\colim_{V(J)\in S_{=i+1},\; V(J)\subseteq V(K)}(\Mod_{R}^{\Nil(K)})^{\Loc(J)}\right)\otimes_{R}\mathcal{C}\right). 
\end{align}
The remaining further computation of (\ref{eq:limitterm}) and verification of the formula takes several steps. Below, we use the abbreviated notation $\Fun^{\L}_{R} := \Fun^{\L}_{\Mod_{R}}$.

\begin{lemma}\label{lem:localstratumstep1}
The object (\ref{eq:limitterm}) is naturally equivalent to 
\begin{equation}\label{eq:limitterm2}
\intmap^{\dual}_{R}\left(\Mod_{R}^{\Nil(I)}, \colim_{V(J)\in S_{=i+1},\; V(J)\subseteq V(I)}\Mod_{R}^{\Loc(J)}\otimes_{R}\mathcal{C}\right). 
\end{equation}
\begin{proof}
First, we note that the object (\ref{eq:limitterm2}) is naturally equivalent to 
\begin{equation}\label{eq:limitterm3}
\intmap^{\dual}_{R}\left(\Mod_{R}^{\Nil(I)}, \colim_{V(J)\in S_{=i+1},\; V(J)\subseteq V(I)}(\Mod_{R}^{\Nil(I)})^{\Loc(J)}\otimes_{R}\mathcal{C}\right). 
\end{equation}
In fact, from the natural fiber-cofiber sequence $(\Mod_{R}^{\Nil(I)})^{\Loc(J)}\to\Mod_{R}^{\Loc(J)}\to(\Mod_{R}^{\Loc(I)})^{\Loc(J)}$, we have a fiber-cofiber sequence 
\begin{align*}
\colim_{V(J)\subseteq V(I),\;\codim=1}(\Mod_{R}^{\Nil(I)})^{\Loc(J)}\otimes_{R}\mathcal{C} & \to \colim_{V(J)\subseteq V(I),\;\codim=1}\Mod_{R}^{\Loc(J)}\otimes_{R}\mathcal{C}\\
 & \to \colim_{V(J)\subseteq V(I),\;\codim=1}(\Mod_{R}^{\Loc(I)})^{\Loc(J)}\otimes_{R}\mathcal{C}
\end{align*}
of $\Mod_{\Mod_{R}}(\Prl_{\st})^{\dual}$, cf. Lemma \ref{lem:intflat}. Note that the third term of this sequence is $I$-local. By Lemma \ref{lem:nilproper1compact}, the sequence obtained by applying $\intmap^{\dual}_{R}\left(\Mod_{R}^{\Nil(I)},-\right)$ to this sequence remains to be a fiber-cofiber sequence in $\Mod_{\Mod_{R}}(\Prl_{\st})^{\dual}$; since the third term is equivalent to $0$ by Lemma \ref{lem:niltoloczero} and Corollary \ref{cor:intmapcontrol} (1), we have a natural equivalence between (\ref{eq:limitterm2}) and (\ref{eq:limitterm3}). \\
\indent Thus, we are reduced to verify that the natural map between the objects (\ref{eq:limitterm}) and (\ref{eq:limitterm3}) is an equivalence. We first observe that the natural map 
\begin{align*}
&\colim_{V(J)\subseteq V(I),\;\codim = 1}(\Mod_{R}^{\Nil(I)})^{\Loc(J)}\otimes_{R}\mathcal{C}\\
&\to \colim_{V(K)\in S_{=i}}\left(\colim_{V(J)\subseteq V(K),\;\codim = 1}(\Mod_{R}^{\Nil(K)})^{\Loc(J)}\otimes_{R}\mathcal{C}\right)
\end{align*}
in $\Mod_{\Mod_{R}}(\Prl_{\st})^{\dual}$ is fully faithful. It suffices to check that for each $V(I)\subseteq V(K)\in S_{=i}$, the functor 
\begin{equation*}
\colim_{V(J)\subseteq V(I),\;\codim = 1}(\Mod_{R}^{\Nil(I)})^{\Loc(J)}\otimes_{R}\mathcal{C}\to \colim_{V(J)\subseteq V(K),\;\codim = 1}(\Mod_{R}^{\Nil(K)})^{\Loc(J)}\otimes_{R}\mathcal{C}
\end{equation*}
is fully faithful. By Lemma \ref{lem:intflat}, one is reduced to the case of $\mathcal{C} = \Mod_{R}$. Now, Lemma \ref{lem:filtcolimrightadjointable} implies the fully faithfulness of the functor; more precisely, Remark \ref{rem:filtcolimrightadjointable} (1) shows each functor $(\Mod_{R}^{\Nil(I)})^{\Loc(J)}\hookrightarrow (\Mod_{R}^{\Nil(K)})^{\Loc(J)}\to \colim_{V(J)\subseteq V(K),\;\codim = 1}(\Mod_{R}^{\Nil(K)})^{\Loc(J)}$ is fully faithful, and (2) shows the functor of question is fully faithful. \\
\indent Thus, to prove the claimed natural equivalence between (\ref{eq:limitterm}) and (\ref{eq:limitterm3}), it suffices by Corollary \ref{cor:intmapcontrol} to check that the induced map 
\begin{align*}
&\Fun^{\L}_{R}\left(\Mod_{R}^{\Nil(I)},\colim_{V(J)\subseteq V(I),\;\codim = 1}(\Mod_{R}^{\Nil(I)})^{\Loc(J)}\otimes_{R}\mathcal{C}\right)\\
&\to \Fun^{\L}_{R}\left(\Mod_{R}^{\Nil(I)},\colim_{V(K)\in S_{=i}}\left(\colim_{V(J)\subseteq V(K),\;\codim = 1}(\Mod_{R}^{\Nil(K)})^{\Loc(J)}\otimes_{R}\mathcal{C}\right)\right)
\end{align*}
is an equivalence in $\Mod_{\Mod_{R}}(\Prl_{\st})$. By writing 
\begin{equation*}
\mathcal{D}_{V(K)}:=\colim_{V(J)\subseteq V(K),\;\codim = 1}(\Mod_{R}^{\Nil(K)})^{\Loc(J)}\otimes_{R}\mathcal{C},
\end{equation*} 
the second variable object of the lower functor category can be written as 
\begin{equation*}
\colim_{V(K)\in S_{=i}}\mathcal{D}_{V(K)}\simeq \colim_{V(K)\in S_{=i},\; V(I)\subseteq V(K)}\mathcal{D}_{V(K)},
\end{equation*} 
where $S_{=i, \;\supseteq V(I)}$ is the cofinal subset of $S_{=i}$ consisting of closed subsets containing $V(I)$. Note that since $\Mod_{R}^{\Nil(I)}$ is a dualizable object in $\Mod_{\Mod_{R}}(\Prl_{\st})$, the internal mapping object functor $\Fun^{\L}_{R}\left(\Mod_{R}^{\Nil(I)},-\right)$ of $\Mod_{\Mod_{R}}(\Prl_{\st})$ preserves small colimits. Thus, to check the induced map from $\Fun^{\L}_{R}\left(\Mod_{R}^{\Nil(I)},\mathcal{D}_{V(I)}\right)$ to 
\begin{equation*}
\Fun^{\L}_{R}\left(\Mod_{R}^{\Nil(I)},\colim_{V(K)\in S_{=i}}\mathcal{D}_{V(K)}\right) \simeq \colim_{V(K)\in S_{=i,\;\supseteq V(I)}}\Fun^{\L}_{R}\left(\Mod_{R}^{\Nil(I)}, \mathcal{D}_{V(K)}\right)
\end{equation*} 
is an equivalence, it suffices to check that for all $V(K)\in S_{=i,\;\supseteq V(I)}$, the natural map
\begin{equation}\label{eq:localstratumstep1eq1}
\Fun^{\L}_{R}\left(\Mod_{R}^{\Nil(I)},\mathcal{D}_{V(I)}\right)\to \Fun^{\L}_{R}\left(\Mod_{R}^{\Nil(I)},\mathcal{D}_{V(K)}\right)
\end{equation}
is an equivalence. To verify (\ref{eq:localstratumstep1eq1}), consider the natural fiber-cofiber sequence $\Mod_{R}^{\Nil(I)}\to\Mod_{R}^{\Nil(K)}\to(\Mod_{R}^{\Nil(K)})^{\Loc(I)}$ in $\Mod_{\Mod_{R}}(\Prl_{\st})^{\dual}$; here, we are using that $\Mod_{R}^{\Nil(I)} = \Mod_{R}^{\Nil(I+K)}$ from $V(I)\subseteq V(K)$. From this, we obtain a fiber-cofiber sequence 
\begin{align*}
\colim_{V(J)\subseteq V(K),\;\codim=1}(\Mod_{R}^{\Nil(I)})^{\Loc(J)}\otimes_{R}\mathcal{C} &\to \colim_{V(J)\subseteq V(K),\;\codim=1}(\Mod_{R}^{\Nil(K)})^{\Loc(J)}\otimes_{R}\mathcal{C}\\
&\to \colim_{V(J)\subseteq V(K),\;\codim=1}((\Mod_{R}^{\Nil(K)})^{\Loc(I)})^{\Loc(J)}\otimes_{R}\mathcal{C}
\end{align*}
in $\Mod_{\Mod_{R}}(\Prl_{\st})^{\dual}$. Note that the third term of the sequence is $I$-local. Thus, the sequence induced by taking $\Fun^{\L}_{R}\left(\Mod_{R}^{\Nil(I)},-\right)$ to this sequence, which is a fiber-cofiber sequence in $\Mod_{\Mod_{R}}(\Prl_{\st})^{\dual}$ by Lemma \ref{lem:intproj}, has the third term being equivalent to $0$ by Lemma \ref{lem:niltoloczero}. In particular, we have a natural equivalence between 
\begin{equation}\label{eq:localstratumstep1eq2}
\Fun^{\L}_{R}\left(\Mod_{R}^{\Nil(I)},\colim_{V(J)\subseteq V(K),\;\codim=1}(\Mod_{R}^{\Nil(I)})^{\Loc(J)}\otimes_{R}\mathcal{C}\right) 
\end{equation}
and
\begin{equation*}
\Fun^{\L}_{R}\left(\Mod_{R}^{\Nil(I)},\colim_{V(J)\subseteq V(K),\;\codim=1}(\Mod_{R}^{\Nil(K)})^{\Loc(J)}\otimes_{R}\mathcal{C}\right) = \Fun^{\L}_{R}\left(\Mod_{R}^{\Nil(I)},\mathcal{D}_{V(K)}\right)
\end{equation*}
in $\Mod_{\Mod_{R}}(\Prl_{\st})$. \\
\indent It remains to check that the natural map 
\begin{equation}\label{eq:localstratumstep1eq3}
\mathcal{D}_{V(I)} = \colim_{V(J)\subseteq V(I),\;\codim=1}(\Mod_{R}^{\Nil(I)})^{\Loc(J)}\otimes_{R}\mathcal{C}\to \colim_{V(J)\subseteq V(K),\;\codim=1}(\Mod_{R}^{\Nil(I)})^{\Loc(J)}\otimes_{R}\mathcal{C}
\end{equation}
is an equivalence, hence identifying (\ref{eq:localstratumstep1eq2}) with $\Fun^{\L}_{R}\left(\Mod_{R}^{\Nil(I)},\mathcal{D}_{V(I)}\right)$. It suffices to check the case of $\mathcal{C}=\Mod_{R}$, so let us assume so. Also, by cofinality of the subset $S_{=i+1}$ in $S_{i+1}$, one can replace $\codim=1$ conditions in the index categories for the colimits by $\codim\geq1$. For any $V(J)\in S_{i+1}$ with $V(J)\subseteq V(K)$, one has $(\Mod_{R}^{\Nil(I)})^{\Loc(J)} = (\Mod_{R}^{\Nil(I)})^{\Loc(I+J)}$, from $(\Mod_{R}^{\Nil(I)})^{\Nil(I+J)} = (\Mod_{R}^{\Nil(I)})^{\Nil(J)}$ for instance, hence can replace $V(J)$ with $V(I)\cap V(J)$ which is a closed subset of $V(I)$ of codimension $\geq1$. Thus, we have a partial order preserving retract $V(J)\mapsto V(I+J)$ of the inclusion $\{V(J)\in S_{i+1}~|~V(J)\subseteq V(I)\}\hookrightarrow \{V(J)\in S_{i+1}~|~V(J)\subseteq V(K)\}$ such that the value of the diagram is invariant under the retract, i.e., $(\Mod_{R}^{\Nil(I)})^{\Loc(J)} = (\Mod_{R}^{\Nil(I)})^{\Loc(I+J)}$; this implies that the natural map $\colim_{V(J)\subseteq V(I),\;\codim=1}(\Mod_{R}^{\Nil(I)})^{\Loc(J)}\to \colim_{V(J)\subseteq V(K),\;\codim=1}(\Mod_{R}^{\Nil(I)})^{\Loc(J)}$ between colimits is an equivalence. More precisely, we have a sink 
\begin{equation*}
\left((\Mod_{R}^{\Nil(I)})^{\Loc(J)} = (\Mod_{R}^{\Nil(I)})^{\Loc(I+J)}\to\colim_{V(J')\subseteq V(I),\;\codim\geq1}(\Mod_{R}^{\Nil(I)})^{\Loc(J')}\right)_{\{V(J)\in S_{i+1}~|~V(J)\subseteq V(K)\}},
\end{equation*} 
and hence a map $\colim_{V(J)\subseteq V(K),\;\codim\geq1}(\Mod_{R}^{\Nil(I)})^{\Loc(J)}\to\colim_{V(J')\subseteq V(I),\;\codim\geq1}(\Mod_{R}^{\Nil(I)})^{\Loc(J')}$ from the colimit, which by construction is an inverse equivalence to the map of question. 
\end{proof}
\end{lemma}

\begin{lemma}\label{lem:localstratumstep2}
For $V(I) = V(\mathfrak{p})\in S^{\mathrm{irr}}_{=i}$, i.e., $V(I)$ being irreducible, the object (\ref{eq:limitterm2}) is naturally equivalent to 
\begin{equation*}
\mathcal{C}^{\wedge}_{\mathfrak{p}} = \intmap^{\dual}_{R}\left(\Mod_{R}^{\Nil(\mathfrak{p})},\Mod_{R_{\mathfrak{p}}}\otimes_{R}\mathcal{C}\right). 
\end{equation*}
\begin{proof}
The second variable object of the internal mapping object (\ref{eq:limitterm2}) is, by irreducibility assumption, equivalent to $\colim_{V(J)\in S_{i+1},\;\mathfrak{p}\unlhd J}\Mod_{R}^{\Loc(J)}\otimes_{R}\mathcal{C}$. Since each $V(J)$ is a codimension 1 subset of $V(\mathfrak{p}) = \Spec (\pi_{0}R)/\mathfrak{p}$, the latter object is equivalent to $\colim_{f\in\pi_{0}R\backslash \mathfrak{p}}\Mod_{R}^{\Loc((f)+\mathfrak{p})}\otimes_{R}\mathcal{C}$ by Krull principal ideal theorem. \\
\indent To further compute the internal mapping object, consider the three fiber-cofiber sequences $\Mod_{R}^{\Nil(I+J)}\to\Mod_{R}^{\Nil(I)}\to(\Mod_{R}^{\Nil(I)})^{\Loc(J)}$, $\Mod_{R}^{\Nil(I)}\to\Mod_{R}\to\Mod_{R}^{\Loc(I)}$, and $\Mod_{R}^{\Nil(I+J)}\to\Mod_{R}\to\Mod_{R}^{\Loc(I+J)}$, where $I$ and $J$ are any finitely generated ideals of $\pi_{0}R$. By Lemma \ref{lem:octahedron}, we know the natural sequence $(\Mod_{R}^{\Nil(I)})^{\Loc(J)}\to\Mod_{R}^{\Loc(I+J)}\to\Mod_{R}^{\Loc(I)}$ is a fiber-cofiber sequence in $\Mod_{\Mod_{R}}(\Prl_{\st})^{\dual}$. By taking $I = (f)$ and $J = \mathfrak{p}$ and taking filtered colimits and tensoring with $\mathcal{C}$, we have a fiber-cofiber sequence 
\begin{equation*}
\colim_{f\in\pi_{0}R\backslash\mathfrak{p}}(\Mod_{R}^{\Nil(f)})^{\Loc(\mathfrak{p})}\otimes_{R}\mathcal{C}\to\colim_{f\in\pi_{0}R\backslash\mathfrak{p}}\Mod_{R}^{\Loc((f)+\mathfrak{p})}\otimes_{R}\mathcal{C}\to\colim_{f\in\pi_{0}R\backslash\mathfrak{p}}\Mod_{R}^{\Loc(f)}\otimes_{R}\mathcal{C}
\end{equation*}
in $\Mod_{\Mod_{R}}(\Prl_{\st})^{\dual}$, cf. Lemma \ref{lem:intflat}. Since the first object of the sequence is $\mathfrak{p}$-local, upon taking $\intmap^{\dual}_{R}\left(\Mod_{R}^{\Nil(\mathfrak{p})},-\right)$ we obtain an equivalence 
\begin{equation*}
\intmap^{\dual}_{R}\left(\Mod_{R}^{\Nil(\mathfrak{p})},\colim_{f\in\pi_{0}R\backslash\mathfrak{p}}\Mod_{R}^{\Loc((f)+\mathfrak{p})}\otimes_{R}\mathcal{C}\right)\simeq \intmap^{\dual}_{R}\left(\Mod_{R}^{\Nil(\mathfrak{p})},\colim_{f\in\pi_{0}R\backslash\mathfrak{p}}\Mod_{R}^{\Loc(f)}\otimes_{R}\mathcal{C}\right)
\end{equation*}
by Lemma \ref{lem:nilproper1compact}, Lemma \ref{lem:niltoloczero} and Corollary \ref{cor:intmapcontrol} (1). \\
\indent Finally, we further compute the second variable of the right hand side internal mapping object of the equivalence. We have equivalences 
\begin{equation*}
\colim_{f\in\pi_{0}R\backslash\mathfrak{p}}\Mod_{R}^{\Loc(f)}\simeq \colim_{f\in\pi_{0}R\backslash\mathfrak{p}}\Mod_{R[f^{-1}]}\simeq \Mod_{\colim_{f\in\pi_{0}R\backslash\mathfrak{p}}R[f^{-1}]}\simeq \Mod_{R_{\mathfrak{p}}}
\end{equation*}
in $\Mod_{\Mod_{R}}(\Prl_{\st})^{\dual}$, where the second equivalence follows from \cite[Cor. 4.8.5.13]{ha}.
\end{proof}
\end{lemma}

\begin{lemma}\label{lem:localstratumstep3}
In general, for $V(I) = V(\mathfrak{p}_{1})\cup\cdots\cup V(\mathfrak{p}_{r})$, where $V(\mathfrak{p}_{i})\in S_{=i}^{\mathrm{irr}}$ for $i=1,...,r$, the object (\ref{eq:limitterm2}) is naturally equivalent to 
\begin{equation}\label{eq:limitterm4}
\sideset{}{^{\dual}_{i=1,...,r}}\prod \mathcal{C}^{\wedge}_{\mathfrak{p}_{i}} = \sideset{}{^{\dual}_{i=1,...,r}}\prod \intmap^{\dual}_{R}\left(\Mod_{R}^{\Nil(\mathfrak{p}_{i})},\Mod_{R_{\mathfrak{p}_{i}}}\otimes_{R}\mathcal{C}\right). 
\end{equation}
\begin{proof}
We proceed by induction on the number $r$ of irreducible components of $V(I)$. The case of $r=1$ is Lemma \ref{lem:localstratumstep2}, so let $r>1$. Write $V(I) = V(\mathfrak{p}_{1})\cup V(I')$, with $I' = \mathfrak{p}_{2}\cdots\mathfrak{p}_{r}$. Also, write $L$ to denote either of the ideals $\mathfrak{p}_{1}$ or $I'$ in the computations below; in particular, $V(L)\subseteq V(I)$. Observe that:\\
\indent (1) We have natural equivalences 
\begin{align*}
\text{[(\ref{eq:limitterm2}) for $V(L)$]} & \simeq \intmap^{\dual}_{R}\left(\Mod_{R}^{\Nil(L)},\colim_{V(J)\in S_{=i+1},\; V(J)\subseteq V(L)}(\Mod_{R}^{\Nil(L)})^{\Loc(J)}\otimes_{R}\mathcal{C}\right)\\
& \simeq \intmap^{\dual}_{R}\left(\Mod_{R}^{\Nil(L)},\colim_{V(J)\in S_{=i+1},\; V(J)\subseteq V(I)}(\Mod_{R}^{\Nil(L)})^{\Loc(J)}\otimes_{R}\mathcal{C}\right)\\
& \simeq \intmap^{\dual}_{R}\left(\Mod_{R}^{\Nil(L)},\colim_{V(J)\in S_{=i+1},\; V(J)\subseteq V(I)}\Mod_{R}^{\Loc(J)}\otimes_{R}\mathcal{C}\right).
\end{align*}
This was proved in the course of the proof of Lemma \ref{lem:localstratumstep1}. The first equivalence amounts to the equivalence between (\ref{eq:limitterm2}) and (\ref{eq:limitterm3}), while the second equivalence follows from the fact that the natural map (\ref{eq:localstratumstep1eq3}) is an equivalence. The third equivalence follows exactly as in the proof of the equivalence between (\ref{eq:limitterm2}) and (\ref{eq:limitterm3}); one considers the fiber-cofiber sequence obtained by taking filtered colimits and tensoring $\mathcal{C}$ to the natural fiber-cofiber sequence $(\Mod_{R}^{\Nil(L)})^{\Loc(J)}\to\Mod_{R}^{\Loc(J)}\to(\Mod_{R}^{\Loc(L)})^{\Loc(J)}$ and applies Lemma \ref{lem:niltoloczero} and Corollary \ref{cor:intmapcontrol} (1).\\
\indent (2) We have 
\begin{equation*}
\intmap^{\dual}_{R}\left(\Mod_{R}^{\Nil(\mathfrak{p}_{1}+I')},\colim_{V(J)\in S_{=i+1},\; V(J)\subseteq V(I)}\Mod_{R}^{\Loc(J)}\otimes_{R}\mathcal{C}\right)\simeq 0.
\end{equation*}
It suffices to check $\Fun^{\L}_{R}\left(\Mod_{R}^{\Nil(\mathfrak{p}_{1}+I')},\colim_{V(J)\in S_{=i+1},\; V(J)\subseteq V(I)}\Mod_{R}^{\Loc(J)}\otimes_{R}\mathcal{C}\right)\simeq 0$ by Corollary \ref{cor:intmapcontrol} (1). By dualizability of $\Mod_{R}^{\Nil(I)}$ in $\Mod_{\Mod_{R}}(\Prl_{\st})$, the internal mapping object functor $\Fun^{\L}_{R}\left(\Mod_{R}^{\Nil(I)},-\right)$ of $\Mod_{\Mod_{R}}(\Prl_{\st})$ preserves small colimits, and hence the left hand side object is equivalent to 
\begin{equation*}
\colim_{V(J)\in S_{=i+1},\; V(J)\subseteq V(I)}\Fun^{\L}_{R}\left(\Mod_{R}^{\Nil(\mathfrak{p}_{1}+I')},\Mod_{R}^{\Loc(J)}\otimes_{R}\mathcal{C}\right).
\end{equation*} 
Thus, we are reduced to check that the colimit above is equivalent to $0$. Note that we can replace the index partially ordered set by its cofinal subset consisting of $V(J)\in S_{i+1}$ such that $V(\mathfrak{p}_{1})\cap V(I')\subseteq V(J)\subseteq V(I)$. For such $V(J)$, we know $\Mod_{R}^{\Loc(J)}\otimes_{R}\mathcal{C} \simeq \mathcal{C}^{\Loc(J)}$ is contained in $\mathcal{C}^{\Loc(\mathfrak{p}_{1}+I')}$, and in particular is $(\mathfrak{p}_{1}+I')$-local. Thus, by Lemma \ref{lem:niltoloczero}, we have $\Fun^{\L}_{R}\left(\Mod_{R}^{\Nil(\mathfrak{p}_{1}+I')},\Mod_{R}^{\Loc(J)}\otimes_{R}\mathcal{C}\right)\simeq 0$, and conclude that the colimit is equivalent to $0$. \\
\indent Using (1) and (2) above, we can proceed with the proof. For notational convenience, denote $\mathcal{D}_{V(I)}:=\colim_{V(J)\in S_{=i+1},\; V(J)\subseteq V(I)}\Mod_{R}^{\Loc(J)}\otimes_{R}\mathcal{C}$. We have to compute the object $\intmap^{\dual}_{R}\left(\Mod_{R}^{\Nil(I)},\mathcal{D}_{V(I)}\right)$; by Lemma \ref{lem:nilobjpushout}, we know the object is equivalent to
\begin{equation*}
\intmap^{\dual}_{R}\left(\Mod_{R}^{\Nil(\mathfrak{p}_{1})},\mathcal{D}_{V(I)}\right)\times_{\intmap^{\dual}_{R}\left(\Mod_{R}^{\Nil(\mathfrak{p}_{1}+I')},\mathcal{D}_{V(I)}\right)
} \intmap^{\dual}_{R}\left(\Mod_{R}^{\Nil(I')},\mathcal{D}_{V(I)}\right).
\end{equation*} 
By (1) and (2) above, we can rewrite the pullback as a product, and have an equivalence
\begin{equation*}
\intmap^{\dual}_{R}\left(\Mod_{R}^{\Nil(I)},\mathcal{D}_{V(I)}\right)\simeq \intmap^{\dual}_{R}\left(\Mod_{R}^{\Nil(\mathfrak{p}_{1})},\mathcal{D}_{V(\mathfrak{p}_{1})}\right)\times \intmap^{\dual}_{R}\left(\Mod_{R}^{\Nil(I')},\mathcal{D}_{V(I')}\right).
\end{equation*}
By induction hypothesis, we can describe each of the two components in the right hand side product. Together, they give the desired expression of the left hand side object, finishing the proof. 
\end{proof}
\end{lemma}
\indent Finally, our formula follows from Lemma \ref{lem:localstratumstep1} and Lemma \ref{lem:localstratumstep3}, and the identification of $S_{=i}$ with the partially ordered set of finite subsets of $S_{=i}^{\mathrm{irr}} = (\Spec \pi_{0}R)^{i}$: 
\begin{align*}
\intmap(\Gamma_{i}, \Gamma_{i}\otimes L_{i+1}\otimes\mathcal{C}) &\simeq \sideset{}{^{\dual}_{V(I)\in S_{=i}^{\op}}}\lim \intmap^{\dual}_{R}\left(\Mod_{R}^{\Nil(I)},\Gamma_{i}\otimes_{R} L_{i+1}\otimes_{R}\mathcal{C}\right)\\ 
&\simeq \sideset{}{^{\dual}_{V(I)\in S_{=i}^{\op}}}\lim \sideset{}{^{\dual}_{\mathfrak{p}\in V(I),\;\mathrm{ht}(\mathfrak{p})=i}}\prod \mathcal{C}^{\wedge}_{\mathfrak{p}_{i}}\simeq\sideset{}{^{\dual}_{\mathfrak{p}\in(\Spec \pi_{0}R)^{i}}}\prod \mathcal{C}^{\wedge}_{\mathfrak{p}}.
\end{align*}
\end{proof}

We would like to describe certain compositions of the endofunctors from Proposition \ref{prop:localstratum}, which will describe terms in the motivic limit diagram. 

\begin{lemma}\label{lem:projequiv}
Let $R$ be an $\bbE_{\infty}$-ring such that $\pi_{0}R$ is Noetherian. For each $i\geq0$ and $\mathfrak{q}\in(\Spec \pi_{0}R)^{i}$, the natural map 
\begin{equation*}
\left(\sideset{}{^{\dual}_{\mathfrak{p}\in(\Spec \pi_{0}R)^{i+1}}}\prod \mathcal{C}^{\wedge}_{\mathfrak{p}}\right)^{\wedge}_{\mathfrak{q}}\to \left(\sideset{}{^{\dual}_{\mathfrak{p}\in(\Spec \pi_{0}R)^{i+1}, \; V(\mathfrak{p})\subseteq V(\mathfrak{q})}}\prod \mathcal{C}^{\wedge}_{\mathfrak{p}}\right)^{\wedge}_{\mathfrak{q}}
\end{equation*}
for $\mathcal{C}\in\Mod_{\Mod_{R}}(\Prl_{\st})^{\dual}$ is an equivalence, i.e., the natural map of endofunctors of $\Mod_{\Mod_{R}}(\Prl_{\st})^{\dual}$ from 
\begin{equation*}
\intmap_{R}^{\dual}\left(\Mod_{R}^{\Nil(\mathfrak{q})}, \Mod_{R_{\mathfrak{q}}}\otimes_{R}\sideset{}{^{\dual}_{\mathfrak{p}\in(\Spec \pi_{0}R)^{i+1}} }\prod\intmap^{\dual}_{R}\left(\Mod_{R}^{\Nil(\mathfrak{p})},-\right)\right)
\end{equation*}
to
\begin{equation*}
\intmap_{R}^{\dual}\left(\Mod_{R}^{\Nil(\mathfrak{q})}, \Mod_{R_{\mathfrak{q}}}\otimes_{R}\sideset{}{^{\dual}_{\mathfrak{p}\in(\Spec \pi_{0}R)^{i+1}, \; V(\mathfrak{p})\subseteq V(\mathfrak{q})}}\prod\intmap^{\dual}_{R}\left(\Mod_{R}^{\Nil(\mathfrak{p})},-\right)\right)
\end{equation*}
 is an equivalence. 
\begin{proof}
Let $\mathcal{C}\in\Mod_{\Mod_{R}}(\Prl_{\st})^{\dual}$. From 
\begin{equation*}
\sideset{}{^{\dual}_{\mathfrak{p}\in(\Spec \pi_{0}R)^{i+1}}}\prod \mathcal{C}^{\wedge}_{\mathfrak{p}}\simeq \sideset{}{^{\dual}_{\mathfrak{p}\in(\Spec \pi_{0}R)^{i+1}, \; V(\mathfrak{p})\subseteq V(\mathfrak{q})}}\prod \mathcal{C}^{\wedge}_{\mathfrak{p}}\times \sideset{}{^{\dual}_{\mathfrak{p}\in(\Spec \pi_{0}R)^{i+1}, \; V(\mathfrak{p})\not\subseteq V(\mathfrak{q})}}\prod \mathcal{C}^{\wedge}_{\mathfrak{p}}
\end{equation*} 
and from the fact that $\intmap^{\dual}_{R}\left(\Mod_{R}^{\Nil(\mathfrak{q})},\Mod_{R_{\mathfrak{q}}}\otimes_{R}-\right)$ preserves finite products, we know that to verify the natural map of question is an equivalence, it suffices to check 
\begin{equation*}
\left(\sideset{}{^{\dual}_{\mathfrak{p}\in(\Spec \pi_{0}R)^{i+1}, \; V(\mathfrak{p})\not\subseteq V(\mathfrak{q})}}\prod \mathcal{C}^{\wedge}_{\mathfrak{p}}\right)^{\wedge}_{\mathfrak{q}} = \intmap^{\dual}_{R}\left(\Mod_{R}^{\Nil(\mathfrak{q})}, \Mod_{R_{\mathfrak{q}}}\otimes_{R}\sideset{}{^{\dual}_{\mathfrak{p}\in(\Spec \pi_{0}R)^{i+1}, \; V(\mathfrak{p})\not\subseteq V(\mathfrak{q})}}\prod \mathcal{C}^{\wedge}_{\mathfrak{p}}\right)\simeq 0.
\end{equation*} 
\indent In order to check the vanishing above, it suffices to check the case of $\mathcal{C} = \Mod_{R}$, i.e., 
\begin{equation}\label{eq:projequiv}
\intmap^{\dual}_{R}\left(\Mod_{R}^{\Nil(\mathfrak{q})}, \Mod_{R_{\mathfrak{q}}}\otimes_{R}\sideset{}{^{\dual}_{\mathfrak{p}\in(\Spec \pi_{0}R)^{i+1}, \; V(\mathfrak{p})\not\subseteq V(\mathfrak{q})}}\prod (\Mod_{R})^{\wedge}_{\mathfrak{p}}\right)\simeq 0.
\end{equation} 
In fact, as each $(-)^{\wedge}_{\mathfrak{p}}$ is lax symmetric monoidal by Remark \ref{rem:loccompl}, the object $\sideset{}{^{\dual}_{\mathfrak{p}\in(\Spec \pi_{0}R)^{i+1}, \; V(\mathfrak{p})\not\subseteq V(\mathfrak{q})}}\prod \mathcal{C}^{\wedge}_{\mathfrak{p}}$ is a module over the algebra $\sideset{}{^{\dual}_{\mathfrak{p}\in(\Spec \pi_{0}R)^{i+1}, \; V(\mathfrak{p})\not\subseteq V(\mathfrak{q})}}\prod (\Mod_{R})^{\wedge}_{\mathfrak{p}}$, and as $(-)^{\wedge}_{\mathfrak{q}}$ is again lax symmetric monoidal, we know in turn the object $\left(\sideset{}{^{\dual}_{\mathfrak{p}\in(\Spec \pi_{0}R)^{i+1}, \; V(\mathfrak{p})\not\subseteq V(\mathfrak{q})}}\prod \mathcal{C}^{\wedge}_{\mathfrak{p}}\right)^{\wedge}_{\mathfrak{q}}$ is a module over the algebra $\left(\sideset{}{^{\dual}_{\mathfrak{p}\in(\Spec \pi_{0}R)^{i+1}, \; V(\mathfrak{p})\not\subseteq V(\mathfrak{q})}}\prod (\Mod_{R})^{\wedge}_{\mathfrak{p}}\right)^{\wedge}_{\mathfrak{q}}$.  \\
\indent Now, the vanishing (\ref{eq:projequiv}) follows from Corollary \ref{cor:intmapcontrol} combined with the observation that the object in the second argument of the internal mapping object is a module over $\Mod_{R}^{\Loc(\mathfrak{q})}$, and hence gives 
\begin{equation*}
\Fun^{\L}_{R}\left(\Mod_{R}^{\Nil(\mathfrak{q})}, \Mod_{R_{\mathfrak{q}}}\otimes_{R}\sideset{}{^{\dual}_{\mathfrak{p}\in(\Spec \pi_{0}R)^{i+1}, \; V(\mathfrak{p})\not\subseteq V(\mathfrak{q})}}\prod (\Mod_{R})^{\wedge}_{\mathfrak{p}}\right)\simeq 0.
\end{equation*} 
To check this observation that $\Mod_{R_{\mathfrak{q}}}\otimes_{R}\sideset{}{^{\dual}_{\mathfrak{p}\in(\Spec \pi_{0}R)^{i+1}, \; V(\mathfrak{p})\not\subseteq V(\mathfrak{q})}}\prod (\Mod_{R})^{\wedge}_{\mathfrak{p}}$ is a module over the closed idempotent algebra $\Mod_{R}^{\Loc(\mathfrak{q})}$, it suffices to check that the object $\sideset{}{^{\dual}_{\mathfrak{p}\in(\Spec \pi_{0}R)^{i+1}, \; V(\mathfrak{p})\not\subseteq V(\mathfrak{q})}}\prod (\Mod_{R})^{\wedge}_{\mathfrak{p}}$ is a module over $\Mod_{R}^{\Loc(\mathfrak{q})}$; for the latter, it again suffices to check that each $(\Mod_{R})^{\wedge}_{\mathfrak{p}}$ is a module over $\Mod_{R}^{\Loc(\mathfrak{q})}$, cf. \cite[Cor. 4.2.3.3]{ha}. \\
\indent From the idempotent fiber-cofiber sequence $\Mod_{R}^{\Nil(\mathfrak{q})}\to\Mod_{R}\to\Mod_{R}^{\Loc(\mathfrak{q})}$, we can equivalently check $\Mod_{R}^{\Nil(\mathfrak{q})}\otimes_{R}(\Mod_{R})^{\wedge}_{\mathfrak{p}}\simeq0$. Note that $(\Mod_{R})^{\wedge}_{\mathfrak{p}}\simeq\Res\left(\intmap^{\dual}_{R_{\mathfrak{p}}}\left(\Mod_{R_{\mathfrak{p}}}^{\Nil(\mathfrak{p}\pi_{0}R_{\mathfrak{p}})},\Mod_{R_{\mathfrak{p}}}\right)\right)$ is a module over the closed idempotent algebra $\Mod_{R_{\mathfrak{p}}}$ of $\Mod_{\Mod_{R}}(\Prl_{\st})^{\dual}$; thus, in order to check the stated vanishing, we are reduced to check $\Mod_{R}^{\Nil(\mathfrak{q})}\otimes_{R}\Mod_{R_{\mathfrak{p}}}\simeq0$. This vanishing can be directly verified as follows. One has $\Mod_{R}^{\Nil(\mathfrak{q})}\otimes_{R}\Mod_{R_{\mathfrak{p}}}\simeq \colim_{\mathfrak{p}\in D(f)}(\Mod_{R[f^{-1}]})^{\Nil(\mathfrak{q})}\simeq  \colim_{\mathfrak{p}\in D(f)\subseteq (\Spec \pi_{0}R)\backslash V(\mathfrak{q})}(\Mod_{R[f^{-1}]})^{\Nil(\mathfrak{q})}$ by the assumption $V(\mathfrak{p})\not\subseteq V(\mathfrak{q})$, and any $f\in\pi_{0}R$ giving $D(f)$ of the index category of the latter colimit, i.e., $f\in \mathfrak{q}\backslash\mathfrak{p}$, by definition satisfies $(\Mod_{R[f^{-1}]})^{\Nil(\mathfrak{q})}\simeq0$. 
\end{proof}
\end{lemma}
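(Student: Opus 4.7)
The plan is to split the dualizable product according to whether or not $V(\mathfrak{p})\subseteq V(\mathfrak{q})$, and to show that the "other" factor is annihilated by $(-)^{\wedge}_{\mathfrak{q}}$. Since dualizable products decompose along a partition of the indexing set, and since the lax symmetric monoidal functor $(-)^{\wedge}_{\mathfrak{q}}$ preserves finite products (being a composition of a restriction and an internal mapping object functor, hence preserving finite limits), the stated natural map is an equivalence if and only if
\begin{equation*}
\left(\sideset{}{^{\dual}_{\mathfrak{p}\in(\Spec \pi_{0}R)^{i+1},\; V(\mathfrak{p})\not\subseteq V(\mathfrak{q})}}\prod \mathcal{C}^{\wedge}_{\mathfrak{p}}\right)^{\wedge}_{\mathfrak{q}}\simeq 0.
\end{equation*}

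Next, I would reduce the desired vanishing to the universal case $\mathcal{C}=\Mod_{R}$. By Remark \ref{rem:loccompl}, each $(-)^{\wedge}_{\mathfrak{p}}$ is lax symmetric monoidal, so $\mathcal{C}^{\wedge}_{\mathfrak{p}}$ is naturally a module over the algebra $(\Mod_{R})^{\wedge}_{\mathfrak{p}}$; applying this twice (once for each $(-)^\wedge$), the term we wish to vanish becomes a module over $\bigl(\prod^{\dual}_{V(\mathfrak{p})\not\subseteq V(\mathfrak{q})}(\Mod_{R})^{\wedge}_{\mathfrak{p}}\bigr)^{\wedge}_{\mathfrak{q}}$, and thus will vanish as soon as the latter algebra is $0$. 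This reduces the problem to the case $\mathcal{C}=\Mod_{R}$.

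For the reduced case, by Corollary \ref{cor:intmapcontrol} (1) it suffices to verify that $\Fun^{\L}_{R}$ from $\Mod_{R}^{\Nil(\mathfrak{q})}$ to $\Mod_{R_{\mathfrak{q}}}\otimes_{R}\prod^{\dual}_{V(\mathfrak{p})\not\subseteq V(\mathfrak{q})}(\Mod_{R})^{\wedge}_{\mathfrak{p}}$ is trivial. By Lemma \ref{lem:niltoloczero}, this is automatic once the target is shown to be a module over $\Mod_{R}^{\Loc(\mathfrak{q})}$, equivalently once $\Mod_{R}^{\Nil(\mathfrak{q})}\otimes_{R}(\Mod_{R})^{\wedge}_{\mathfrak{p}}\simeq 0$ for every $\mathfrak{p}$ with $V(\mathfrak{p})\not\subseteq V(\mathfrak{q})$ (again propagated to the product by the module structure, and then tensored with $\Mod_{R_{\mathfrak{q}}}$). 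Since $(\Mod_{R})^{\wedge}_{\mathfrak{p}}$ is a module over $\Mod_{R_{\mathfrak{p}}}$, we are further reduced to the vanishing $\Mod_{R}^{\Nil(\mathfrak{q})}\otimes_{R}\Mod_{R_{\mathfrak{p}}}\simeq 0$.

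The main obstacle, and the place where the hypothesis $V(\mathfrak{p})\not\subseteq V(\mathfrak{q})$ intervenes essentially, is this last vanishing. I would verify it directly using the filtered colimit presentation $\Mod_{R_{\mathfrak{p}}}\simeq \colim_{f\in\pi_{0}R\setminus\mathfrak{p}}\Mod_{R[f^{-1}]}$, which gives
\begin{equation*}
\Mod_{R}^{\Nil(\mathfrak{q})}\otimes_{R}\Mod_{R_{\mathfrak{p}}}\simeq \colim_{f\in\pi_{0}R\setminus\mathfrak{p}}(\Mod_{R[f^{-1}]})^{\Nil(\mathfrak{q})}.
\end{equation*}
The assumption $\mathfrak{p}\not\supseteq\mathfrak{q}$ supplies an element $f_{0}\in\mathfrak{q}\setminus\mathfrak{p}$; the subset of $f$ with $f_{0}\mid f$ in $\pi_{0}R$ is cofinal in $\pi_{0}R\setminus\mathfrak{p}$, and for each such $f$ the element $f_{0}\in\mathfrak{q}$ becomes a unit in $R[f^{-1}]$, so $(\Mod_{R[f^{-1}]})^{\Nil(\mathfrak{q})}\simeq 0$. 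This completes the vanishing and hence the proof.
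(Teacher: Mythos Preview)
Your proof is correct and follows essentially the same route as the paper's own proof: split the dualizable product, reduce to the vanishing of the complementary factor under $(-)^{\wedge}_{\mathfrak{q}}$, reduce to $\mathcal{C}=\Mod_{R}$ via lax monoidality, invoke Corollary~\ref{cor:intmapcontrol} and Lemma~\ref{lem:niltoloczero} to reduce to showing each $(\Mod_{R})^{\wedge}_{\mathfrak{p}}$ is $\mathfrak{q}$-local, and finally verify $\Mod_{R}^{\Nil(\mathfrak{q})}\otimes_{R}\Mod_{R_{\mathfrak{p}}}\simeq 0$ by a cofinality argument using some $f_{0}\in\mathfrak{q}\setminus\mathfrak{p}$. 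The only cosmetic difference is that the paper phrases the last cofinality in terms of the subset $\{f\in\mathfrak{q}\setminus\mathfrak{p}\}$ rather than multiples of a fixed $f_{0}$, which is equivalent.
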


Finally, we can describe the objects of $\Mod_{\Mod_{R}}(\Prl_{\st})^{\dual}$ appearing as individual terms in the motivic limit diagram associated with the adelic idempotent fiber-cofiber sequences:

\begin{proposition}\label{prop:localstratumcomposition}
Let $R$ be an $\bbE_{\infty}$-ring such that $\pi_{0}R$ is Noetherian. Then, for each $0\leq i_{1}<\cdots< i_{r}\in\bbZ$, we have a natural equivalence 
\begin{align*}
&\phi_{i_{1}}\circ\cdots\circ\phi_{i_{r}}(\mathcal{C})\\
& \simeq \sideset{}{^{\dual}_{\mathfrak{p_{1}}\in(\Spec \pi_{0}R)^{i_{1}}}}\prod \left(\cdots\sideset{}{^{\dual}_{\mathfrak{p}_{r-1}\in(\Spec \pi_{0}R)^{i_{r-1}},\;\mathfrak{p}_{r-1}\in V(\mathfrak{p}_{r-2})}}\prod \left(\sideset{}{^{\dual}_{\mathfrak{p}_{r}\in(\Spec \pi_{0}R)^{i_{r}},\;\mathfrak{p}_{r}\in V(\mathfrak{p}_{r-1})}}\prod \mathcal{C}^{\wedge}_{\mathfrak{p}_{r}}\right)^{\wedge}_{\mathfrak{p}_{r-1}}\cdots\right)^{\wedge}_{\mathfrak{p}_{1}}
\end{align*}
for $\mathcal{C}\in\Mod_{\Mod_{R}}(\Prl_{\st})^{\dual}$. 
\begin{proof}
The case of $r=1$ is Proposition \ref{prop:localstratum}. In general, successive applications of Proposition \ref{prop:localstratum} provides an expression without restrictions $\mathfrak{p}_{i}\in V(\mathfrak{p}_{i+1})$ on the indexes for the products, and Lemma \ref{lem:projequiv} further reduces the expression to the claimed formula above. 
\end{proof}
\end{proposition}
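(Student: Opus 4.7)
My plan is to proceed by induction on $r$, with the base case $r=1$ being exactly Proposition \ref{prop:localstratum}. For the inductive step, I would write
\[
\phi_{i_{1}}\circ\cdots\circ\phi_{i_{r}}(\mathcal{C}) \simeq \phi_{i_{1}}\bigl(\phi_{i_{2}}\circ\cdots\circ\phi_{i_{r}}(\mathcal{C})\bigr),
\]
apply Proposition \ref{prop:localstratum} to the outer $\phi_{i_{1}}$, and then substitute the inductive hypothesis for the inner argument. This yields an expression of the form
\[
\sideset{}{^{\dual}_{\mathfrak{p}_{1}\in(\Spec\pi_{0}R)^{i_{1}}}}\prod \left(\;\sideset{}{^{\dual}_{\mathfrak{p}_{2}\in(\Spec\pi_{0}R)^{i_{2}}}}\prod \Bigl(\cdots\Bigr)^{\wedge}_{\mathfrak{p}_{2}}\right)^{\wedge}_{\mathfrak{p}_{1}},
\]
where the nested products already carry the restrictions $\mathfrak{p}_{k+1}\in V(\mathfrak{p}_{k})$ for $k\geq 2$, but the outer-most inner product over $\mathfrak{p}_{2}$ is unconstrained relative to $\mathfrak{p}_{1}$. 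The goal is then to impose the constraint $\mathfrak{p}_{2}\in V(\mathfrak{p}_{1})$ using Lemma \ref{lem:projequiv}.

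The main point of the argument is therefore to upgrade Lemma \ref{lem:projequiv} to the case of non-consecutive codimensions and of completions-of-completions. More precisely, the statement I would use is: for any $\mathfrak{q}\in(\Spec\pi_{0}R)^{i_{1}}$ and any set $T$ of prime ideals of $\pi_{0}R$ (of possibly varying codimension), the natural map
\[
\left(\sideset{}{^{\dual}_{\mathfrak{p}\in T}}\prod \mathcal{D}_{\mathfrak{p}}\right)^{\wedge}_{\mathfrak{q}} \longrightarrow \left(\sideset{}{^{\dual}_{\mathfrak{p}\in T,\; V(\mathfrak{p})\subseteq V(\mathfrak{q})}}\prod \mathcal{D}_{\mathfrak{p}}\right)^{\wedge}_{\mathfrak{q}}
\]
is an equivalence, where each $\mathcal{D}_{\mathfrak{p}}\in\Mod_{\Mod_{R}}(\Prl_{\st})^{\dual}$ is, by construction, a module over $\Mod_{R_{\mathfrak{p}}}$ (and in our setting will in fact be of the form $(\cdots)^{\wedge}_{\mathfrak{p}_{2}}$). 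Splitting the product according to whether $V(\mathfrak{p})\subseteq V(\mathfrak{q})$ or not and using the finite-product-preservation of $(-)^{\wedge}_{\mathfrak{q}}$, it suffices to show that $\bigl(\prod^{\dual}_{\mathfrak{p}\in T,\; V(\mathfrak{p})\not\subseteq V(\mathfrak{q})} \mathcal{D}_{\mathfrak{p}}\bigr)^{\wedge}_{\mathfrak{q}}\simeq 0$. But the proof of Lemma \ref{lem:projequiv} adapts verbatim: each $\mathcal{D}_{\mathfrak{p}}$ for such $\mathfrak{p}$ is a module over $\Mod_{R_{\mathfrak{p}}}$, which in turn is a module over $\Mod_{R}^{\Loc(\mathfrak{q})}$ precisely because $V(\mathfrak{p})\not\subseteq V(\mathfrak{q})$ forces some element of $\mathfrak{q}$ to be inverted in $R_{\mathfrak{p}}$. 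Hence the $\dual$-product is a $\Mod_{R}^{\Loc(\mathfrak{q})}$-module, and Corollary \ref{cor:intmapcontrol} together with Lemma \ref{lem:niltoloczero} give the desired vanishing.

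Applying this generalized statement inside the outer product over $\mathfrak{p}_{1}$ (taking $\mathfrak{q}=\mathfrak{p}_{1}$ and $T=(\Spec\pi_{0}R)^{i_{2}}$, with $\mathcal{D}_{\mathfrak{p}_{2}}$ the further nested product by the inductive hypothesis) introduces the missing restriction $\mathfrak{p}_{2}\in V(\mathfrak{p}_{1})$ and completes the induction. The main obstacle is purely bookkeeping: checking that the generalized vanishing argument really goes through when the "building blocks" $\mathcal{D}_{\mathfrak{p}}$ are themselves iterated products of completions, rather than simply $\mathcal{C}^{\wedge}_{\mathfrak{p}}$. This reduces, however, to the single observation that being a $\Mod_{R_{\mathfrak{p}}}$-module is preserved under $\sideset{}{^{\dual}}\prod$-limits and under $(-)^{\wedge}_{\mathfrak{p}}$, which follows from the lax symmetric monoidality of $(-)^{\wedge}_{\mathfrak{p}}$ recorded in Remark \ref{rem:loccompl}, so no genuinely new input is required beyond what is already established.
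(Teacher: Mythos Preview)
Your proposal is correct and follows the same strategy as the paper: iterate Proposition~\ref{prop:localstratum} and then invoke Lemma~\ref{lem:projequiv} to impose the containment conditions on the prime indices. In fact you are more careful than the paper's two-line proof on one point: as literally stated, Lemma~\ref{lem:projequiv} treats only consecutive codimensions $i,i+1$ and a product $\prod^{\dual}_{\mathfrak p}\mathcal{C}^{\wedge}_{\mathfrak p}$ with a \emph{fixed} $\mathcal{C}$, whereas your inductive organization requires arbitrary codimension gaps $i_1<i_2$ and factors $\mathcal{D}_{\mathfrak p_2}$ that depend on $\mathfrak p_2$. Your observation that the proof of Lemma~\ref{lem:projequiv} only uses that each factor is a $\Mod_{R_{\mathfrak p}}$-module, together with Remark~\ref{rem:loccompl}, disposes of this cleanly. (Alternatively, if one first expands all the $\phi_{i_k}$ via Proposition~\ref{prop:localstratum} with no restrictions and then applies Lemma~\ref{lem:projequiv} from the outside in, the inner category at each stage is $\phi_{i_{k+1}}\cdots\phi_{i_r}(\mathcal{C})$, which is independent of $\mathfrak p_k$, so the index-dependence issue does not arise; one still needs the harmless extension to non-consecutive codimensions.)
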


Now, we have the following adelic descent statement for localizing invariants on dualizable presentable stable $\infty$-categories:

\begin{corollary}\label{cor:adelicdescent}
Let $R$ be an $\bbE_{\infty}$-ring such that $\pi_{0}R$ is Noetherian and of finite Krull dimension $n$. Then, for any localizing invariant $E:\Pr^{\L,\dual}_{\st}\to\mathcal{V}$ into a stable $\infty$-category $\mathcal{V}$ and any $\mathcal{C}\in\Mod_{\Mod_{R}}(\Prl_{\st})^{\dual}$, there is a natural equivalence 
\begin{equation*}
E(\mathcal{C})\simeq \lim_{0\leq i_{1}<\cdots<i_{r}\leq n} E\left(\sideset{}{^{\dual}_{\mathfrak{p_{1}}\in(\Spec \pi_{0}R)^{i_{1}}}}\prod\left(\cdots\left(\sideset{}{^{\dual}_{\mathfrak{p}_{r}\in(\Spec \pi_{0}R)^{i_{r}},\;\mathfrak{p}_{r}\in V(\mathfrak{p}_{r-1})}}\prod \mathcal{C}^{\wedge}_{\mathfrak{p}_{r}}\right)\cdots\right)^{\wedge}_{\mathfrak{p}_{1}}\right)
\end{equation*}
in $\mathcal{V}$. 
\begin{proof}
By Example \ref{ex:motivicgluingsqdualcats}, we can apply Theorem \ref{thm:motiviclimitdiag} to $\mathcal{X} = \Mod_{\Mod_{R}}(\Prl_{\st})^{\dual}$, its sequence of adelic idempotent fiber-cofiber sequences of Construction \ref{constr:adelicseq}, and the restriction of $E$ to $\Mod_{\Mod_{R}}(\Prl_{\st})^{\dual}$. By Proposition \ref{prop:localstratumcomposition}, each of the terms $E(\phi_{i_{1}}\cdots\phi_{i_{r}}(\mathcal{C}))$ takes the form as stated above on the right hand side of the equivalence. 
\end{proof} 
\end{corollary}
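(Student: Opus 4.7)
The plan is to observe that this corollary is essentially an immediate consequence of Theorem \ref{thm:motiviclimitdiag}, once one assembles the adelic construction of Construction \ref{constr:adelicseq} with the stratum description of Proposition \ref{prop:localstratumcomposition}. I would take $\mathcal{X} = \Mod_{\Mod_{R}}(\Prl_{\st})^{\dual}$ as the ambient category; as noted in Example \ref{ex:motivicgluingsqdualcats}, this satisfies the hypotheses of Theorem \ref{thm:motiviclimitdiag} by virtue of the presentability result of Ramzi and Lemma \ref{lem:intflat}.

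Next, I would feed in the specific finite sequence of idempotent fiber-cofiber sequences of Construction \ref{constr:adelicseq}. Since $\pi_{0}R$ is Noetherian of Krull dimension $n$, the partially ordered sets $S_{i>n}$ are empty, so this sequence is finite of level $n$; the compatibility condition \eqref{eq:idempincs} has already been verified in Construction \ref{constr:adelicseq}(2). For the coefficient functor, the composite $\Mod_{\Mod_{R}}(\Prl_{\st})^{\dual} \xrightarrow{\Res} \Pr^{\L,\dual}_{\st} \xrightarrow{E} \mathcal{V}$ sends fiber-cofiber sequences to fiber-cofiber sequences: the restriction of scalars preserves them by Lemma \ref{lem:fibcofibseq} combined with Lemma \ref{lem:rigidadjunction} applied to the rigid map $\Sp \to \Mod_{R}$, while $E$ preserves them by definition of a localizing invariant. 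Theorem \ref{thm:motiviclimitdiag}(4) then produces a natural equivalence
\begin{equation*}
E(\mathcal{C}) \simeq \lim_{0 \leq i_{1} < \cdots < i_{r} \leq n} E\bigl(\phi_{i_{1}} \circ \cdots \circ \phi_{i_{r}}(\mathcal{C})\bigr)
\end{equation*}
in $\mathcal{V}$, where $\phi_{i} = \intmap_{R}^{\dual}(\Gamma_{i},\Gamma_{i}\otimes_{R} L_{i+1}\otimes_{R}-)$.

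The last step is to identify each term $\phi_{i_{1}} \circ \cdots \circ \phi_{i_{r}}(\mathcal{C})$ with the iterated product-completion expression on the right-hand side of the claimed formula, and this is exactly the content of Proposition \ref{prop:localstratumcomposition}. Substituting those identifications into the limit above yields the stated equivalence, so at this stage the corollary is a matter of assembly rather than of fresh argument. The genuine obstacle lay upstream, in establishing Proposition \ref{prop:localstratum}, which identifies each single stratum $\phi_{i}(\mathcal{C})$ with a dualizable product over $(\Spec\pi_{0}R)^{i}$ of the local-and-complete pieces $\mathcal{C}^{\wedge}_{\mathfrak{p}}$; that in turn rests on the Zariski co-excision pushout Lemma \ref{lem:nilobjpushout} together with the internal-mapping-object manipulations of Lemmas \ref{lem:localstratumstep1}--\ref{lem:localstratumstep3}, and on the vanishing Lemma \ref{lem:projequiv} that removes off-diagonal terms when one applies a further completion. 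With those facts in hand, no new input is needed for the corollary itself.
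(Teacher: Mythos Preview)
Your proof is correct and follows exactly the same approach as the paper: apply Theorem \ref{thm:motiviclimitdiag} to $\mathcal{X} = \Mod_{\Mod_{R}}(\Prl_{\st})^{\dual}$ with the adelic idempotent fiber-cofiber sequences of Construction \ref{constr:adelicseq} and the restricted invariant $E\circ\Res$, then identify the terms via Proposition \ref{prop:localstratumcomposition}. Your additional remarks about why $E\circ\Res$ preserves fiber-cofiber sequences and about where the real work lies are accurate elaborations of what the paper leaves implicit.
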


When the underlying commutative ring of $R$ has Krull dimension $1$, we can compute the terms in the limit diagram in a more precise way in the case of continuous K-theory:

\begin{corollary}\label{cor:adelicdescentcurve}
Let $R$ be an $\bbE_{\infty}$-ring such that $\pi_{0}R$ is Noetherian of Krull dimension $1$. Then, for each $\mathcal{C}\in\Mod_{\Mod_{R}}(\Prl_{\st})^{\dual}$, there is a natural pullback-pushout square of spectra 
\begin{equation*}
\begin{tikzcd}
\K^{\cont}(\mathcal{C}) \arrow[r] \arrow[d] & \sideset{}{^{}_{\eta\in(\Spec\pi_{0}R)^{0}}}\prod \K^{\cont}(\Mod_{R_{\eta}}\otimes_{R}\mathcal{C}) \arrow[d]\\
\sideset{}{^{}_{\mathfrak{p}\in(\Spec\pi_{0}R)^{1}}}\prod \K^{\cont}(\mathcal{C}^{\wedge}_{\mathfrak{p}}) \arrow[r] & \sideset{}{^{}_{\eta\in(\Spec\pi_{0}R)^{0}}}\prod \K^{\cont}\left(\Mod_{R_{\eta}}\otimes_{R}\sideset{}{^{\dual}_{\mathfrak{p}\in(\Spec\pi_{0}R)^{1}\cap V(\eta)}}\prod \mathcal{C}^{\wedge}_{\mathfrak{p}}\right).
\end{tikzcd}
\end{equation*}
Moreover, the bottom right object is naturally equivalent to 
\begin{align*}
\sideset{}{^{}_{\eta\in(\Spec\pi_{0}R)^{0}}}\prod \colim_{S\in\mathcal{P}_{\mathrm{fin}}((\Spec\pi_{0}R)^{1}\cap V(\eta))}\bigg( & \sideset{}{^{}_{\mathfrak{p}\in S}}\prod \K^{\cont}\left((\mathcal{C}^{\wedge}_{\mathfrak{p}})^{\Loc(\mathfrak{p})}\right)\\
& \times \sideset{}{^{}_{\mathfrak{p}\in(\Spec\pi_{0}R)^{1}\cap V(\eta),\; \mathfrak{p}\notin S}}\prod \K^{\cont}(\mathcal{C}^{\wedge}_{\mathfrak{p}})\bigg).
\end{align*}
\end{corollary}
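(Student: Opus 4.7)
The plan is to deduce part (1) directly from Corollary \ref{cor:adelicdescent} with $n=1$ and $E = \K^{\cont}$, supplemented by an identification of the bottom-right spectrum via a filtered colimit presentation; parts (2) and the subsequent Example will follow by specializing to $\mathcal{C} = \Mod_R$ and invoking Efimov's theorem $\K^{\cont}((\Mod_R)^{\wedge}_{\mathfrak{p}}) \simeq \lim_n \K(R_\mathfrak{p} \sslash \mathfrak{p}^n R_\mathfrak{p})$ (cf.\ Remark \ref{rem:formalgluingsq}) together with the formal gluing square at $R_\mathfrak{p}$.

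First, I would apply Corollary \ref{cor:adelicdescent} with $n = 1$, so that the limit over $\N\mathcal{P}([1])\setminus\emptyset$ becomes a pullback-pushout square. Example \ref{ex:0thstratum} identifies the codim-$0$ completion $\mathcal{C}^{\wedge}_\eta$ with $\Mod_{R_\eta}\otimes_R \mathcal{C}$. As $(\Spec \pi_0 R)^0$ is finite by Noetherianity and $\K^{\cont}$ is additive, the top-right vertex becomes $\prod_\eta \K^{\cont}(\Mod_{R_\eta}\otimes_R \mathcal{C})$. The bottom-left and bottom-right vertices involve products over the (possibly infinite) set of codim-$1$ primes; I would pull these products outside $\K^{\cont}$ using that $\K^{\cont}$, as a localizing invariant on $\Pr^{\L,\dual}_{\st}$, preserves products in $\Pr^{\L,\dual}_{\st}$.

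The main work is the filtered colimit description of the bottom-right vertex. Using the presentation $\Mod_{R_\eta} \simeq \colim_{f \in \pi_0 R \setminus \eta} \Mod_{R[f^{-1}]}$ in $\Mod_{\Mod_R}(\Prl_{\st})^{\dual}$, I would establish the local computation
\begin{equation*}
\Mod_{R[f^{-1}]}\otimes_R \mathcal{C}^{\wedge}_{\mathfrak{p}} \simeq \begin{cases} (\mathcal{C}^{\wedge}_{\mathfrak{p}})^{\Loc(\mathfrak{p})} & \text{if } f \in \mathfrak{p},\\ \mathcal{C}^{\wedge}_{\mathfrak{p}} & \text{if } f \notin \mathfrak{p}, \end{cases}
\end{equation*}
valid for each codim-$1$ prime $\mathfrak{p} \supseteq \eta$. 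The second case uses that $\mathcal{C}^{\wedge}_{\mathfrak{p}}$ is naturally a $\Mod_{R_\mathfrak{p}}$-module and $R_\mathfrak{p}$ already inverts $f$; the first case rests on the observation that in the one-dimensional Noetherian local ring $R_\mathfrak{p}$, Krull's height theorem gives $\sqrt{(f)} = \mathfrak{p}R_\mathfrak{p}$ for any $f \in \mathfrak{p}\setminus\eta$, so that $\Loc(f)$ and $\Loc(\mathfrak{p})$ coincide on $\Mod_{R_\mathfrak{p}}$. Setting $S_f = \{\mathfrak{p} \in (\Spec\pi_0 R)^1 \cap V(\eta) : f \in \mathfrak{p}\}$, which is finite by Noetherianity (minimal primes of $(f)$ of height $1$), I would invoke the fact that finite factors split off a dualizable product to write
\begin{equation*}
\sideset{}{^{\dual}_{\mathfrak{p}\in(\Spec\pi_0 R)^1\cap V(\eta)}}\prod \mathcal{C}^{\wedge}_\mathfrak{p} \simeq \sideset{}{_{\mathfrak{p}\in S_f}}\prod\mathcal{C}^{\wedge}_\mathfrak{p} \times \sideset{}{^{\dual}_{\mathfrak{p}\notin S_f}}\prod\mathcal{C}^{\wedge}_\mathfrak{p},
\end{equation*}
and apply $\Mod_{R[f^{-1}]}\otimes_R -$ componentwise. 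Taking the filtered colimit over $f$, noting that $\{S_f\}_f$ is cofinal in $\mathcal{P}_{\mathrm{fin}}((\Spec\pi_0 R)^1 \cap V(\eta))$ (any finite $S$ is realized inside some $S_f$, by choosing $f = \prod_{\mathfrak{p}\in S}g_\mathfrak{p}$ with $g_\mathfrak{p}\in\mathfrak{p}\setminus\eta$), and then applying $\K^{\cont}$, which commutes with the relevant filtered colimits with right-adjointable transition maps and with products in $\Pr^{\L,\dual}_{\st}$, produces the stated formula.

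The main obstacle is carefully establishing the interaction of $\Mod_{R[f^{-1}]}\otimes_R -$ with the dualizable product decomposition, together with verifying that $\K^{\cont}$ preserves the products in $\Pr^{\L,\dual}_{\st}$ at the scale needed; the latter rests on general properties of continuous K-theory from Efimov's work. Part (2) specializes (1) to $\mathcal{C} = \Mod_R$: for animated $R$, the identification $\K^{\cont}((\Mod_R)^{\wedge}_\mathfrak{p}) \simeq \lim_n \K(R_\mathfrak{p}\sslash\mathfrak{p}^n R_\mathfrak{p})$ follows from Efimov's theorem (cf.\ Remark \ref{rem:formalgluingsq}) applied at $R_\mathfrak{p}$, while the pushout square describing $\K^{\cont}(((\Mod_R)^{\wedge}_\mathfrak{p})^{\Loc(\mathfrak{p})})$ arises from the motivic gluing square of Example \ref{ex:formalgluingsq} applied to $R_\mathfrak{p}$ with ideal $\mathfrak{p}R_\mathfrak{p}$ upon taking $\K^{\cont}$.
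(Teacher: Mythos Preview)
Your approach is essentially the same as the paper's: apply Corollary \ref{cor:adelicdescent} with $n=1$, use Example \ref{ex:0thstratum} for the top-right, pull products outside via Efimov's theorem that $\K^{\cont}$ preserves small products in $\Pr^{\L,\dual}_{\st}$, and then compute the bottom-right corner by writing $\Mod_{R_{\eta}}$ as a filtered colimit of localizations and analyzing the effect on each factor $\mathcal{C}^{\wedge}_{\mathfrak{p}}$. The only substantive difference is the indexing of that filtered colimit: you index by elements $f\in\pi_{0}R\setminus\eta$ and use $\Mod_{R[f^{-1}]}$, whereas the paper indexes directly by finite subsets $S\subseteq(\Spec\pi_{0}R)^{1}\cap V(\eta)$ via the closed sets $Z_{S}=V(S)\cup\bigcup_{\eta'\neq\eta}V(\eta')$, and then uses the Zariski co-excision square of Lemma \ref{lem:nilobjpushout} to identify $\Mod_{R}^{\Loc(Z_{S})}\otimes_{R}\mathcal{C}^{\wedge}_{\mathfrak{p}}$ with $(\mathcal{C}^{\wedge}_{\mathfrak{p}})^{\Loc(\mathfrak{p})}$ for $\mathfrak{p}\in S$.

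There is one genuine gap in your version that the paper's indexing avoids. Your claim that $\sqrt{(f)R_{\mathfrak{p}}}=\mathfrak{p}R_{\mathfrak{p}}$ for any $f\in\mathfrak{p}\setminus\eta$ is only correct when $\eta R_{\mathfrak{p}}$ is the \emph{unique} minimal prime of $R_{\mathfrak{p}}$. If $\mathfrak{p}$ also contains another minimal prime $\eta'$ of $\pi_{0}R$ and $f\in\eta'$, then $V(f)\cap\Spec R_{\mathfrak{p}}$ contains $\eta'R_{\mathfrak{p}}$, so $\Mod_{R[f^{-1}]}\otimes_{R}\mathcal{C}^{\wedge}_{\mathfrak{p}}$ is not $(\mathcal{C}^{\wedge}_{\mathfrak{p}})^{\Loc(\mathfrak{p})}$ at that stage, and consequently the value at $f$ does not depend only on $S_{f}$. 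This is fixable within your framework: by prime avoidance one may restrict to the cofinal subsystem of $f$'s lying outside every minimal prime of $\pi_{0}R$, and for such $f$ your Krull-height argument goes through. The paper's choice of $Z_{S}$ (which explicitly absorbs the other irreducible components) sidesteps this issue entirely and lands directly on the target indexing by $\mathcal{P}_{\mathrm{fin}}$, at the cost of invoking Lemma \ref{lem:nilobjpushout} for the $\mathfrak{p}\in S$ case. A second, more minor point: your phrase ``apply $\Mod_{R[f^{-1}]}\otimes_{R}-$ componentwise'' to the infinite dualizable product should be justified, as in the paper, by noting that each factor $\mathcal{C}^{\wedge}_{\mathfrak{p}}$ with $f\notin\mathfrak{p}$ is already a module over the closed idempotent $\Mod_{R[f^{-1}]}$, whence so is their dualizable product, so the tensor acts trivially there; tensoring does not commute with $\prod^{\dual}$ in general.
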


\begin{example}\label{ex:adelicdescentcurve}
Let $R$ be an $\bbE_{\infty}$-ring such that $\pi_{0}R$ is Noetherian of Krull dimension $1$. By taking $\mathcal{C} = \mathbf{1} = \Mod_{R}$, Corollary \ref{cor:adelicdescentcurve} says that there is a natural pullback-pushout square of spectra 
\begin{equation*}
\begin{tikzcd}
\K(R) \arrow[r] \arrow[d] & \sideset{}{^{}_{\eta\in(\Spec\pi_{0}R)^{0}}}\prod \K(R_{\eta}) \arrow[d]\\
\sideset{}{^{}_{\mathfrak{p}\in(\Spec\pi_{0}R)^{1}}}\prod \K^{\cont}\left((\Mod_{R})^{\wedge}_{\mathfrak{p}}\right) \arrow[r] & \sideset{}{^{}_{\eta\in(\Spec\pi_{0}R)^{0}}}\prod \K^{\cont}\left(\Mod_{R_{\eta}}\otimes_{R}\sideset{}{^{\dual}_{\mathfrak{p}\in(\Spec\pi_{0}R)^{1}\cap V(\eta)}}\prod (\Mod_{R})^{\wedge}_{\mathfrak{p}}\right),
\end{tikzcd}
\end{equation*}
and the bottom right object is furthermore naturally equivalent to 
\begin{align*}
\sideset{}{^{}_{\eta\in(\Spec\pi_{0}R)^{0}}}\prod \colim_{S\in\mathcal{P}_{\mathrm{fin}}((\Spec\pi_{0}R)^{1}\cap V(\eta))}\bigg( & \sideset{}{^{}_{\mathfrak{p}\in S}}\prod \K^{\cont}\left(((\Mod_{R})^{\wedge}_{\mathfrak{p}})^{\Loc(\mathfrak{p})}\right)\\
& \times \sideset{}{^{}_{\mathfrak{p}\in(\Spec\pi_{0}R)^{1}\cap V(\eta),\; \mathfrak{p}\notin S}}\prod \K^{\cont}\left((\Mod_{R})^{\wedge}_{\mathfrak{p}}\right)\bigg).
\end{align*}
If $R$ is an underlying $\bbE_{\infty}$-ring of an animated commutative ring with $\pi_{0}R$ being Noetherian of Krull dimension 1, then for each $\mathfrak{p}\in(\Spec\pi_{0}R)^{1}$ we have $(\Mod_{R})^{\wedge}_{\mathfrak{p}}\simeq\mNuc_{\widehat{R_{\mathfrak{p}}}}$ and hence a natural equivalence $\K^{\cont}\left((\Mod_{R})^{\wedge}_{\mathfrak{p}}\right)\simeq \lim_{n}\K(R_{\mathfrak{p}}\sslash \mathfrak{p}^{n}R_{\mathfrak{p}})$ by Efimov's result \cite{efiminverse}; here, the quotient is taken as animated commutative rings. On the other hand, the spectrum $\K^{\cont}\left(((\Mod_{R})^{\wedge}_{\mathfrak{p}})^{\Loc(\mathfrak{p})}\right) = \K^{\cont}\left(\mNuc_{\widehat{R_{\mathfrak{p}}}}^{\Loc(\mathfrak{p})}\right)$, which in the case of static $R$ recovers $\K^{\cont}\left(\Nuc_{\Spf(\widehat{R_{\mathfrak{p}}})_{\eta}}\right)$, cf. Remark \ref{rem:formalgluingsq}, fits into the pushout square 
\begin{equation*}
\begin{tikzcd}
\K(R_{\mathfrak{p}}) \arrow[r] \arrow[d] & \K\left(\Spec R_{\mathfrak{p}}\backslash V(\mathfrak{p}R_{\mathfrak{p}})\right) \arrow[d] \\
\lim_{n}\K(R_{\mathfrak{p}}\sslash\mathfrak{p}^{n}R_{\mathfrak{p}}) \arrow[r] & \K^{\cont}\left(\mNuc_{\widehat{R_{\mathfrak{p}}}}^{\Loc(\mathfrak{p})}\right)
\end{tikzcd}
\end{equation*}
of spectra via Example \ref{ex:formalgluingsq} and the equivalence $\K^{\cont}\left(\Mod_{R_{\mathfrak{p}}}^{\Loc(\mathfrak{p})}\right)\simeq \K\left(\Spec R_{\mathfrak{p}}\backslash V(\mathfrak{p}R_{\mathfrak{p}})\right)$. 
\end{example}

\begin{proof}[Proof of Corollary \ref{cor:adelicdescentcurve}]
By Corollary \ref{cor:adelicdescent}, we have a square
\begin{equation*}
\begin{tikzcd}
\mathcal{C} \arrow[r] \arrow[d] & \sideset{}{^{}_{\eta\in(\Spec\pi_{0}R)^{0}}}\prod \Mod_{R_{\eta}}\otimes_{R}\mathcal{C} \arrow[d]\\
\sideset{}{^{\dual}_{\mathfrak{p}\in(\Spec\pi_{0}R)^{1}}}\prod \mathcal{C}^{\wedge}_{\mathfrak{p}} \arrow[r] & \sideset{}{^{}_{\eta\in(\Spec\pi_{0}R)^{0}}}\prod \Mod_{R_{\eta}}\otimes_{R}\sideset{}{^{\dual}_{\mathfrak{p}\in(\Spec\pi_{0}R)^{1}\cap V(\eta)}}\prod \mathcal{C}^{\wedge}_{\mathfrak{p}}  
\end{tikzcd}
\end{equation*}
which maps to a pullback-pushout square through any localizing invariants; here, we used Example \ref{ex:0thstratum} for the right side objects. Since $\K^{\cont}:\Pr^{\L,\dual}_{\st}\to\Sp$ preserves small products \cite[Th. 4.29]{efimlarge}, we in particular have the following pullback-pushout square 
\begin{equation*}
\begin{tikzcd}
\K^{\cont}(\mathcal{C}) \arrow[r] \arrow[d] & \sideset{}{^{}_{\eta\in(\Spec\pi_{0}R)^{0}}}\prod \K^{\cont}(\Mod_{R_{\eta}}\otimes_{R}\mathcal{C}) \arrow[d]\\
\sideset{}{^{}_{\mathfrak{p}\in(\Spec\pi_{0}R)^{1}}}\prod \K^{\cont}(\mathcal{C}^{\wedge}_{\mathfrak{p}}) \arrow[r] & \sideset{}{^{}_{\eta\in(\Spec\pi_{0}R)^{0}}}\prod \K^{\cont}\left(\Mod_{R_{\eta}}\otimes_{R}\sideset{}{^{\dual}_{\mathfrak{p}\in(\Spec\pi_{0}R)^{1}\cap V(\eta)}}\prod \mathcal{C}^{\wedge}_{\mathfrak{p}}\right)
\end{tikzcd}
\end{equation*}
of spectra. To further compute the bottom right spectrum, note that for each $\eta\in(\Spec\pi_{0}R)^{0}$, one has $\Mod_{R_{\eta}}\simeq \colim_{\eta\in D(f)}\Mod_{R}^{\Loc(V(f))}\simeq \colim_{S\in\mathcal{P}_{\mathrm{fin}}((\Spec\pi_{0}R)^{1}\cap V(\eta))}\Mod_{R}^{\Loc\left(V(S)\cup \bigcup_{\eta'\in(\Spec\pi_{0}R)^{0}\backslash\eta}V(\eta')\right)}$. For each finite subset $S$ of $(\Spec\pi_{0}R)^{1}\cap V(\eta)$, us write $Z_{S} := V(S)\cup \bigcup_{\eta'\in(\Spec\pi_{0}R)^{0}\backslash\eta}V(\eta')$ for convenience. Now, observe that we have:\\
(1) $\Mod_{R}^{\Loc(Z_{S})}\otimes_{R} \mathcal{C}^{\wedge}_{\mathfrak{p}}\simeq \mathcal{C}^{\wedge}_{\mathfrak{p}}$ for $\mathfrak{p}\notin S$, $\mathfrak{p}\in (\Spec\pi_{0}R)^{1}\cap V(\eta)$. In particular, each $\mathcal{C}^{\wedge}_{\mathfrak{p}}$ is a $\Mod_{R}^{\Loc(Z_{S})}$-module object in $\Mod_{\Mod_{R}}(\Prl_{\st})^{\dual}$, and so is their product. Since $\Mod_{R_{\mathfrak{p}}}\otimes_{R}\mathcal{C}^{\wedge}_{\mathfrak{p}}\simeq \mathcal{C}^{\wedge}_{\mathfrak{p}}$ over $\Mod_{R}$, i.e., $\mathcal{C}^{\wedge}_{\mathfrak{p}}$ is a $\Mod_{R_{\mathfrak{p}}}$-module object, it suffices to check the equivalence after replacing $\mathcal{C}^{\wedge}_{\mathfrak{p}}$ by $\Mod_{R_{\mathfrak{p}}}$. Since $\Mod_{R_{\mathfrak{p}}}$ is equivalent to a filtered colimit in $\Mod_{\Mod_{R}}(\Prl_{\st})^{\dual}$ of $\Mod_{R}^{\Loc(\Spec \pi_{0}R\backslash U)}$ over open neighborhoods $U$ of the point $\mathfrak{p}$ in $(\Spec \pi_{0}R)\backslash Z_{S}$, and since $\Mod_{R}^{\Loc(Z_{S})}\otimes_{R}\Mod_{R}^{\Loc(\Spec \pi_{0}R\backslash U)}\simeq \Mod_{R}^{\Loc(\Spec \pi_{0}R\backslash U)}$ as $Z_{S}\subseteq (\Spec \pi_{0}R)\backslash U$, we have $\Mod_{R}^{\Loc(Z_{S})}\otimes_{R}\Mod_{R_{\mathfrak{p}}}\simeq\Mod_{R_{\mathfrak{p}}}$ as desired.\\
(2) $\Mod_{R}^{\Loc(Z_{S})}\otimes_{R} \mathcal{C}^{\wedge}_{\mathfrak{p}}\simeq (\mathcal{C}^{\wedge}_{\mathfrak{p}})^{\Loc(\mathfrak{p})}$ for $\mathfrak{p}\in S$. For convenience, write $Z' = \bigcup_{\eta'\in(\Spec\pi_{0}R)^{0}\backslash\eta}V(\eta')$, so $Z_{S} = V(S)\cup Z'$ for instance. From the pushout square of Lemma \ref{lem:nilobjpushout}, we have a pushout square 
\begin{equation*}
\begin{tikzcd}
\Mod_{R}^{\Loc(\emptyset)}\otimes_{R}\mathcal{C}^{\wedge}_{\mathfrak{p}} \arrow[r] \arrow[d] & \Mod_{R}^{\Loc(V(S\backslash\{\mathfrak{p}\})\cup Z')}\otimes_{R}\mathcal{C}^{\wedge}_{\mathfrak{p}} \arrow[d]\\
\Mod_{R}^{\Loc(\mathfrak{p})}\otimes_{R}\mathcal{C}^{\wedge}_{\mathfrak{p}} \arrow[r] & \Mod_{R}^{\Loc(Z_{S})}\otimes_{R}\mathcal{C}^{\wedge}_{\mathfrak{p}}
\end{tikzcd} 
\end{equation*}
in $\Mod_{\Mod_{R}}(\Prl_{\st})^{\dual}$. From (1), the upper right object is equivalent to $\mathcal{C}^{\wedge}_{\mathfrak{p}}$ and the upper horizontal arrow is an equivalence. Thus, the lower horizontal arrow is an equivalence, and we have $\Mod_{R}^{\Loc(Z_{S})}\otimes_{R}\mathcal{C}^{\wedge}_{\mathfrak{p}}\simeq(\mathcal{C}^{\wedge}_{\mathfrak{p}})^{\Loc(\mathfrak{p})}$ as claimed.\\
\indent From the above (1) and (2), we know $\Mod_{R_{\eta}}\otimes_{R}\sideset{}{^{\dual}_{\mathfrak{p}\in(\Spec\pi_{0}R)^{1}\cap V(\eta)}}\prod \mathcal{C}^{\wedge}_{\mathfrak{p}}$ is equivalent to a filtered colimit over $S\in\mathcal{P}_{\mathrm{fin}}((\Spec\pi_{0}R)^{1}\cap V(\eta))$ of the objects of the form $\sideset{}{^{}_{\mathfrak{p}\in S}}\prod (\mathcal{C}^{\wedge}_{\mathfrak{p}})^{\Loc(\mathfrak{p})} \times \sideset{}{^{\dual}_{\mathfrak{p}\in(\Spec\pi_{0}R)^{1}\cap V(\eta),\; \mathfrak{p}\notin S}}\prod \mathcal{C}^{\wedge}_{\mathfrak{p}}$ in $\Mod_{\Mod_{R}}(\Prl_{\st})^{\dual}$. Since $\Mod_{\Mod_{R}}(\Prl_{\st})^{\dual}\to\Pr^{\L,\dual}_{\st}$ preserves small filtered colimits and $\K^{\cont}$ is finitary, we obtain the stated expression of the bottom right spectrum of the diagram. 
\end{proof}

\printbibliography[heading=bibintoc]
\noindent \textit{\small E-mail address}: \texttt{\small hyungseop.kim@universite-paris-saclay.fr}\\
\noindent \textsc{\small CNRS, Laboratoire de Math\'ematiques d'Orsay, Universit\'e Paris-Saclay, Bâtiment 307, rue Michel Magat, F-91405 Orsay Cedex, France}
\end{document}